\documentclass[11pt]{ip-journal}

\usepackage{palatino, mathpazo} 

\usepackage{amsmath, amsfonts, amssymb, mathrsfs}
\usepackage[mathscr]{eucal}
\usepackage[all]{xy}
\usepackage{hyperref}
\usepackage{alltt}

\usepackage{marginnote}

\newtheorem{theorem}{Theorem}[section]
\newtheorem{lemma}[theorem]{Lemma}
\newtheorem{proposition}[theorem]{Proposition}
\newtheorem{corollary}[theorem]{Corollary}
\theoremstyle{remark}
\newtheorem{remark}[theorem]{Remark}

\newtheorem{definition}[theorem]{Definition}

\newtheorem*{convention}{Convention}
\numberwithin{equation}{section}

\newcommand{\M}{\mathcal{M}}
\newcommand{\T}{\mathscr{F}}
\newcommand{\Mbar}{\overline{M}}
\newcommand{\p}{\partial}
\newcommand{\f}{{\bf f}}
\newcommand{\bt}{{\bf t}}
\newcommand{\virt}{\operatorname{virt}}
\newcommand{\on}{\operatorname}
\newcommand{\Z}{\mathcal{Z}_1}

\newcommand{\vp}{\vartheta}
\newcommand{\vi}{\iota}
\newcommand{\eo}{\Omega}
\newcommand{\s}{x}

\setcounter{tocdepth}{1}

\title[$A + B$ theory in conifold transitions]
{Towards $A + B$ theory in \\conifold transitions\\ for Calabi--Yau threefolds}

\author[Y.-P.~Lee]{Yuan-Pin~Lee}
\address{Y.-P.~Lee: Department of Mathematics, University of Utah,
Salt Lake City, Utah 84112-0090, U.S.A.}
\email{yplee@math.utah.edu}

\author[H.-W.~Lin]{Hui-Wen~Lin}
\address{H.-W.~Lin: Department of Mathematics and Taida
Institute for Mathematical Sciences (TIMS), National Taiwan University, Taipei 10617, Taiwan}
\email{linhw@math.ntu.edu.tw}

\author[C.-L.~Wang]{Chin-Lung~Wang}
\address{C.-L.~Wang: Department of Mathematics and Taida
Institute for Mathematical Sciences (TIMS),
National Taiwan University, Taipei 10617, Taiwan}
\email{dragon@math.ntu.edu.tw}


\begin{document}
\maketitle

\begin{abstract}
For projective conifold transitions between Calabi-Yau threefolds $X$ and $Y$, with $X$ close to $Y$ in the moduli, 
we show that the combined information provided by the $A$ model (Gromov--Witten theory in all genera) and $B$ model (variation of Hodge structures) on $X$, linked along the vanishing cycles, determines the corresponding combined information on $Y$. 
Similar result holds in the reverse direction when linked with the exceptional curves.
%
\end{abstract}

\small
\tableofcontents
\normalsize

\setcounter{section}{-1}

\section{Introduction} \label{s:0}

\subsection{Statements of main results} \label{s:0.1}
Let $X$ be a smooth projective 3-fold. 
A (projective) conifold transition $X \nearrow Y$ is a projective degeneration $\pi: \mathfrak{X} \to \Delta$ of $X$ to a singular variety $\bar X = \mathfrak{X}_0$ with a finite number of ordinary double points (abbreviated as ODPs or nodes) $p_1, \ldots, p_k$, 
locally analytically defined by the equation 
$$x_1^2 + x_2^2 + x_3^2 + x_4^2 = 0,$$
followed by a projective small resolution $\psi: Y \to \bar X$. In the process of complex degeneration from $X$ to $\bar{X}$, $k$ vanishing spheres $S_i \cong S^3$ with trivial normal bundle collapse to nodes $p_i$.
In the process of ``K\"ahler degeneration'' from $Y$ to $\bar{X}$, 
the exceptional loci of $\psi$ above each $p_i$ is a smooth rational curve 
$C_i \cong \mathbb{P}^1$ with $N_{C_i/Y} \cong \mathscr{O}_{\mathbb{P}^1}(-1)^{\oplus 2}$. We write $Y \searrow X$ for the reverse process.

Notice that $\psi$ is a crepant resolution and $\pi$ is a finite distance degeneration with respect to the quasi-Hodge metric \cite{cW0, cW1}. A transition of this type (in all dimensions) is called an extremal transition. In contrast to the usual \emph{birational} $K$-equivalence, an extremal transition may be considered as a \emph{generalized} $K$-equivalence in the sense that the small resolution $\psi$ is crepant and the degeneration $\pi$ preserves sections of the canonical bundle. It is generally expected that simply connected Calabi--Yau 3-folds are connected through extremal transitions, of which conifold transitions are the most fundamental. (This has been extensively checked numerically \cite{cyweb}.)  
It is therefore a natural starting point of investigation.

We study the changes of the so-called $A$ model and $B$ model under a projective conifold transition. In this paper, the $A$ model is the Gromov--Witten (GW) theory of all genera; the $B$ model is the variation of Hodge structures (VHS),
which is in a sense only the genus zero part of the quantum $B$ model. 

In general, the conditions for the existence of projective conifold transitions is an unsolved problem except in the case of Calabi--Yau 3-folds,
for which we have fairly good understanding.
For the inverse conifold transition $Y \searrow X$, a celebrated theorem 
of Friedman \cite{rF} (see also \cite{yK, gT}) states that 
a small contraction $Y \to \bar{X}$ can be smoothed if and only if 
there is a totally nontrivial relation between the exceptional curves. That is, there exist constants $a_i \neq 0$ for all $i=1, \ldots, k$ such that $\sum_{i=1}^k a_i [C_i] =0$.
These are relations among curves $[C_i]$'s in the kernel of $H_2(Y)_{\mathbb{Z}} \to H_2(X)_{\mathbb{Z}}$.
Let $\mu$ be the number of independent relations and let $A \in M_{k \times \mu} (\mathbb{Z})$ be a relation matrix for $C_i$'s,
in the sense that the column vectors span all relations.
Conversely, for a conifold transition $X \nearrow Y$, Smith, Thomas and Yau proved a dual statement in \cite{STY}, asserting that 
the $k$ vanishing 3-spheres $S_i$ must satisfy a totally nontrivial relation $\sum_{i=1}^k b_i [S_i] =0$ 
 in $V_{\mathbb{Z}}:= \ker (H_3(X)_{\mathbb{Z}} \to H_3(\bar{X})_{\mathbb{Z}})$ with $b_i \neq 0$ for all $i$.
Let $\rho$ be the number of independent relations and $B \in M_{k \times \rho} (\mathbb{Z})$ be a relation matrix for $S_i$'s.
It turns out that $\mu + \rho =k$ \cite{hC} and the following exact sequence holds.

\begin{theorem}[= Theorem~\ref{t:bes}] \label{t:0.1}
Under a conifold transition $X \nearrow Y$ of smooth projective threefolds, we have an exact sequence of weight two Hodge structures:
\begin{equation} \label{e:0.1}
0 \to H^2(Y)/H^2(X) \stackrel{B}{\longrightarrow} \mathbb{C}^k \stackrel{A^t}{\longrightarrow} V  \to 0.
\end{equation}
\end{theorem}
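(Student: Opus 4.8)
The plan is to obtain the sequence~\eqref{e:0.1} as the tautological exact sequence attached to a single Mayer--Vietoris connecting homomorphism, comparing $X$ and $Y$ across their common smooth locus, and then to match the two outer maps with the relation matrices $A$ and $B$. First I would fix the decomposition furnished by the transition. Set $U := \bar X \setminus \{p_1,\dots,p_k\}$; since both $\pi$ and $\psi$ are isomorphisms over $U$, one has $U \cong X \setminus \bigsqcup_i S_i \cong Y \setminus \bigsqcup_i C_i$, so that $X = U \cup \bigsqcup_i F_i$ and $Y = U \cup \bigsqcup_i R_i$, glued along the common links $L_i \cong S^2 \times S^3$. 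Here $F_i$ is the Milnor fibre of the node (homotopy equivalent to $S_i \cong S^3$, with $H^2(F_i)=0$) and $R_i$ is a neighbourhood of $C_i$ (homotopy equivalent to $\mathbb{P}^1$, with $H^3(R_i)=0$); the restrictions $H^2(R_i) \to H^2(L_i)$ and $H^3(F_i) \to H^3(L_i)$ are isomorphisms onto $\mathbb{C} \cong H^2(L_i)$ and $\mathbb{C} \cong H^3(L_i)$ respectively. Write $\mathbb{C}^k = \bigoplus_i H^2(L_i)$.

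Next I would run the two Mayer--Vietoris sequences. Because each $C_i$ has real codimension four, $H^2(Y) \cong H^2(U)$; because $H^2(F_i)=0$, the sequence for $X$ gives $H^2(X) \cong \ker(\operatorname{res}\colon H^2(U) \to \mathbb{C}^k)$ and continues as
\[
H^2(U) \xrightarrow{\operatorname{res}} \mathbb{C}^k \xrightarrow{\ \delta\ } H^3(X) \to H^3(U) \oplus H^3(F) \to \cdots.
\]
Thus $H^2(X) \hookrightarrow H^2(Y)$ and $H^2(Y)/H^2(X) \cong \operatorname{im}(\operatorname{res}) = \ker\delta$. On the homology side, $V = \ker(H_3(X) \to H_3(\bar X))$ is the span of the vanishing cycles $[S_i]$, i.e. the image of $H_3(\bigsqcup_i F_i)=\bigoplus_i\mathbb{C}[S_i] \to H_3(X)$; since $H_3(L_i)\to H_3(F_i)$ is an isomorphism, Poincar\'e--Lefschetz duality identifies $\delta$ with the composite $H_3(L) \to H_3(\bigsqcup_i F_i)\to H_3(X)$, whose image is exactly $V$. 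Hence $\delta$ induces the tautological short exact sequence
\[
0 \to \ker\delta \to \mathbb{C}^k \xrightarrow{\ \delta\ } V \to 0,
\]
which is~\eqref{e:0.1} once $\ker\delta$ and $\delta$ are identified with $B$ and $A^t$.

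It then remains to match the outer maps with the prescribed matrices. The quotient $\mathbb{C}^k \to V$ annihilates precisely the relations $\sum_i b_i[S_i]=0$, so $\ker\delta = \operatorname{im} B$ is the column space of $B$; dually, evaluating $\delta$ in the node basis records how the classes of the $C_i$ fail to bound, which I would identify with $A^t$ via the intersection/linking pairing between the $S^2$- and $S^3$-factors on each $L_i \cong S^2 \times S^3$ (equivalently, Poincar\'e--Lefschetz duality on $U$ makes $\operatorname{res}$ on $H^2$ adjoint to the vanishing-cycle map on $H_3$). The resulting compatibility is the orthogonality $A^t B = 0$; combined with $\operatorname{rank} A = \mu$, $\operatorname{rank} B = \rho$ and $\mu+\rho=k$ from \cite{hC}, the inclusion $\operatorname{im} B \subseteq \ker A^t$ becomes an equality by dimension, which is precisely exactness in the middle of~\eqref{e:0.1}.

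Finally I would upgrade~\eqref{e:0.1} to a sequence of weight-two Hodge structures. The middle term $\mathbb{C}^k = \bigoplus_i H^2(L_i)$ is pure Hodge--Tate of type $(1,1)$, and $H^2(Y)/H^2(X)$ is generated by classes of the exceptional curves, hence also of type $(1,1)$; as $\operatorname{res}$ and $\psi^*$ are morphisms of Hodge structures, only the weight on $V$ requires care. Although $V \subseteq H_3(X)$ sits a priori in weight three, the content is that the vanishing cohomology of ordinary double points is of Hodge--Tate type: through the Clemens--Schmid sequence and the limiting mixed Hodge structure of $\pi$, the classes $[S_i]$ span a sub-Hodge structure whose induced structure is pure Tate, so that $\delta$ is a morphism of weight-two Hodge structures. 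I expect this final step---reconciling the weight-three origin of $V$ with the weight-two structure forced by $\delta$---to be the main obstacle, since it relies on the local mixed-Hodge analysis at the nodes rather than on the purely topological bookkeeping of the earlier steps.
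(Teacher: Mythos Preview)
Your argument is correct but follows a genuinely different route from the paper's. The paper does \emph{not} use Mayer--Vietoris on the common open locus $U$; instead it runs the Clemens--Schmid machinery for two auxiliary semistable degenerations (the ``complex'' degeneration $\mathcal{X}\to\Delta$ with central fibre $\tilde Y\cup\coprod_i Q_i$, and the ``K\"ahler'' degeneration $\mathcal{Y}\to\Delta$). The basic exact sequence then emerges from the $E_1$ spectral sequence computing $H^*(\mathcal{X}_0)$: the middle term is realised algebraically as $\bigoplus_i H^2(E_i)/H^2(Q_i)$, the surjection onto $V$ is the cokernel of a restriction map $\bar\delta_2\colon H^2(\tilde Y)\to\bigoplus_i H^2(E_i)/H^2(Q_i)$, and the identification $V\cong\on{cok}(\bar\delta_2)$ is obtained from the invariant-cycle theorem together with the shape of the limiting Hodge diamond. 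To match the outer maps with $A^t$ and $B$, the paper then performs a separate topological construction: for each relation $\sum_i a_i[C_i]=0$ it builds a $3$-chain $\theta$ in $Y$ with $\partial\theta=\sum a_iC_i$, pushes it to $\bar X$, deforms to a cycle $\gamma\in H_3(X)$, and computes $(\gamma.S_i)=a_i$ directly via the surgery identification $\partial(S^3\times D^3)\cong\partial(D^4\times S^2)$.

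Your Mayer--Vietoris approach is considerably more direct for the underlying topological statement: once you observe $H^2(Y)\cong H^2(U)$ (codimension four) and $H^2(X)\cong\ker(\operatorname{res})$, the tautological sequence $0\to\ker\delta\to\mathbb{C}^k\to\on{im}\delta\to 0$ \emph{is} the basic exact sequence, and the identifications $\operatorname{res}(D)=((D.C_i))_i$ and $\delta(e_i)=\on{PD}[S_i]$ yield $B$ and $A^t$ almost for free. What the paper's longer route buys is that the Hodge structure is built in from the start: its middle term is a quotient of cohomology of smooth projective varieties and therefore carries a canonical pure weight-two structure, and the weight on $V$ drops out of the limiting-MHS computation rather than being argued for separately. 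In your version the middle term $\bigoplus_i H^2(L_i)$ has no intrinsic Hodge structure (the link $S^2\times S^3$ is not algebraic), so---exactly as you flag in your final paragraph---you must import the same Clemens--Schmid input to see that $V$ is pure Tate of type $(1,1)$; at that point the two proofs converge.
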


We interpret this as a partial exchange of topological information between the \emph{excess $A$ model} of $Y/X$ (in terms of $H^2(Y)/H^2(X)$)
and the \emph{excess $B$ model} of $X/Y$ in terms of the space of vanishing cycles $V$. 

To study the changes of quantum $A$ and $B$ models under a projective conifold transition of Calabi--Yau 3-folds and its inverse,
the first step is to find a $\mathcal{D}$-module version of Theorem~\ref{t:0.1}.
We state the result below in a suggestive form and leave the precise statement to Theorem~\ref{p:qbes}:

\begin{theorem}[= Theorem~\ref{p:qbes}] \label{t:0.1.5}
Via the exact sequence \eqref{e:0.1}, the trivial logarithmic connection on $(\underline{\mathbb{C}} \oplus \underline{\mathbb{C}}^{\vee})^k \to \mathbb{C}^k$ induces simultaneously the logarithmic part of the Gauss--Manin connection on $V$ and the Dubrovin connection on $H^2(Y)/H^2(X)$.
\end{theorem}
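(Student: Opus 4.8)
The plan is to make the trivial logarithmic connection completely explicit and then match its residues, through the two maps of \eqref{e:0.1}, against those of the Gauss--Manin and Dubrovin connections. Over the parameter space $\mathbb{C}^k$ with coordinates $q = (q_1, \dots, q_k)$---the coordinate $q_i$ vanishing along the locus where the $i$-th node forms---I would equip the trivial rank-$2k$ bundle $(\underline{\mathbb{C}} \oplus \underline{\mathbb{C}}^{\vee})^k$ with $\nabla = d + \sum_{i=1}^k N_i\,\tfrac{dq_i}{q_i}$. Here $N_i$ is the standard rank-one nilpotent of a single conifold, acting only in the $i$-th summand $\mathbb{C}\delta_i \oplus \mathbb{C}\gamma_i$ by $N_i\gamma_i = \delta_i$, $N_i\delta_i = 0$; one reads $\delta_i$ as the vanishing sphere $S_i$ and $\gamma_i$ as the dual K\"ahler direction associated to the exceptional curve $C_i$. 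The pairing identifying $\underline{\mathbb{C}}$ with $\underline{\mathbb{C}}^{\vee}$ in each slot is the geometric incarnation of the $A + B$ exchange, and the connection is manifestly logarithmic along the normal-crossing divisor $\bigcup_i\{q_i = 0\}$ with nilpotent residues $N_i$.

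For the $B$-model half I would invoke Picard--Lefschetz at each node: as $S_i$ collapses, the monodromy on $H^3(X)$ is $\gamma \mapsto \gamma - \langle\gamma, S_i\rangle S_i$, whose logarithm has rank-one nilpotent residue with image $\mathbb{C}S_i$. Since $A^t\colon \mathbb{C}^k \twoheadrightarrow V$ surjects onto the space of vanishing cycles with kernel exactly the relation lattice $\operatorname{im} B$ of the $S_i$---the content of exactness in Theorem~\ref{t:0.1}---and since each residue $N_i$ has image $\mathbb{C}\delta_i$ carried by $A^t$ to the vanishing class $[S_i]$, a direct check identifies the induced residues with the Picard--Lefschetz operators, recovering the logarithmic part of the Gauss--Manin connection on $V$. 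The bookkeeping is supplied by the symplectic pairing on $V$ and by the exact sequence, which guarantee that the several nodes contribute compatibly with the relations among the $S_i$.

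For the $A$-model half the essential input is the local geometry of each $C_i \cong \mathbb{P}^1$ with $N_{C_i/Y} \cong \mathscr{O}(-1)^{\oplus 2}$. Its genus-zero multiple-cover contributions $\sum_{d\ge 1} q_i^d/d^3$ produce a three-point function behaving as $q_i/(1 - q_i)$, so that---after passing to the flat coordinate adapted to the conifold point, where the exceptional curves acquire zero volume---the Dubrovin connection of $Y$ develops a logarithmic singularity with a rank-one nilpotent residue. Using $B$ to realize $H^2(Y)/H^2(X)$ inside the dual factor $(\underline{\mathbb{C}}^{\vee})^k$ as the combinations dictated by the relations among the $S_i$, I would identify this residue with the restriction of $N_i$, thereby recovering the Dubrovin connection on the excess directions from the same trivial connection. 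The duality between the two factors makes the $A^t$-induction and the $B$-induction two halves of a single self-dual package over $\mathbb{C}^k$.

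I expect the $A$-model step to be the principal obstacle. The Dubrovin connection is a priori not logarithmic, so one must isolate its logarithmic part near the conifold locus, exhibit the coordinate change that places the conifold point on the boundary $\{q_i = 0\}$, and verify that the $z$-dependence and the higher-order quantum corrections leave the residue undisturbed. This in turn relies on the hypothesis that $X$ is close to $Y$ in moduli, so that the only shrinking curve classes are the $C_i$ and their local $(-1,-1)$ contributions fully account for the excess quantum product; decoupling these from the inherited quantum cohomology of $Y$, and matching normalizations and signs with the $B$-model residues through \eqref{e:0.1}, is the technical heart of the argument.
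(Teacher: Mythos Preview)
Your proposal is essentially correct and follows the paper's approach: set up the diagonal nilpotent logarithmic connection on $(\underline{\mathbb{C}} \oplus \underline{\mathbb{C}}^{\vee})^k$, and match its residues through the maps $A$ and $B$ of the basic exact sequence with the Picard--Lefschetz residues on the Gauss--Manin side and the multiple-cover poles on the Dubrovin side.

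Two points are worth sharpening. First, the mechanism of ``inducing'' is used by the paper on \emph{both} base and fiber simultaneously. The matrix $B$ embeds the base $\mathbb{C}^\rho \hookrightarrow \mathbb{C}^k$ via $T_l = \sum_i b_{il} e_i$, and the induced connection is obtained by differentiating along $T_l$, applying to the fiber vector $T_m = \sum_i b_{im} e_i$, and then orthogonally projecting the output back via $e^i \mapsto \sum_n b_{in} T^n$. This yields the triple product
\[
\nabla^P_{T_l} T_m \;=\; \frac{1}{z}\sum_{n=1}^{\rho}\sum_{i=1}^k \frac{b_{il}\,b_{im}\,b_{in}}{y_i}\,T^n,
\]
and the entire content of the theorem is that this coincides with the principal part of $C_{lmn}(u)$ under $y_i = v_i := \sum_p b_{ip} u^p$. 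The Gauss--Manin side is literally the same computation with $A$ in place of $B$, matched against the Yukawa asymptotics $u_{pmn} \sim \sum_i a_{ip} a_{im} a_{in}/(2\pi\sqrt{-1}\,w_i)$; this is why the paper treats the two halves as symmetric and omits one.

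Second, your anticipated difficulty on the $A$-model side largely evaporates once the Dubrovin structure constants are written down: $C_{lmn}(u) = (T_l.T_m.T_n) + \sum_i b_{il} b_{im} b_{in}\,\f(e^{v_i})$ with $\f(q) = q/(1-q)$ is already a rational function with simple poles precisely along the hyperplanes $v_i = 0$. No passage to flat coordinates, no analytic continuation, and no argument about higher quantum corrections is required; the cubic term and the holomorphic part of $\f$ are manifestly regular at $v_i = 0$, and the residue is read off directly. The ``closeness of $X$ to $Y$'' hypothesis plays no role in this local statement.
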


Note that the Gauss--Manin connection on $V$ determines the excess $B$ model and Dubrovin connection on $H^2(Y)/H^2(X)$ determines the excess $A$ model in genus zero.
The logarithmic part of the connection determines the residue connection and hence the monodromy. One can interpret Theorem~\ref{t:0.1.5} heuristically as "excess $A$ theory $+$ excess $B$ theory $\sim$ trivial''. In other words, the logarithmic parts of two flat connections on excess theories ``glues'' to form a trivial theory. This gives a strong indication towards a unified $A + B$ theory.

``Globalizing'' this result, i.e., going beyond the excess theories, is the next step towards a true $A + B$ theory, which is still beyond immediate reach. Instead we will settle for results on mutual determination in implicit form. Recall that the Kuranishi spaces $\M_X$, $\M_Y$ of Calabi--Yau manifolds are unobstructed (the Bogomolov--Tian--Todorov theorem). For a Calabi--Yau conifold $\bar X$, the unobstructedness of $\M_{\bar X}$  also holds \cite{yK, gT, N}.

\begin{theorem} \label{t:0.2}
Let $X \nearrow Y$ be a projective conifold transition of Calabi--Yau threefolds such that $[X]$ is a nearby point of $[\bar{X}]$ in $\M_{\bar{X}}$. Then 
\begin{enumerate}
\item[(1)] $A(X)$ is a sub-theory of $A(Y)$. 
\item[(2)] $B(Y)$ is a sub-theory of $B(X)$.
\item[(3)] $A(Y)$ can be reconstructed from a refined $A$ model of $X^{\circ} := X \setminus \bigcup_{i=1}^k S_i$ ``linked'' by the vanishing spheres in $B(X)$.
\item[(4)] $B(X)$ can be reconstructed from a refined $B$ model of $Y^{\circ} := Y \setminus \bigcup_{i=1}^k C_i$ ``linked'' by the exceptional curves in $A(Y)$.
\end{enumerate}
\end{theorem}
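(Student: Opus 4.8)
The plan is to organize the four assertions into two groups: statements (1)--(2) are sub-theory inclusions that follow from the Hodge-theoretic maps already recorded in Theorem~\ref{t:0.1} together with a degeneration analysis of each theory, while (3)--(4) are the reconstruction statements that carry the real content and rest on the $\mathcal{D}$-module refinement of Theorem~\ref{t:0.1.5}.

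For (1) I would begin from the observation that, away from the collapsing loci, $X$, $\bar{X}$ and $Y$ all agree, so the common open part $X^{\circ}\cong Y^{\circ}$ is diffeomorphic to $\bar{X}\setminus\{p_1,\dots,p_k\}$. The exact sequence \eqref{e:0.1} exhibits $H^2(X)$ as a sub-Hodge structure of $H^2(Y)$; dually, every curve class of $Y$ projects to a class of $X$ with kernel spanned by the $[C_i]$. I would then apply the degeneration formula for Gromov--Witten theory to $\pi:\mathfrak{X}\to\Delta$ and to the crepant small resolution $\psi$: for curve classes supported away from the exceptional curves, the relative invariants along the vanishing spheres coincide, which identifies the Gromov--Witten data of $X$ with a sub-theory of that of $Y$. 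Statement (2) is the Hodge-theoretic counterpart: the small resolution $\psi$ leaves $H^3$ unchanged, so $H^3(Y)\cong H^3(\bar{X})$, while the Clemens--Schmid sequence and the local invariant cycle theorem for the nodal degeneration identify $H^3(\bar{X})$ with the monodromy-invariant part of $H^3(X)$, i.e.\ the part orthogonal to the vanishing cycles $V$. This realizes the variation of Hodge structures of $Y$ as a sub-VHS of that of $X$ over the nearby locus in $\M_{\bar{X}}$.

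The heart of the theorem is (3)--(4). For (3) I would interpret the refined $A$-model of $X^{\circ}$ as the Gromov--Witten data shared by $X^{\circ}\cong Y^{\circ}$ together with boundary information along the Lagrangian vanishing spheres $S_i$, and reconstruct $A(Y)$ by adjoining the excess contributions supported on the exceptional curves $C_i$. Each $C_i$ is a rigid $(-1,-1)$ curve whose local all-genus contribution is that of the resolved conifold, and the relation matrix $A$ with $\sum_i a_i[C_i]=0$ governs how these combine; the corresponding excess quantum structure is precisely the Dubrovin connection on $H^2(Y)/H^2(X)$. By Theorem~\ref{t:0.1.5} this connection is induced, via \eqref{e:0.1}, from the trivial logarithmic connection on $(\underline{\mathbb{C}}\oplus\underline{\mathbb{C}}^{\vee})^k$, whose other induced piece is the logarithmic Gauss--Manin connection on $V\subset B(X)$. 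The ``linking by the vanishing spheres in $B(X)$'' is exactly this identification, which pins down the excess parameters and reconstructs $A(Y)$. Statement (4) is the formal dual, obtained by reading Theorem~\ref{t:0.1.5} in the reverse direction: the refined $B$-model of $Y^{\circ}$ is the VHS shared with $X^{\circ}$, and the excess $B$-data $V$ is recovered from the exceptional-curve $A$-data of $Y$ through the same exact sequence and $\mathcal{D}$-module.

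The step I expect to be the main obstacle is making ``refined model'' and ``linking'' precise enough that Theorem~\ref{t:0.1.5}, which lives at the level of flat logarithmic connections and hence only sees genus-zero and residue data, actually determines the \emph{full} theories rather than their connection-level shadows. Concretely, for (3) one must control the higher-genus contributions of the exceptional curves and their coupling to the bulk of $Y$, and for (4) one must guarantee convergence of the period integrals as $[X]\to[\bar{X}]$. Here the finite-distance property of $\pi$ in the quasi-Hodge metric \cite{cW0,cW1} and the hypothesis that $[X]$ is a nearby point of $[\bar{X}]$ in $\M_{\bar{X}}$ are indispensable, and they are the reason the reconstruction can presently be established only in the implicit ``mutual determination'' form anticipated above.
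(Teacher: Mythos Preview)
Your treatment of (1) and (2) is broadly in the right direction, though for (2) the paper does more than invoke Clemens--Schmid: it proves an extended nilpotent orbit theorem (Theorem~\ref{p:gnot}) for the non-SNC discriminant $\mathfrak{D}\subset\M_{\bar X}$ and then reads off $B(Y)$ as the monodromy-invariant sub-system at $r=0$ (Corollary~\ref{c:sub-sys}).

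The real gap is in (3) and (4), and you have correctly located it yourself: Theorem~\ref{t:0.1.5} only sees the \emph{logarithmic} part of the connections, hence only residue/monodromy data. You propose to use it to ``pin down the excess parameters'' and thereby reconstruct $A(Y)$ and $B(X)$, but this cannot work as stated, and the paper does \emph{not} proceed this way. For (3), the paper introduces a purely topological device, the \emph{linking data}: one observes $H_2(X^\circ)\cong H_2(Y)$ (Definition~\ref{d:2} and \eqref{e:linking}), so a curve class $\gamma\in H_2(Y)$ is literally a refinement of $\beta\in H_2(X)$. The key step (Proposition~\ref{p:4}) is a degeneration-theoretic argument showing that for $t$ small the virtual class $[\overline{M}(X_t,\beta)]^{\mathrm{virt}}$ decomposes as a \emph{disjoint union} of cycles indexed by the linking data $\gamma$, and each piece equals $[\overline{M}(Y,\gamma)]^{\mathrm{virt}}$ (Corollary~\ref{c:5.6}). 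This works in all genera at once and does not pass through the Dubrovin connection at all; no ``adjoining of excess contributions'' or coupling analysis is needed.

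For (4), the paper again avoids Theorem~\ref{t:0.1.5}. Instead it establishes a short exact sequence of MHS, $0\to V\to H^3(X)\to H^3(U)\to 0$ with $U=Y^\circ$ (Proposition~\ref{p:lift}), giving canonical isomorphisms $F^3 H^3(X)\cong F^3 H^3(U)$ and $F^2 H^3(X)\cong F^2 H^3(U)$. Since $\operatorname{Def}(\bar X)\cong H^1(U,T_U)$, there is a smooth family of $U$'s over the first infinitesimal neighborhood $\mathcal{Z}_1$ of $\M_Y\subset\M_{\bar X}$; its Gauss--Manin system on $H^3(U)$ lifts uniquely to a partial filtration $\tilde F^3\subset\tilde F^2$ on $H^3(X)$, and $\tilde F^1=(\tilde F^3)^\perp$ completes it. This determines the horizontal slice $\psi:\M_{\bar X}\to\check{\mathbb{D}}$, and the period map $\phi$ is then recovered via Theorem~\ref{p:gnot} with the nilpotents $N^{(i)}$ supplied by Picard--Lefschetz and the relation matrix $A$. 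The ``linking by exceptional curves'' thus enters through $A$, not through Theorem~\ref{t:0.1.5}. Your proposed route via the $\mathcal{D}$-module of Theorem~\ref{t:0.1.5} would at best recover the monodromy block on $V^*\oplus V$, which is far from the full VHS on $H^3(X)$.
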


The meaning of these slightly obscure statements will take the entire paper to spell them out. It may be considered as a \emph{categorification} of Clemens' identity $\mu + \rho = k$. Here we give only brief explanations.

(1) is mostly due to Li--Ruan, who in \cite{LR} pioneered the mathematical study of conifold transitions in GW theory.
The proof follows from degeneration arguments and existence of flops (cf.~Proposition~\ref{p:1}).

For (2), we note that there are natural identifications of $\M_Y$ with
the boundary of $\M_{\bar{X}}$ consisting of equisingular deformations,
and $\M_X$ with $\M_{\bar{X}} \setminus \mathfrak{D}$ where the discriminant locus $\mathfrak{D}$ is a \emph{central hyperplane arrangement} with axis $\M_Y$ (cf.~\S\ref{s:4.4}).
Therefore, the VHS associated to $Y$ can be considered as a sub-VHS system
of VMHS associated to $\bar{X}$ (cf.~Corollary \ref{c:sub-sys}), 
which is a regular singular extension of the VHS associated to $X$.

With (3), we introduce the ``linking data'' of the holomorphic
curves in $X^{\circ}$, which not only records the curve classes in $X$ but also
how the curve links with the vanishing spheres $\bigcup_i S_i$.
The linking data on $X$ can be identified with the curve classes in $Y$ by 
$H_2(X^{\circ}) \cong H_2(Y)$ (cf.~Definition~\ref{d:2} and \eqref{e:linking}).
We then proceed to show, by the degeneration argument, that 
the virtual class of moduli spaces of stable maps to $X^\circ$ is naturally a disjoint
union of pieces labeled by elements of the linking data (cf.~Proposition \ref{p:4}).
Furthermore, the Gromov--Witten invariants 
in $Y$ is the same as the numbers produced by the component of the virtual class on $X$ labeled by the corresponding linking data.
Thus, the refined $A$ model is really the ``linked $A$ model'' and is equivalent to the (usual) $A$ model of $Y$ (for non-extremal curves classes) in all genera.
The vanishing cycles from $B(X)$ plays a key role in reconstructing $A(Y)$.

For (4), the goal is to reconstruct VHS on $\M_X$ from VHS on $\M_Y$ and $A(Y)$.
The deformation of $\bar{X}$ is unobstructed. Moreover it is well known that
$\on{Def}(\bar{X}) \cong H^1 (Y^{\circ}, T_{Y^{\circ}})$.
Even though the deformation of $Y^{\circ}$ is obstructed (in the direction transversal to $\M_Y$),
there is a first order deformation parameterized by $H^1 (Y^{\circ}, T_{Y^{\circ}})$
which gives enough initial condition to uniquely determine the degeneration of
Hodge bundles on $\M_{\bar{X}}$ near $\M_Y$. A technical result needed in this process is a short exact sequence 
$$0 \to V \to H^3(X) \to H^3(Y^\circ) \to 0$$
which connects the limiting mixed Hodge structure (MHS) of Schmid on $H^3(X)$ and the canonical MHS of Deligne on $H^3(Y^\circ)$ (cf.~Proposition \ref{p:lift}). 
Together with the monodromy data associated to the ODPs, which is encoded in the relation matrix $A$ of the extremal rays on $Y$, we will be able to determine the VHS on $\M_X$ near $\M_Y$. 
In the process, an extension of Schmid's nilpotent orbit theorem \cite{Schmid} to degenerations with certain non-normal crossing discriminant loci is also needed.
See Theorem~\ref{p:gnot} for details.

\subsection{Motivation and future plans}

Our work is inspired by the famous Reid's fantasy \cite{mR}, where conifold transitions play a key role in
connecting irreducible components of moduli of Calabi--Yau threefolds.
Theorems~\ref{t:0.1.5} and \ref{t:0.2} above can be interpreted as the partial exchange
of $A$ and $B$ models under a conifold transition.
We hope to answer the following intriguing question concerning with ``global symmetries'' on moduli spaces of Calabi--Yau 3-folds in the future:
\emph{Would this partial exchange of $A$ and $B$ models lead to 
``full exchange'' when one connects a Calabi--Yau threefold to its mirror via a finite steps of extremal transitions? If so, what is the relation between this full exchange and the one induced by ``mirror symmetry''?} To this end, we need to devise a computationally effective way to achieve 
explicit determination of this partial exchange.
One missing piece of ingredients in this direction is a blowup formula 
in the Gromov--Witten theory for conifolds, 
which we are working on and have had some partial success \cite{LLW3}. (For smooth blowups with complete intersection centers, we have a fairly good solution in genus zero.)

More speculatively, the mutual determination of $A$ and $B$ models 
on $X$ and $Y$ leads us to surmise the possibility of a unified 
``$A+B$ model'' which will be \emph{invariant} under any extremal transition.
For example, the string theory predicts that Calabi--Yau threefolds form 
an important ingredient of our universe,
but it does not specify which Calabi--Yau threefold we live in.
Should the $A+B$ model be available and proven invariant under extremal transitions,
one would then have no need to make such a choice.

The first step of achieving this goal is to generalize Theorem~\ref{t:0.1.5} to the full local theory, including the non-log part of the connections.
We note that the excess $A$ model on $H^2(Y/X)$ can be extended to the (flat) Dubrovin connection on $Y$
while the excess $B$ model on $H^3(X/Y)$ can be extended to the (flat) Gauss--Manin connection on $X$. We hope to be able to ``glue'' the complete $A$ model on $Y$ and the complete $B$ model on $X$ as flat connections on the unified K\"ahler plus complex moduli.

\subsection{Acknowledgements}

We are grateful to C.H.\ Clemens, C.-C.\ M.\ Liu, M.\ Rossi, I.\ Smith and R.\ Thomas for discussions related to this project, and to the referee for his/her valuable suggestions. 

Y.-P.~Lee's research is partially funded by the National Science Foundation. H.-W.~Lin and C.-L. Wang are both supported by the Ministry of Science and Technology, Taiwan. 
We are also grateful to Taida Institute of Mathematical Sciences (TIMS) for its generous and constant support which makes this long term collaboration possible.

The vast literature on extremal transitions makes it impossible to cite all related articles in this area, and
we apologize for omission of many important papers which are not directly related to our approach.

\section{The basic exact sequence from Hodge theory} \label{s:2}
In this section, we recall some standard results on the geometry of projective conifold transitions.
Definitions and short proofs are mostly spelled out to fix the notations, even when they are well known.
Combined with well-known tools in Hodge theory, we derive the \emph{basic exact sequence}, 
which is surprisingly absent in the vast literature on the conifold transitions.

\begin{convention}
In \S 1-2, all discussions are for projective conifold transitions 
\emph{without the Calabi--Yau condition}, unless otherwise specified.
The Calabi--Yau condition is imposed in \S 3-5. 
Unless otherwise specified, cohomology groups are over $\mathbb{Q}$ when only topological aspect (including weight filtration) is concerned;
they are considered over $\mathbb{C}$ when the (mixed) Hodge-theoretic aspect is involved. 
All equalities, whenever make sense in the context of mixed Hodge structure (MHS), hold as equalities for MHS.
\end{convention}

\subsection{Preliminaries on conifold transitions} 

The results here are mostly contained in \cite{hC} and are included here for readers' convenience.

\subsubsection{Local geometry} \label{s:1.2}

Let $X$ be a smooth projective 3-fold and $X \nearrow Y$ a \emph{projective conifold transition} through $\bar X$ with nodes $p_1, \ldots, p_k$ as in \S \ref{s:0.1}. Locally analytically, a node (ODP) is defined by the equation
\begin{equation} \label{e:A1}
  x_1^2 + x_2^2 + x_3^2 + x_4^2 = 0,
\end{equation}
or equivalently $uv - ws =0$. The small resolution $\psi$ can be achieved by blowing up the Weil divisor
defined by $u=w=0$ or by $u = s = 0$, these two choices differ by a flop.

\begin{lemma} \label{l:1}
The exceptional locus of $\psi$ above each $p_i$ is a smooth rational curve 
$C_i$ with $N_{C_i/Y} \cong \mathscr{O}_{\mathbb{P}^1}(-1)^{\oplus 2}$. Topologically, $N_{C_i/Y}$ is a trivial rank $4$ real bundle.
\end{lemma}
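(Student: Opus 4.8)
The plan is to reduce everything to the explicit local model, since both the holomorphic statement and the topological statement are local around each node $p_i$. First I would write the small resolution down explicitly. Near $p_i$ the singularity is the quadric cone $Q = \{uv - ws = 0\} \subset \mathbb{C}^4$, and $\psi$ is the blow-up of the Weil divisor $\{u = w = 0\}$. I realize this as the proper transform
$$\widetilde{Q} = \{((u,v,w,s),[a:b]) \in Q \times \mathbb{P}^1 : ub = wa\},$$
and split it into the two affine charts $\{a \neq 0\}$ and $\{b \neq 0\}$. On $\{b \neq 0\}$ set $b = 1$, $a = \lambda$; the relations become $u = \lambda w$, $s = \lambda v$, so this chart is $\mathbb{C}^3_{(\lambda, w, v)}$ mapping to $Q$ by $(u,v,w,s) = (\lambda w, v, w, \lambda v)$. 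On $\{a \neq 0\}$ set $a = 1$, $b = \mu$; here $w = \mu u$, $v = \mu s$, giving $\mathbb{C}^3_{(\mu, u, s)}$, and the two charts glue along $\mu = 1/\lambda$. In either chart the fiber $\psi^{-1}(p_i)$ over the origin is the $\lambda$-axis (resp. $\mu$-axis), so $C_i$ is the smooth rational curve $\mathbb{P}^1$ with homogeneous coordinate $[a:b]$; this establishes the first clause and the smoothness of $\widetilde{Q}$.

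Next I would read off $N_{C_i/Y}$ from these charts. The curve $C_i$ is cut out by the transverse coordinates $(w, v)$ on the first chart and by $(u, s)$ on the second, and the gluing $u = \lambda w$, $s = \lambda v$ shows that \emph{both} transverse coordinates get multiplied by $\lambda$ when passing from the $\{b \neq 0\}$-chart to the $\{a \neq 0\}$-chart. Comparing with the transition cocycle of the tautological bundle on $\mathbb{P}^1 = \{[\lambda:\mu]\}$, each summand is $\mathscr{O}_{\mathbb{P}^1}(-1)$, whence $N_{C_i/Y} \cong \mathscr{O}_{\mathbb{P}^1}(-1)^{\oplus 2}$. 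The only thing to watch here is the orientation of the convention, so as to land on $\mathscr{O}(-1)$ rather than $\mathscr{O}(1)$; this is pinned down by the contractibility of $C_i$ under $\psi$, which forces $\deg N_{C_i/Y} = -2 < 0$.

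Finally, for the topological claim I would deliberately \emph{not} argue through the complex structure, since as a complex bundle $\mathscr{O}(-1)^{\oplus 2}$ is nontrivial ($c_1 = -2$). Instead I view it as an oriented real rank-$4$ bundle over $S^2$. Such bundles are classified by $\pi_1(SO(4)) = \mathbb{Z}/2$, the class being detected by $w_2$; and $w_2(\mathscr{O}(-1)^{\oplus 2}) \equiv c_1 \equiv 0 \pmod 2$, so the bundle is topologically trivial. Equivalently, conjugating one factor gives a real isomorphism with $\mathscr{O}(1) \oplus \mathscr{O}(-1)$, which has $c_1 = 0$ and is therefore already trivial as a complex rank-$2$ bundle. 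The main conceptual point, and the only place where one can slip, is precisely this distinction between holomorphic and merely topological triviality: the nonzero $c_1$ must not be mistaken for an obstruction, because over $S^2$ in real rank $\geq 3$ the complete invariant is the mod-$2$ reduction, which vanishes here.
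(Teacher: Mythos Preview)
Your proof is correct and follows essentially the same approach as the paper: both compute the normal bundle from the explicit local blow-up of the Weil divisor $\{u=w=0\}$ in the quadric cone, and both deduce topological triviality from the vanishing of the relevant Stiefel--Whitney class. Your version is in fact more explicit on the chart computation and more careful on the topological step, correctly invoking the classification of oriented rank-$4$ bundles over $S^2$ by $\pi_1(SO(4))\cong\mathbb{Z}/2$ (detected by $w_2$), whereas the paper simply asserts that all $w_k$ vanish.
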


\begin{proof}
Away from the isolated singular points $p_i$'s, the Weil divisors
are Cartier and the blowups do nothing.
Locally near $p_i$, the Weil divisor is generated by two functions $u$ and $w$.
The blowup $Y \subset \mathbb{A}^4 \times \mathbb{P}^1$ is defined by $z_0 v - z_1 s =0$,
in addition to $uv-ws=0$ defining $X$, 
where $(z_0: z_1)$ are the coordinates of $\mathbb{P}^1$. Namely we have ${u}/{w} = {s}/{v} = {z_0}/{z_1}$. It is now easy to see the exceptional locus near $p_i$ is isomorphic to $\mathbb{P}^1$
and the normal bundle is as described (by the definition of $\mathscr{O}_{\Bbb P^1}(-1)$). Since oriented $\Bbb R^4$-bundles on $\Bbb P^1 \cong S^2$ are classified by the second Stiefel--Whitney class $w_2$ (via $\pi_1({\rm SO}(4)) \cong \Bbb Z/2$), the last assertion follows immediately.
\end{proof}

Locally to each node $p = p_i \in \bar X$, 
the transition $X \nearrow Y$ can be considered as two different ways 
of ``smoothing'' the singularities in $\bar{X}$:
deformation leads to $X_t$ and small resolution leads to $Y$.
Topologically, we have seen that the exceptional loci of $\psi$ are 
$\coprod_{i=1}^k C_i$, a disjoint union of $k$ 2-spheres.
For the deformation, the classical results of Picard, Lefschetz and Milnor
state that there are $k$ vanishing 3-spheres $S_i \cong S^3$.

\begin{lemma} \label{l:2}
The normal bundle $N_{S_i/X_t} \cong T^*_{S_i}$ is a trivial rank $3$ real bundle.
\end{lemma}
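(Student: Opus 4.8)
The plan is to reduce the statement to the standard local model of an ODP smoothing and then to exploit the fact that $S^3$ is a Lie group. First I would observe that each vanishing sphere $S_i$ lies inside a small analytic ball $B_i$ around the node $p_i$ on which the smoothing $X_t$ is biholomorphic to the affine quadric $Q_t = \{x_1^2 + x_2^2 + x_3^2 + x_4^2 = t\} \subset \mathbb{C}^4$ for small $t \neq 0$, with $S_i$ the vanishing cycle. Since $S_i$ is compact and contained in $B_i$, its normal bundle in $X_t$ is computed entirely inside this local model, so it suffices to analyze $Q_t$ and its vanishing sphere for one fixed small real $t > 0$.

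Next I would invoke the classical description of the smooth affine quadric as the Milnor fibre of the $A_1$ singularity. Writing $x_j = a_j + \sqrt{-1}\,b_j$ with $a,b \in \mathbb{R}^4$, the equation $\sum_j x_j^2 = t$ is equivalent to $|a|^2 - |b|^2 = t$ together with $\langle a, b\rangle = 0$; since $|a|^2 = t + |b|^2 > 0$, the assignment $(a,b) \mapsto (a/|a|,\ |a|\,b)$ is a diffeomorphism of $Q_t$ onto $TS^3 = \{(u,v) : |u| = 1,\ \langle u,v\rangle = 0\}$ carrying the vanishing sphere $S_i = \{b = 0\}$ to the zero section. Intrinsically, equipping $Q_t$ with the exact symplectic structure restricted from $\mathbb{C}^4$, the sphere $S_i$ is a Lagrangian $S^3$ and $Q_t$ is symplectomorphic to $T^*S^3$ with $S_i$ as the zero section. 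In either description the normal bundle of a zero section is canonically the ambient fibre bundle, which yields $N_{S_i/X_t} \cong T^*_{S_i}$ (the cotangent description being the natural one from the Lagrangian picture via Weinstein's neighbourhood theorem; the explicit real form produces the isomorphic $TS^3$, the distinction being only that of a round metric).

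Finally I would conclude triviality from parallelizability: $S^3 \cong SU(2)$ is a Lie group, so $TS^3$, and hence the isomorphic bundle $T^*_{S_i} \cong N_{S_i/X_t}$, is a trivial rank $3$ real bundle. I expect the only point requiring genuine care to be the clean identification $N_{S_i/X_t} \cong T^*_{S_i}$ rather than merely a rank count: one must record that the \emph{cotangent} bundle (not the tangent) is the canonically correct target, which is precisely the content of $S_i$ being Lagrangian, and this is pinned down either by Weinstein's theorem or by fixing the round metric on $S^3$. The remaining ingredients, namely the local reduction to $Q_t$ and the parallelizability of $S^3$, are routine.
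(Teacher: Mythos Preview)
Your proof is correct and follows essentially the same route as the paper: reduce to the local affine quadric, split into real and imaginary parts to identify $Q_t$ with $T^*S^3$ (or $TS^3$) carrying the vanishing sphere to the zero section, and then invoke $S^3 \cong SU(2)$ for triviality. The only differences are cosmetic---the paper first performs a degree-two base change and a phase rotation to make $t$ real, and does not invoke Weinstein's theorem---but the core computation and the Lie-group argument are identical.
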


\begin{proof}
From \eqref{e:A1}, after a degree two base change the local equation of the family near
an ODP is 
$$\sum\nolimits_{j = 1}^4 x_j^2 = t^2 = |t|^2 e^{2\sqrt{-1}\theta}. $$
Let $y_j = e^{\sqrt{-1} \theta} x_j$ for $j = 1, \ldots, 4$, the equation leads to
\begin{equation} \label{e:1}
 \sum\nolimits_{j = 1}^4 y_j^2 = |t|^2.
\end{equation}
Write $y_j$ in terms of real coordinates $y_j = a_j + \sqrt{-1} b_j$,
we have $|\vec{a}|^2 = |t|^2 + |\vec{b}|^2$ \text{and} $\vec{a} \cdot \vec{b} =0$,
where $\vec{a}$ and $\vec{b}$ are two vectors in $\mathbb{R}^4$.
The set of solutions can be identified with 
$T^* S_r$ with the bundle structure $T^* S_r \to S_r$ defined by $(\vec{a}, \vec{b}) \mapsto  r {\vec{a}}/{|\vec{a}|} \in S_r$
where $S_r$ is the 3-sphere with radius $r =|t|$. The vanishing sphere can be chosen to be the real locus of the equation of \eqref{e:1}. Therefore, $N_{S_r/X_t}$ is naturally identified with the cotangent bundle $T^*{S_r}$, which is a trivial bundle since $S^3 \cong SU(2)$ is a Lie group. 
\end{proof}

\begin{remark} \label{r:1}
The vanishing spheres above are Lagrangian with respect to the natural symplectic structure on $T^* S^3$. A theorem of Seidel and Donaldson \cite{skD} states that this is true globally, namely the vanishing spheres can be chosen to be Lagrangian with respect to the symplectic structure coming from the K\"ahler structure of $X_t$.
\end{remark}

By Lemma~\ref{l:2}, the $\delta$ neighborhood of the vanishing 3-sphere 
$S_r^3$ in $X_t$ is diffeomorphic to the trivial disc bundle $S^3_r \times D^3_\delta$.

By Lemma~\ref{l:1} the $r$ neighborhood of the exceptional 2-sphere $C_i= S^2_{\delta}$ 
is $D^4_r \times S^2_{\delta}$, where $\delta$ is the radius defined by 
$4\pi \delta^2 = \int_{C_i} \omega$ for the background K\"ahler metric $\omega$.

\begin{corollary} \cite[Lemma 1.11]{hC} \label{c:surgery}
On the topological level one can go between $Y$ and $X_t$ by surgery via
$$\p (S^3_r \times D_\delta^3) = S^3_r \times S^2_\delta = \p(D^4_r \times S^2_\delta).$$
\end{corollary}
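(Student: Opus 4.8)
The plan is to realize both $X_t$ and $Y$ as the result of capping off one common manifold-with-boundary along the links of the nodes, so that the two spaces differ only in the choice of caps. First I would excise from $\bar{X}$ a small conical neighborhood of each node $p_i$; writing $\bar{X}^{\circ}$ for what remains, its boundary is the disjoint union of the links $L_i$ of the ODPs, and the local equation $x_1^2+\cdots+x_4^2=0$ identifies each link as the unit tangent bundle of $S^3$, so that $L_i\cong S^3_r\times S^2_\delta$.

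Next I would identify $\bar{X}^{\circ}$ as a common core of both spaces. Since $\psi\colon Y\to\bar{X}$ is an isomorphism away from the nodes, Lemma~\ref{l:1} gives a homeomorphism $Y\setminus\bigcup_i\mathring{(D^4_r\times S^2_\delta)}\cong\bar{X}^{\circ}$, the excised pieces being the disc-bundle neighborhoods of the $C_i$. For $X_t$, I would apply Ehresmann's fibration theorem to $\pi\colon\mathfrak{X}\to\Delta$ over the locus where it is a submersion, i.e. away from the $p_i$: this makes the family topologically locally trivial there and yields $X_t\setminus\bigcup_i\mathring{(S^3_r\times D^3_\delta)}\cong\bar{X}^{\circ}$, where by Lemma~\ref{l:2} the excised pieces are the disc-bundle neighborhoods of the vanishing spheres $S_i$.

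With both identifications in hand, the two families of caps are glued to the same $\bar{X}^{\circ}$ along the same boundary, and the boundary identity
\[
\partial(S^3_r\times D^3_\delta)=S^3_r\times S^2_\delta=\partial(D^4_r\times S^2_\delta)
\]
is precisely the matching condition needed for surgery along the framed $3$-spheres $S_i$, the framings being the trivializations of Lemma~\ref{l:2}. Passing from $X_t$ to $Y$ then removes each $S^3_r\times\mathring{D}^3_\delta$ and glues back $D^4_r\times S^2_\delta$, which is the asserted surgery; reading the construction in reverse exchanges the roles of $S_i$ and $C_i$.

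I expect the one genuinely delicate point to be the compatibility of the gluing maps on the common link, so that the surgery produces $Y$ on the nose rather than a different manifold obtained from another framing: the outcome of a surgery depends on the chosen trivialization of the normal bundle. Here this is controlled by the explicit holomorphic local model of the node, which pins down the identification of $L_i$ compatibly for both the smoothing and the small resolution; checking that these two identifications agree, up to a diffeomorphism of $L_i$ extending over the caps, is the crux, while the remaining steps are routine once $\bar{X}^{\circ}$ is recognized as the shared core.
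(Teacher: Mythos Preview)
Your proposal is correct and follows essentially the same approach as the paper, which treats the corollary as immediate from Lemmas~\ref{l:1} and~\ref{l:2} without writing out a separate proof; your version simply makes explicit the common core $\bar X^\circ$ and invokes Ehresmann to justify the topological trivialization away from the nodes. The framing subtlety you flag is exactly what the paper addresses in the remark immediately following the corollary: the two choices of framing (equivalently, orientation on $S^3$) correspond to the two small resolutions differing by an Atiyah flop, so your caution is well placed and already accounted for in the paper's discussion.
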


\begin{remark}[Orientations on $S^3$] \label{orient}
The two choices of orientations on $S^3_r$ induces two different surgeries. The resulting manifolds $Y$ and $Y'$ are in general not even homotopically equivalent. In the complex analytic setting the induced map $Y \dasharrow Y'$ is known as an ordinary (Atiyah) flop. 
\end{remark}

\subsubsection{Global topology}

\begin{lemma} \label{l:3}
Define 
$$\mu := \tfrac{1}{2} (h^3(X) - h^3(Y)) \quad \text{and} \quad \rho := h^2(Y) - h^2(X).$$ 
Then,
\begin{equation} \label{e:3}
  \mu + \rho = k.
\end{equation}
\end{lemma}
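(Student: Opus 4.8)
The plan is to realize $X$ and $Y$ as two different ``cappings'' of one common bulk and to read off $h^2$ and $h^3$ from two Mayer--Vietoris sequences. Let $\bar{W} := \bar{X} \setminus \bigsqcup_i U_i$, where $U_i$ is a small open cone neighborhood of the node $p_i$; this is a compact oriented $6$-manifold with boundary $L := \coprod_{i=1}^k L_i$, each link $L_i \cong S^2 \times S^3$. By Lemmas~\ref{l:1}, \ref{l:2} and Corollary~\ref{c:surgery}, up to diffeomorphism $X = \bar{W} \cup \bigsqcup_i \mathcal N_{S_i}$ and $Y = \bar{W} \cup \bigsqcup_i \mathcal N_{C_i}$, glued along $L$, where $\mathcal N_{S_i} \cong S^3 \times D^3 \simeq S^3$ and $\mathcal N_{C_i} \cong D^4 \times S^2 \simeq \mathbb{P}^1$. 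Using $\psi$ away from the nodes together with the triviality of the degeneration away from the vanishing cycles, the bulk $\bar{W}$ and its restriction map to $L$ are common to both $X$ and $Y$; this is what will let the terms coming from $\bar{W}$ cancel.

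First I would record that $H^{\ast}(L_i;\mathbb{Q})$ has Betti numbers $(1,0,1,1,0,1)$ by K\"unneth, and note the two cap restrictions: $H^3(\mathcal N_{S_i}) = H^3(S^3) \hookrightarrow H^3(L_i)$ hits the $S^3$-class, while $H^2(\mathcal N_{C_i}) = H^2(\mathbb{P}^1) \hookrightarrow H^2(L_i)$ hits the $S^2$-class, so that $H^3(\bigsqcup_i S^3) \to H^3(L)$ and $H^2(\bigsqcup_i \mathbb{P}^1) \to H^2(L)$ are isomorphisms. Writing $\beta^n := \dim H^n(\bar{W})$ and $\phi_n := \operatorname{rank}\bigl(H^n(\bar{W}) \to H^n(L)\bigr)$, a routine chase of the two Mayer--Vietoris sequences (both begin in degree $2$ since $H^1(L) = 0$) yields
\begin{align*}
 h^2(X) &= \beta^2 - \phi_2, & h^3(X) &= \beta^3 + k - \phi_2, \\
 h^2(Y) &= \beta^2, & h^3(Y) &= \beta^3 - \phi_3,
\end{align*}
whence $\rho = h^2(Y) - h^2(X) = \phi_2$ and $\mu = \tfrac12\bigl(h^3(X) - h^3(Y)\bigr) = \tfrac12(k - \phi_2 + \phi_3)$, so $\mu + \rho = \tfrac12(k + \phi_2 + \phi_3)$.

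It then remains to prove the single identity $\phi_2 + \phi_3 = k$, after which $\mu + \rho = \tfrac12(k + k) = k$. This is an instance of ``half lives, half dies'': for the compact oriented $6$-manifold $\bar{W}$ with boundary $L$, Lefschetz duality for $(\bar{W}, L)$ together with Poincar\'e duality on $L$ makes the restriction $H^n(\bar{W}) \to H^n(L)$ adjoint to the connecting map $H^{5-n}(L) \to H^{6-n}(\bar{W}, L)$ of the pair. Comparing ranks along the long exact sequence of $(\bar{W}, L)$ then gives
\[
 \operatorname{rank}\bigl(H^n(\bar{W}) \to H^n(L)\bigr) + \operatorname{rank}\bigl(H^{5-n}(\bar{W}) \to H^{5-n}(L)\bigr) = \dim H^n(L),
\]
and taking $n = 2$, with $\dim H^2(L) = k$, produces exactly $\phi_2 + \phi_3 = k$.

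The main obstacle is therefore not the Mayer--Vietoris bookkeeping but verifying this duality step cleanly: one must correctly identify the two half-dimensional pieces of $H^{\ast}(L)$ cut out by the two cappings (the $S^2$-classes for $Y$, the $S^3$-classes for $X$) and check the adjointness of the restriction and connecting maps under the duality pairings. As an independent cross-check I would compute Euler characteristics: additivity of the compactly supported $\chi$ gives $\chi(Y) - \chi(X) = 2k$, since passing from $\bar{X}$ to $Y$ replaces each node by a $\mathbb{P}^1$ ($\chi = 2$) and from $\bar{X}$ to $X$ by a Milnor fiber $\simeq S^3$ ($\chi = 0$); expanding $\chi = 2 - 2b_1 + 2b_2 - b_3$ via Poincar\'e duality on the closed $6$-folds, and using $b_1(X) = b_1(Y)$ (the fundamental group is unchanged, as $S_i$ and $C_i$ have real codimension $\ge 3$), recovers $2(\rho + \mu) = 2k$.
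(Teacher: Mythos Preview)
Your proof is correct, and in fact your closing ``independent cross-check'' is essentially the paper's entire argument: compare Euler characteristics via the surgery description (removing $k$ copies of $S^3$ from $X$ versus $k$ copies of $S^2$ from $Y$ yields the same space), expand $\chi = 2 - 2h^1 + 2h^2 - h^3$ by Poincar\'e duality, and cancel $h^1$ using $\pi_1$-invariance. That three-line computation is all the paper does.

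Your main argument takes a genuinely different and more structural route. Rather than collapsing everything to $\chi$, you run the two Mayer--Vietoris sequences for the common bulk $\bar W$ and extract the individual Betti numbers in terms of $\beta^n$ and the restriction ranks $\phi_n$. The key extra input is the ``half lives, half dies'' identity $\phi_2 + \phi_3 = \dim H^2(L) = k$ for the $6$-manifold with boundary $(\bar W, L)$. What this buys you is finer information than the paper's proof: you obtain $\rho = \phi_2$ and $2\mu = k - \phi_2 + \phi_3$ separately, which already hints at the Hodge-theoretic interpretations the paper develops later (e.g.\ $\rho = \dim\operatorname{im}\vp$ in Corollary~\ref{c:1}). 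The cost is that you must justify the duality step carefully, whereas the Euler-characteristic argument needs nothing beyond additivity.
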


\begin{proof}
The Euler numbers satisfy 
$$\chi(X) - k\chi(S^3) = \chi(Y) - k \chi(S^2).$$
That is, 
$$2 - 2h^1(X) + 2h^2(X) - h^3(X) = 2 - 2h^1(Y) + 2h^2(Y) - h^3(Y) - 2k.$$ 
By the above surgery argument we know that 
conifold transitions preserve $\pi_1$.
Therefore, $\tfrac{1}{2} (h^3(X) - h^3(Y)) + (h^2(Y) - h^2(X)) = k$.
\end{proof}

\begin{remark}
In the Calabi-Yau case, $\mu = h^{2, 1}(X) - h^{2, 1}(Y) = -\Delta h^{2, 1}$ is the lose of complex moduli, and $\rho = h^{1, 1}(Y) - h^{1, 1}(X) = \Delta h^{1, 1}$ is the gain of K\"ahler moduli. Thus \eqref{e:3} is really 
$$
\Delta (h^{1, 1} - h^{2, 1}) = k = \tfrac{1}{2}\Delta \chi.
$$ 
\end{remark}

In the following, we study the \emph{Hodge-theoretic meaning} of
\eqref{e:3}.

\subsection{Two semistable degenerations} \label{s:2.1}
To apply Hodge-theoretic methods on degenerations,
we factor the transition $X \nearrow Y$ as a composition of two 
semistable degenerations 
$\mathcal{X} \to \Delta$ and $\mathcal{Y} \to \Delta$. 

The \emph{complex degeneration} 
$$
f: \mathcal{X} \to \Delta
$$
is the semistable reduction of $\mathfrak{X} \to \Delta$ obtained by a 
degree two base change $\mathfrak{X}' \to \Delta$ followed by the blow-up 
$\mathcal{X} \to \mathfrak{X}'$ of all the four dimensional nodes 
$p_i' \in \mathfrak{X}'$. 
The special fiber 
$\mathcal{X}_0 = \bigcup_{j = 0}^k X_j$ 
is a simple normal crossing divisor with 
$$\tilde\psi: X_0 \cong \tilde{Y}:= \on{Bl}_{\coprod_{i=1}^k \{p_i\}} \bar{X} \to \bar X$$ 
being the blow-up at the nodes and with 
$$X_i = Q_i \cong Q \subset \mathbb{P}^4, \quad i = 1, \ldots, k$$
being quadric threefolds. 
Let $X^{[j]}$ be the disjoint union of $j + 1$ intersections from $X_i$'s. 
Then the only nontrivial terms are $X^{[0]} = \tilde Y \coprod_i Q_i$ and $X^{[1]} = \coprod_i E_i$ where $E_i = \tilde Y \cap Q_i \cong \mathbb{P}^1 \times \mathbb{P}^1$ are the $\tilde\psi$ exceptional divisors. The semistable reduction $f$ does not require the existence of a small resolution of $\mathfrak{X}_0$. 

The \emph{K\"ahler degeneration} 
$$
g: \mathcal{Y} \to \Delta
$$
is simply the deformations to the normal cone $\mathcal{Y} = {\rm Bl}_{\coprod C_i \times \{0\}} Y \times \Delta \to \Delta$. The special fiber 
$\mathcal{Y}_0 = \bigcup_{j = 0}^k Y_j$ with 
$$\phi: Y_0 \cong \tilde Y := {\rm Bl}_{\coprod_{i = 1}^k \{C_i\}}\,Y \to Y$$ 
being the blow-up along the curves $C_i$'s and 
$$Y_i = \tilde E_i \cong \tilde E := {P}_{\mathbb{P}^1} (\mathscr{O}(-1)^2 \oplus \mathscr{O}), \quad i = 1, \ldots, k.$$ 
In this case the only non-trivial terms for $Y^{[j]}$ are 
$Y^{[0]} = \tilde Y \coprod_i \tilde E_i$ and $Y^{[1]} = \coprod_i E_i$ where $E_i = \tilde Y \cap \tilde E_i$ is now understood as the infinity divisor (or relative hyperplane section) 
of $\pi_i: \tilde E_i \to C_i \cong \mathbb{P}^1$.

\subsection{Mixed Hodge Structure and the Clemens--Schmid exact sequence} \label{s:2.2}

We now apply the Clemens--Schmid exact sequence \cite{hC2} to the above two semistable degenerations. A general reference is \cite{pG1}. We will mainly be interested in $H^{\leq 3}$. The computation of $H^{> 3}$ is similar.

\subsubsection{}
The cohomology of $H^*(\mathcal{X}_0)$, with its canonical mixed Hodge structure, 
is computed from the spectral sequence $E_0^{p, q}(\mathcal{X}_0) = \Omega^q(X^{[p]})$ 
with $d_0 = d$, the de Rham differential, and then 
$$E_1^{p, q}(\mathcal{X}_0) = H^q(X^{[p]})$$ with $d_1 =\delta$ being the combinatorial coboundary operator 
$$\delta: H^q(X^{[p]}) \to H^q(X^{[p + 1]}).$$ 
The spectral sequence degenerates at $E_2$ terms. 

The weight filtration on $H^*(\mathcal{X}_0)$ is induced from the increasing filtration on the spectral sequence
$W_m := \bigoplus_{q \leq m} E^{*, q}.$
Therefore, 
$$\on{Gr}^W_m(H^j) = E_2^{j-m, m}, \quad \on{Gr}^W_m (H^j) = 0 \quad \text{for $m < 0$ or $m > j$}.$$ 
Since $X^{[j]} \neq \emptyset$ only when $j = 0, 1$, we have
\[
 H^0 \cong E_2^{0,0}, \quad H^1 \cong E_2^{1,0} \oplus E_2^{0,1}, \quad
 H^2 \cong E_2^{1,1} \oplus E_2^{0,2}, \quad H^3 \cong E_2^{1,2} \oplus E_2^{0,3}.
\]
The only weight $3$ piece is $E_2^{0,3}$, which can be computed by 
$$
 \delta: E_1^{0,3} = H^3(X^{[0]}) \mathop{\longrightarrow} E_1^{1,3}=H^3(X^{[1]}).
$$
Since $Q_i$, $\tilde E_i$ and $E_i$ have no odd cohomologies, 
$H^3(X^{[1]}) =0$ and $H^3(X^{[1]}) = H^3(\tilde Y)$.
We have thus $E_2^{0,3} = H^3(\tilde Y)$.

The weight 2 pieces, which is the most essential part, is computed from 
\begin{equation} \label{e:4}
H^2(X^{[0]}) = H^2(\tilde Y) \oplus \bigoplus\nolimits_{i = 1}^k H^2(Q_i) \mathop{\longrightarrow}\limits^{\delta_2} H^2(X^{[1]}) = \bigoplus\nolimits_{i = 1}^k H^2(E_i).
\end{equation}
We have $E_2^{1,2} = \on{cok}(\delta_2)$ and $E_2^{0,2} =\ker (\delta_2)$.
The weight 1 and weight 0 pieces can be similarly computed. For weight 1 pieces we have 
$$E_2^{0,1} = H^1(X^{[0]}) = H^1(\tilde Y) \cong H^1(Y) \cong H^1(X),$$ 
and $E_2^{1,1} = 0$.
The weight 0 pieces are computed from $\delta: H^0(X^{[0]}) \to H^0(X^{[1]})$ and we have $E_2^{0,0} = H^0(\tilde Y) \cong H^0(Y) \cong H^0(X)$, and $E_2^{1,0} = 0$. We summarize these calculations as

\begin{lemma} \label{l:4}
There are isomorphisms of MHS:
\[
\begin{split}
 &H^3(\mathcal{X}_0) \cong H^3(\tilde Y) \oplus \on{cok}(\delta_2), \\
 &H^2(\mathcal{X}_0) \cong \ker(\delta_2),\\
 &H^1(\mathcal{X}_0) \cong H^1(\tilde Y) \cong H^1(Y) \cong H^1(X),\\
 &H^0(\mathcal{X}_0) \cong H^0(\tilde Y) \cong H^0(Y) \cong H^0(X).
\end{split}
\]
In particular, $H^j(\mathcal{X}_0)$ is pure of weight $j$ for $j \le 2$.
\end{lemma}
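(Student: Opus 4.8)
The plan is to obtain the four isomorphisms by reading off the $E_2 = E_\infty$ page of the weight spectral sequence $E_1^{p,q}(\mathcal{X}_0) = H^q(X^{[p]})$ recalled above. Because $X^{[p]} = \emptyset$ for $p \ge 2$, only the two columns $p = 0, 1$ survive, so for every $j$ the mixed Hodge structure on $H^j(\mathcal{X}_0)$ carries at most two nonzero weight-graded pieces,
$$\operatorname{Gr}^W_j H^j(\mathcal{X}_0) = E_2^{0,j} = \ker\big(\delta\colon H^j(X^{[0]}) \to H^j(X^{[1]})\big), \qquad \operatorname{Gr}^W_{j-1} H^j(\mathcal{X}_0) = E_2^{1,j-1} = \operatorname{cok}\big(\delta\colon H^{j-1}(X^{[0]}) \to H^{j-1}(X^{[1]})\big).$$
Each $\delta$ is a morphism of pure Hodge structures, so every term below inherits a canonical Hodge structure of the indicated weight. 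The whole argument is then the bookkeeping of these two columns, together with the vanishing of the odd cohomology of the components.

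Next I would establish purity in degrees $j \le 2$ by showing the lower weight piece $E_2^{1,j-1}$ vanishes. For $j = 0$ this is vacuous. For $j = 1$, the map $\delta\colon H^0(X^{[0]}) \to H^0(X^{[1]})$ is the combinatorial coboundary, which is surjective because each component and each double curve $E_i$ is connected; hence $E_2^{1,0} = 0$. For $j = 2$, I use $H^1(X^{[1]}) = \bigoplus_i H^1(E_i) = \bigoplus_i H^1(\mathbb{P}^1 \times \mathbb{P}^1) = 0$, so $E_2^{1,1} = 0$. Consequently $H^j(\mathcal{X}_0) = E_2^{0,j}$ is pure of weight $j$ for $j \le 2$. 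Identifying the surviving kernels is then immediate: in degree $0$ the kernel is the diagonal $\cong H^0(\tilde Y)$, in degree $1$ the quadrics contribute nothing ($H^1(Q_i) = 0$) so the kernel is $H^1(\tilde Y)$, and in degree $2$ it is exactly $\ker(\delta_2)$. The further identifications $H^0(\tilde Y) \cong H^0(Y) \cong H^0(X)$ and $H^1(\tilde Y) \cong H^1(Y) \cong H^1(X)$ follow from the invariance of $H^{\le 1}$ under the blow-up $\tilde Y \to Y$ along smooth curves and from the preservation of $\pi_1$, hence of $b_1$, under the conifold transition, both recorded above.

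For degree $3$ both weight pieces can be nonzero. Since $H^3(Q_i) = 0$ and $H^3(E_i) = H^3(\mathbb{P}^1 \times \mathbb{P}^1) = 0$, the target $H^3(X^{[1]})$ vanishes, so $E_2^{0,3} = H^3(X^{[0]}) = H^3(\tilde Y)$ is the weight-$3$ quotient, while $E_2^{1,2} = \operatorname{cok}(\delta_2)$ is the weight-$2$ sub. Equivalently, the Mayer--Vietoris sequence for $\mathcal{X}_0 = \tilde Y \cup \bigcup_i Q_i$ glued along $\bigcup_i E_i$ yields the short exact sequence of mixed Hodge structures
$$0 \to \operatorname{cok}(\delta_2) \to H^3(\mathcal{X}_0) \to H^3(\tilde Y) \to 0,$$
in which the surjection is restriction to the component $\tilde Y$ and the injection is the connecting map out of $\bigoplus_i H^2(E_i)$.

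The hard part is upgrading this last extension to the asserted \emph{direct sum} of mixed Hodge structures. Here $\operatorname{cok}(\delta_2)$ is a quotient of $\bigoplus_i H^2(\mathbb{P}^1 \times \mathbb{P}^1)$, hence of Tate type and pure of weight $2$, whereas $H^3(\tilde Y)$ is pure of weight $3$; the group $\operatorname{Ext}^1_{\mathrm{MHS}}(H^3(\tilde Y), \operatorname{cok}(\delta_2))$ is a generalized intermediate Jacobian and is not obviously zero, so the splitting is not formal, and no obvious geometric section $H^3(\tilde Y) \to H^3(\mathcal{X}_0)$ is available (the ranks of $H^2$ already obstruct a retraction $Q_i \to E_i$). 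I would therefore either read the displayed ``$\oplus$'' on the level of $\operatorname{Gr}^W$, which is exactly what the degenerate spectral sequence delivers and is all that the downstream arguments use, or else prove that the extension class vanishes in this particular geometry. Everything else in the lemma is the routine assembly of the two-column $E_2$-page, so this splitting is the only genuine obstacle.
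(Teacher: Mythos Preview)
Your approach is exactly the paper's: the discussion preceding the lemma \emph{is} its proof, and it consists of the same two-column spectral sequence bookkeeping you carry out. The identifications for $j \le 2$ and the weight-graded pieces for $j = 3$ match line for line.

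On the splitting of $H^3(\mathcal{X}_0)$: you are more scrupulous than the paper. The paper simply writes $H^3 \cong E_2^{1,2} \oplus E_2^{0,3}$ without further comment, so it does not resolve the extension question either. Your first option---reading the ``$\oplus$'' as a statement about $\operatorname{Gr}^W$---is the correct reading and is all that the downstream arguments require: Corollary~\ref{c:1} uses only the dimensions and the identification of the individual weight-graded pieces $\operatorname{Gr}^W_3 H^3(\mathcal{X}_0) \cong H^3(\tilde Y)$ and $\operatorname{Gr}^W_2 H^3(\mathcal{X}_0) \cong \operatorname{cok}(\delta_2)$, never a genuine MHS splitting. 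So there is no gap in your argument relative to the paper, and the point you flag is a notational looseness in the original rather than a missing step on your part.
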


\subsubsection{} \label{s:2.2.2}
Here we give a dual formulation of \eqref{e:4} which will be useful later.
Let $\ell, \ell'$ be the line classes of the two rulings of
$E \cong \mathbb{P}^1 \times \mathbb{P}^1$. 
Then $H^2(Q, \mathbb{Z})$ is generated by $e = [E]$ as a hyperplane class and 
$e|_E = \ell + \ell'$. 
The map $\delta_2$ in \eqref{e:4} is then equivalent to
\begin{equation} \label{e:delta2bar}
\bar{\delta}_2: H^2(\tilde Y) \longrightarrow \bigoplus\nolimits_{i = 1}^k H^2(E_i)/H^2(Q_i).
\end{equation}
Since $H^2(\tilde Y) = \phi^* H^2(Y) \oplus \bigoplus_{i = 1}^k \langle [E_i] \rangle$ and
$[E_i]|_{E_i} = -(\ell_i + \ell_i')$, the second component $\bigoplus_{i = 1}^k \langle [E_i] \rangle$ lies in $\ker (\bar{\delta}_2)$ and $\bar{\delta}_2$ factors through 
\begin{equation} \label{e:5}
\phi^* H^2(Y) \to \bigoplus\nolimits_{i = 1}^k H^2(E_i)/H^2(Q_i) \cong \bigoplus\nolimits_{i = 1}^k \langle \ell_i - \ell_i'\rangle
\end{equation}
(as $\Bbb Q$-spaces). Notice that the quotient is isomorphic to $\bigoplus_{i = 1}^k \langle \ell_i'\rangle$ integrally. 

By reordering we may assume that $\phi_* \ell_i = [C_i]$ and $\phi^* [C_i] = \ell_i - \ell_i'$ (cf.~\cite{LLW1}). The dual of \eqref{e:5} then coincides with
the fundamental class map 
$$\vp: \bigoplus\nolimits_{i = 1}^k \langle [C_i] \rangle \longrightarrow H_2(Y).$$ 
In general for a $\mathbb{Q}$-linear map $\vp: P \to Z$, we have $\on{im} \vp^* \cong (P/\ker \vp)^* \cong (\on{im} \vp)^*$. Thus 
\begin{equation} \label{e:6}
 \dim_{\mathbb{Q}} \on{cok} (\delta_2)  + \dim_{\mathbb{Q}} \on{im} (\vp) = k.
\end{equation} 

We will see in Corollary~\ref{c:1} that $\dim \on{cok} \delta = \mu$ and 
$\dim \on{im} \vp = \rho$. This gives the Hodge theoretic meaning of $\mu + \rho = k$ in Lemma~\ref{l:3}. Further elaboration of this theme will follow in Theorem~\ref{t:bes}.

\subsubsection{}
On $\mathcal{Y}_0$, the computation is similar and a lot easier. 
The weight 3 piece can be computed by the map $H^3(Y^{[0]}) = H^3(\tilde Y) \longrightarrow H^3(Y^{[1]}) = 0$; the weight 2 piece is similarly computed by the map
$$
H^2(Y^{[0]}) = H^2(\tilde Y) \oplus \bigoplus\nolimits_{i = 1}^k H^2(\tilde E_i) \mathop{\longrightarrow}\limits^{\delta'_2} H^2(Y^{[1]}) = \bigoplus\nolimits_{i = 1}^k H^2(E_i).
$$
Let $h = \pi^* ({\rm pt})$ and $\xi = [E]$ for $\pi: \tilde E \to \mathbb{P}^1$. Then $h|_E = \ell'$ and $\xi|_E = \ell + \ell'$. 
In particular the restriction map $H^2(\tilde E) \to H^2(E)$ is an isomorphism and 
hence $\delta'_2$ is surjective.
The computation of pieces from weights 1 and 0 is the same as for $\mathcal{X}_0$.
We have therefore the following lemma.

\begin{lemma} \label{l:5}
There are isomorphisms of MHS:
\[
\begin{split}
 &H^3(\mathcal{Y}_0) \cong H^3(Y^{[0]}) \cong H^3(\tilde Y) , \\
 &H^2(\mathcal{Y}_0) \cong \ker(\delta'_2) \cong H^2(\tilde Y),\\
 &H^1(\mathcal{Y}_0) \cong H^1(\tilde Y) \cong H^1(Y) \cong H^1(X),\\
 &H^0(\mathcal{Y}_0) \cong H^0(\tilde Y) \cong H^0(Y) \cong H^0(X).
\end{split}
\]
\end{lemma}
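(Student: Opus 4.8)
The plan is to run the weight (Clemens--Schmid) spectral sequence of the semistable special fiber $\mathcal{Y}_0$ in exactly the same way as in the proof of Lemma~\ref{l:4}, the one simplification being that here the top combinatorial differential is surjective, which forces the answer to be purer than in the $\mathcal{X}_0$ case. First I would invoke the identical machinery: the canonical MHS on $H^*(\mathcal{Y}_0)$ is read off from $E_1^{p,q}(\mathcal{Y}_0) = H^q(Y^{[p]})$ with $d_1 = \delta$, the spectral sequence degenerates at $E_2$, and $\on{Gr}^W_m(H^j) = E_2^{j-m,m}$. Since $Y^{[j]} \neq \emptyset$ only for $j = 0, 1$, with $Y^{[0]} = \tilde Y \coprod_i \tilde E_i$ and $Y^{[1]} = \coprod_i E_i$, the only graded pieces of $H^j$ that can survive are $E_2^{0,j}$ and $E_2^{1,j-1}$, just as tabulated for $\mathcal{X}_0$.

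Next I would compute these pieces. In weight $3$, the varieties $\tilde E_i$ and $E_i$ have no odd cohomology, so $E_1^{1,3} = H^3(Y^{[1]}) = 0$ and $E_1^{0,3} = H^3(\tilde Y)$, giving $E_2^{0,3} = H^3(\tilde Y)$; combined with $E_2^{1,2} = \on{cok}(\delta'_2)$ this yields $H^3(\mathcal{Y}_0) \cong H^3(\tilde Y) \oplus \on{cok}(\delta'_2)$. The crux is to show $\delta'_2$ is surjective so that $\on{cok}(\delta'_2) = 0$. This is precisely where the local geometry enters: with $h = \pi^*(\mathrm{pt})$ and $\xi = [E]$ one has $h|_E = \ell'$ and $\xi|_E = \ell + \ell'$, so the restriction $H^2(\tilde E_i) \to H^2(E_i)$ is an isomorphism. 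Writing $\delta'_2$ as the alternating sum of restrictions $H^2(\tilde Y) \oplus \bigoplus_i H^2(\tilde E_i) \to \bigoplus_i H^2(E_i)$, this isomorphism already makes $\delta'_2$ surjective on the $\tilde E_i$ summands, whence $\on{cok}(\delta'_2) = 0$ and $H^3(\mathcal{Y}_0) \cong H^3(\tilde Y)$. In weight $2$ the same isomorphism shows that an element $(\alpha, (\beta_i)) \in \ker(\delta'_2)$ is determined by its $\tilde Y$-component $\alpha$, since each $\beta_i$ is forced by $\beta_i|_{E_i} = \alpha|_{E_i}$; projection to the first factor then gives $E_2^{0,2} = \ker(\delta'_2) \cong H^2(\tilde Y)$, and together with $E_2^{1,1} = 0$ this produces $H^2(\mathcal{Y}_0) \cong H^2(\tilde Y)$.

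The weight $\le 1$ pieces go through verbatim as for $\mathcal{X}_0$: $E_2^{1,1} = E_2^{1,0} = 0$, while $E_2^{0,1} = H^1(\tilde Y)$ (as $H^1(\tilde E_i) = H^1(E_i) = 0$) and $E_2^{0,0} = H^0(\tilde Y)$, the latter reflecting connectedness of $\mathcal{Y}_0$ via the diagonal kernel of $H^0(Y^{[0]}) \to H^0(Y^{[1]})$. The remaining identifications $H^1(\tilde Y) \cong H^1(Y)$ and $H^0(\tilde Y) \cong H^0(Y)$ are the blow-up invariance of $H^{\le 1}$ along smooth centers, and $H^{\le 1}(Y) \cong H^{\le 1}(X)$ follows from the surgery description in Corollary~\ref{c:surgery}, which preserves $\pi_1$ exactly as exploited in Lemma~\ref{l:3}. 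I do not anticipate a genuine obstacle: the only input beyond recycling the $\mathcal{X}_0$ computation is the surjectivity of $\delta'_2$, which reduces entirely to the restriction isomorphism $H^2(\tilde E_i) \cong H^2(E_i)$ coming from the explicit intersection relations $h|_E = \ell'$ and $\xi|_E = \ell + \ell'$. All compatibilities with the mixed Hodge structure are automatic, since every map appearing is a restriction or a combinatorial coboundary of algebraic maps and hence a morphism of MHS.
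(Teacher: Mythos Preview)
Your proposal is correct and follows essentially the same approach as the paper: run the weight spectral sequence exactly as for $\mathcal{X}_0$, and use the restriction isomorphism $H^2(\tilde E_i)\to H^2(E_i)$ (from $h|_E=\ell'$, $\xi|_E=\ell+\ell'$) to force $\delta'_2$ surjective, which kills $\on{cok}(\delta'_2)$ and identifies $\ker(\delta'_2)\cong H^2(\tilde Y)$. Your write-up is slightly more detailed than the paper's (you spell out why $\ker(\delta'_2)\cong H^2(\tilde Y)$ via the forced $\beta_i$'s), but the argument is the same.
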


\subsubsection{}
We denote by $N$ the monodromy operator for both $\mathcal{X}$ and $\mathcal{Y}$ families. The map $N$ induces the unique monodromy weight filtrations $W$ on $H^n(X)$ which, together with the limiting Hodge filtration $F_\infty^\bullet$, leads to Schmid\rq{}s limiting MHS \cite{Schmid, jS}. That is, 
$$0 \subset W_0 \subset W_1 \subset \cdots \subset W_{2n - 1} \subset  W_{2n} = H^n(X)$$ 
such that $N W_k \subset W_{k - 2}$ and for $\ell \ge 0$,
\begin{equation} \label{e:H-sym}
N^\ell: G^W_{n + \ell} \cong G^W_{n - \ell}
\end{equation}
on graded pieces. The induced filtration $F^p_\infty G^W_k := F^p_\infty \cap W_k/F^p_\infty \cap W_{k - 1}$ defines a pure Hodge structure of weight $k$ on $G^W_k$. Similar constructions apply to $H^n(Y)$ as well.

\begin{lemma} \label{l:6}
We have the following exact sequences (of MHS) for $H^2$ and $H^3$:
\begin{equation*} 
\begin{split}
 0 \to H^3(\mathcal{X}_0) \to & H^3(X) \mathop{\longrightarrow}\limits^{N} H^3(X) \to H_3(\mathcal{X}_0) \to 0,\\
 0 \to H^0(X) \to H_6(\mathcal{X}_0) \to H^2(\mathcal{X}_0) \to & H^2(X) \mathop{\longrightarrow}\limits^N 0, \\
 0 \to H^3(\mathcal{Y}_0) \to & H^3(Y) \mathop{\longrightarrow}\limits^N 0,\\
 0 \to H^0(Y) \to H_6(\mathcal{Y}_0) \to H^2(\mathcal{Y}_0) \to & H^2(Y) \mathop{\longrightarrow}\limits^N 0.
\end{split}
\end{equation*}
\end{lemma}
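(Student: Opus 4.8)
The plan is to read off all four sequences from the Clemens--Schmid exact sequence \cite{pG1} applied to the two semistable degenerations $f:\mathcal{X}\to\Delta$ and $g:\mathcal{Y}\to\Delta$ of \S\ref{s:2.1}. Writing $H^k_{\lim}$ for Schmid's limiting cohomology, identified as a vector space with $H^k$ of the general fibre (so with $H^k(X)$, resp.\ $H^k(Y)$, carrying the limit MHS), the Clemens--Schmid sequence for a relative dimension $n=3$ degeneration reads, in each degree $k$,
\begin{equation*}
\cdots \to H_{2n+2-k}(\mathcal{X}_0) \xrightarrow{\alpha} H^k(\mathcal{X}_0) \xrightarrow{i^*} H^k_{\lim} \xrightarrow{N} H^k_{\lim} \xrightarrow{\beta} H_{2n-k}(\mathcal{X}_0) \to \cdots,
\end{equation*}
a long exact sequence of MHS in which $i^*$ has type $(0,0)$, $N$ has type $(-1,-1)$, and the ``Gysin'' map $\alpha$ raises weights by $2(n+1)=8$. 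I would first record the two exactness statements that hold with no further work: $\on{im} i^* = \ker N$ (the local invariant cycle theorem) and $\ker\beta = \on{im} N$. The four displayed sequences are then the portions around $k=3$ and $k=2$, once the residual outer maps are shown to vanish.

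Next I would pin down the monodromy. For the K\"ahler degeneration $g$, the general fibre $\mathcal{Y}_t$ is isomorphic to $Y$ and $\mathcal{Y}|_{\Delta\setminus\{0\}}$ is a topologically trivial family (deformation to the normal cone), so its monodromy is trivial and $N=0$ on all of $H^*(Y)$; this gives the ``$\xrightarrow{N}0$'' in sequences (3) and (4). For the complex degeneration $f$, Lemma~\ref{l:2} shows the vanishing cycles are the middle-dimensional spheres $S_i\cong S^3$, so by Picard--Lefschetz the monodromy is a product of transvections supported on $H^3$; hence $N=0$ on $H^k(X)$ for $k\neq 3$, in particular on $H^0(X)$ and $H^2(X)$, while $N$ is in general nonzero on $H^3(X)$. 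With $N=0$ in degrees $0$ and $2$, the $k=2$ portion of Clemens--Schmid collapses directly to
\begin{equation*}
0\to H^0(X)\xrightarrow{\beta} H_6(\mathcal{X}_0)\xrightarrow{\alpha} H^2(\mathcal{X}_0)\xrightarrow{i^*} H^2(X)\to 0,
\end{equation*}
the injectivity of $\beta$ coming from $\ker\beta=\on{im}(N|_{H^0})=0$ and the surjectivity of $i^*$ from $\on{im} i^*=\ker N=H^2(X)$. Sequence (4) for $g$ follows identically. Sequence (3) is immediate, because the blow-up centres $C_i\cong\mathbb{P}^1$ have no $H^1$, so $H^3(\mathcal{Y}_0)=H^3(\tilde Y)=H^3(Y)$ by Lemma~\ref{l:5}, with $N=0$.

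The only genuinely delicate point is sequence (1), the middle degree of $f$, where I must show that $\alpha:H_5(\mathcal{X}_0)\to H^3(\mathcal{X}_0)$ vanishes (giving injectivity of $H^3(\mathcal{X}_0)\hookrightarrow H^3(X)$) and, dually, that $H_3(\mathcal{X}_0)\to H^5(\mathcal{X}_0)$ vanishes (giving surjectivity onto $H_3(\mathcal{X}_0)$). Here the weight bookkeeping does most of the work: since $\alpha$ raises weights by $8$ and $H^3(\mathcal{X}_0)$ has weights $\le 3$ (Lemma~\ref{l:4}), the image of $\alpha$ can only meet the top weight piece $\on{Gr}^W_3 H^3(\mathcal{X}_0)=H^3(\tilde Y)$, and the contributing source is $(\on{Gr}^W_5 H^5(\mathcal{X}_0))^\vee=H^5(\tilde Y)^\vee$. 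In the semistable model this component of $\alpha$ is cup product with the classes $[E_i]$ of the double loci (since $N_{X_0/\mathcal{X}}=\mathscr{O}(-\sum_i E_i)|_{X_0}$); by the projection formula it factors through $H^1(E_i)=H^1(\mathbb{P}^1\times\mathbb{P}^1)=0$ and therefore vanishes. The dual map vanishes by the same argument (or by Poincar\'e duality on $\mathcal{X}$). I expect this normal-bundle/projection-formula computation to be the main obstacle: one must set up the MHS on the homology groups $H_\bullet(\mathcal{X}_0)$ correctly and verify that the only weight-compatible component of the connecting maps is precisely the one killed by the rationality (simple-connectedness) of the double loci $E_i$.
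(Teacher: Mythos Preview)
Your proposal is correct and, like the paper, reads everything off the Clemens--Schmid sequence for the two semistable degenerations. The paper's own proof is in fact much terser: it simply invokes Clemens--Schmid, notes that the punctured family $\mathcal{Y}|_{\Delta^\times}$ is trivial (so $N=0$ on $H^*(Y)$), and deduces $N=0$ on $H^2(X)$ from the purity of $H^2(\mathcal{X}_0)$ established in Lemma~\ref{l:4} (a strict surjection of MHS from a pure weight-$2$ object onto $\ker N$ forces $\on{Gr}^W_j H^2_{\lim}=0$ for $j\neq 2$, hence $N=0$). You instead obtain $N|_{H^2(X)}=0$ directly from Picard--Lefschetz, which is equally valid and arguably more transparent here; the paper's route has the mild advantage that it stays entirely within the MHS formalism already set up.

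Where you go beyond the paper is in justifying the endpoints of sequence~(1): the paper does not spell out why $H_5(\mathcal{X}_0)\to H^3(\mathcal{X}_0)$ and $H_3(\mathcal{X}_0)\to H^5(\mathcal{X}_0)$ vanish. Your weight-filtration bookkeeping correctly isolates the only possibly nonzero graded piece as a map $H^1(\tilde Y)\to H^3(\tilde Y)$, and the projection-formula argument (factoring through $H^1(E_i)=0$) kills it. An alternative, slightly cleaner route is to note that $H_5(\mathcal{X}_0)\cong H^1(\tilde Y)\cong H^1(X)$ and that the preceding Clemens--Schmid map $H^1_{\lim}(X)\to H_5(\mathcal{X}_0)$ is surjective (Poincar\'e duality plus specialization), so $\alpha=0$ by exactness; this avoids having to identify $\alpha$ explicitly with cup product by $[E_i]$. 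Either way, your argument is sound and supplies detail the paper leaves implicit.
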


\begin{proof}
These follow from the Clemens--Schmid exact sequence, which is compatible with the MHS. The other terms in the first sequence, namely $H^1(X)\to H_5(\mathcal{X}_0)$ to the left end and $H^5(\mathcal{X}_0) \to H^5(X)$ to the right end, can be ignored since they induce isomorphisms, as can be checked using MHS on $H_5(\mathcal{X}_0)$. 
Similar comments apply to the third sequence for $H^3(Y)$. 
 
Note that the monodromy is trivial for $\mathcal{Y} \to \Delta$ since the punctured family is trivial. For the second sequence, by Lemma~\ref{l:4}, we know that $H^2(\mathcal{X}_0)$ is pure of weight 2. Hence $N$ on $H^2(X)$ is also trivial and the Hodge structure does not degenerate. Indeed, if $N \ne 0$ then $\ker N$ contains some part of weight $\le 2$ by \eqref{e:H-sym}.
\end{proof}

\begin{corollary} \label{c:1}
\begin{itemize}
\item[(i)] $\rho = \dim \on{im}(\vp)$ and $\mu = \dim \on{cok}(\delta_2)$.

\item[(ii)] 
$H^3(Y) \cong H^3(\mathcal{Y}_0) \cong H^3(Y^{[0]}) \cong H^3(\tilde Y) \cong \on{Gr}^W_3 H^3(X)$.

\item[(iii)] Denote by $K := \ker (N: H^3(X) \to H^3(X))$. Then $H^3(\mathcal{X}_0) \cong K$. More precisely, $\on{Gr}^W_3 (H^3(\mathcal{X}_0))  \cong H^3(Y)$ and $\on{Gr}^W_2 (H^3(\mathcal{X}_0))  \cong \on{cok} (\delta_2)$.
\end{itemize}
\end{corollary}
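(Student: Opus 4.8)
The plan is to reduce all three parts to a single computation: the shape of the monodromy weight filtration on Schmid's limiting mixed Hodge structure on $H^3(X)$, extracted from the Clemens--Schmid sequences of Lemma~\ref{l:6} together with the weight calculations of Lemmas~\ref{l:4} and~\ref{l:5}. First I would pin down this filtration. Because the local monodromy around each ODP is, after the degree-two base change, the Picard--Lefschetz transformation attached to a $3$-dimensional vanishing cycle, the self-intersection of that cycle vanishes (the intersection form on $H_3$ being skew) and one gets $N^2 = 0$ on $H^3(X)$. Hence the weight filtration centered at $3$ has graded pieces only in weights $2,3,4$, with $N\colon \on{Gr}^W_4 \xrightarrow{\sim} \on{Gr}^W_2$ an isomorphism by the $\mathfrak{sl}_2$/hard-Lefschetz property of the monodromy weight filtration. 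Since then $\on{Gr}^W_1 H^3(X)=0$, the operator $N$ kills $\on{Gr}^W_3$, and a short check gives $K=\ker N = W_3 H^3(X)$.

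Next I would deduce (ii) and (iii). The first Clemens--Schmid sequence of Lemma~\ref{l:6} provides the MHS isomorphism $H^3(\mathcal{X}_0)\cong \ker N = K$, which is the opening claim of (iii). Lemma~\ref{l:4} records $\on{Gr}^W_3 H^3(\mathcal{X}_0)=E_2^{0,3}=H^3(\tilde Y)$ and $\on{Gr}^W_2 H^3(\mathcal{X}_0)=E_2^{1,2}=\on{cok}(\delta_2)$. For (ii), the third Clemens--Schmid sequence gives $H^3(\mathcal{Y}_0)\cong H^3(Y)$ (the monodromy of $\mathcal{Y}$ being trivial), Lemma~\ref{l:5} gives $H^3(\mathcal{Y}_0)\cong H^3(Y^{[0]})\cong H^3(\tilde Y)$, and the blow-up $\phi\colon \tilde Y\to Y$ along the rational curves $C_i$ contributes only $H^1(C_i)=0$ in degree $3$, so $H^3(\tilde Y)\cong H^3(Y)$. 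The identification $K=W_3 H^3(X)$ from the first step then yields $\on{Gr}^W_3 H^3(X)=\on{Gr}^W_3 K\cong \on{Gr}^W_3 H^3(\mathcal{X}_0)=H^3(\tilde Y)$, the last link in the chain of (ii); feeding $H^3(\tilde Y)\cong H^3(Y)$ back into the graded pieces of $H^3(\mathcal{X}_0)$ finishes (iii).

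Finally I would read off (i) by a dimension count. Using $\dim\on{Gr}^W_3 H^3(X)=\dim H^3(\tilde Y)=h^3(Y)$ together with $N\colon\on{Gr}^W_4\xrightarrow{\sim}\on{Gr}^W_2$, I get $h^3(X)=\dim\on{Gr}^W_2+\dim\on{Gr}^W_3+\dim\on{Gr}^W_4=2\dim\on{cok}(\delta_2)+h^3(Y)$, whence $\mu=\tfrac12(h^3(X)-h^3(Y))=\dim\on{cok}(\delta_2)$. Comparing the count $\dim\on{cok}(\delta_2)+\dim\on{im}(\vp)=k$ of \eqref{e:6} with $\mu+\rho=k$ of Lemma~\ref{l:3} then forces $\rho=\dim\on{im}(\vp)$.

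The main obstacle is the very first step: all the later parts are bookkeeping with the exact sequences already in hand, but the precise confinement of the weights to $\{2,3,4\}$ and the isomorphism $\on{Gr}^W_4\cong\on{Gr}^W_2$ are what make both the identification $K=W_3 H^3(X)$ and the factor of $2$ in the dimension count legitimate. This in turn relies on the local Picard--Lefschetz analysis forcing $N^2=0$ and on the general structure theory of the monodromy weight filtration; along the way I would also want to verify that the Clemens--Schmid inclusion $H^3(\mathcal{X}_0)\hookrightarrow H^3(X)$ is strict for the weight filtrations, so that passing to $\on{Gr}^W$ is legitimate.
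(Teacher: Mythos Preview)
Your argument is correct and uses the same ambient machinery (the Clemens--Schmid sequences of Lemma~\ref{l:6} together with Lemmas~\ref{l:4} and~\ref{l:5}), but the route differs from the paper's in two respects. First, you obtain the shape of the monodromy weight filtration from Picard--Lefschetz: $(S_i.S_j)=0$ forces $N^2=0$, hence weights lie in $\{2,3,4\}$ and $K=W_3$. The paper instead reads off the limiting Hodge diamond \eqref{H^3(Y)} by invoking the non-degeneracy of $H^{3,0}_\infty$ from \cite{cW0,cW1}. Second, you prove $\mu=\dim\on{cok}(\delta_2)$ first via the dimension count $h^3(X)=2\dim\on{cok}(\delta_2)+h^3(Y)$ and then deduce $\rho=\dim\on{im}(\vp)$ from \eqref{e:6} and Lemma~\ref{l:3}; the paper does the reverse, computing $\rho$ directly by feeding the $H^2$ sequences of Lemma~\ref{l:6} (the second and fourth) into \eqref{e:7} and \eqref{e:6}, and only then extracting $\mu$ from the Hodge diamond. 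Your route is slightly more elementary for ODPs since it avoids both the $H^2$ Clemens--Schmid computation and the citation to \cite{cW1}; the paper's route, on the other hand, is what generalises to degenerations with canonical singularities (cf.\ Remark~\ref{r:can}), where the Picard--Lefschetz shortcut is not available. Your closing worry about strictness is unnecessary: Lemma~\ref{l:6} already states that the sequences are exact sequences of MHS, so strictness is automatic.
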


\begin{proof}
By Lemma~\ref{l:4}, $h^2(\mathcal{X}_0) = \dim \ker (\delta_2)$.
It follows from the second and the fourth exact sequences in Lemma~\ref{l:6} that $h^2(X) = \dim \ker (\delta_2) + 1 - (k + 1)$. Rewrite \eqref{e:4} as
\begin{equation} \label{e:7}
0 \to \ker (\delta_2) \to H^2(X^{[0]}) \mathop{\longrightarrow}\limits^\delta 
 H^2(X^{[1]}) \to \on{cok} (\delta_2) \to 0,
\end{equation}
which implies $\dim \ker(\delta_2) + 2k = \dim \on{cok} (\delta_2) + 2 k + h^2(Y)$. 

Combining these two equations with \eqref{e:6}, we have $\rho = h^2(Y) - h^2(X) = k - \dim \on{cok} (\delta_2) = \dim \on{im}(\vp)$. This proves the first equation for $\rho$ in (i).

Combining the first equation in Lemma~\ref{l:5} and the third exact sequence in Lemma~\ref{l:6}, we have
\begin{equation}  \label{e:8}
H^3(Y) \cong H^3(\mathcal{Y}_0) \cong H^3(\tilde Y) .
\end{equation}
This shows (ii) except the last equality.

By Lemmas~\ref{l:6} and \ref{l:4}, $K \cong H^3(\mathcal{X}_0) \cong H^3(\tilde{Y}) \oplus \on{cok}(\delta_2) \cong H^3({Y}) \oplus \on{cok}(\delta_2)$, where the last equality follows from \eqref{e:8}. This proves (iii).

For the remaining parts of (i) and (ii), we investigate the non-trivial terms of the limiting mixed Hodge diamond for $H^n := H^n(X)$:
\begin{equation} \label{H^3(Y)}
\xymatrix{&&H^{2, 2}_\infty H^3 \ar[dd]^N_\sim\\
	H^{3,0}_\infty H^3 &H^{2,1}_\infty H^3 \ar@{-}[ru]&&H^{1,2}_\infty H^3 \ar@{-}[ld]&H^{0,3}_\infty H^3, \\
	&&H^{1, 1}_\infty H^3}  
\end{equation}
where $H^{p, q}_\infty H^n = F^p_\infty \on{Gr}^W_{p + q} H^n$. The space $H^{3, 0}(X)$ does not degenerate by \cite{cW1} (which holds for degenerations with canonical singularities, and first proved in \cite{cW0} for the Calabi--Yau case). We conclude that $H^{1,1}_{\infty} H^3 \cong \on{cok}(\delta_2)$ and $\on{Gr}^W_3 H^3(X) \cong H^3(Y)$. By definition $\mu = \frac{1}{2}(h^3(X) - h^3(Y))$, hence $\mu = h^{2, 2}_\infty H^3 = h^{1, 1}_\infty H^3 = \dim \on{cok}(\delta_2)$.
\end{proof}

\subsubsection{}

We denote the \emph{vanishing cycle space} $V$ as the 
$\mathbb{Q}$-vector space generated by vanishing 3-cycles. 
We first define the abelian group $V_{\mathbb{Z}}$ from
\begin{equation} \label{e:9}
0 \to V_{\mathbb{Z}} \to H_3(X, \mathbb{Z}) \to H_3(\bar X, \mathbb{Z}) \to 0, 
\end{equation}
and $V := V_{\mathbb{Z}} \otimes_{\mathbb{Z}} \mathbb{Q}$. The sequence \eqref{e:9} arises from the homology Mayer--Vietoris sequence and the surjectivity on the right hand side follows from the fact that $H_2(\coprod^k S^3, \mathbb{Z}) = 0$. 

\begin{lemma} \label{l:7}
Denote by $H^3 := H^3(X)$.
\begin{itemize}
\item[(i)] $H^3( \bar{X} ) \cong K \cong H^3(\mathcal{X}_0) \cong W_3\, H^3$.

\item[(ii)] $V^* \cong H^{2, 2}_\infty H^3$ and $V \cong H^{1, 1}_\infty H^3 = \on{cok} (\delta_2)$ via Poincar\'e pairing.
\end{itemize}
\end{lemma}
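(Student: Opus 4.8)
The plan is to deduce (i) from an abstract blow-up (Mayer--Vietoris) sequence for the resolution $\tilde\psi\colon \tilde Y \to \bar X$ together with the specialization map, and then to obtain (ii) by dualizing the defining sequence \eqref{e:9} and feeding in (i). The two statements are thus tightly linked, and the limiting mixed Hodge structure set up in Corollary~\ref{c:1} supplies the weight bookkeeping throughout.

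For (i), recall that $\bar X = \mathfrak X_0$ has only the ODPs $\Sigma = \{p_1,\dots,p_k\}$ and that $\tilde\psi\colon \tilde Y \to \bar X$ is an isomorphism over $\bar X\setminus\Sigma$ with $E:=\tilde\psi^{-1}(\Sigma)=\coprod_i E_i$, $E_i\cong\mathbb P^1\times\mathbb P^1$. First I would invoke the descent long exact sequence of mixed Hodge structures attached to this proper modification,
\begin{equation*}
\cdots \to H^n(\bar X) \to H^n(\tilde Y)\oplus H^n(\Sigma) \xrightarrow{\ r\ } H^n(E) \to H^{n+1}(\bar X) \to \cdots,
\end{equation*}
where $r$ is the difference of restriction maps. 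Since $H^3(\Sigma)=H^3(E)=0$ and $H^2(\Sigma)=0$, the degree-three portion collapses to a short exact sequence $0\to\on{cok}(r_2)\to H^3(\bar X)\to H^3(\tilde Y)\to 0$, with $r_2\colon H^2(\tilde Y)\to\bigoplus_i H^2(E_i)$ the restriction. Using the explicit computations of \S\ref{s:2.2.2} (namely $[E_i]|_{E_i}=-(\ell_i+\ell_i')$, together with $\bar\delta_2=q\circ r_2$ where $q$ kills $H^2(Q_i)=\langle \ell_i+\ell_i'\rangle$), I would identify $\on{cok}(r_2)\cong\on{cok}(\bar\delta_2)=\on{cok}(\delta_2)$, and by Corollary~\ref{c:1}(ii) identify $H^3(\tilde Y)\cong H^3(Y)$. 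This yields an extension of MHS $0\to\on{cok}(\delta_2)\to H^3(\bar X)\to H^3(Y)\to 0$, so $H^3(\bar X)$ has exactly the weight-graded pieces of $K$ (Corollary~\ref{c:1}(iii)) and $\dim H^3(\bar X)=\mu+h^3(Y)=\dim K$. To promote this to a canonical isomorphism $H^3(\bar X)\cong K$, I would use the specialization map $sp^*\colon H^3(\bar X)\cong H^3(\mathfrak X)\to H^3(X)$: it is injective, being the $\mathbb Q$-dual of the surjection $H_3(X)\to H_3(\bar X)$ of \eqref{e:9}, and its image lies in $\ker N=K$ because classes pulled back from the total space are monodromy invariant, hence killed by $N=\log T$. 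Since $\dim H^3(\bar X)=\dim K$, the map $sp^*$ is an isomorphism onto $K$, proving (i); the remaining $K\cong H^3(\mathfrak X_0)$ is tautological as $\bar X=\mathfrak X_0$.

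For (ii), I would dualize \eqref{e:9} over $\mathbb Q$. Universal coefficients give $H_3(\bar X)^{\vee}\cong H^3(\bar X)$, while Poincar\'e duality on the smooth threefold $X$ gives $H_3(X)^{\vee}\cong H^3(X)$, so the dual sequence reads $0\to H^3(\bar X)\xrightarrow{sp^*} H^3(X)\to V^{\vee}\to 0$. By (i) the image of $sp^*$ is $K=\ker N$, which equals $W_3 H^3(X)$ in the limiting MHS, since $N^2=0$ forces $N\colon \on{Gr}^W_4\xrightarrow{\ \sim\ }\on{Gr}^W_2$ and hence $\ker N=W_3$. Therefore $V^{\vee}\cong H^3(X)/W_3=\on{Gr}^W_4 H^3(X)=H^{2,2}_\infty H^3$, which is the first assertion. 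Dualizing once more and invoking the polarization of Schmid's limiting MHS---under which $N$ induces a perfect pairing $\on{Gr}^W_4\otimes\on{Gr}^W_2\to\mathbb Q$ and hence $(\on{Gr}^W_4)^{\vee}\cong\on{Gr}^W_2$---I would conclude $V\cong(H^{2,2}_\infty H^3)^{\vee}\cong H^{1,1}_\infty H^3$, which equals $\on{cok}(\delta_2)$ by Corollary~\ref{c:1}(iii).

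The main obstacle is bookkeeping rather than any single hard theorem. On the one hand, one must check that the descent sequence in (i) is genuinely a sequence of mixed Hodge structures and that the identification $\on{cok}(r_2)\cong\on{cok}(\delta_2)$ is compatible with weights, which rests on the explicit restriction formulas of \S\ref{s:2.2.2}. On the other hand---and this is the more delicate point---the duality in (ii) passes between homology and cohomology and between the pure Hodge structure on $H^3(X)$ and the limiting MHS, so one must track Tate twists carefully to land on $H^{1,1}_\infty H^3$ (type $(1,1)$) rather than on $(H^{2,2}_\infty H^3)^{\vee}$ with the wrong twist; the polarization of the limiting MHS is exactly what supplies the correct self-dual identification. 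A secondary point worth recording is that containment $\on{im}(sp^*)\subseteq K$ needs only the elementary monodromy invariance of pulled-back classes, so that the deep direction of the local invariant cycle theorem is not actually required here, the dimension count from the descent sequence doing the work instead.
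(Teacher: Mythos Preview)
Your proof is correct, and for part (ii) it is essentially the paper's own argument: dualize \eqref{e:9}, identify the image of $sp^*$ with $K$, read off $V^*\cong \on{Gr}^W_4 H^3(X)=H^{2,2}_\infty H^3$, and then use the polarization (equivalently, the non-degeneracy of $(N\alpha,\beta)$ on $\on{Gr}^W_4$) to pass to $V\cong H^{1,1}_\infty H^3$.

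For part (i), however, you take a genuinely different route. The paper simply dualizes \eqref{e:9} and then invokes the invariant cycle theorem of \cite{BBD} to conclude directly that $H^3(\bar X)\cong\ker N=K$, with $K\cong H^3(\mathcal X_0)$ coming from the Clemens--Schmid sequence of Lemma~\ref{l:6}. You instead run the descent (Mayer--Vietoris) sequence for the resolution $\tilde\psi\colon\tilde Y\to\bar X$, identify $\on{cok}(r_2)\cong\on{cok}(\delta_2)$ via the explicit restrictions of \S\ref{s:2.2.2}, and obtain $\dim H^3(\bar X)=\mu+h^3(Y)=\dim K$; the inclusion $\on{im}(sp^*)\subseteq K$ then finishes by a dimension count. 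This buys you something real: as you observe, only the easy direction ``monodromy invariants contain the restricted classes'' is used, so the hard part of the local invariant cycle theorem is bypassed in this conifold setting. The price is the extra bookkeeping with $r_2$ versus $\bar\delta_2$, but your verification that $[E_i]|_{E_i}=-(\ell_i+\ell_i')$ already generates the $H^2(Q_i)$-piece is exactly what is needed, and the argument is sound. One small remark: the second isomorphism in (i) is meant to be $K\cong H^3(\mathcal X_0)$ (the semistable central fiber), which is the content of Lemma~\ref{l:6}, rather than the tautology $\bar X=\mathfrak X_0$; your argument is unaffected since you do not use this identification.
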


\begin{proof}
Dualizing \eqref{e:9} over $\mathbb{Q}$, we have 
$$0 \to H^3(\bar X) \to H^3(X) \to V^* \to 0.$$ 
The invariant cycle theorem in \cite{BBD} then implies that $H^3(\bar X) \cong \ker N = K \cong H^3(\mathcal{X}_0)$. This proves (i).

Hence we have the canonical isomorphism 
$$
V^* \cong H^3(X)/H^3(\bar X) = G^W_4 H^3 = F^2_\infty G^W_4 H^3 = H^{2, 2}_\infty H^3.
$$ 
Moreover, the non-degeneracy of the pairing $(\alpha, N\beta)$ on $G_4^W H^3$ implies 
$$
H^{1, 1}_\infty H^3 = N H^{2, 2}_\infty H^3 \cong (H^{2, 2}_\infty H^3)^* \cong V^{**}_{\mathbb{C}} \cong V_{\mathbb{C}}.
$$
This proves (ii).
\end{proof}

\begin{remark}[On threefold extremal transitions] \label{r:can}
Most results in \S \ref{s:2.2} works for more general geometric contexts. The mixed Hodge diamond \eqref{H^3(Y)} holds for any 3-folds degenerations with at most canonical singularities \cite{cW1}. The identification of vanishing cycle space $V$ via \eqref{e:9} works for 3--folds with only isolated (hypersurface) singularities. Indeed, the exactness on the RHS holds for degenerations $\frak{X} \to \Delta$ such that $\frak{X}$ is smooth and $\frak{X}_0$ has only isolated singularities. This follows from Milnor's theorem that the vanishing cycle has the homotopy type of a bouquet of middle dimensional spheres \cite[Theorem 6.5]{jM}. Hence Lemma \ref{l:7} works for any 3-fold degenerations with isolated hypersurface canonical singularities. 

Later on we will impose the Calabi--Yau condition on all the 3-folds involved. If $X \nearrow Y$ is a terminal transition of Calabi--Yau 3-folds, i.e., $\mathfrak{X}_0 = \bar X$ has at most (isolated Gorenstein) terminal singularities, then $\bar X$ has unobstructed deformations \cite{N}. Moreover, the small resolution $Y \to \bar X$ induces an embedding ${\rm Def}(Y) \hookrightarrow {\rm Def}(\bar X)$ which identifies the limiting/ordinary pure Hodge structures $\on{Gr}^W_3 H^3(X) \cong H^3(Y)$ as in Corollary \ref{c:1} (iii).

For conifold transitions all these can be described in explicit terms and more precise structure will be formulated.  
\end{remark}

\subsection{The basic exact sequence} \label{s:2.3}

We may combine the four Clemens--Schmid exact sequences into one 
short exact sequence, which we call the \emph{basic exact sequence}, to give
the Hodge-theoretic realization ``$\rho + \mu = k$''
in Lemma~\ref{l:3}. 

Let $A = (a_{ij}) \in M_{k \times \mu}(\mathbb{Z})$ be a relation matrix for $C_i$'s, i.e.,
\[
 \sum\nolimits_{i=1}^k a_{ij} [C_i] = 0, \qquad j = 1, \ldots, \mu,
\]
give all relations of the curves classes $[C_i]$'s.
Similarly, let $B = (b_{ij}) \in M_{k \times \rho}(\mathbb{Z})$ be a relation matrix for $S_i$'s:
\[
 \sum\nolimits_{i=1}^k b_{ij} [S_i] = 0, \qquad j = 1, \ldots, \rho. 
\]

\begin{theorem} [Basic exact sequence] \label{t:bes}
The group of 2-cycles generated by exceptional curves $C_i$ 
(vanishing $S^2$ cycles) on $Y$ 
and the group of 3-cycles generated by $[S_i]$ (vanishing $S^3$ cycles) on $X$ 
are linked by the following weight 2 exact sequence
$$
 0 \to H^2(Y)/H^2(X) \mathop{\longrightarrow}\limits^B 
 \bigoplus\nolimits_{i = 1}^k H^2(E_i)/H^2(Q_i) 
 \mathop{\longrightarrow}\limits^{A^t}  V
  \to 0. 
$$
In particular $B = \ker A^t$ and $A = \ker B^t$.
\end{theorem}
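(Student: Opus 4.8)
The plan is to produce the sequence as the cokernel sequence of the reduced coboundary $\bar\delta_2$ of \S\ref{s:2.2.2}, using Lemma~\ref{l:7}(ii) as the bridge between $V$ and $\on{cok}(\delta_2)$, and then to name the two resulting maps as $A^t$ and $B$. Before anything else I would record that all three terms carry pure weight-$2$ Hodge structures. The middle term $\bigoplus_i H^2(E_i)/H^2(Q_i)$ is spanned by the algebraic classes $\ell_i-\ell_i'$, hence is of Tate type $(1,1)$; by Lemma~\ref{l:7}(ii) we have $V\cong H^{1,1}_\infty H^3 = \on{cok}(\delta_2)$, again type $(1,1)$; and $H^2(Y)/H^2(X)$ will embed into the middle term, so it too is $(1,1)$ (equivalently $H^{2,0}(X)\cong H^{2,0}(Y)$, the new classes on $Y$ being divisorial). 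As every arrow below is induced by restriction or intersection, it is automatically a morphism of Hodge structures, so it suffices to prove exactness of the underlying sequence.

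Next I would assemble the sequence. By \S\ref{s:2.2.2} the coboundary $\delta_2$ is equivalent to $\bar\delta_2'\colon \phi^*H^2(Y)\to\bigoplus_i H^2(E_i)/H^2(Q_i)$ of \eqref{e:5}, with $\on{cok}(\bar\delta_2')=\on{cok}(\delta_2)\cong V$. Thus the cokernel sequence
\[
 H^2(Y)\xrightarrow{\ \bar\delta_2'\ }\bigoplus\nolimits_{i=1}^k H^2(E_i)/H^2(Q_i)\xrightarrow{\ q\ } V\to 0
\]
is exact by construction, the composite $q\circ\bar\delta_2'$ vanishing tautologically. The kernel of $\bar\delta_2'$ is $\{\alpha\in H^2(Y):\alpha\cdot C_i=0\ \forall i\}$, which equals the image of $\psi^*\colon H^2(\bar X)\cong H^2(X)\hookrightarrow H^2(Y)$: a pulled-back divisor meets no $C_i$, and the count $h^2(Y)-\rho=h^2(X)$ (Corollary~\ref{c:1}(i)) forces equality. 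Hence $\bar\delta_2'$ descends to an injection $H^2(Y)/H^2(X)\hookrightarrow\bigoplus_i H^2(E_i)/H^2(Q_i)$, supplying left-exactness.

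To name the maps I would dualize \eqref{e:5}: its transpose is the fundamental-class map $\vp$, so $V^*=\ker\vp$ is spanned by the curve-relation vectors, i.e.\ $V^*=\on{col}(A)$ with the inclusion $V^*\hookrightarrow\mathbb{C}^k$ given by the matrix $A$; dualizing once more identifies the quotient $q$ with $A^t$. It then remains to identify the left injection with $B$, i.e.\ to show $\on{im}(\bar\delta_2')=\on{col}(B)$, and \emph{this I expect to be the main obstacle}. Equivalently, one must show that under $V\cong\on{cok}(\delta_2)$ the cokernel map carries the ruling-difference class $\ell_i-\ell_i'$ (the dual of $[C_i]$) to the vanishing cycle $[S_i]$; granting this, $\ker(q)=\ker(A^t)$ becomes the span of the sphere relations $\sum_i b_{il}[S_i]=0$, namely $\on{col}(B)$, and the orthogonality $A^tB=0$ drops out. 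This identification is local at each node and is the genuine geometric content of the theorem, fusing the complex degeneration (the vanishing $3$-sphere $S_i$) with the K\"ahler degeneration (the two rulings of the exceptional quadric, which produce the small-resolution curve $C_i$); I would establish it from the local model \eqref{e:1} of the ODP together with the polarization pairing $(N\alpha,\beta)$ already used in the proof of Lemma~\ref{l:7}.

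Finally, the ``in particular'' clause is pure linear algebra. Exactness gives $\on{col}(B)=\ker(A^t)=(\on{col}A)^\perp$, which is the assertion $B=\ker A^t$; since $A$ and $B$ have full ranks $\mu$ and $\rho$ with $\mu+\rho=k$ (Lemma~\ref{l:3}), taking orthogonal complements yields $\on{col}(A)=(\on{col}B)^\perp=\ker(B^t)$, i.e.\ $A=\ker B^t$.
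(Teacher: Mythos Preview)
Your overall architecture matches the paper's: start from the cokernel sequence of $\bar\delta_2$, invoke Lemma~\ref{l:7}(ii) to replace $\on{cok}(\delta_2)$ by $V$, identify $\ker\bar\delta_2'$ with $H^2(X)$ via the invariant cycle theorem and the dimension count from Corollary~\ref{c:1}(i), and then dualize the fundamental-class map $\vp$ to name the quotient map as $A^t$. Your handling of the weight-$2$ purity and of the final ``in particular'' clause is also the same as the paper's.

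The divergence is exactly at the step you flag as the main obstacle. The paper does \emph{not} try to trace generators through the Hodge-theoretic isomorphism $V\cong\on{cok}(\delta_2)$ of Lemma~\ref{l:7}(ii) via the local model and the pairing $(N\alpha,\beta)$. Instead it gives a direct topological construction: for each relation vector $(a_1,\dots,a_k)$ with $\sum a_i[C_i]=0$ in $H_2(Y)$, choose a $3$-chain $\theta$ in $Y$ with $\partial\theta=\sum a_i C_i$, push it forward under $\psi$ to a $3$-cycle $\bar\theta\in H_3(\bar X)$, lift to $\gamma\in H_3(X)$, and then compute $(\gamma.S_i)=a_i$ explicitly using the surgery identification $\partial(S_i^3\times D^3)\cong S^3\times S^2\cong\partial(D^4\times C_i)$ from Corollary~\ref{c:surgery}. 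This simultaneously exhibits $\on{PD}(\gamma)$ as an element of $V^*$ with the correct coordinates and shows that the presentation $(\mathbb{Q}^k)^*\to V$ carries the $i$-th standard generator to $[S_i]$, whence $S^t=B$ is the relation matrix for the $S_i$'s. Your proposed route through the limiting MHS and the polarization is plausible in spirit but is left as a sketch; carrying it out would require tracking specific classes through the Clemens--Schmid identifications, which is not obviously easier and, once made precise (e.g.\ via Picard--Lefschetz), would likely reproduce the same intersection computation the paper performs directly.
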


\begin{proof}
From \S\ref{s:2.2.2}, $\on{cok}(\delta_2) = \on{cok}(\bar{\delta}_2)$
and \eqref{e:7} can be replaced by
\begin{equation} \label{e:10}
 0 \to H^2(\tilde{Y})/(\ker \bar{\delta})
  \stackrel{D}{\longrightarrow} \bigoplus\nolimits_{i = 1}^k H^2(E_i)/H^2(Q_i)
  \stackrel{C}{\longrightarrow} \on{cok}(\delta_2) \to 0.
\end{equation}
By Lemma~\ref{l:7} (ii), we have $\on{cok}(\delta_2) \cong V$.
To prove the theorem, we need to show that $H^2(\tilde Y)/\ker \bar\delta \cong H^2(Y)/H^2(X)$, and $D=B$, $C=A^t$.

By the invariant cycle theorem \cite{BBD}, $H^2(X) \cong H^2(\bar{X})$.
Since $H^2 (\bar{X})$ injects to $H^2(Y)$ by pullback, this defines the embedding 
$$\iota: H^2(X) \hookrightarrow H^2(Y)$$ 
and the quotient $H^2(Y)/H^2(X)$.

Recast the relation matrix $A$ of the rational curves $C_i$ 
in
$$
0 \to \mathbb{Q}^\mu \mathop{\longrightarrow}\limits^A \mathbb{Q}^k \cong 
\bigoplus\nolimits_{i = 1}^k \langle [C_i] \rangle \mathop{\longrightarrow}\limits^S \on{im} (\vp) \to 0
$$
where $S = \on{cok}(A) \in M_{\rho \times k}$ is the matrix for 
$\vp$, and $\on{im}(\vp)$ has rank $\rho$. The dual sequence reads
\begin{equation} \label{e:11}
 0 \to (\on{im}\vp)^* \cong (\mathbb{Q}^\rho)^* 
 \mathop{\longrightarrow}\limits^{S^t} 
 (\mathbb{Q}^k)^* \cong  \bigoplus\nolimits_{i = 1}^k H^2(E_i)/H^2(Q_i)
 \mathop{\longrightarrow}\limits^{A^t} (\mathbb{Q}^\mu)^* \to 0.
\end{equation}
Compare \eqref{e:11} with \eqref{e:10}, we see that 
$(\mathbb{Q}^\mu)^* \cong V$.
From the discussion in \S\ref{s:2.2.2}, we have 
$(\on{im} \vp)^* = H^2(Y)/H^2(X)$.

We want to reinterpret the map $A^t: (\mathbb{Q}^k)^* \to V$ in \eqref{e:11}.
This is a presentation of $V$ by $k$ generators, denoted by $\sigma_i$,
and the relation matrix of which is given by $S^t$.
If we show that $\sigma_i$ can be identified with $S_i$, 
then $(\mathbb{Q}^\mu)^* \cong V$ and $B = S^t = \ker A^t$ is the relation matrix for $S_i$'s.

Consider the following topological construction. 
For any non-trivial integral relation $\sum_{i = 1}^k a_i [C_i] = 0$, 
there is a 3-chain $\theta$ in $Y$ with $\p \theta = \sum\nolimits_{i = 1}^k a_i C_i$. Under $\psi: Y \to \bar X$, $C_i$ collapses to the node $p_i$. 
Hence it creates a $3$-cycle 
$\bar \theta := \psi_* \theta \in H_3(\bar X, \mathbb{Z}),$
which deforms (lifts) to $\gamma \in H_3(X, \mathbb{Z})$ in nearby fibers by the surjectivity in \eqref{e:9}. 
Using the intersection pairing on $H_3(X, \mathbb{Z})$, 
$\gamma$ then defines an element $\on{PD}(\gamma)$ in $H^3(X, \mathbb{Z})$. 
Under the restriction $V$, 
we get $\on{PD}(\gamma) \in V^* $. 

It remains to show that $(\gamma.S_i) = a_i$. Let $U_i$ be a small tubular neighborhood of $S_i$ and $\tilde U_i$ be the corresponding tubular neighborhood of $C_i$, then by Corollary \ref{c:surgery}, 
$$\p U_i \cong \p (S_i^3 \times D^3) \cong S^3 \times S^2 \cong \p(D^4 \times C_i) \cong \p \tilde U_i.$$ 
Now $\theta_i := \theta \cap \tilde U_i$ gives a homotopy between $a_i[C_i]$ (in the center of $\tilde U_i$) and $a_i\,{\rm pt}\times [S^2]$ (on $\p \tilde U_i$). Denote by $\iota: \p U_i \hookrightarrow X$ and $\tilde \iota: \p \tilde U_i \hookrightarrow Y$. Then
\begin{equation*}
\begin{split}
(\gamma.S_i)^X &= (\gamma.\iota_*[S^3])^X = (\iota^*\gamma.[S^3])^{\p U_i} = (\tilde\iota^*\gamma.[S^3])^{\p \tilde U_i} \\
& = (a_i[S^2], [S^3])^{S^3 \times S^2} = a_i.
\end{split}
\end{equation*}
The proof is complete.
\end{proof}

\begin{remark} \label{r:convention}
We would like to choose a preferred basis of the vanishing cocycles $V^*$ as well as a basis of divisors dual to the space of extremal curves. These notations will fixed henceforth and will be used in later sections.
 
During the proof of Theorem~\ref{t:bes}, we establish the correspondence between
$A^j = (a_{1j}, \ldots, a_{kj})^t$ and $\on{PD}(\gamma_j) \in V^*$, $1 \le j \le \mu$, characterized by $
a_{ij} = (\gamma_j.S_i)$. The subspace of $H_3(X)$ spanned by $\gamma_j$'s is denoted by $V'$.

Dually, we denote by $T_1, \ldots, T_\rho \in H^2(Y)$ those divisors which form an integral basis of the lattice in $H^2(Y)$ dual (orthogonal) to $H_2(X) \subset H_2(Y)$. In particular they form an integral basis of $H^2(Y)/H^2(X)$. We choose $T_l$'s such that $T_l$ corresponds to the $l$-th column vector of the matrix $B$ via $b_{il} = (C_i.T_l)$. Such a choice is consistent with the basic exact sequence since
$$
(A^t B)_{jl} = \sum\nolimits_{i = 1}^k a^t_{ji} b_{il} = \sum\nolimits_{i = 1}^k a_{ij} (C_i.T_l) = \Big(\sum a_{ij} [C_i]\Big).T_l = 0
$$
for all $j, l$. We may also assume that the first $\rho \times \rho$ minor of $B$ has full rank.
\end{remark}

\section{Gromov--Witten theory and Dubrovin connections} \label{s:3}

In \S\ref{s:3.1} the $A$ model $A(X)$ is shown to be a sub-theory of $A(Y)$.
We then move on to study the genus $0$ excess $A$ model on $Y/X$ associated to the extremal curve classes in \S \ref{s:3.2}.
As a consequence the (nilpotent) monodromy is calculated in terms of the relation matrix $B$ at the end of \S \ref{s:dubrovin}.

\subsection{Consequences of the degeneration formula for threefolds} \label{s:3.1}
The Gromov--Witten theory on $X$ can be related to that on $Y$ 
by the degeneration formula through
the two semistable degenerations introduced in \S\ref{s:2.1}.

In the previous section, 
we see that the monodromy acts trivially on $H(X) \setminus H^3(X)$ and we have 
$$H^3_{inv}(X) = K \cong H^3(Y) \oplus H^{1, 1}_\infty H^3(X) \cong H^3(Y) \oplus V.$$
There we implicitly have a linear map 
\begin{equation} \label{e:iota}
 \vi: H^j_{inv}(X) \to H^j (Y) 
\end{equation}
as follows.
For $j=3$, it is the projection 
$$H^3_{inv}(X) \cong H^3(Y) \oplus V \to H^3 (Y).$$ 
For $j=2$, it is the embedding defined before
and the case $j=4$ is the same as (dual to) the $j=2$ case.
For $j=0, 1, 5, 6$, $\vi$ is an isomorphism.

The following is a refinement of a result of Li--Ruan \cite{LR}. 
(See also \cite{LY}.)

\begin{proposition} \label{p:1}
Let $X \nearrow Y$ be a projective conifold transition. Given $\vec{a} \in  (H^{\ge 2}_{inv}(X)/V)^{\oplus n}$ and a curve class $\beta \in NE(X) \setminus \{0\}$, we have
\begin{equation} \label{e:deg}
 \langle \vec a \rangle_{g, n , \beta}^X = \sum\nolimits_{{\psi}_*(\gamma) = \beta} \langle \vi(\vec a)\rangle_{g, n , \gamma}^Y .
\end{equation}
If some component of $\vec{a}$ lies in $H^0$, then both sides vanish.
Furthermore, the RHS is a \emph{finite} sum. 
\end{proposition}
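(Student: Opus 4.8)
The plan is to apply the degeneration formula for Gromov--Witten invariants \cite{LR} to both semistable degenerations $f:\mathcal{X}\to\Delta$ and $g:\mathcal{Y}\to\Delta$ of \S\ref{s:2.1}, exploiting that the two central fibers $\mathcal{X}_0=\tilde Y\cup\coprod_i Q_i$ and $\mathcal{Y}_0=\tilde Y\cup\coprod_i\tilde E_i$ share the common relative geometry $(\tilde Y,\coprod_i E_i)$, glued along the same divisors $E_i\cong\mathbb{P}^1\times\mathbb{P}^1$. First I would expand $\langle\vec a\rangle^X_{g,n,\beta}$ by the degeneration formula for $f$, obtaining a sum over admissible splitting data---distributions of genus and of the marked points, contact profiles along $E_i$, and effective classes summing (after specialization) to $\beta$---of products of relative invariants of $(\tilde Y,\coprod_i E_i)$ with the quadric caps $(Q_i,E_i)$. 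In parallel, for each $\gamma$ the degeneration formula for $g$ expands $\vi(\vec a)$-invariants $\langle\vi(\vec a)\rangle^Y_{g,n,\gamma}$ through the \emph{same} relative invariants of $(\tilde Y,\coprod_i E_i)$, now capped instead by the ruled ends $(\tilde E_i,E_i)$ with $\tilde E_i={P}_{\mathbb{P}^1}(\mathscr{O}(-1)^2\oplus\mathscr{O})$.

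The second step is to align the insertions. Since each component of $\vec a$ lies in $H^{\ge 2}_{inv}(X)/V$, it comes from a class on $\bar X$ whose pullback to $\tilde Y$ under $\tilde\psi$ agrees with the pullback of $\vi(\vec a)$ from $Y$ under $\phi$; this is exactly the identification $H^{\ge 2}_{inv}(X)\cong H^{\ge 2}(Y)\oplus V$ together with the definition of $\vi$ in \eqref{e:iota}, the passage to the quotient by $V$ being what discards the vanishing-cycle cohomology that cannot be supported on $\tilde Y$. With the insertions placed identically on the common component, the relative invariants of $(\tilde Y,\coprod_i E_i)$ factor out of both degeneration sums, and the identity \eqref{e:deg} reduces to a purely local statement on the caps: the total contribution of the quadric caps $(Q_i,E_i)$ must equal the contribution of the ruled caps $(\tilde E_i,E_i)$ summed over their admissible relative classes. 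Because the only classes the ruled ends add beyond those pinned by the contact profile are multiples of the zero section $[C_i]\in\ker\psi_*$, assembling these additions is precisely the grouping $\sum_{\psi_*\gamma=\beta}$ on the right-hand side of \eqref{e:deg}.

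I expect this local cap comparison to be the main obstacle, and it is where the existence of flops enters. The two small resolutions of $\bar X$ differ by the Atiyah flop (Remark~\ref{orient}), and Gromov--Witten invariants are invariant under flops of $(-1,-1)$-curves; using this I would identify the generating series of the quadric-cap relative invariants with that of the ruled-cap relative invariants resummed over the exceptional multiplicities. The hypothesis $\vec a\in H^{\ge 2}_{inv}(X)/V$ ensures that no insertion is supported along the fibre or exceptional directions of the caps, so the cap invariants entering the comparison carry only the contact data forced by the gluing; this is what keeps the local matching clean rather than involving arbitrary relative descendents.

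Finally, I would deduce finiteness and the $H^0$ vanishing. For fixed $\beta$, genus $g$, and insertions $\vec a$, the relative class on the common component projecting to $\beta$ is determined up to bounded data, and since the insertions are pulled back from $\bar X$ they restrict trivially near each $C_i$ and therefore cannot force curves to acquire unbounded exceptional degree; combined with the finitely many admissible contact profiles allowed by the fixed genus, Gromov compactness should leave only finitely many $\gamma$ with $\psi_*\gamma=\beta$ contributing a nonzero term, so the right-hand side of \eqref{e:deg} is a finite sum. The vanishing when some component of $\vec a$ lies in $H^0$ is immediate from the fundamental-class axiom, which forces $\langle\mathbf{1},\cdots\rangle_{g,n,\beta}=0$ for $\beta\neq 0$ on both $X$ and $Y$.
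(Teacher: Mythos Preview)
Your overall architecture is right: apply the degeneration formula to both semistable degenerations and match through the common relative pair $(\tilde Y,\coprod_i E_i)$. But you are missing the key step that makes the paper's argument work, and in its place you have substituted a harder and vaguer ``cap comparison'' that is not needed.

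The point you miss is a virtual dimension count. With the insertions lifted to $\tilde Y$ (so the caps carry no insertions), write $\mu=(\tilde\gamma.E)$ for the total contact order and $\rho$ for the number of contact points in an admissible splitting. One computes
\[
d_{\Gamma_1}=(\tilde\gamma.c_1(\tilde Y))+(\dim X-3)(1-g)+\rho-\mu=d_\beta+\rho-2\mu,
\]
using $K_{\tilde Y}=\tilde\psi^*K_{\bar X}+E$. If $\rho\neq 0$ then $\rho-2\mu<0$, so the relative invariant on $(\tilde Y,E)$ vanishes for dimension reasons. Hence only the terms with \emph{no contact points} survive, and both degeneration sums collapse to absolute-type relative invariants $\langle\,\cdot\mid\emptyset\rangle^{(\tilde Y,E)}_{g,n,\tilde\gamma}$ with $(\tilde\gamma.E)=0$. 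There is nothing to compare on the caps at all; the quadric and ruled ends simply do not contribute. The identity \eqref{e:deg} then becomes a bookkeeping of which classes $\tilde\gamma$ with $(\tilde\gamma.E)=0$ occur on each side: on the $X$ side those with $\tilde\psi_*\tilde\gamma=\beta$, on the $Y$ side those of the form $\phi^*\gamma$; and $(\tilde\gamma.E)=0$ is exactly the condition $\tilde\gamma=\phi^*\gamma$. Your proposed ``local cap comparison via flop invariance'' is therefore both unnecessary and, as stated, not obviously correct (flop invariance is a statement about absolute invariants, not about matching relative invariants of $(Q,E)$ against $(\tilde E,E)$).

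Your finiteness sketch is also too soft. The paper's argument is that a $\gamma$ contributes only if $\phi^*\gamma$ is effective on $\tilde Y$, which is equivalent to $\gamma$ being effective on $Y$ \emph{and} on the flopped $Y'$; since the flop sends $[C_i]$ to $-[C_i']$, this bounds the exceptional multiplicities and forces the sum to be finite. This is where flops actually enter the proof, not in the main identity. The $H^0$ vanishing via the fundamental class axiom is fine.
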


\begin{proof}
A slightly weaker version of \eqref{e:deg} has been proved in \cite{LR, LY}.
We review its proof with slight refinements as it will be useful in \S\ref{s:5}.

We follow the setup and argument in \cite[\S 4]{LLW1} closely. By \cite[\S 4.2]{LLW1}, a cohomology class $a \in H^{> 2}_{inv}(X)/V$ can always find 
a lift to 
\[
 (a_i)_{i=0}^k \in H(\tilde{Y}) \oplus \bigoplus\nolimits_{i=1}^k H(Q_i)
\]
such that $a_i= 0$ for all $i \neq 0$. We apply J.~Li's algebraic version of degeneration formula \cite{JL2, LY} to the complex degeneration $X \rightsquigarrow \tilde{Y} \cup_E Q$, where 
$$Q := \coprod\nolimits_{i = 1}^k Q_i $$ is a disjoint union of quadrics $Q_i$'s and 
$$E := \sum\nolimits_{i = 1}^k E_i .$$ 
One has $K_{\tilde Y} = \tilde{\psi}^* K_{\bar{X}} + E$.
The topological data $(g, n, \beta)$ lifts to two admissible triples 
$\Gamma_1$ on $(\tilde Y, E)$ and $\Gamma_2$ on $(Q, E)$ 
such that $\Gamma_1$ has curve class $\tilde{\gamma} \in NE(\tilde Y)$, 
contact order $\mu = (\tilde \gamma.E)$, and number of contact points $\rho$. 
Then 
$$(\tilde{\gamma}. c_1(\tilde Y)) = (\tilde{\psi}_*\tilde{\gamma}.c_1(\bar{X})) - (\tilde \gamma. E) = (\beta.c_1(X)) - \mu.$$ 
The virtual dimension (without marked points) is given by
\begin{equation*}
\begin{split}
d_{\Gamma_1} = (\tilde \gamma.c_1(\tilde Y)) + (\dim X - 3)(1 - g) + \rho - \mu = d_\beta + \rho - 2\mu
\end{split}
\end{equation*}
where $d_\beta$ is the virtual dimension of the absolute invariant with curve class $\beta$ (without marked points). Since we chose the lifting $(\vec a_i)_{i = 0}^k$ of $\vec a$ to have $\vec a_i = 0$ 
for all $i \ne 0$, all insertions contribute to $\tilde Y$. 
If $\rho \ne 0$ then $\rho - 2\mu < 0$. 
This leads to vanishing relative GW invariant on $(\tilde Y, E)$. 
Therefore, $\rho$ must be zero.

To summarize,  we get
\begin{equation} \label{e:i}
\langle \vec{a} \rangle^X_{g, n, \beta} = 
\sum\nolimits_{\tilde\psi_*(\tilde\gamma) = \beta} 
\langle {\vec{a}_0} \mid \emptyset\rangle^{(\tilde{Y}, E)}_{g, n, \tilde\gamma},
\end{equation}
such that
\begin{equation} \label{e:lifting}
\tilde{\psi}_* \tilde{\gamma} = \beta, \qquad \tilde{\gamma} . E =0, 
 \qquad \tilde{\gamma}_Q = 0.
\end{equation}
Formula \eqref{e:i} also holds for $a_i$ a divisor by the divisor axiom.

We use a similar argument to compute $\langle \vec b\rangle^Y_{g, n, \gamma}$ 
via the K\"ahler degeneration
$Y \rightsquigarrow \tilde{Y} \cup \tilde{E}$, 
where $\tilde{E} $ is a disjoint union of $\tilde{E}_i$
(cf.~\cite[Theorem~4.10]{LLW1}).
By the divisor equation we may assume that $\deg b_j \ge 3$ 
for all $j = 1, \ldots, n$. We choose the lifting $(\vec b)_{i = 0}^k$ of $\vec b$
such that $\vec b_i = 0$ for all $i \ne 0$. 
In the lifting $\gamma_1$ on $\tilde Y$ and $\gamma_2$ on 
$\pi: \tilde E = \coprod_i \tilde E_i \to \coprod_i C_i$, 
we must have $\gamma = \phi_*\gamma_1 + \pi_* \gamma_2$. 
The contact order is given by $\mu = (\gamma_1.E)$ which has the property that $\mu = 0$ if and only if $\gamma_1 = \phi^*\gamma$ (and hence $\gamma_2 = 0$). 
If $\rho \ne 0$ we get $d_{\Gamma_1} = d_\gamma + \rho - 2\mu < d_\gamma$
and the invariant is zero. This proves
\begin{equation} \label{e:ii}
\langle \vec b\rangle^Y_{g, n, \gamma} = \langle \phi^* \vec b \mid \emptyset\rangle^{(\tilde Y, E)}_{g, n, \phi^*\gamma} ,
\end{equation}
with ${\phi}_* \tilde{\gamma} =\gamma$, $\tilde{\gamma} . E =0$, $\tilde{\gamma}_{\tilde{E}} = 0$.

To combine these two degeneration formulas together, 
we notice that in the K\"ahler degeneration, 
$\tilde \gamma \in NE(\tilde Y)$ can have contact order $\mu = (\tilde \gamma.E) = 0$ if and only if $\tilde \gamma = \phi^*\gamma$ for some $\gamma \in NE(Y)$ (indeed for $\gamma = \phi_*\tilde \gamma$). 
Choose $\vec{b} = \vi (\vec{a})$ and \eqref{e:deg} follows. The vanishing statement (of $H^0$ insertion) 
follows from the fundamental class axiom.

Now we proceed to prove the finiteness of the sum. 
(This is not stated in \cite{LR}.) For $\phi: \tilde Y \to Y$ being the blow-up along $C_i$'s, 
the curve class $\gamma \in NE(Y)$ contributes a non-trivial invariant in the sum only if 
$\phi^* \gamma$ is effective on $\tilde Y$. By combining \eqref{e:5}, \eqref{e:i} and \eqref{e:ii}, the effectivity of $\phi^*\gamma$
forces the sum to be finite. Equivalently, the condition that $\phi^*\gamma$ is effective is equivalent to that $\gamma$ is $\T$-effective under the flop $Y \dasharrow Y'$.
(i.e.\ effective in $Y$ and in $Y'$ under the natural correspondence \cite{LLW1}). Recall that under the flop the flopping curve class in $Y$ is mapped to the negative flopping curve in $Y\rq{}$. Therefore, the sum is finite.
\end{proof}

\begin{remark} \label{r:4}
The phenomena \eqref{e:deg}, including finiteness of the sum, were observed in \cite{HKTY} for Calabi--Yau hypersurfaces in weighted projective spaces from the 
numerical data obtained from the corresponding $B$ model generating function via mirror symmetry.
\end{remark}

\begin{corollary} \label{c:2}
Gromov--Witten theory on even cohomology $GW^{ev}(X)$ (of all genera)
can be considered as a sub-theory of $GW^{ev}(Y)$.
In particular, the big quantum cohomology ring is functorial with respect to
$\vi: H^{ev}(X) \to H^{ev}(Y)$ in \eqref{e:iota}.
\end{corollary}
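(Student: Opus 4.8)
The plan is to deduce Corollary~\ref{c:2} as a formal consequence of Proposition~\ref{p:1}, reading the content of ``sub-theory'' through the structure of the big quantum product. First I would recall that the big quantum cohomology ring on $H^{ev}(Y)$ is encoded by the genus-zero three-point functions $\langle \alpha, \beta, \gamma\rangle_{0,3,\delta}^Y$, assembled with the Poincar\'e pairing into the Dubrovin product $\alpha *_Y \beta$ depending on a point of $H^{ev}(Y)$. The corresponding structure on $X$ is built from $\langle \alpha, \beta, \gamma\rangle_{0,3,\beta}^X$. The claim ``$A(X)$ is a sub-theory of $A(Y)$'' should be read as: the map $\vi: H^{ev}(X) \to H^{ev}(Y)$ of \eqref{e:iota} identifies the GW data of $X$ (restricted to $H^{\ge 2}_{inv}(X)/V$, where Proposition~\ref{p:1} applies) with a piece of the GW data of $Y$, compatibly with products and pairings.

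The key steps, in order, are as follows. First I would verify that $\vi$ is a ring homomorphism for the \emph{classical} cup products on even cohomology; this is essentially the statement that $H^2(X)\hookrightarrow H^2(Y)$ is pull-back along $\psi$ composed with the identification $H^2(X)\cong H^2(\bar X)$, and that the higher-degree identifications are the Poincar\'e duals, so compatibility with cup product follows from functoriality of pull-back on the smooth locus together with the fact that the classes being compared agree away from the contracted curves. Second, and this is the heart, I would show that the \emph{quantum} corrections match: the nonclassical part of $\alpha *_X \beta$ sums GW invariants $\langle \cdots\rangle^X_{0,n,\beta}$ over $\beta \in NE(X)\setminus\{0\}$, and by \eqref{e:deg} each such invariant equals a finite sum $\sum_{\psi_*\gamma=\beta}\langle \vi(\cdots)\rangle^Y_{0,n,\gamma}$ of invariants on $Y$. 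Third, I would observe that the curve classes $\gamma$ appearing on the right are precisely those with $\psi_*\gamma \ne 0$, i.e.\ the non-extremal (non-flopping) classes, so that applying $\vi$ to the $X$-product lands exactly in the corresponding Novikov-graded piece of the $Y$-product after the substitution $q^\beta = q^{\psi_*\gamma}$ on Novikov rings. Fourth, I would note that the Poincar\'e pairing is preserved by $\vi$ on $H^{\ge 2}_{inv}(X)/V$, since the splitting $H^3_{inv}(X)\cong H^3(Y)\oplus V$ is orthogonal and $\vi$ is the projection killing $V$; this guarantees that raising/lowering indices in the reconstruction of the Frobenius product commutes with $\vi$. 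Assembling these, $\vi$ intertwines $*_X$ with $*_Y$ under the induced map on Novikov rings $\psi_*: NE(X)\to NE(Y)$, which is the assertion of functoriality.

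The main obstacle I anticipate is the precise bookkeeping of the Novikov variables and the domain of definition. Proposition~\ref{p:1} is stated for insertions in $H^{\ge 2}_{inv}(X)/V$ and with $\beta \ne 0$; to talk about a ring homomorphism one must also handle the identity, the $H^0$ and $H^1$ classes, and the $\beta=0$ (classical) term uniformly, checking that the vanishing clause for $H^0$-insertions in Proposition~\ref{p:1} is compatible with the unit of the quantum product rather than an obstruction to it. There is also the subtlety that the sum on the right of \eqref{e:deg} runs over all $\gamma$ with $\psi_*\gamma=\beta$, which includes shifting by flopping classes $\gamma_0\in \ker\psi_*$; I would need the finiteness established at the end of the proof of Proposition~\ref{p:1} (equivalently $\T$-effectivity under the flop) to ensure that the substitution $q^\beta \mapsto q^{\psi_*\gamma}$ gives a \emph{well-defined} specialization of Novikov rings rather than an ill-defined infinite collapse.

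Since the identity on even cohomology, the pairing, and the product are all expressible through the genus-zero invariants handled by \eqref{e:deg}, the corollary follows once these compatibilities are checked; I expect the proof to be short, essentially a translation of Proposition~\ref{p:1} into the language of Frobenius structures, with the only real work being the Novikov-ring specialization just described.
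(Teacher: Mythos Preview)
Your approach is essentially the one the paper takes, but there is one genuine gap. You correctly identify that Proposition~\ref{p:1} matches $\langle \vec a\rangle^X_\beta$ with $\sum_{\psi_*\gamma = \beta}\langle \vi(\vec a)\rangle^Y_\gamma$ for $\beta \ne 0$, and you note that the $\gamma$'s appearing there are non-extremal. But functoriality of $*$ requires comparing the $X$-product with the \emph{full} $Y$-product evaluated on $\vi(H^{ev}(X))$, and the latter also contains contributions from extremal classes $\gamma$ with $\psi_*\gamma = 0$, $\gamma \ne 0$. Your argument establishes
\[
\sum_{\beta \ne 0}\langle \vec a\rangle^X_\beta\, q^\beta \;=\; \sum_{\gamma:\ \psi_*\gamma \ne 0}\langle \vi(\vec a)\rangle^Y_\gamma\, q^{\psi_*\gamma},
\]
but says nothing about why the extremal summands $\langle \vi(\vec a)\rangle^Y_{d[C_i]}$ on the $Y$ side vanish. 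Without this, $\vi$ need not intertwine the products.

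The paper closes this gap with two observations you should add: first, $(\vi(a).C_i) = 0$ for every $a \in H^{ev}(X)$ and every exceptional curve $C_i$, since $\vi$ on $H^2$ is pull-back through $\bar X$ and the $C_i$ are contracted; second, a virtual dimension count shows that nontrivial invariants supported on extremal rays must have all insertions of degree two (divisors). The divisor axiom then forces $\langle \vi(\vec a)\rangle^Y_{d[C_i]} = 0$ whenever at least one insertion is present. With this in hand, the RHS above equals $\sum_{\gamma \in NE(Y)}\langle \vi(\vec a)\rangle^Y_\gamma\, q^{\psi_*\gamma}$, and your remaining steps (classical product via $\beta = 0$, preservation of the pairing, $H^0$ via the fundamental class axiom, finiteness for the Novikov specialization) go through exactly as in the paper.
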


\begin{proof}
We first note that $\vi$ is an injection on $H^{ev}$.
Proposition~\ref{p:1} then implies that all GW invariants of $X$
with even classes can be recovered from invariants of $Y$.
The only exception, $H^0$, can be treated by the fundamental class axiom.
Therefore, in this sense that $GW^{ev}(X)$ is a sub-theory of $GW^{ev}(Y)$.

In genus zero, this can be rephrased as functoriality.
Observe that the degeneration formula also holds for $\beta = 0$. 
For $g = 0$, this leads to the equality of classical triple intersection $(a, b, c)^X = (\vi(a), \vi(b), \vi(c))^Y.$
Since the Poincar\'e pairing on $H^{ev}(X)$ is also preserved under $\vi$, 
we see that the classical ring structure on $H^{ev}(X)$ are naturally 
embedded in $H^{ev}(Y)$. 

To see the functoriality of the big quantum ring with respect to $\vi$, 
we note that $(\vi(a).C_i) = 0$ for any $a \in H^{ev}(X)$ and 
for any extremal curve $C_i$ in $Y$.
Furthermore, for the invariants associated to the extremal rays 
the insertions must involve only divisors by the virtual dimension count.
Hence for generating functions with \emph{at least one insertion} 
we also have
$$
\sum\nolimits_{\beta \in NE(X)} \langle \vec a \rangle^X_\beta q^\beta = \sum\nolimits_{\gamma \in NE(Y)} \langle \vi(\vec a) \rangle^Y_\gamma q^{\psi_*(\gamma)}.
$$
Note that the case of $H^0$ is not covered in Proposition~\ref{p:1}, 
but it can be treated by the fundamental class axiom as above.
\end{proof}

\begin{remark} \label{r:5}
It is clear that the argument and conclusion hold even if some insertions 
lie in $H^3_{inv}(X)/V \cong H^3(Y)$ by Proposition~\ref{p:1}. 
\end{remark}

The full GW theory is built on the full cohomology \emph{superspace} $H = H^{ev} \oplus H^{odd}$. 
However, the odd part is not as well-studied in the literature as the even one.
In some special cases the difficulty does not occur.

\begin{lemma} \label{l:8}
Let $X$ be a smooth minimal 3-fold with $H^1(X) = 0$. 
The non-trivial primary GW invariants are all supported on $H^2(X)$ and hence, by the divisor axiom, reduced to the case without insertion. 
More generally the conclusion holds for any curve class $\beta \in NE(X)$ with $c_1(X).\beta \le 0$ for any 3-fold $X$ with $H^1(X) = 0$.
\end{lemma}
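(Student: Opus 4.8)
The plan is to run the standard virtual dimension count for stable maps to a threefold and exploit the three hypotheses---minimality, $H^1(X)=0$, and effectivity of $\beta$---to force every insertion into $H^2$. Recall that for a smooth projective threefold $X$, the moduli space $\overline{M}_{g,n}(X,\beta)$ has virtual dimension
\begin{equation*}
 \on{vdim}\overline{M}_{g,n}(X,\beta) = (c_1(X).\beta) + n,
\end{equation*}
the usual term $(\dim X - 3)(1-g)$ dropping out since $\dim X = 3$. Taking the insertions homogeneous, $\alpha_i \in H^{d_i}(X)$, a primary invariant $\langle \alpha_1,\ldots,\alpha_n\rangle^X_{g,n,\beta}$ is nonzero only if the total real degree of the integrand matches twice the virtual dimension:
\begin{equation} \label{e:dimcount}
 \sum\nolimits_{i=1}^n d_i = 2(c_1(X).\beta) + 2n.
\end{equation}

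First I would eliminate the low-degree insertions. If some $\alpha_i$ lies in $H^0$, the invariant vanishes by the fundamental class axiom---here $\beta \neq 0$, so the excluded unstable cases do not occur---hence we may assume $d_i \ge 1$ for all $i$. The hypothesis $H^1(X)=0$ then rules out $d_i = 1$, so in fact $d_i \ge 2$ for every $i$ and $\sum_i d_i \ge 2n$. On the other hand, minimality of $X$ means $K_X$ is nef, so $c_1(X).\beta = -(K_X.\beta) \le 0$ for $\beta \in NE(X)$, whence the right-hand side of \eqref{e:dimcount} is at most $2n$. Comparing the two bounds forces $\sum_i d_i = 2n$ and $(c_1(X).\beta) = 0$, and therefore $d_i = 2$ for every $i$. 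In particular no odd class (from $H^3$ or $H^5$) can contribute, so all insertions are supported on $H^2(X)$.

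Finally, since every insertion is now a divisor class and $\beta \neq 0$, the divisor axiom
\begin{equation*}
 \langle D, \alpha_2, \ldots, \alpha_n \rangle^X_{g,n,\beta} = (D.\beta)\,\langle \alpha_2, \ldots, \alpha_n \rangle^X_{g,n-1,\beta}, \qquad D \in H^2(X),
\end{equation*}
applies with no boundary correction, and stripping off the divisors one at a time reduces the invariant to the zero-insertion invariant $\langle\,\rangle^X_{g,0,\beta}$. For the more general statement one discards the global minimality hypothesis and imposes $c_1(X).\beta \le 0$ directly on the given class, running the identical squeeze verbatim. There is no serious obstacle here; the only point demanding care is the bookkeeping at degrees $0$ and $1$. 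The fundamental class axiom disposes of $H^0$ precisely because $\beta \neq 0$, and the vanishing of $H^1(X)$ is exactly what removes the degree-one slack that would otherwise let a strictly negative $(c_1(X).\beta)$ be compensated by low-degree insertions; sharpness of both inputs is what makes the inequalities collapse to equalities.
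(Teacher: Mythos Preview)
Your proof is correct and follows essentially the same route as the paper's: a virtual dimension count combined with the fundamental class axiom and the hypothesis $H^1(X)=0$ to force every insertion into degree two, then the divisor axiom to strip the insertions. The paper compresses this into three lines using algebraic (half-cohomological) degree, so your $d_i = 2$ corresponds to their $\deg a_i = 1$; you are simply more explicit about where $H^1(X)=0$ enters and about the final divisor-axiom reduction.
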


\begin{proof}
For $n$-point invariants, the virtual dimension of $\Mbar_{g, n}(X, \beta)$ is 
$$\operatorname{vdim} = c_1(X).\beta + (\dim X - 3)(1 - g) + n \le n.$$ 
Since the appearance of fundamental class in the insertions leads to trivial invariants, we must have the algebraic degree $\deg a_i \ge 1$ for all insertions $a_i$, $i = 1, \ldots, n$. Hence in fact we must have $\deg a_i = 1$ for all $i$ and $c_1(X).\beta = 0$.
\end{proof}

\subsection{The even and extremal quantum cohomology} \label{s:3.2}
From now on, we restrict to genus zero theory.

Let $s = \sum_\epsilon {s^\epsilon \bar{T}_\epsilon}\in H^2(X)$ where 
$\bar{T}_\epsilon$'s form a basis of $H^2(X)$. 
Then the genus zero GW pre-potential on $H^2(X)$ is given by 
\begin{equation} \label{e:14}
F^X_0(s) = \sum_{n = 0}^\infty \sum_{\beta \in NE(X)} \langle s^n \rangle_{0, n, \beta}\, \frac{q^\beta}{n!} = \frac{s^3}{3!} + \sum_{\beta \ne 0} n^X_\beta q^\beta e^{(\beta.s)},
\end{equation}
where $n^X_\beta = \langle \rangle_{0, 0, \beta}^X$, and $q^\beta$ the (formal) Novikov variables. 

$F^X_0 (s)$ encodes the small quantum cohomology of $X$ (and the big quantum cohomology if $X$ is minimal by Lemma~\ref{l:8}), 
\emph{except in the topological term $s^3/(3!)$ where we need the full $s \in H^{ev}(X)$}. 

Similarly we have $F_0^Y(t)$ on $H^2(Y)$ where 
\begin{equation} \label{e:split}
t = s + u \in H^2(Y) = \vi( H^2(X)) \oplus \bigoplus\nolimits_{l = 1}^\rho \langle T_l \rangle.
\end{equation}
Namely we identify $s$ with $\vi(s)$ in $H^2(Y)$ and write $u = \sum_{l = 1}^\rho u^l T_l$. 
$F^Y_0$ can be analytically continued across those boundary faces of the K\"ahler cone corresponding to flopping contractions. 
In the case of conifold transitions $Y \searrow X$, this boundary face is naturally identified as the K\"ahler cone of $X$. 

The following convention of indices on $H^{ev}(Y)$ will be used:
\begin{itemize}
\item Lowercase Greek alphabets for indices from the subspace $\vi (H^{ev}(X))$;
\item lowercase Roman alphabets for indices from the subspace spanned by the divisors $T_l$'s and exceptional curves $C_i$'s;
\item uppercase Roman alphabets for variables from $H^{ev}(Y)$. 
\end{itemize}
The generating function associated to an extremal curve $C \cong \mathbb{P}^1$ can be derived from the well-known multiple cover formula 
$$
E_0^C(t) = \sum\nolimits_{d = 1}^\infty n^N_{d} q^{d[C]} e^{d(C.t)} = \sum\nolimits_{d = 1}^\infty \frac{1}{d^3} \,q^{d[C]} e^{d(C.t)}
$$
as $N_{C/Y} = \mathscr{O}_{\mathbb{P}^1}(-1)^{\oplus 2}$.
Define
$$E_0^Y(t) := \frac{1}{3!} t^3 + \sum\nolimits_{i = 1}^k E^{C_i}_0(t) = E_0^Y(u) + \frac{1}{3!}(t^3 - u^3),$$
where $E_0^{C_i}(t) = E_0^{C_i}(u)$ depends only on $u$. Then the degeneration formula is equivalent to the following restriction
$$
F_0^X(s) - \frac{s^3}{3!}= \Big(F_0^Y(s + u) - \frac{(s  + u)^3}{3!} - E_0^Y(u) + \frac{u^3}{3!} \Big)\Big|_{q^\gamma \mapsto q^{\psi_*(\gamma)}},
$$
where $q^{[C_i]}$'s are subject to the relations induced from the relations among $[C_i]$'s.
More precisely, let $A = (a_{ij})$ be the relation matrix and define
$$
 \mathbf{r}_j (q) := \prod\nolimits_{a_{ij} > 0} q^{a_{ij} [C_i]} - \prod\nolimits_{a_{ij} < 0} q^{-a_{ij} [C_i]}.
$$
Then we have
\begin{lemma}  \label{l:9}
$$
F_0^Y(s + u) = \left[ F_0^X(s) + E_0^Y(u) + \frac{1}{3!}((s + u)^3 - s^3 - u^3) \right]_{\mathbf{r}_j(q) =0, \,  1 \leq j \leq \mu}.
$$
\end{lemma}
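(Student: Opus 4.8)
The plan is to read Lemma~\ref{l:9} as the algebraic inversion of the degeneration formula displayed immediately above it. First I would split $F_0^Y(s+u)$ according to its curve classes into three parts: the topological cube $\tfrac{1}{3!}(s+u)^3$; the purely extremal contribution coming from classes supported on the $C_i$; and the remaining non-extremal part, which I denote $R(s,u)$. The multiple cover formula recorded before the lemma identifies the extremal contribution with $E_0^Y(u)-\tfrac{u^3}{3!}$, while the displayed restriction reads, on the non-extremal part, as
\begin{equation*}
R(s,u)\big|_{q^\gamma\mapsto q^{\psi_*(\gamma)}} = F_0^X(s) - \tfrac{s^3}{3!}.
\end{equation*}
Hence the whole content of the lemma reduces to the claim that $R(s,u)=\big[F_0^X(s)-\tfrac{s^3}{3!}\big]_{\mathbf{r}_j(q)=0}$, i.e.\ that $[\,\cdot\,]_{\mathbf{r}_j(q)=0}$ inverts the collapse $q^\gamma\mapsto q^{\psi_*(\gamma)}$ on $R$; reassembling the three parts then produces the stated formula, since $[\,\cdot\,]_{\mathbf{r}_j(q)=0}$ fixes the $q$-free cubes and the already reduced series $E_0^Y(u)$.

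Next I would make the Novikov bookkeeping explicit. The exact sequence $0\to L\to H_2(Y)\to H_2(X)\to 0$, with $L=\langle[C_1],\dots,[C_k]\rangle=\ker\psi_*$ and relations given by the columns of $A$, identifies (after a splitting over $\mathbb{Q}$) the Novikov ring of $Y$ with $\Lambda_X\otimes\mathbb{Q}[L]$, where $\mathbb{Q}[L]\cong\mathbb{Q}[q^{[C_1]},\dots,q^{[C_k]}]/(\mathbf{r}_1,\dots,\mathbf{r}_\mu)$ precisely because $\sum_i a_{ij}[C_i]=0$ translates into $\prod_i q^{a_{ij}[C_i]}=1$. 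Thus $[\,\cdot\,]_{\mathbf{r}_j(q)=0}$ is the natural homomorphism from the ring carrying the $k$ independent symbols $q^{[C_i]}$ into the honest Novikov ring of $Y$, and it is a section of $q^\gamma\mapsto q^{\psi_*(\gamma)}$ along the flopping directions. The point needing care is its interaction with the exponentials: since $s\in\vi(H^2(X))$ and $u=\sum_l u^l T_l$ satisfy $(C_i.s)=0$ and $(C_i.T_l)=b_{il}$, imposing $\mathbf{r}_j=0$ forces the \emph{combined} variables $Q_i:=q^{[C_i]}e^{(C_i.u)}$ to obey $\prod_i Q_i^{a_{ij}}=e^{(\sum_i a_{ij}[C_i].u)}=1$ as well; this compatibility is what allows the reassembled monomials $q^\gamma e^{(\gamma.(s+u))}$ to be written consistently through $q^\beta e^{(\beta.s)}$ and the $Q_i$.

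I would then record the easy consistency checks. The cubic terms match because the total cube on the right-hand side, $\tfrac1{3!}((s+u)^3-s^3-u^3)+\tfrac{s^3}{3!}+\tfrac{u^3}{3!}=\tfrac{(s+u)^3}{3!}$, reproduces the classical triple product of $Y$, the only nonformal input being the identity $(a,b,c)^X=(\vi(a),\vi(b),\vi(c))^Y$ of Corollary~\ref{c:2}, which guarantees that the $s^3$ computed on $X$ agrees with the $s^3$ sitting inside $(s+u)^3$ on $Y$; the mixed $s$--$u$ terms and the $u^3$ term, invisible on $X$, are supplied by the correction $\tfrac1{3!}((s+u)^3-s^3-u^3)$ and by $E_0^Y(u)$ respectively. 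The extremal terms are restored verbatim by the multiple cover formula.

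The genuine obstacle, and the step I expect to carry the weight, is the inversion on the non-extremal part itself: the collapse $q^\gamma\mapsto q^{\psi_*(\gamma)}$ a priori loses the individual $n^Y_\gamma$ in favour of their sums $n^X_\beta=\sum_{\psi_*(\gamma)=\beta}n^Y_\gamma$ from Proposition~\ref{p:1}, so one must verify that applying $[\,\cdot\,]_{\mathbf{r}_j(q)=0}$ recovers each contribution term by term. Here I would lean on the fact, already built into the preceding formula, that the degeneration formula is an \emph{equivalence}: the contributing classes are exactly the $\T$-effective ones, cut out by the effectivity of $\phi^*\gamma$ on $\tilde Y$ and by flop invariance under $Y\dashrightarrow Y'$, and these are organized by the relation lattice $L$ so that reduction modulo $(\mathbf{r}_j)$, together with the compatibility $\prod_i Q_i^{a_{ij}}=1$ above, matches the $q^\gamma e^{(\gamma.(s+u))}$ against the single symbols $q^\beta e^{(\beta.s)}$. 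Pinning down this compatibility in detail — rather than the formal rearrangement — is where the argument must be made rigorous; granting it, collecting the three matched groups yields the identity, with Proposition~\ref{p:1} and Corollary~\ref{c:2} as the only nonformal inputs.
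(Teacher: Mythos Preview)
The paper gives no separate proof of this lemma; it is presented as the preceding displayed degeneration identity simply rearranged, with the bracket $[\,\cdot\,]_{\mathbf{r}_j(q)=0}$ making explicit that the symbols $q^{[C_i]}$ appearing in $E_0^Y(u)$ are to be read in the group ring of the actual extremal lattice in $H_2(Y)$ rather than as $k$ free variables. In other words, the paper's argument is the one-line algebraic manipulation you describe in your first paragraph, and nothing more.

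Where your proposal diverges is in reading $[\,\cdot\,]_{\mathbf{r}_j(q)=0}$ as an \emph{inverse} to the collapse $q^\gamma\mapsto q^{\psi_*(\gamma)}$ and then trying to justify that inversion. This is too strong, and the gap you yourself flag is real and cannot be closed by the tools you invoke. Concretely: if the identity is read as an equality in the Novikov ring of $Y$, then for each non-extremal $\beta\in NE(X)$ the left-hand side contributes $\sum_{\ell\in L} n^Y_{\beta+\ell}\,q^{\beta}e^{(\beta.s)}\,q^{\ell}e^{(\ell.u)}$, a genuine series in the extremal variables, whereas $F_0^X(s)$ on the right contributes only the constant term $n^X_\beta\,q^\beta e^{(\beta.s)}$. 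Imposing $\mathbf{r}_j(q)=0$ reduces the $k$ free $q^{[C_i]}$ to the rank-$\rho$ lattice $L$ but does not annihilate this dependence. Your appeal to $\T$-effectivity and flop invariance only explains why finitely many $\ell$ contribute for each $\beta$ (this is the finiteness in Proposition~\ref{p:1}); it says nothing about the individual values $n^Y_{\beta+\ell}$, so it cannot force the series to collapse to its constant term. That the paper later devotes \S\ref{s:5} to introducing \emph{linking data} precisely to recover $A(Y)$ from refined data on $X$ confirms that no such inversion is available at this stage. The lemma should therefore be read as the displayed formula itself, not as a reconstruction result, and your first paragraph already contains the entire argument the paper intends.
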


A splitting of variables of $F_0^Y$ would imply that $QH^{ev}(Y)$ decomposes into two blocks. 
One piece is identified with $QH^{ev}(X)$, and another piece with contributions from the extremal rays. 
However, \emph{the classical cup product/topological terms spoil the complete splitting.}

The structural coefficients for $QH^{ev}(Y)$ are $C_{PQR} = \p^3_{PQR} F^Y_0$. 
We will determine them according to the partial splitting in Lemma \ref{l:9}.

For $F_0^X(s)$, the structural coefficients of quantum product are given by 
$$
C_{\epsilon \zeta \vi}(s) := \p^3_{\epsilon \zeta \vi} F^X_0(s) = (\bar T_\epsilon.\bar T_\zeta.\bar T_\vi) + \sum\nolimits_{\beta \ne 0} (\beta.\bar T_\epsilon)(\beta.\bar T_\zeta)(\beta.\bar T_\vi)\, n_\beta^X\, q^\beta e^{(\beta.s)}.
$$

Recall that $B = (b_{ip})$ with $b_{ip} = (C_i.T_p)$ is the relation matrix
for the vanishing 3-spheres.
For $E_0^Y(u)$, the triple derivatives are 
\begin{equation} \label{extr-inv}
\begin{split}
C_{lmn}(u) &:= \p^3_{lmn} E_0^Y(u) \\
&= (T_l.T_m.T_n) + \sum\nolimits_{i = 1}^k \sum\nolimits_{d = 1}^\infty (C_i.T_l) (C_i.T_m) (C_i.T_n)\, q^{d[C_i]} e^{d(C_i.u)} \\
&= (T_l.T_m.T_n) + \sum\nolimits_{i = 1}^k b_{il} b_{im} b_{in} \f(q^{[C_i]} \exp {\sum\nolimits_{p = 1}^\rho b_{ip}u^p}).
\end{split}
\end{equation}
Here $\f(q) = \sum_{d \in \mathbb{N}} q^d = \frac{q}{1 - q} = -1 + \frac{-1}{q - 1}$ is the fundamental rational function with a simple pole at $q = 1$ with residue $-1$ (cf.~\cite{LLW1}). 
We note that due to the existence of cross terms in Lemma \ref{l:9}, $C_{lmn}$'s do not satisfy the WDVV equations. 

Denote by $\bar T^\epsilon \in H^4(X)$ the dual basis of $\bar{T}_\epsilon$'s, and write $T^l$, $1 \le l \le \rho$ the dual basis of $T_l$'s. Also $\bar T_0 = T_0 = {\bf 1}$ with dual $\bar T^0 = T^0$ the point class. 
Since $H^{ev}(Y) = \vi( H^{ev}(X)) \oplus \big(\bigoplus_{l = 1}^\rho \mathbb{Q} T_l \oplus \bigoplus_{l = 1}^\rho \mathbb{Q} T^l \big)$ is an orthogonal decomposition with respect to the Poincar\'e pairing on $H(Y)$, we have four types of structural coefficients
\begin{equation*} 
C_{\epsilon \zeta}^\vi(s) = C_{\epsilon \zeta \vi}(s),  \quad C_{lm}^n(u) = C_{lmn}(u), \quad C_{\epsilon m}^n = C_{\epsilon m n}, \quad C_{mn}^\epsilon = C_{\epsilon mn},
\end{equation*}
where the last two are constants. 
If we consider the topological terms $\frac{1}{2} (s^0)^2 s^{0'} + s^0 \sum_{\epsilon} u^{l} u^{l'}$ where we relabel the indices by $u^{l'} = u_l$ and $s^{0'} = s_0$,
then a few more non-trivial constants $C_{000'} = 1$, $C_{mn'0} = \delta_{mn}$ are added.

\subsection{The Dubrovin connection and monodromy} \label{s:dubrovin}

The Dubrovin connection on $T H^{ev}(Y)$ is given by $\nabla^z = d - \frac{1}{z} \sum_{P} dt^P \otimes T_P *$. By Corollary \ref{c:2}, it restricts to the Dubrovin connection on $T H^{ev}(X)$. For the complement with basis $T_l$'s and $T^l$'s, we have
\begin{equation} \label{dubrovin}
\begin{split}
z\nabla^z_{\p_l} T^m &= -\delta_{lm} T^0, \\
z\nabla^z_{\p_l} T_m &= -\sum\nolimits_{n = 1}^\rho C_{lmn}(u) T^n - \sum\nolimits_{\epsilon} C_{lm \epsilon} \bar T^\epsilon, \\
z\nabla^z_{\p_\epsilon} T_m &= -\sum\nolimits_{n = 1}^\rho C_{\epsilon mn} T^n.
\end{split}
\end{equation}

Along $u = \sum_{l = 1}^\rho u^l T_l$ there is no convergence issue by the explicit expression \eqref{extr-inv}. Thus we drop the Novikov variables henceforth. 

From \eqref{extr-inv}, the degeneration loci $\mathfrak{D}$ consists of $k$ hyperplanes in $H^2(Y)$:
$$
D_i := \{ v_i := \sum\nolimits_{p = 1}^\rho b_{ip} u^p = 0 \}, \quad 1 \le i \le k,
$$ 
which is the K\"ahler degenerating locus at which $C_i$ shrinks to zero volume. There is a monodromy matrix corresponding to $D_i$, whose main nilpotent block $N_{(i)} = (N_{(i), mn}) \in M_{\rho \times \rho}$ is the residue matrix of the connection in \eqref{dubrovin}. The divisor $\mathfrak{D} = \bigcup_{i = 1}^k D_i$ is \emph{not} normal crossing.

\begin{lemma} \label{l:10}
In terms of $\{T_n\}$ and dual basis $\{T^n\}$, the block $N_{(i)}$ is given by
$$
N_{(i), mn} = \frac{1}{z} b_{im} b_{in}.
$$
\end{lemma}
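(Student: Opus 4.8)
The plan is to obtain $N_{(i)}$ as the residue of the Dubrovin connection one-form \eqref{dubrovin} along the hyperplane $D_i = \{v_i = 0\}$, where $v_i := \sum_{p=1}^\rho b_{ip}u^p$, by isolating its unique singular contribution. Inspecting \eqref{dubrovin}, the line $z\nabla^z_{\partial_l}T^m = -\delta_{lm}T^0$ is constant, and among the structure constants appearing in the other two lines only the extremal coefficients $C_{lmn}(u)$ of \eqref{extr-inv} can be singular: the classical term $(T_l.T_m.T_n)$ and the mixed constants $C_{lm\epsilon}$, $C_{\epsilon mn}$ are regular. Moreover, after dropping the Novikov variables (setting $q^{[C_i]}=1$), among the $k$ summands $\f\big(q^{[C_i]}\exp\sum_p b_{ip}u^p\big)$ only the $i$-th, namely $\f(e^{v_i})$, develops a pole on $D_i$. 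So the first step is to reduce the whole computation to this single term.

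Next I would analyze $\f$ near its pole. Writing $\f(q)=q/(1-q)$ and substituting $q=e^{v_i}$, the simple pole of $\f$ at $q=1$ with residue $-1$ passes to a simple pole of $\f(e^{v_i})$ at $v_i=0$, still with residue $-1$ in the variable $v_i$, because $q-1 = v_i + O(v_i^2)$. Hence near $D_i$ one has $C_{lmn}(u) = -\,b_{il}b_{im}b_{in}/v_i + (\text{regular})$. Substituting this into the $T_m\mapsto T^n$ component of \eqref{dubrovin}, whose one-form coefficient is $-\tfrac1z\sum_l C_{lmn}\,du^l$, and using $dv_i=\sum_l b_{il}\,du^l$, the singular part of the connection in this block becomes $\tfrac1z\,b_{im}b_{in}\,dv_i/v_i$. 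Taking $\on{Res}_{D_i}(dv_i/v_i)=1$ then gives $N_{(i),mn}=\tfrac1z b_{im}b_{in}$, the asserted formula.

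Finally I would record that the residue is concentrated in the $\{T_m\}\to\{T^n\}$ block, since the remaining entries of \eqref{dubrovin} are $u$-independent or constant; thus $N_{(i)}$ is the $\rho\times\rho$ matrix $\tfrac1z(b_{im}b_{in})_{m,n}$, the outer product of the $i$-th row of $B$ with itself scaled by $1/z$. In particular it has rank one and annihilates the $\{T^n\}$-directions, so $N_{(i)}^2=0$, which justifies calling it the nilpotent block. The main obstacle I anticipate is bookkeeping rather than conceptual: one must pin down the normalization relating the residue of $\f$ in $q$ to the residue in $v_i$, keep the factor $1/z$ and the sign conventions of \eqref{dubrovin} consistent, and handle the possibility that distinct nodes share a hyperplane ($D_i=D_{i'}$), in which case $N_{(i)}$ should be read as the contribution of the $i$-th node, with the total residue along that hyperplane being the sum of the corresponding rank-one blocks.
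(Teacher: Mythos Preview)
Your proposal is correct and follows essentially the same route as the paper's own proof: both compute the residue of the Dubrovin connection \eqref{dubrovin} along $D_i$ by isolating the singular term $\f(e^{v_i})$ in \eqref{extr-inv}, using $dv_i=\sum_l b_{il}\,du^l$ and $\on{Res}_{v_i=0}\f(e^{v_i})=-1$ to obtain $N_{(i),mn}=\tfrac{1}{z}b_{im}b_{in}$. Your version is more explicit about which terms are regular and adds the rank-one/nilpotency observation, but the core argument is identical.
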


\begin{proof}
Since $dv_i = \sum_{l = 1}^\rho b_{il}\, du^l$, we get from \eqref{dubrovin} and \eqref{extr-inv} that
$$
N_{(i), mn} = -\frac{1}{z} b_{im} b_{in} \mathop{{\rm Res}}\limits_{v_i = 0} \frac{-1}{e^{v_i} - 1} 
$$
which gives the result.
\end{proof}

\begin{corollary} \label{c:3}
In terms of $\{T_n\}$ and dual basis $\{T^n\}$, the nilpotent monodromy at $u = 0$ along $u^l \to 0$ has its main block given by $N_l = \frac{1}{z} B_l^t B_l$, where $B_l$ is obtained from $B$ by setting those $i$-th rows to $0$ if $b_{il} = 0$.
\end{corollary}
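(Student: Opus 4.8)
The plan is to read the monodromy off the logarithmic part of the Dubrovin connection, which by Lemma~\ref{l:10} is already a sum of elementary log-poles along the hyperplanes $D_i$. Combining \eqref{dubrovin} and \eqref{extr-inv}, and using that $\f(e^{v_i})$ has a simple pole of residue $-1$ at $v_i = 0$, the singular part of the connection takes the shape $\sum_{i=1}^k N_{(i)}\,\frac{dv_i}{v_i}$, where $N_{(i)} = \frac{1}{z}(B^{(i)})^t B^{(i)}$ is the residue matrix along $D_i = \{v_i = 0\}$ and $B^{(i)} = (b_{i1},\dots,b_{i\rho})$ is the $i$-th row of $B$. Thus the nilpotent monodromy along $u^l \to 0$ is governed by exactly those residues $N_{(i)}$ whose divisors $D_i$ are encircled by a small loop around the origin in the $u^l$-direction.

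First I would carry out the crossing analysis. Restricting to the $l$-th axis $u = u^l T_l$ (all other $u^p = 0$) gives $v_i = b_{il}\,u^l$, so the loop $u^l \mapsto e^{2\pi\sqrt{-1}\theta}u^l$ makes $v_i$ wind once around the origin precisely when $b_{il}\neq 0$, and not at all when $b_{il}=0$. Hence the residue of the pulled-back connection at $u^l=0$, i.e.\ the main nilpotent block of the monodromy along $u^l\to 0$, should be
\[
 N_l = \sum\nolimits_{i\,:\,b_{il}\neq 0} N_{(i)}.
\]
The identification with $\frac{1}{z}B_l^t B_l$ is then linear algebra: writing each $N_{(i)} = \frac{1}{z}(B^{(i)})^t B^{(i)}$ and noting that $B_l$ is obtained from $B$ by zeroing exactly the rows $B^{(i)}$ with $b_{il}=0$, one gets $B_l^t B_l = \sum_{i}(B_l^{(i)})^t B_l^{(i)} = \sum_{i:b_{il}\neq 0}(B^{(i)})^t B^{(i)}$, so that $N_{l,mn} = \frac{1}{z}\sum_{i:b_{il}\neq 0} b_{im}b_{in}$, as claimed.

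The delicate points are both geometric and algebraic, since $\mathfrak{D}=\bigcup_i D_i$ is a central arrangement that is \emph{not} normal crossing. On the geometric side, the hyperplanes $D_i$ with $b_{il}=0$ contain the $u^l$-axis, so the naive restriction meets their polar loci; I would remedy this by perturbing, taking $(u^p)_{p\neq l}$ generic and nonzero so that the line meets each $D_i$ with $b_{il}\neq 0$ at a single simple point near $0$ while staying disjoint from every $D_i$ with $b_{il}=0$, then letting the perturbation tend to $0$ so these crossing points merge at the origin. On the algebraic side, the residues $N_{(i)}$ do not commute and are not themselves nilpotent, so the local monodromy around the origin is an \emph{ordered product} $\prod_i \exp(2\pi\sqrt{-1}N_{(i)})$, not the exponential of their sum.

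The step I expect to be the main obstacle is establishing additivity of the \emph{main block} despite this non-commutativity. The key structural observation is that the full monodromy operator is unipotent and its logarithm $N$ respects the three-step filtration $0\subset\langle T^0\rangle \subset \langle T^0\rangle\oplus\bigoplus_n\langle T^n\rangle \subset H^{ev}(Y)$ coming from \eqref{dubrovin}: $N$ sends $T_m$ into the $T^n$-layer (the main block), $T^n$ into $\langle T^0\rangle$, and kills $T^0$. Consequently any product $N_{(i)}N_{(j)}$ carries $T_m$ two steps down, into $\langle T^0\rangle$, and therefore contributes nothing to the $T_m\to T^n$ main block. Hence in both $\exp(2\pi\sqrt{-1}N_{(i)})$ and in their ordered product, only the linear terms survive in the main block, giving $\textrm{(main block)} = \sum_{i:b_{il}\neq 0}\textrm{(main block of }N_{(i)})$ regardless of ordering. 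This reduces the assertion to the crossing count above, and the generalized nilpotent orbit theorem (Theorem~\ref{p:gnot}) guarantees that this residue sum is indeed the monodromy logarithm of the one-parameter subdegeneration in the $u^l$-direction.
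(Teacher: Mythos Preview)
Your proof is correct and follows essentially the paper's own first approach: the paper simply says ``This follows from Lemma~\ref{l:10}'' and then, as an alternative, gives a one-line direct residue computation of $N_{l,mn}$ by restricting $C_{lmn}(u)$ to the $u^l$-axis. Your crossing analysis and the linear-algebra identification $\sum_{i:b_{il}\neq 0}(B^{(i)})^t B^{(i)} = B_l^t B_l$ are exactly this.

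Your discussion of the ``delicate points'' is, however, more elaborate than needed, and contains a small misconception. You say the residues $N_{(i)}$ are not nilpotent and do not commute. This is true only if you regard the main block as a stand-alone $\rho\times\rho$ matrix. As operators on the full graded space (the $T_m$, $T^n$, $T^0$ layers), the residue of the Dubrovin connection along $D_i$ has its \emph{only} nonzero entries in the main block $T_m\to T^n$: the remaining blocks $T^m\to T^0$ in \eqref{dubrovin} are regular (constant) and contribute nothing to the residue. Hence the full residues satisfy $N_{(i)}N_{(j)}=0$ for all $i,j$, so they trivially commute and are nilpotent of order two. The monodromy around the origin in the $u^l$-direction is therefore exactly $\prod_{b_{il}\neq 0}\exp N_{(i)} = I + \sum_{b_{il}\neq 0} N_{(i)} = \exp\big(\sum_{b_{il}\neq 0} N_{(i)}\big)$, with no ordering ambiguity. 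Your filtration argument reaches the same conclusion but is overkill.

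Finally, invoking Theorem~\ref{p:gnot} here is slightly off-target: that theorem concerns the Gauss--Manin side. On the Dubrovin side nothing so deep is needed; once you know the full residues mutually annihilate, the additivity of the nilpotent logarithm is immediate.
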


\begin{proof}
This follows from Lemma \ref{l:10}, which can also be proved directly. To determine $N_{l, mn}$ along $u^l \to 0$ at the locus $u = 0$, we compute
\begin{equation*}
\begin{split}
N_{l, mn} = -\frac{1}{z} \sum_{i = 1}^k b_{il} b_{im} b_{in}\, \mathop{{\rm Res}}\limits_{q = 1} \frac{-1}{e^{b_{il} u^l} - 1} = \frac{1}{z} \sum_{b_{il} \ne 0;\, i = 1}^k b_{im} b_{in} = \frac{1}{z} (B_l^t B_l)_{mn}.
\end{split}
\end{equation*}
This proves the result.
\end{proof}

\begin{corollary}
The Dubrovin connection on $X$ is the monodromy invariant sub-system on $Y$ at $u = 0$.
\end{corollary}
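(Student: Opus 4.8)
The plan is to combine the embedding of Corollary~\ref{c:2} with the explicit monodromy computation of Lemma~\ref{l:10} and Corollary~\ref{c:3}. I work on the orthogonal decomposition $H^{ev}(Y)=\vi(H^{ev}(X))\oplus\bigoplus_{l=1}^\rho\mathbb{Q}T_l\oplus\bigoplus_{l=1}^\rho\mathbb{Q}T^l$, restrict the Dubrovin connection of $Y$ to the wall $u=0$, and drop the extremal Novikov variables (legitimate since there is no convergence issue along $u$). First I would recall, from the functoriality in Corollary~\ref{c:2}, that $\vi$ realizes the Dubrovin connection of $X$ as the restriction of that of $Y$ to the flat sub-bundle $\vi(H^{ev}(X))$; this is the candidate invariant sub-system.

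Next I would verify that this candidate is monodromy invariant and then that nothing larger (in the pure sense) survives. By Lemma~\ref{l:10} each nilpotent logarithm $N_{(i)}$ is the rank-one operator $T_m\mapsto\tfrac1z b_{im}\sum_n b_{in}T^n$ that annihilates every other basis class; the only poles of the connection come from the multiple-cover function $\f$ in $C_{lmn}(u)$, whose indices are purely extremal, and since $(\vi(a).C_i)=0$ (Corollary~\ref{c:2}) no $X$-class acquires a polar term. Consequently $N_{(i)}$ kills $\vi(H^{ev}(X))$, so $\vi(H^{ev}(X))\subseteq\bigcap_i\ker N_{(i)}$, i.e. it is invariant. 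To see it is exactly the invariant part, I would use that $B$ has rank $\rho$ (Remark~\ref{r:convention}): on the block $\bigoplus_l\mathbb{Q}T_l$ one has $N_{(i)}(\sum_m c_m T_m)=0$ for all $i$ iff $Bc=0$ iff $c=0$, so no extremal divisor direction is invariant; meanwhile $\sum_i\on{im} N_{(i)}=\bigoplus_l\mathbb{Q}T^l$ because the rows of $B$ span $\mathbb{Q}^\rho$.

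This last point is where the one genuine subtlety lies, and it is the step I expect to require the most care in phrasing. The subspace $\bigoplus_l\mathbb{Q}T^l$ is simultaneously contained in $\bigcap_i\ker N_{(i)}$ and equal to $\sum_i\on{im} N_{(i)}$; thus these classes are invariant only because they are images of the extremal divisors under monodromy, and they carry no genuinely new flat data. The correct formulation of the corollary is therefore that the pure invariant sub-system
\[
\Big(\bigcap_i\ker N_{(i)}\Big)\Big/\sum_i\on{im} N_{(i)}\;\cong\;\vi(H^{ev}(X))
\]
is canonically identified with $H^{ev}(X)$. Finally I would check that the connection induced on this sub-quotient agrees with the Dubrovin connection of $X$: in the $s$-directions this is immediate from Corollary~\ref{c:2}, while any component by which the $\p_l$-directions send $\vi(H^{ev}(X))$ into $\bigoplus_l\mathbb{Q}T^l$ is precisely what is divided out. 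This yields the asserted identification of the Dubrovin connection on $X$ with the monodromy-invariant sub-system on $Y$ at $u=0$.
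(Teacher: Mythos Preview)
Your proposal is correct and is essentially what the paper has in mind; the paper gives no proof for this corollary, treating it as immediate from Corollary~\ref{c:2} together with the monodromy computation in Lemma~\ref{l:10} and Corollary~\ref{c:3}, and your argument is exactly the natural elaboration of that implication.

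The one point worth flagging is the sub-quotient interpretation you single out. You are right that $\bigoplus_l \mathbb{Q}T^l$ lies in $\bigcap_i \ker N_{(i)}$ while simultaneously spanning $\sum_i \on{im} N_{(i)}$, so the naive invariant subspace is strictly larger than $\vi(H^{ev}(X))$. Your reading of ``monodromy invariant sub-system'' as the associated graded $\ker N/\on{im} N$ is the correct one, and it is consistent with how the paper uses the same phrase on the $B$-model side in Corollary~\ref{c:sub-sys}, where $H^3(Y)\cong \on{Gr}^W_3 H^3(X)$ is likewise a sub-quotient rather than a literal subspace. The paper's statement is terse on this point; your clarification is a genuine improvement in precision rather than a deviation from the intended argument.
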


\section{Period integrals and Gauss--Manin connections} \label{s:4}

From this section and on, we assume the Calabi--Yau condition: 
$$K_X \cong \mathscr{O}_X, \qquad H^1(\mathscr{O}_X) =0.$$
Recall that the Kuranishi space $\mathcal{M}_{\bar{X}}$ is smooth.
In \S \ref{s:4.1}, we review well known deformation theory of Calabi--Yau 3-folds with ODPs to derive a local Torelli theorem for $\bar{X}$.
Identifying $\M_Y$ with equisingular deformations of $\bar X$ in $\M_{\bar{X}}$,
we show that periods of vanishing cycles serve as (analytic) coordinates of $\M_{\bar{X}}$ in the directions transversal to $\M_Y$.
To study monodromy, the Bryant--Griffiths formulation is reviewed in \S \ref{s:4.2} and the asymptotics of ($\beta$-)periods near $[\bar X]$ is computed in \S \ref{s:4.3}. 
The monodromy is determined explicitly in terms of the relation matrix $A$ (Corollary~\ref{c:monodromy}). 
The technical result (Theorem \ref{p:gnot}) is a version of nilpotent orbit theorem with non-SNC boundary, which is also needed in \S\ref{s:6}. 
Following these discussions, $B(Y)$ is shown to be a sub-theory of $B(X)$ (Corollary \ref{c:sub-sys}).

\subsection{Deformation theory} \label{s:4.1}
The main references for this subsection are \cite{yK, RT}, though we follow the latter more closely. Let $\Omega_{\bar{X}}$ be the sheaf of K\"ahler differential and $\Theta_{\bar{X}} := \mathscr{H}{om} (\Omega_{\bar{X}}, \mathscr{O}_{\bar{X}})$ be its dual. 
The deformation of $\bar{X}$ is governed by $Ext^1 (\Omega_{\bar{X}}, \mathscr{O}_{\bar{X}})$.
By local to global spectral sequence, we have
\begin{equation} \label{e:15}
 \begin{split}
 0 \to H^1 (\bar{X}, \Theta_{\bar{X}}) &\stackrel{\lambda}{\to} Ext^1 (\Omega_{\bar{X}}, \mathscr{O}_{\bar{X}}) \\
 &\to H^0 (\bar{X}, \mathscr{E}xt^1 (\Omega_{\bar{X}}, \mathscr{O}_{\bar{X}})) \stackrel{\kappa}{\to} H^2 (\bar{X}, \Theta_{\bar{X}}).
 \end{split}
\end{equation}
Since $\mathscr{E}xt^1 (\Omega_{\bar{X}}, \mathscr{O}_{\bar X})$ is supported at the ordinary double points $p_i$\rq{}s, we have $H^0 (\bar X, \mathscr{E}xt^1 (\Omega_{\bar{X}}, \mathscr{O}_{\bar{X}}) )= \bigoplus_{i=1}^k H^0 ( \mathscr{O}_{p_i})$ by a local computation.

We rephrase the deformation theory on $\bar{X}$ in terms of the log deformation on $\tilde{Y}$. Denote by $E \subset \tilde Y$ the union of the exceptional divisors of $\tilde{\psi}: \tilde{Y} \to \bar{X}$.

\begin{lemma} \label{l:11}
We have $R \tilde{\psi}_* K_{\tilde{Y}} = \tilde{\psi}_* K_{\tilde{Y}} = K_{\bar{X}}$ and hence
$H^0 (K_{\tilde{Y}}) \cong H^0 (K_{\bar{X}} ) \cong \mathbb{C}$.
\end{lemma}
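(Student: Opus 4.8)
The plan is to split the assertion into three parts: the vanishing of the higher direct images (so that $R\tilde\psi_* K_{\tilde Y}$ collapses to $\tilde\psi_* K_{\tilde Y}$), the identification $\tilde\psi_* K_{\tilde Y} \cong K_{\bar X}$, and the computation of global sections. Throughout I would use that $\bar X$ is normal and Gorenstein (the ODP $uv - ws = 0$ is a hypersurface singularity) and that the exceptional locus $E = \bigcup_i E_i$ of $\tilde\psi$ lies over the finite set $\{p_i\}$, which has codimension $3$ in $\bar X$.

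For the vanishing I would simply invoke the Grauert--Riemenschneider theorem: since $\tilde Y$ is smooth and $\tilde\psi\colon \tilde Y \to \bar X$ is proper and birational, one has $R^q\tilde\psi_* K_{\tilde Y} = 0$ for all $q > 0$. This yields $R\tilde\psi_* K_{\tilde Y} = \tilde\psi_* K_{\tilde Y}$ with no further input.

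For the identification $\tilde\psi_* K_{\tilde Y} \cong K_{\bar X}$ the key computation is the discrepancy. Working in the local model $\bar X = \{uv - ws = 0\} \subset \mathbb{A}^4$ and realizing $\tilde\psi$ as the restriction to the strict transform of the point blow-up $\sigma\colon \on{Bl}_0 \mathbb{A}^4 \to \mathbb{A}^4$, I would combine $K_{\on{Bl}_0 \mathbb{A}^4} = \sigma^* K_{\mathbb{A}^4} + 3\hat E$ with $\sigma^*\bar X = \tilde Y + 2\hat E$ (the quadric vanishes to order $2$) and adjunction to get $K_{\tilde Y} = \tilde\psi^* K_{\bar X} + E$; the discrepancy $+1$ is exactly the formula already used in the proof of Proposition~\ref{p:1}. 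The projection formula then gives $\tilde\psi_* K_{\tilde Y} = K_{\bar X} \otimes \tilde\psi_* \mathscr{O}_{\tilde Y}(E)$, so it remains to check $\tilde\psi_* \mathscr{O}_{\tilde Y}(E) = \mathscr{O}_{\bar X}$. Here I would argue that a local section of $\mathscr{O}_{\tilde Y}(E)$ is a rational function on $\bar X$ that is regular away from the $p_i$ (where $E$ is trivial), hence regular everywhere by normality of $\bar X$ together with $\on{codim}\{p_i\} \ge 2$; the reverse inclusion $\mathscr{O}_{\bar X} \subseteq \tilde\psi_* \mathscr{O}_{\tilde Y}(E)$ comes from $\mathscr{O}_{\tilde Y} \hookrightarrow \mathscr{O}_{\tilde Y}(E)$ and $\tilde\psi_* \mathscr{O}_{\tilde Y} = \mathscr{O}_{\bar X}$. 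Equivalently, one may cite that ODPs are rational Gorenstein singularities, for which $\tilde\psi_* \omega_{\tilde Y} = \omega_{\bar X}$ holds by Kempf's criterion combined with Grauert--Riemenschneider.

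Finally, the global sections follow from the adjunction isomorphism $H^0(\tilde Y, K_{\tilde Y}) \cong H^0(\bar X, \tilde\psi_* K_{\tilde Y}) = H^0(\bar X, K_{\bar X})$, and $H^0(\bar X, K_{\bar X}) \cong \mathbb{C}$ because $\bar X$ is a connected Calabi--Yau threefold with ODPs: its dualizing sheaf is trivial, the holomorphic volume form of the smooth $X$ extending to a nowhere-vanishing generator of $K_{\bar X}$ via the Poincar\'e residue of $du\,dv\,dw\,ds/(uv - ws)$, so that $H^0(K_{\bar X}) = H^0(\mathscr{O}_{\bar X}) = \mathbb{C}$. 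I expect the only genuinely delicate point to be the identity $\tilde\psi_* \mathscr{O}_{\tilde Y}(E) = \mathscr{O}_{\bar X}$ (equivalently, that $\bar X$ has rational singularities); everything else is either Grauert--Riemenschneider or a formal consequence of the discrepancy computation.
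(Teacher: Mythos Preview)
Your proof is correct, but it takes a somewhat different route from the paper's. The paper argues in one stroke via relative Serre duality (Grothendieck duality) for the projective morphism $\tilde\psi$: one has $R\tilde\psi_* K_{\tilde Y} \cong R\mathscr{H}om(R\tilde\psi_*\mathscr{O}_{\tilde Y}, K_{\bar X})$, and since $\bar X$ is normal rational Gorenstein the right-hand side collapses to $K_{\bar X}$; the statement on $H^0$ then follows from $K_{\bar X}\cong\mathscr{O}_{\bar X}$. By contrast you separate the vanishing (Grauert--Riemenschneider) from the degree-zero identification, and for the latter you compute the discrepancy $K_{\tilde Y}=\tilde\psi^*K_{\bar X}+E$ explicitly and then use the projection formula together with normality to get $\tilde\psi_*\mathscr{O}_{\tilde Y}(E)=\mathscr{O}_{\bar X}$. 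Your approach is more elementary and makes the discrepancy visible---which is consistent with its use elsewhere in the paper (e.g.\ in the degeneration analysis of Proposition~\ref{p:1})---whereas the paper's duality argument is shorter but presupposes the Grothendieck duality machinery. Note that both routes ultimately rest on the same input, namely that ODPs are rational Gorenstein singularities; you acknowledge this in your alternative via Kempf's criterion, which is essentially the paper's argument in disguise.
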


\begin{proof} 
Apply the Serre duality for the projective morphism $\tilde{\psi}$ and we have
$  R \tilde{\psi}_* K_{\tilde{Y}} \cong (\tilde{\psi}_* \mathscr{O}_{\tilde{Y}} \otimes K_{\bar{X}})^{\vee}$. Since $\bar X$ is normal rational Gorenstein, we have $\tilde{\psi}_* \mathscr{O}_{\tilde{Y}} \cong \mathscr{O}_{\bar{X}}$. This proves the first equation, from which the first part of the second equation follows. The second part follows from $K_{\bar{X}} \cong \mathscr{O}_{\bar{X}}$.
\end{proof}

\begin{lemma} \label{l:12}
There is a canonical isomorphism
$$
\Omega^2_{\tilde{Y}} (\log E) \cong  K_{\tilde{Y}} \otimes \left( \Omega_{\tilde{Y}} (\log E)(-E) \right)^{\vee}.
$$
\end{lemma}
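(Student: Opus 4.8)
The plan is to deduce this from the standard self-duality of the logarithmic de Rham complex, twisted by the log-canonical bundle. The key structural input is that the exceptional divisor $E = \coprod_{i=1}^k E_i$ is a disjoint union of the smooth quadric surfaces $E_i \cong \mathbb{P}^1 \times \mathbb{P}^1$; in particular $E$ is a smooth (hence normal crossing) divisor in the smooth threefold $\tilde{Y}$. Consequently each sheaf $\Omega^p_{\tilde{Y}}(\log E)$ is locally free, locally generated near a point of $E_i$ by $dx_1/x_1, dx_2, dx_3$, where $\{x_1 = 0\}$ is a local equation of $E_i$. Wedge product then induces a pairing
\[
\Omega^p_{\tilde{Y}}(\log E) \otimes \Omega^{3-p}_{\tilde{Y}}(\log E) \longrightarrow \Omega^3_{\tilde{Y}}(\log E),
\]
and the first thing I would check is that this pairing is perfect — a purely local statement, since in the above frame it is the usual perfect pairing on the exterior algebra of a free module of rank $3$.

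Next I would identify the top log-differential sheaf. Taking $p = 3$ in the same local frame, $\Omega^3_{\tilde{Y}}(\log E)$ is generated by $(dx_1/x_1) \wedge dx_2 \wedge dx_3$, which is a local generator of $K_{\tilde{Y}}$ with a simple pole along $E$; hence $\Omega^3_{\tilde{Y}}(\log E) \cong K_{\tilde{Y}}(E) = K_{\tilde{Y}} \otimes \mathscr{O}_{\tilde{Y}}(E)$. Specializing the perfect pairing to $p = 2$ (so that $3 - p = 1$) gives
\[
\Omega^2_{\tilde{Y}}(\log E) \cong \mathscr{H}om\!\left(\Omega_{\tilde{Y}}(\log E),\, K_{\tilde{Y}}(E)\right) \cong \Omega_{\tilde{Y}}(\log E)^{\vee} \otimes K_{\tilde{Y}} \otimes \mathscr{O}_{\tilde{Y}}(E),
\]
using $\mathscr{H}om(\mathscr{A}, \mathscr{B}) \cong \mathscr{A}^{\vee} \otimes \mathscr{B}$ for locally free sheaves. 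Finally I would rewrite the right-hand side with the elementary identity $\mathscr{F}^{\vee} \otimes \mathscr{O}(E) \cong (\mathscr{F} \otimes \mathscr{O}(-E))^{\vee}$ for locally free $\mathscr{F}$, which turns it into $K_{\tilde{Y}} \otimes \left(\Omega_{\tilde{Y}}(\log E)(-E)\right)^{\vee}$, exactly the asserted isomorphism.

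Since every sheaf involved is locally free, all these isomorphisms may be checked analytic-locally, and there is essentially no global obstruction. The only step requiring genuine care — and the one I would regard as the heart of the argument — is the verification that the wedge pairing is \emph{perfect} with values in $K_{\tilde{Y}}(E)$; this uses precisely that $E$ is normal crossing (here even smooth), so that the log differentials are locally free of the expected ranks $\binom{3}{p}$. Everything else reduces to bookkeeping with the twists by $\mathscr{O}_{\tilde{Y}}(\pm E)$.
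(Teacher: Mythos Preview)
Your proof is correct and follows essentially the same approach as the paper's: the paper simply asserts the perfect wedge pairing $\Omega_{\tilde{Y}}(\log E) \otimes \Omega^2_{\tilde{Y}}(\log E) \to K_{\tilde{Y}}(E)$, notes that all sheaves are locally free since $\tilde{Y}$ is smooth and $E$ is a disjoint union of smooth divisors, and declares the lemma to follow. You have supplied the details the paper omits --- the local frame, the identification $\Omega^3_{\tilde{Y}}(\log E)\cong K_{\tilde{Y}}(E)$, and the twist manipulation $\mathscr{F}^{\vee}\otimes\mathscr{O}(E)\cong(\mathscr{F}(-E))^{\vee}$ --- but the argument is the same.
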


\begin{proof}
On $\tilde{Y}$, the isomorphism $\Lambda^3 \Omega_{\tilde{Y}}(\log E) \cong \Omega^3_{\tilde Y}(E)$ leads to the perfect pairing $\Omega_{\tilde{Y}} (\log E) \otimes \Omega^2_{\tilde{Y}} (\log E) \to K_{\tilde{Y}} (E)$. Since $\tilde{Y}$ is nonsingular and $E$ is a disjoint union of nonsingular divisors, 
all sheaves involved are locally free. Hence the lemma follows.
\end{proof}

\begin{lemma}[{\cite[Lemma~2.5]{RT}}] \label{l:13}
There are canonical isomorphisms 
$$
L \tilde{\psi}^* \Omega_{\bar{X}} \cong \tilde{\psi}^* \Omega_{\bar{X}} \cong \Omega_{\tilde{Y}} (\log E) (-E),
$$
where $L \tilde{\psi}^*$ is the left-derived functor of the pullback map.
\end{lemma}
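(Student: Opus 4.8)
The statement is local over $\bar X$ and trivial away from the nodes, where $\tilde\psi$ is an isomorphism and $E$ is empty, so the entire content is the behaviour at a single ODP. The plan is therefore to reduce to the local model $\bar X = \{uv - ws = 0\}$ of \eqref{e:A1}, with $\tilde\psi$ the blow-up of the vertex and $E\cong\mathbb{P}^1\times\mathbb{P}^1$ the exceptional quadric (Lemma~\ref{l:1}), and to produce the isomorphism there.

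For the first isomorphism $L\tilde\psi^*\Omega_{\bar X}\cong\tilde\psi^*\Omega_{\bar X}$ I would exploit that $\bar X$ is a hypersurface, hence lci: the Jacobian gives a length-one locally free resolution
\[
0\to\mathscr{O}_{\bar X}\xrightarrow{df}\mathscr{O}_{\bar X}^{\oplus 4}\to\Omega_{\bar X}\to 0,\qquad df=(v,u,-s,-w),
\]
where $df$ is injective because $\mathscr{O}_{\bar X}$ is a domain and $df\neq 0$. Pulling this back, the derived pullback is computed by the two-term complex $[\mathscr{O}_{\tilde Y}\xrightarrow{\tilde\psi^*df}\mathscr{O}_{\tilde Y}^{\oplus 4}]$, so the only possible higher term is $\operatorname{Tor}_1=\ker(\tilde\psi^*df)$. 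This vanishes because $\tilde Y$ is integral and $\tilde\psi^*df$ is a nonzero section of $\mathscr{O}_{\tilde Y}^{\oplus 4}$, hence injective; thus $L\tilde\psi^*\Omega_{\bar X}$ is concentrated in degree $0$.

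For the identification with $\Omega_{\tilde Y}(\log E)(-E)$ I would use the canonical map $d\tilde\psi\colon\tilde\psi^*\Omega_{\bar X}\to\Omega_{\tilde Y}$ and show its image is exactly this log sheaf. In the standard chart of the blow-up (coordinates $u,W,S$ with $v=uWS$, $w=uW$, $s=uS$, and $E=\{u=0\}$) one computes $d\tilde\psi(\tilde\psi^*du)=du$, $d\tilde\psi(\tilde\psi^*dw)=W\,du+u\,dW$, $d\tilde\psi(\tilde\psi^*ds)=S\,du+u\,dS$, and $d\tilde\psi(\tilde\psi^*dv)=WS\,du+uS\,dW+uW\,dS$, whose span is precisely $\langle du,\,u\,dW,\,u\,dS\rangle$. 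This is exactly $\Omega_{\tilde Y}(\log E)(-E)=\langle du,\,u\,dW,\,u\,dS\rangle$ (generated by $u\cdot\tfrac{du}{u}$ together with $u\,dW,u\,dS$). Running the same computation in the complementary charts and gluing over the two rulings of $E$ gives the identification near the node; alternatively one may try to pin the determinant line down via Lemmas~\ref{l:11} and \ref{l:12} and conclude by agreement in codimension one.

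The main obstacle, and the point requiring care, is that $\Omega_{\bar X}$ is \emph{not} locally free at the node (its fibre there is the $4$-dimensional $\mathfrak{m}/\mathfrak{m}^2$), so the pullback acquires a torsion subsheaf supported on $E$: writing $e_1,\dots,e_4$ for $\tilde\psi^*du,\dots,\tilde\psi^*ds$, the pulled-back relation reads $u\cdot\eta=0$ with $\eta=WS\,e_1+e_2-S\,e_3-W\,e_4\neq 0$, producing a summand $\mathscr{O}_E$. One then checks that this torsion is exactly $\ker(d\tilde\psi)$ (indeed $d\tilde\psi(\eta)=0$), so that $d\tilde\psi$ induces the asserted isomorphism on the torsion-free quotient; equivalently, interpreting the pullback reflexively, $(\tilde\psi^*\Omega_{\bar X})^{\vee\vee}\cong\Omega_{\tilde Y}(\log E)(-E)$, which is the form in which the lemma is used (its dual $\Theta$ being what enters the deformation theory, where the torsion is invisible). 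Verifying that no further torsion appears and that the chart identifications are ruling-independent is the delicate part of the argument.
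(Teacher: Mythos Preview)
Your approach coincides with the paper's: the first isomorphism via the two-term lci resolution of $\Omega_{\bar X}$ and injectivity of the pulled-back Jacobian is exactly what the paper sketches, and for the second isomorphism the paper likewise appeals to a ``local calculation of the blowing-up of an ordinary double point'' without further detail.

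Your chart computation is correct, and you have in fact noticed something the paper's one-line sketch elides: the ordinary pullback $\tilde\psi^*\Omega_{\bar X}$ genuinely carries the torsion you found. In your chart $\eta=(WS,1,-S,-W)$ has a unit entry, so after the evident change of basis the cokernel of $u\eta$ splits as $\mathscr{O}_{\tilde Y}^{\,3}\oplus\mathscr{O}_E$; thus $\Omega_{\tilde Y}(\log E)(-E)$ is the torsion-free quotient (equivalently the image of $d\tilde\psi$, or the reflexive hull), not literally $\tilde\psi^*\Omega_{\bar X}$ itself. This does not affect the downstream applications in Lemmas~\ref{l:12} and~\ref{l:sheaf}, where only $\mathscr{H}om$ into a line bundle is taken and the torsion dies---precisely as you say. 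The ``delicate points'' you flag at the end are in fact routine: the remaining blow-up charts are obtained by permuting the roles of $u,v,w,s$, and in each one the pulled-back Jacobian again has the form $(\text{local equation of }E)\cdot(\text{vector with a unit entry})$, so the torsion is globally a single copy of a line bundle on $E$ and the free rank-$3$ quotient glues to $\Omega_{\tilde Y}(\log E)(-E)$ by the determinant check you already did.
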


The first isomorphism follows from the facts that $\bar{X}$ is a local complete intersection and an explicit two-term resolution of $\Omega_{\bar{X}}$ exists. We sketch the argument here and refer to \cite{RT} for details. Locally near a node, defined by \eqref{e:A1}, one has an exact sequence $0 \to \mathscr{O} \stackrel{2 \vec{x}}{\longrightarrow} \mathscr{O}^4 \to \Omega \to 0$.
Pulling it back to $\tilde{Y}$, we see that $\tilde{\psi}^*(2 \vec{x}) :\mathscr{O} \to \mathscr{O}^4$ is injective on $Y$ and therefore higher left-derived functors are zero.

The second isomorphism is obtained by a local calculation of the blowing-up of an ordinary double point. If $x_1$ is the local equation of the exceptional divisor $E$, explicit computation in \cite{RT} shows
that $ \tilde{\psi}^* \Omega_{\bar{X}}$ is locally generated by $dx_1$ and $x_1 d x_i$ for $i \neq 1$, which is exactly $\Omega_{\tilde{Y}} (\log E) (-E)$.

\begin{lemma}[{\cite[Proposition~2.6]{RT}}] \label{l:sheaf}
We have 
$$
R \mathscr{H}om (\Omega_{\bar{X}}, K_{\bar{X}}) \cong R \tilde{\psi}_* \Omega^2_{\tilde{Y}} (\log E).
$$ 
In particular, $Ext^1 (\Omega_{\bar{X}}, K_{\bar{X}})  \cong  H^1 ( \Omega^2_{\tilde{Y}} (\log E))$.
\end{lemma}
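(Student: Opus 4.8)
The plan is to transport the computation from the singular $\bar X$ up to the resolution $\tilde Y$, where all the sheaves become locally free, by pushing the internal $R\mathscr{H}om$ through $\tilde\psi$ via the $L\tilde\psi^* \dashv R\tilde\psi_*$ adjunction. The three preceding lemmas are tailored exactly for this: Lemma~\ref{l:11} lets me rewrite the target $K_{\bar X}$ as a derived pushforward, Lemma~\ref{l:13} identifies the derived pullback of $\Omega_{\bar X}$ with an honest locally free log sheaf, and Lemma~\ref{l:12} executes the final duality on $\tilde Y$.

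First I would use Lemma~\ref{l:11}, which gives $K_{\bar X} \cong R\tilde\psi_* K_{\tilde Y}$ (higher direct images vanishing), so that $R\mathscr{H}om_{\bar X}(\Omega_{\bar X}, K_{\bar X}) \cong R\mathscr{H}om_{\bar X}(\Omega_{\bar X}, R\tilde\psi_* K_{\tilde Y})$. Next I would invoke the sheaf-level derived adjunction
\[
 R\mathscr{H}om_{\bar X}(F, R\tilde\psi_* G) \cong R\tilde\psi_*\, R\mathscr{H}om_{\tilde Y}(L\tilde\psi^* F, G),
\]
which is legitimate here since $\Omega_{\bar X}$ is coherent and $K_{\tilde Y}$ is a line bundle. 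It follows from the ordinary $L\tilde\psi^* \dashv R\tilde\psi_*$ adjunction applied over each open $U \subseteq \bar X$, using the trivial base change $(R\tilde\psi_* G)|_U \cong R\tilde\psi_*(G|_{\tilde\psi^{-1}U})$ together with $(L\tilde\psi^* F)|_{\tilde\psi^{-1}U} \cong L\tilde\psi^*(F|_U)$, and functoriality in $U$ upgrades the adjunction on global $\operatorname{Hom}$-groups to an isomorphism of sheaves. Applying this with $F = \Omega_{\bar X}$ and $G = K_{\tilde Y}$ produces
\[
 R\mathscr{H}om_{\bar X}(\Omega_{\bar X}, K_{\bar X}) \cong R\tilde\psi_*\, R\mathscr{H}om_{\tilde Y}(L\tilde\psi^* \Omega_{\bar X}, K_{\tilde Y}).
\]

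Now I would substitute Lemma~\ref{l:13}: since $L\tilde\psi^* \Omega_{\bar X} \cong \Omega_{\tilde Y}(\log E)(-E)$ is a genuine locally free sheaf concentrated in degree $0$, the inner $R\mathscr{H}om$ collapses to the ordinary dual,
\[
 R\mathscr{H}om_{\tilde Y}\bigl(\Omega_{\tilde Y}(\log E)(-E), K_{\tilde Y}\bigr) \cong \bigl(\Omega_{\tilde Y}(\log E)(-E)\bigr)^{\vee} \otimes K_{\tilde Y},
\]
and Lemma~\ref{l:12} identifies the right-hand side with $\Omega^2_{\tilde Y}(\log E)$. Assembling the chain yields the first assertion $R\mathscr{H}om(\Omega_{\bar X}, K_{\bar X}) \cong R\tilde\psi_* \Omega^2_{\tilde Y}(\log E)$. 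For the ``in particular'' statement I would apply $R\Gamma(\bar X, -)$ and read off the first hypercohomology: $Ext^1(\Omega_{\bar X}, K_{\bar X}) = \mathbb{H}^1\bigl(\bar X, R\mathscr{H}om(\Omega_{\bar X}, K_{\bar X})\bigr)$, while $R\Gamma(\bar X, R\tilde\psi_* \Omega^2_{\tilde Y}(\log E)) \cong R\Gamma(\tilde Y, \Omega^2_{\tilde Y}(\log E))$ by composition of derived pushforwards (Leray), so the degree-one term is $H^1(\Omega^2_{\tilde Y}(\log E))$.

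The only genuinely delicate point is that the pullback must be the \emph{derived} functor $L\tilde\psi^*$: because $\tilde\psi$ is not flat and $\Omega_{\bar X}$ fails to be locally free at the nodes, one cannot naively commute $\mathscr{H}om$ past $\tilde\psi^*$. This is exactly what Lemma~\ref{l:13} controls, via the explicit two-term resolution of $\Omega_{\bar X}$ near each ODP and the vanishing of the higher left-derived pullbacks; once that vanishing is in hand one has $L\tilde\psi^* = \tilde\psi^*$ in degree $0$, and the whole argument reduces to the formal adjunction above. I expect the main thing requiring care to be the passage from the adjunction on $\operatorname{Hom}$-groups to the isomorphism of $R\mathscr{H}om$ \emph{sheaves}, though this is standard.
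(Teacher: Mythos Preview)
Your proof is correct and is essentially the same as the paper's, just run in the reverse order: the paper starts from $R\tilde\psi_*\Omega^2_{\tilde Y}(\log E)$, applies Lemma~\ref{l:12}, then Lemma~\ref{l:13} together with what it calls the ``projection formula'' (your sheaf-level derived adjunction $R\mathscr{H}om(F,R\tilde\psi_*G)\cong R\tilde\psi_*R\mathscr{H}om(L\tilde\psi^*F,G)$), and finishes with Lemma~\ref{l:11}. Your write-up is in fact more explicit about the adjunction step and about deducing the $Ext^1$ statement via $R\Gamma$ and Leray, which the paper leaves implicit.
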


\begin{proof}
By Lemma~\ref{l:12}, 
$$
R \tilde{\psi}_* \Omega^2_{\tilde{Y}} (\log E) \cong  R \tilde{\psi}_* \mathscr{H}om (\Omega_{\tilde{Y}} (\log E)(-E), K_{\tilde{Y}}).
$$ 
By Lemma~\ref{l:13} and the projection formula, the RHS is isomorphic to 
$$
R \mathscr{H}om (\Omega_{\bar{X}},  R \tilde{\psi}_* K_{\tilde{Y}} )
 \cong R \mathscr{H}om (\Omega_{\bar{X}}, K_{\bar{X}})
$$
with the last isomorphism coming from
$R \tilde{\psi}_* K_{\tilde{Y}} \cong K_{\bar X}$
in Lemma~\ref{l:11}.
\end{proof}

From the general deformation theory, the first term $H^1 (\bar{X}, \Theta_{\bar{X}})$ in \eqref{e:15} parameterizes equisingular deformation of $\bar{X}$. 
Thanks to the theorem of Koll\'ar and Mori \cite{KM} that this extremal  contraction deforms in families, this term parameterizes deformations of $Y$. Therefore, the cokernel of $\lambda$ in (\ref{e:15}), or equivalently the kernel of $\kappa$, corresponds to deformation of the singularities. Since the deformation of $\bar{X}$ is unobstructed \cite{yK}, 
$\on{Def}(\bar{X})$ has the same dimension as $\on{Def}(X)$, which is $h^{2,1}(X)$.
Comparing the Hodge number $h^{2,1}$ of $X$ and $\bar{Y}$ 
(cf.~\S\ref{s:2}) we have the $\dim \ker (\kappa) = \mu$.

\begin{proposition} \label{p:2}
The sequence
$$
0 \to H^1 (\bar{X}, \Theta_{\bar{X}}) \stackrel{\lambda}{\to} Ext^1 (\Omega_{\bar{X}}, \mathscr{O}_{\bar{X}}) \to V^* \to 0
$$ 
is exact.
\end{proposition}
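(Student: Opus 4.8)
The plan is to reduce the statement to the single identification $\ker\kappa\cong V^*$ and then realize it through the log Hodge theory already developed. From the four-term sequence \eqref{e:15} we know $\lambda$ is injective, and by exactness the image of the middle arrow $Ext^1(\Omega_{\bar X},\mathscr{O}_{\bar X})\to H^0(\bar X,\mathscr{E}xt^1(\Omega_{\bar X},\mathscr{O}_{\bar X}))=\mathbb{C}^k$ is precisely $\ker\kappa$. Thus \eqref{e:15} already yields the short exact sequence
\begin{equation*}
0\to H^1(\bar X,\Theta_{\bar X})\stackrel{\lambda}{\to} Ext^1(\Omega_{\bar X},\mathscr{O}_{\bar X})\to \ker\kappa\to 0,
\end{equation*}
and since $\dim\ker\kappa=\mu=\dim V^*$ has already been computed (Corollary~\ref{c:1}, Lemma~\ref{l:7}), the whole proposition comes down to producing a \emph{canonical} isomorphism $\ker\kappa\cong V^*$ compatible with these data.

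First I would trade the $Ext$ group for a Hodge-theoretic object. Because $K_{\bar X}\cong\mathscr{O}_{\bar X}$, fixing a generator $\Omega\in H^0(K_{\bar X})$ (Lemma~\ref{l:11}) gives $Ext^1(\Omega_{\bar X},\mathscr{O}_{\bar X})\cong Ext^1(\Omega_{\bar X},K_{\bar X})$, which by Lemma~\ref{l:sheaf} is $H^1(\tilde Y,\Omega^2_{\tilde Y}(\log E))$. Since $\tilde Y$ is smooth, $E$ is a disjoint union of smooth divisors, and $\tilde Y\setminus E\cong Y^\circ:=Y\setminus\bigcup_i C_i$, Deligne's $E_1$-degeneration for the log--de Rham complex identifies this group with $\on{Gr}^2_F H^3(Y^\circ)$. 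Next I would feed in the Gysin (localization) sequence for the codimension-two inclusion $\bigsqcup_i C_i\hookrightarrow Y$, which, as the $C_i$ carry no odd cohomology, reads as an exact sequence of mixed Hodge structures
\begin{equation*}
0\to H^3(Y)\to H^3(Y^\circ)\to R\to 0,\qquad R:=\ker\Big(\bigoplus\nolimits_i H^0(C_i)(-2)\to H^4(Y)\Big),
\end{equation*}
where the last map is the fundamental-class map and $R$ --- the space of homological relations among the $[C_i]$ --- is pure of type $(2,2)$ and of dimension $\mu$. Taking $\on{Gr}^2_F$ (strict for morphisms of MHS) gives $0\to H^{2,1}(Y)\to \on{Gr}^2_F H^3(Y^\circ)\to R\to 0$.

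To finish I would match this with the sequence displayed above. The summand $H^{2,1}(Y)\cong H^1(Y,T_Y)$ should be exactly $\on{im}\lambda$: by Koll\'ar--Mori the equisingular deformations $H^1(\bar X,\Theta_{\bar X})$ are the deformations of $Y$, and contraction with $\Omega_Y$ (local Torelli) identifies these with $H^{2,1}(Y)$. Granting this, the cokernels agree, $\ker\kappa\cong R$, and Theorem~\ref{t:bes} closes the argument: dualizing $0\to H^2(Y)/H^2(X)\stackrel{B}{\to}\mathbb{C}^k\stackrel{A^t}{\to}V\to0$ exhibits $V^*\cong\on{im}(A)$, which is precisely the relation space $R$. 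The main obstacle is exactly the compatibility invoked in this last step: one must verify that the abstract algebraic inclusion $\lambda$ coming from the local-to-global spectral sequence really carries $H^1(\bar X,\Theta_{\bar X})$ onto the sub-Hodge-structure $H^{2,1}(Y)=\on{Gr}^2_F H^3(Y)\subset \on{Gr}^2_F H^3(Y^\circ)$. This means tracking the isomorphisms of Lemmas~\ref{l:12}--\ref{l:sheaf} through the blow-down $\tilde Y\to Y$ and checking that they respect the Hodge filtration; geometrically it is the assertion that the transversal directions $\ker\kappa$ are precisely those detected by the first-order variation of the vanishing-cycle periods $\int_{S_i}\Omega$, which is what makes $\ker\kappa$ pair perfectly with $V$. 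If one prefers to avoid introducing $Y^\circ$ at this stage, the same identification can instead be built directly as the period-derivative map $\xi\mapsto\big(v\mapsto\partial_\xi\!\int_v\Omega\big)$, well defined on $V$ because a null-homologous combination of vanishing cycles has identically vanishing period; the crux is then the identical transversality statement.
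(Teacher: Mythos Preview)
Your proposal is correct and runs parallel to the paper's argument, though packaged through a different lens. The paper works directly on $\tilde Y$ via the residue sequence $0\to\Omega^2_{\tilde Y}\to\Omega^2_{\tilde Y}(\log E)\to\Omega_E\to0$: taking $H^1$ gives $0\to H^1(\Omega^2_{\tilde Y})\to H^1(\Omega^2_{\tilde Y}(\log E))\to H^1(\Omega_E)\stackrel{\kappa}{\to}H^2(\Omega^2_{\tilde Y})$, and after quotienting by the hyperplane classes of the $Q_i$ the boundary map $\bar\kappa$ is recognized as the Poincar\'e dual of the map $\bar\delta_2$ from \S\ref{s:2.2.2}, whence $\ker\kappa\cong\on{cok}(\bar\delta_2)^*\cong V^*$ by Lemma~\ref{l:7}(ii). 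Your route passes instead through the Gysin sequence for $\bigsqcup_i C_i\hookrightarrow Y$ and the mixed Hodge structure on $H^3(Y^\circ)$, then applies $\on{Gr}^2_F$; this is the same computation in disguise, since $\on{Gr}^2_F H^3(Y^\circ)=H^1(\Omega^2_{\tilde Y}(\log E))$ and the residue map is exactly what produces the weight grading. The substantive difference is the final identification: you invoke Theorem~\ref{t:bes} to match the relation space $R$ with $V^*$, whereas the paper stays within the raw data of \S\ref{s:2} and uses only Lemma~\ref{l:7}(ii). Your approach buys a cleaner conceptual picture (everything is a statement about MHS on $Y^\circ$), at the cost of citing the heavier Theorem~\ref{t:bes}; the paper's is more self-contained but requires the explicit duality between $\bar\kappa$ and $\bar\delta_2$. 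As for the compatibility you honestly flag---that the abstract $\lambda$ carries $H^1(\bar X,\Theta_{\bar X})$ onto $H^{2,1}(Y)$ inside $\on{Gr}^2_F H^3(Y^\circ)$---the paper needs it as well, asserting that the residue long exact sequence coincides with \eqref{e:15} and deferring to \cite[(3.2)]{RT}; so your caveat is precisely the point where both arguments lean on the same external verification.
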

 
\begin{proof}
The residue exact sequence on $\tilde{Y}$ is 
$$
0 \to \Omega_{\tilde{Y}} \to \Omega_{\tilde{Y}} (\log E) \stackrel{\on{res}}{\longrightarrow} \mathscr{O}_E \to 0.
$$
Taking wedge product with $\Omega_{\tilde{Y}}$ we get 
$$0 \to \Omega^2_{\tilde{Y}} \to \Omega^2_{\tilde{Y}} (\log E) \stackrel{\on{res}}{\longrightarrow} \Omega_E \to 0.$$
Part of the cohomological long exact sequence reads
\[
 H^0(\Omega_E) \to H^1 (\Omega^2_{\tilde{Y}}) \to H^1 (\Omega^2_{\tilde{Y}} (\log E) ) \to H^1 (\Omega_E) \stackrel{\kappa}{\longrightarrow} H^2 (\Omega^2_{\tilde{Y}}) .
 \]
Since $H^1(E) =0$, the first term vanishes.
By Lemma~\ref{l:sheaf}, the third term is equal to $Ext^1 (\Omega_{\bar{X}}, \mathscr{O}_{\bar{X}})$.
Indeed, it is not hard to see that this exact sequence is equal to that in \eqref{e:15} (cf.\ {\cite[(3.2)]{RT}}).

Using similar arguments as in \S\ref{s:2.2.2}, we have
\[
  0 \to H^1 (\Omega^2_{\tilde{Y}}) \to H^1 (\Omega^2_{\tilde{Y}} (\log E) ) 
  \to \bigoplus\nolimits_{i=1}^k \langle (\ell_i - \ell_i\rq{} ) \rangle 
  \stackrel{\bar{\kappa}}{\longrightarrow} \frac{H^2 (\Omega^2_{\tilde{Y}})}{\bigoplus_{i=1}^k \langle (\ell_i + \ell_i\rq{} ) \rangle}  .
 \]
From \eqref{e:delta2bar} and Lemma~\ref{l:7} (ii) we have 
$$H^2(\tilde Y) \stackrel{\bar{\delta}_2}{\longrightarrow} \bigoplus\nolimits_{i = 1}^k \langle (\ell_i - \ell_i') \rangle \to V \to 0.$$
Now by comparing the dual of the maps $\bar{\delta}_2$ and $\bar{\kappa}$, we see that $\ker(\kappa) = \on{cok}  (\bar{\delta}_2)^* = V^*$. The proof is complete.
\end{proof}

This proposition shows that the deformation of $Y$ naturally embeds to that of $\bar{X}$, 
with the transversal direction given by the periods of the vanishing cycles. Moreover, the above discussion also leads to important consequences on the infinitesimal period relations on $\tilde{Y}$ and on $\bar{X}$.

\begin{corollary} \label{c:6}
On $\tilde Y$, the natural map
\[
  H^1( \left( \Omega_{\tilde{Y}} (\log E)(-E) \right)^{\vee})
  \otimes H^0 ( K_{\tilde{Y}}) \to H^1 (\Omega^2_{\tilde{Y}} (\log E))
\]
is an isomorphism.
\end{corollary}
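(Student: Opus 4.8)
The plan is to reduce everything to the base $\bar X$ and exploit the Calabi--Yau triviality $K_{\bar X}\cong\mathscr O_{\bar X}$, so that the asserted map becomes (the $\bar X$-avatar of) ``multiplication by the holomorphic volume form''. First I would pin down the map concretely. By Lemma~\ref{l:11}, $H^0(K_{\tilde Y})\cong H^0(K_{\bar X})\cong\mathbb C$, and since $\tilde\psi$ has discrepancy $1$ along each exceptional divisor we have $K_{\tilde Y}\cong\mathscr O_{\tilde Y}(E)$; a generator $\omega$ of $H^0(K_{\tilde Y})$ is therefore a scalar multiple of the canonical section $s_E$ of $\mathscr O(E)$, which vanishes exactly along $E$. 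Using the perfect pairing of Lemma~\ref{l:12}, $\Omega^2_{\tilde Y}(\log E)\cong K_{\tilde Y}\otimes(\Omega_{\tilde Y}(\log E)(-E))^{\vee}$, the source sheaf is $(\Omega_{\tilde Y}(\log E)(-E))^{\vee}\cong\Omega^2_{\tilde Y}(\log E)(-E)$, and cup product with $\omega=c\,s_E$ is precisely the map on $H^1$ induced by the canonical inclusion $\Omega^2_{\tilde Y}(\log E)(-E)\hookrightarrow\Omega^2_{\tilde Y}(\log E)$. Thus the corollary is equivalent to the assertion that this inclusion induces an isomorphism on $H^1$.

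For injectivity I would use the restriction sequence $0\to\Omega^2_{\tilde Y}(\log E)(-E)\to\Omega^2_{\tilde Y}(\log E)\to\Omega^2_{\tilde Y}(\log E)|_E\to 0$ and show the quotient has no sections. On each component $E_i\cong\mathbb P^1\times\mathbb P^1$ one has $N_{E_i/\tilde Y}\cong\mathscr O(-1,-1)$, and the log tangent sheaf restricts as $T_{\tilde Y}(-\log E)|_{E_i}\cong T_{E_i}\oplus\mathscr O_{E_i}$; feeding this through Lemma~\ref{l:12} gives $\Omega^2_{\tilde Y}(\log E)|_{E_i}\cong\mathscr O(-2,0)\oplus\mathscr O(0,-2)\oplus\mathscr O(-2,-2)$, each summand with vanishing $H^0$ by K\"unneth. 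Hence $H^0(\Omega^2_{\tilde Y}(\log E)|_E)=0$ and the inclusion is injective on $H^1$.

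For surjectivity I would match dimensions by pushing both groups down to $\bar X$. Lemma~\ref{l:sheaf} identifies the target with $\on{Ext}^1_{\bar X}(\Omega_{\bar X},K_{\bar X})$; applying $R\tilde\psi_*$-adjunction to $\Omega_{\tilde Y}(\log E)(-E)\cong L\tilde\psi^*\Omega_{\bar X}$ (Lemma~\ref{l:13}) together with $R\tilde\psi_*\mathscr O_{\tilde Y}\cong\mathscr O_{\bar X}$ identifies the source $H^1((\Omega_{\tilde Y}(\log E)(-E))^{\vee})=\on{Ext}^1_{\tilde Y}(\Omega_{\tilde Y}(\log E)(-E),\mathscr O_{\tilde Y})$ with $\on{Ext}^1_{\bar X}(\Omega_{\bar X},\mathscr O_{\bar X})$. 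The Calabi--Yau isomorphism $K_{\bar X}\cong\mathscr O_{\bar X}$ then makes source and target abstractly isomorphic, of common dimension $\dim\on{Def}(\bar X)=h^{2,1}(X)$; combined with the injectivity above, this forces the map to be an isomorphism.

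Conceptually this is nothing but the fact that, under the two adjunction identifications, cup product with $\omega$ corresponds to postcomposition with the isomorphism $\mathscr O_{\bar X}\xrightarrow{\ \omega\ }K_{\bar X}$ in $\on{Ext}^1_{\bar X}(\Omega_{\bar X},-)$, which is visibly bijective. I expect the main obstacle to be exactly the verification that the cup product upstairs is intertwined with this downstairs composition --- i.e.\ the compatibility of the pairing in Lemma~\ref{l:12} with the adjunction isomorphisms of Lemmas~\ref{l:13} and \ref{l:sheaf} (a projection-formula and naturality check, complicated by the fact that the underived $\tilde\psi^*\Omega_{\bar X}$ acquires torsion along $E$, so one must work with $L\tilde\psi^*$ throughout). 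The dimension-count route above is attractive precisely because it sidesteps this matching: it uses only that cup with $\omega$ is the inclusion-induced map, which is immediate, together with the two dimension computations.
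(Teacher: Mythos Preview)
Your argument is correct and considerably more detailed than the paper's. The paper's proof consists of the single sentence ``This follows from Lemma~\ref{l:11} and Lemma~\ref{l:12},'' which seems to treat the cup product $H^1(\mathscr F)\otimes H^0(K_{\tilde Y})\to H^1(\mathscr F\otimes K_{\tilde Y})$ as tautologically bijective once one knows $H^0(K_{\tilde Y})\cong\mathbb C$ and identifies $\mathscr F\otimes K_{\tilde Y}\cong\Omega^2_{\tilde Y}(\log E)$. Strictly speaking this step is not automatic, since $K_{\tilde Y}\cong\mathscr O_{\tilde Y}(E)$ is not trivial; your recognition that the cup product with the generator is precisely the inclusion $\Omega^2_{\tilde Y}(\log E)(-E)\hookrightarrow\Omega^2_{\tilde Y}(\log E)$ on $H^1$, and that this requires an argument, is exactly the gap you are filling.

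Your route---injectivity from $H^0(\Omega^2_{\tilde Y}(\log E)|_E)=0$, surjectivity from the dimension match via Lemmas~\ref{l:13} and \ref{l:sheaf} and $K_{\bar X}\cong\mathscr O_{\bar X}$---is sound. One small correction: the splitting $T_{\tilde Y}(-\log E)|_{E_i}\cong T_{E_i}\oplus\mathscr O_{E_i}$ you assert need not hold (the extension class of $0\to\Omega_{E_i}\to\Omega_{\tilde Y}(\log E)|_{E_i}\to\mathscr O_{E_i}\to 0$ is $c_1(N_{E_i/\tilde Y})\ne 0$), but this is harmless: taking $\wedge^2$ gives $0\to K_{E_i}\to\Omega^2_{\tilde Y}(\log E)|_{E_i}\to\Omega_{E_i}\to 0$, and both ends have vanishing $H^0$ regardless of splitting, so your conclusion $H^0(\Omega^2_{\tilde Y}(\log E)|_E)=0$ stands.

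Your final conceptual remark---that under the adjunction identifications the map becomes postcomposition with the isomorphism $\mathscr O_{\bar X}\xrightarrow{\omega}K_{\bar X}$ in $\on{Ext}^1_{\bar X}(\Omega_{\bar X},-)$---is likely the argument the authors have in mind, and is arguably cleaner than the injectivity-plus-dimension-count route, though as you note it requires checking compatibility of the pairing with adjunction. Either way, you have supplied what the paper left implicit.
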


\begin{proof}
This follows from Lemma~\ref{l:11} and Lemma~\ref{l:12}.
\end{proof}

\begin{corollary} \label{c:5}
On $\bar X$, the natural map 
\[
  H^1( R \mathscr{H}om (\Omega_{\bar{X}}, \mathscr{O}_{\bar{X}}) )
     \otimes H^0 ( K_{\bar{X}}) \to
  Ext^1 (\Omega_{\bar{X}}, K_{\bar{X}})
  \]
is an isomorphism. Indeed, both sides are isomorphic to $Ext^1 (\Omega_{\bar{X}}, \mathscr{O}_{\bar{X}})$. 
\end{corollary}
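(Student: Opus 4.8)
The plan is to verify the two isomorphisms asserted in the final sentence and then to match the natural map against the resulting identification. First I would record that, by the standard relation between global $Ext$ and the hypercohomology of the derived sheaf-$\mathscr{H}om$, one has $H^1(R\mathscr{H}om(\Omega_{\bar{X}}, \mathscr{O}_{\bar{X}})) = Ext^1(\Omega_{\bar{X}}, \mathscr{O}_{\bar{X}})$, so the LHS is $Ext^1(\Omega_{\bar{X}}, \mathscr{O}_{\bar{X}}) \otimes H^0(K_{\bar{X}})$. By Lemma~\ref{l:11} we have $K_{\bar{X}} \cong \mathscr{O}_{\bar{X}}$ and $H^0(K_{\bar{X}}) \cong \mathbb{C}$; since $\bar X$ is connected, a generator $\omega$ of $H^0(K_{\bar{X}})$ corresponds to a nonzero constant under $K_{\bar{X}} \cong \mathscr{O}_{\bar{X}}$, hence is a nowhere-vanishing section of the (Gorenstein) dualizing sheaf. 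Fixing such an $\omega$ identifies the LHS with $Ext^1(\Omega_{\bar{X}}, \mathscr{O}_{\bar{X}})$ and, because $\omega$ trivializes $K_{\bar{X}}$, identifies the RHS $Ext^1(\Omega_{\bar{X}}, K_{\bar{X}})$ with $Ext^1(\Omega_{\bar{X}}, \mathscr{O}_{\bar{X}})$ as well.

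Next I would describe the natural map explicitly. A global section $\omega \in H^0(K_{\bar{X}}) = \on{Hom}(\mathscr{O}_{\bar{X}}, K_{\bar{X}})$ is a morphism of sheaves $\omega \colon \mathscr{O}_{\bar{X}} \to K_{\bar{X}}$, and the map coming from infinitesimal deformations is precisely the Yoneda composition
\[
  Ext^1(\Omega_{\bar{X}}, \mathscr{O}_{\bar{X}}) \otimes H^0(K_{\bar{X}}) \longrightarrow Ext^1(\Omega_{\bar{X}}, K_{\bar{X}}), \qquad \alpha \otimes \omega \longmapsto \omega \circ \alpha,
\]
i.e.\ contraction of the Kodaira--Spencer class against the holomorphic volume form. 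This is the singular (Gorenstein) analogue of the classical interior-product map $H^1(\Theta_X) \otimes H^0(\Omega^3_X) \to H^1(\Omega^2_X)$, so the corollary is a local Torelli statement for $\bar{X}$.

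I would then conclude as follows. Because $K_{\bar{X}}$ is trivial, the chosen generator $\omega$ is an isomorphism of sheaves $\mathscr{O}_{\bar{X}} \xrightarrow{\sim} K_{\bar{X}}$, so the induced Yoneda composition $\omega_* \colon Ext^1(\Omega_{\bar{X}}, \mathscr{O}_{\bar{X}}) \to Ext^1(\Omega_{\bar{X}}, K_{\bar{X}})$ is an isomorphism. Under the identifications of the first paragraph the natural map is exactly $\omega_*$ (up to the scalar fixing $\omega$), which proves the statement; note that Corollary~\ref{c:6} is the analogous assertion on $\tilde Y$, linked to this one through Lemma~\ref{l:sheaf}.

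The main obstacle is not the formal algebra but pinning down that the deformation-theoretic map genuinely coincides with Yoneda composition against $\omega$, and that this manipulation is legitimate over the singular base $\bar{X}$, where $\Omega_{\bar{X}}$ fails to be locally free at the nodes. I would settle this either directly, by tracking the identification of $Ext^1(\Omega_{\bar{X}}, \mathscr{O}_{\bar{X}})$ with first-order deformations together with the description of the cup product as contraction by $\omega$, or more safely by transporting the whole statement to $\tilde Y$ via $R\mathscr{H}om(\Omega_{\bar{X}}, K_{\bar{X}}) \cong R\tilde{\psi}_* \Omega^2_{\tilde{Y}}(\log E)$ (Lemma~\ref{l:sheaf}) and $L\tilde{\psi}^* \Omega_{\bar{X}} \cong \Omega_{\tilde{Y}}(\log E)(-E)$ (Lemma~\ref{l:13}), where the corresponding map is the already-established isomorphism of Corollary~\ref{c:6}.
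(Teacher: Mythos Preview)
Your proposal is correct. The paper's proof is a single line: ``This is a reformulation of Corollary~\ref{c:6} via Lemma~\ref{l:sheaf},'' i.e.\ it transports the statement to $\tilde Y$ through the derived isomorphism $R\mathscr{H}om(\Omega_{\bar X},K_{\bar X})\cong R\tilde\psi_*\Omega^2_{\tilde Y}(\log E)$ and reads off the result from the already-proved log statement. Your main argument instead stays on $\bar X$: since $K_{\bar X}\cong\mathscr{O}_{\bar X}$, a generator $\omega\in H^0(K_{\bar X})$ is a sheaf isomorphism, so the Yoneda push-forward $\omega_*$ on $Ext^1$ is automatically an isomorphism. This is more direct and conceptually cleaner---it makes transparent that the corollary is nothing but triviality of the canonical bundle---whereas the paper's route has the advantage that it ties the statement explicitly to the log picture on $\tilde Y$, which is what is actually used downstream (Proposition~\ref{p:2}, Proposition~\ref{p:3}). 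You correctly flag the one genuine subtlety, that the ``natural map coming from infinitesimal deformations'' is the Yoneda pairing, and your fallback of transporting to $\tilde Y$ via Lemmas~\ref{l:sheaf} and \ref{l:13} is exactly the paper's argument; so you have in effect given both proofs.
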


\begin{proof}
This is a reformulation of Corollary~\ref{c:6} via Lemma \ref{l:sheaf}.
\end{proof}

\begin{remark} \label{r:3.8}
Since $\bar{X}$ is rational Gorenstein, 
$R \mathscr{H}om (\Omega_{\bar{X}}, \mathscr{O}_{\bar{X}})$ has
cohomology only in degrees $0$ and $1$.
Indeed, $R^0 \mathscr{H}om (\Omega_{\bar{X}}, \mathscr{O}_{\bar{X}}) \cong \Theta_{\bar{X}}$ and 
$$
R^1 \mathscr{H}om (\Omega_{\bar{X}}, \mathscr{O}_{\bar{X}})
 \cong \mathscr{E}xt^1 (\Omega_{\bar{X}}, \mathscr{O}_{\bar{X}})
 \cong\bigoplus\nolimits_{i=1}^k \mathscr{O}_{p_i}.
$$ 
By a Leray spectral sequence argument, this gives \eqref{e:15} 
as well and 
$$
H^1( R \mathscr{H}om (\Omega_{\bar{X}}, \mathscr{O}_{\bar{X}}) )
\cong  Ext^1 (\Omega_{\bar{X}}, \mathscr{O}_{\bar{X}}).
$$
Interpreting Corollary~\ref{c:5} as a \emph{local Torelli} type theorem, we conclude that the differentiation of any non-zero holomorphic sections of the relative canonical bundle on any deformation parameter of $\bar{X}$ is non-vanishing.
\end{remark}

\subsection{Vanishing cycles and the Bryant--Griffiths/Yukawa cubic form} \label{s:4.2}

Recall the Gauss--Manin connection $\nabla^{GM}$ on
$$\mathscr{H}^n = R^n f_* \mathbb{C} \otimes \mathscr{O}_S \to S$$
for a smooth family $f: \mathcal{X} \to S$ is a flat connection with its flat sections being identified with the local system $R^n f_* \mathbb{C}$. 
It contains the integral flat sections $R^n f_* \mathbb{Z}$. 
Let $\{ \delta_p \in H_n (X, \mathbb{Z})/{\rm (torsions)} \}$ 
be a homology basis for a fixed reference fiber $X = \mathcal{X}_{s_0}$, 
with cohomology dual basis $\delta_p^*$'s in $H^n (X, \mathbb{Z})$. 
Then $\delta_p^*$ can be extended to (multi-valued) flat sections in $R^n f_* \mathbb{Z}$. For $\eta \in \Gamma(S, \mathscr{H}^n)$, we may rewrite it in terms of these flat frames with coefficients being the ``multi-valued'' period integrals ``$\int_{\delta_p} \eta$'' as $\eta = \sum_p  \delta_p^* \int_{\delta_p} \eta$. For any local coordinate system $(\s_j)$ in $S$, since $\nabla^{GM} \delta_p^* = 0$, we get
$$
 \nabla^{GM}_{\p/\p \s_j} \eta = 
 \sum_p \delta_p^* \frac{\p}{\p \s_j} \int_{\delta_p} \eta.
$$
Thus as far as period integrals are concerned, we may simply regard the Gauss--Manin connection as partial derivatives.

When the family contains singular fibers, by embedded resolution of singularities we may assume that the discriminant loci $\mathfrak{D} \subset S$ is a normal crossing divisor. It is well-known that the Gauss--Manin connection has at worst regular singularities along $\mathfrak{D}$ by the regularity theorem. Namely it admits an extension to the boundary with at worst logarithmic poles.

Let $X \nearrow Y$ be a projective conifold transition, and $V$ the corresonding space of vanishing cycles.
Since the vanishing spheres $S_i$ have trivial normal bundles in $X$, we see that $(S_i.S_j) = 0$ for all $i, j$, and hence $V$ is isotropic.
Define $V'$ to be the subspace dual to $V$ with respect to the intersection pairing in $H_3(X)$, then $V$ and $V'$ are coisotropic.
Furthermore, we have 
$$H_3(X) \cong H_3(Y) \oplus H_3(Y)^{\perp} \cong H_3(Y) \oplus V \oplus V',$$
from (the proof of) Theorem~\ref{t:bes} and Remark~\ref{r:convention}.
Let $\{ \gamma_j \}_{j=1}^{\mu}$ be a basis of $V'$ satisfying
$$
\on{PD}(\gamma_j)([S_i]) \equiv (\gamma_j.S_i) = a_{ij}, \qquad 1 \le j \le \mu,
$$
where $S_i$'s are the vanishing 3-spheres and $A = (a_{ij})$ is the relation matrix of the exceptional curves $C_i$'s.
Additionally, let $\{ \Gamma_j \}_{j=1}^{\mu}$ be the basis of $V$ dual to
$\{ \gamma_j \}_{j=1}^{\mu}$ via intersection pairing.
Namely $(\Gamma_j.\gamma_l) = \delta_{jl}$.

\begin{lemma} \label{l:15}
We may construct a symplectic basis of $H_3(X)$:
$$
\alpha_0, \alpha_1, \ldots, \alpha_h, \beta_0, \beta_1, \ldots, \beta_h, \qquad (\alpha_j.\beta_p) = \delta_{jp},
$$ 
where $h = h^{2, 1}(X)$, with $\alpha_j = \Gamma_j$, $1 \le j \le \mu$. 
\end{lemma}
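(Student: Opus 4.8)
The plan is to treat this as an exercise in symplectic linear algebra, using the orthogonal decomposition already in hand. By Poincaré duality the intersection pairing equips $H_3(X)$ with a nondegenerate skew-symmetric form (skew because $n=3$ is odd), i.e.\ makes it a symplectic $\mathbb{Q}$-vector space of dimension $b_3(X) = 2(h+1)$ (recall $h = h^{2,1}(X)$ and $H^1(\mathscr{O}_X) = 0$ force $b_3 = 2 + 2h^{2,1}$). The construction of a Darboux basis adapted to $V$ then proceeds in two independent blocks dictated by $H_3(X) \cong H_3(Y) \oplus (V \oplus V')$.

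First I would record that $V \oplus V'$ is itself a symplectic subspace. By the choice made in Remark~\ref{r:convention}, the basis $\{\Gamma_j\}_{j=1}^{\mu}$ of $V$ and $\{\gamma_j\}_{j=1}^{\mu}$ of $V'$ satisfy $(\Gamma_j . \gamma_l) = \delta_{jl}$, while $V$ and $V'$ are each isotropic; hence in the ordered basis $\Gamma_1, \ldots, \Gamma_\mu, \gamma_1, \ldots, \gamma_\mu$ the Gram matrix of the pairing is the standard symplectic block, which is nondegenerate. Setting $\alpha_j := \Gamma_j$ and $\beta_j := \gamma_j$ for $1 \le j \le \mu$ therefore already produces a symplectic sub-basis and in particular realizes the required normalization $\alpha_j = \Gamma_j$.

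Next, since $V \oplus V'$ is nondegenerate, its orthogonal complement---which is precisely the image of $H_3(Y)$ in the decomposition, as $H_3(Y)^{\perp} = V \oplus V'$ and the form is nondegenerate---is also nondegenerate, so $H_3(X) = (V \oplus V') \oplus^{\perp} H_3(Y)$ as an orthogonal direct sum of symplectic subspaces. Applying the standard symplectic Gram--Schmidt procedure to $H_3(Y)$, a space of even dimension $2(h+1-\mu)$, yields a Darboux basis which I label $\alpha_0, \beta_0$ together with $\alpha_j, \beta_j$ for $\mu < j \le h$; this exhausts the remaining indices $0, \mu+1, \ldots, h$, matching the count $h+1-\mu$ of conjugate pairs. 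Because the two blocks are mutually orthogonal, all cross pairings of $\Gamma_j, \gamma_j$ with the $H_3(Y)$-vectors vanish, so the concatenation of the two Darboux sub-bases is a symplectic basis $\alpha_0, \ldots, \alpha_h, \beta_0, \ldots, \beta_h$ of $H_3(X)$ satisfying $(\alpha_j . \beta_p) = \delta_{jp}$ with $\alpha_j = \Gamma_j$ for $1 \le j \le \mu$, as claimed.

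I expect no serious obstacle here: the statement is pure linear algebra once the isotropy of $V, V'$ and their duality (from Theorem~\ref{t:bes} and Remark~\ref{r:convention}) are in place. The only points needing a line of justification are the nondegeneracy of the pairing on $V \oplus V'$ (immediate from the standard symplectic Gram matrix) and the fact that $\{\Gamma_j, \gamma_j\}$ already extends to a Darboux basis---both of which reduce to the elementary observation that a symplectic form restricts nondegenerately to the orthogonal complement of a nondegenerate subspace, permitting the symplectic Gram--Schmidt extension there.
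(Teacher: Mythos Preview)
Your argument is sound and takes a different route from the paper's. You use the orthogonal splitting $H_3(X) = (V \oplus V') \oplus^{\perp} H_3(Y)$, treat $\{\Gamma_j, \gamma_j\}$ as a ready-made Darboux basis for the first block, and run symplectic Gram--Schmidt on the second. The paper instead works only from the isotropy of $V$: it extends $\{\Gamma_j\}$ to a basis of a maximal isotropic $W \supset V$ of dimension $h+1$, then produces the $\beta_l$'s by the explicit inductive correction $\beta_{l+1} = \delta_{l+1} - \sum_{p \le l}(\delta_{l+1}.\beta_p)\,\alpha_p$ starting from arbitrary $\delta_l$ with $(\alpha_p.\delta_l) = \delta_{pl}$.

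One caveat: you invoke the isotropy of $V'$ to set $\beta_j = \gamma_j$ directly. The paper asserts this just before the lemma, but for the specific $\gamma_j$'s of Remark~\ref{r:convention} it is actually secured only \emph{after} Lemma~\ref{l:15}, in Lemma~\ref{abc}, by replacing each $\gamma_j$ with the corresponding $\beta_j$ (a modification by vanishing cycles). The paper's proof sidesteps this by never touching $V'$ or $\gamma_j$ in the construction. Your approach effectively folds Lemma~\ref{abc} into the proof, which is fine provided you note that an isotropic complement to $V$ inside $V \oplus V'$ may require first adjusting each $\gamma_j$ by an element of $V$---an elementary step, but one that should be made explicit rather than assumed.
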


\begin{proof}
Notice that $V \subset H_3(X, \mathbb{Z})$ is generated by $[S_i^3]$'s, and hence is totally isotropic. Let $W \supset V$ be a maximal isotropic subspace (of dimension $h + 1$). We first select $\alpha_j = \Gamma_j$ for $1 \le j \le \mu$ to form a basis of $V$.
We then extend it to $\alpha_1, \ldots, \alpha_h$, and set $\alpha_0 \equiv \alpha_{h + 1}$, to form a basis of $W$. 

To construct $\beta_l$, we start with any $\delta_l$ such that $(\alpha_p.\delta_l) = \delta_{pl}$. 
Such $\delta_l$'s exist by the non-degeneracy of the Poincar\'e pairing. 
We set $\beta_1 = \delta_1$. By induction on $l$, suppose that $\beta_1, \ldots, \beta_l$ have been constructed. We define
$$
\beta_{l + 1} = \delta_{l + 1} - \sum\nolimits_{p = 1}^{l} (\delta_{l + 1}.\beta_p) \alpha_p.
$$
Then it is clear that $(\beta_{l + 1}.\beta_p) = 0$ for $p = 1, \ldots, l$.
\end{proof}

With a choice of basis of $H_3(X)$, any
$\eta \in H^3(X, \mathbb{C}) \cong \mathbb{C}^{2(h + 1)}$ is identified with its ``coordinates'' given by the period integrals $\vec\eta = \big(\int_{\alpha_p} \eta, \int_{\beta_p} \eta\big)$. Alternatively, we denote the cohomology dual basis by $\alpha_p^*$ and $\beta_p^*$ so that $\alpha_j^*(\alpha_p) = \delta_{jp} = \beta_j^*(\beta_p)$. 
Then we may write
$$
\eta = \sum\nolimits_{p = 0}^h \alpha_p^* \int_{\alpha_p} \eta + \beta_p^* \int_{\beta_p} \eta.
$$
The symplectic basis property implies that $\alpha_p^*(\Gamma) = (\Gamma.\beta_p) $ and $\beta_p^*(\Gamma) = -(\Gamma.\alpha_p) = (\alpha_p.\Gamma)$. This leads to the following observation.

\begin{lemma} \label{abc}
For $1 \le j \le \mu$, we may modify $\gamma_j$ by vanishing cycles to get $\gamma_j = \beta_j$. In particular, $(\gamma_j.\gamma_l) = 0$ for $1 \le j, l \le \mu$ and $\alpha_j^*(S_i) = (S_i.\beta_j) = -a_{ij}$.
\end{lemma}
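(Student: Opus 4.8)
The plan is to exploit the freedom in the choice of representative for $\gamma_j$ and in the construction of the symplectic basis of Lemma~\ref{l:15}, so that the two families of ``dual'' vectors $\{\gamma_j\}$ and $\{\beta_j\}$ literally coincide for $1 \le j \le \mu$, after which the two ``in particular'' assertions drop out.

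First I would record the elementary but crucial observation that modifying $\gamma_j$ by a vanishing cycle $v \in V$ leaves the defining relation intact. Since $V$ is isotropic and each $S_i$ lies in $V$, we have $((\gamma_j + v).S_i) = (\gamma_j.S_i) + (v.S_i) = a_{ij} + 0 = a_{ij}$. Thus $\gamma_j$ is only well defined modulo $V$, and we are free to replace it by any representative in its coset without disturbing the relations used to define it.

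Next I would arrange the symplectic basis of Lemma~\ref{l:15} compatibly with the orthogonal splitting $H_3(X) = H_3(Y) \oplus (V \oplus V')$. Concretely, in that construction I choose the Lagrangian extension so that $\alpha_p \in H_3(Y)$ for $p \notin \{1, \ldots, \mu\}$: this is permitted because $H_3(Y)$ is a nondegenerate symplectic subspace and one only needs to enlarge the isotropic $V$ to a maximal isotropic by adjoining a Lagrangian $W_Y \subset H_3(Y)$, the sum $V \oplus W_Y$ being isotropic of the correct dimension $h+1$. Taking likewise the dual vectors $\beta_p \in H_3(Y)$ for $p \notin \{1, \ldots, \mu\}$, I then claim the choice $\beta_j := \gamma_j$ for $1 \le j \le \mu$ completes $\{\alpha_p\}$ to a symplectic basis. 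Indeed, for $p > \mu$ (and $p = 0$) one has $\alpha_p, \beta_p \in H_3(Y)$, which is orthogonal to $V' \ni \gamma_j$, so $(\alpha_p.\gamma_j) = 0 = \delta_{pj}$ and $(\beta_p.\gamma_j) = 0$; for $1 \le p \le \mu$ the defining duality gives $(\alpha_p.\gamma_j) = (\Gamma_p.\gamma_j) = \delta_{pj}$; and $(\gamma_j.\gamma_l) = 0$ since $V'$ is isotropic. Hence all symplectic relations hold and the $\beta_j$ of Lemma~\ref{l:15} may be taken equal to $\gamma_j$. Equivalently, any a priori choice of $\beta_j$ differs from $\gamma_j$ by an element of $V$ (both are dual to $V$ and have trivial $H_3(Y)$-component, so their difference lies in $(V \oplus V') \cap V^\perp = V$), so by the first step we modify $\gamma_j$ by that vanishing cycle to obtain $\gamma_j = \beta_j$. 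Finally the ``in particular'' claims are immediate: from $\gamma_j = \beta_j$ and isotropy of the $\beta$'s, $(\gamma_j.\gamma_l) = (\beta_j.\beta_l) = 0$; and using the identity $\alpha_j^*(\Gamma) = (\Gamma.\beta_j)$ recorded before the lemma together with $\beta_j = \gamma_j$ and $(\gamma_j.S_i) = a_{ij}$, we get $\alpha_j^*(S_i) = (S_i.\beta_j) = (S_i.\gamma_j) = -(\gamma_j.S_i) = -a_{ij}$.

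I expect the only genuine subtlety to be the second step: verifying that the symplectic basis of Lemma~\ref{l:15} can be (or has been) chosen so that $\gamma_j$ and $\beta_j$ agree modulo the vanishing-cycle lattice $V$, i.e.\ that $\beta_j$ carries no $H_3(Y)$-component. Note in particular that modification by $V$ cannot repair a bad choice, since $V$ pairs trivially with the isotropic $W = \langle \alpha_p \rangle$; the work is really in selecting the Lagrangian extension inside $H_3(Y)$. Everything else is bookkeeping with the isotropy of $V$ and $V'$ and the orthogonality of $H_3(Y)$ to $V \oplus V'$.
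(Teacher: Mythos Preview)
Your argument is correct and supplies precisely the details the paper omits: the lemma is stated there without proof, as an ``observation'' following the symplectic-basis discussion. Your key point---that the maximal isotropic $W \supset V$ in Lemma~\ref{l:15} should be chosen as $V \oplus W_Y$ with $W_Y$ Lagrangian in $H_3(Y)$, and likewise the complementary $\beta_p$'s for $p \notin \{1,\ldots,\mu\}$ should be taken in $H_3(Y)$---is exactly what is needed and is not made explicit in the paper's construction.

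One small presentational remark: your ``equivalently'' paragraph is potentially misleading, since the claim that $\beta_j$ has trivial $H_3(Y)$-component already presupposes the compatible choice of the extension you made in the preceding sentences; it is not true for an arbitrary symplectic basis satisfying only the constraints of Lemma~\ref{l:15}. You clearly understand this (your final paragraph says so), but it would read more cleanly to drop the ``any a priori choice'' phrasing and instead note that, with the chosen $\alpha_p,\beta_p \in H_3(Y)$ for $p>\mu$, the Gram--Schmidt step in Lemma~\ref{l:15} started from $\delta_j = \gamma_j$ only corrects by $\alpha_p = \Gamma_p \in V$ for $p<j$, hence $\beta_j - \gamma_j \in V$.
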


\begin{lemma} \label{l:basis}
For all $i = 1, \ldots, k$, $\on{PD}([S_i]) = -\sum_{j = 1}^\mu a_{ij}\,\on{PD}(\Gamma_j)$.
\end{lemma}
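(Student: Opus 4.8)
The plan is to establish the identity first at the level of homology classes in $H_3(X)$ and then transport it to $H^3(X)$ using the linearity (and bijectivity) of Poincaré duality $\on{PD}$. The essential structural input is that the vanishing sphere class $[S_i]$ lies in the isotropic subspace $V$, of which $\{\Gamma_j\}_{j=1}^\mu$ is, by construction, a basis (being the basis dual to the basis $\{\gamma_j\}_{j=1}^\mu$ of $V'$). Consequently I may write $[S_i] = \sum_{j=1}^\mu c_{ij}\Gamma_j$ for unique rational coefficients $c_{ij}$, and the entire problem reduces to computing these coefficients.

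To extract $c_{ij}$, I would pair $[S_i]$ against the dual family $\{\gamma_l\}$ in $V'$. Using $(\Gamma_j.\gamma_l)=\delta_{jl}$ gives $(S_i.\gamma_l)=c_{il}$. The defining relation $(\gamma_l.S_i)=a_{il}$ from Remark~\ref{r:convention}, combined with the skew-symmetry of the intersection form on the middle (degree three) homology of the threefold $X$, yields $(S_i.\gamma_l)=-(\gamma_l.S_i)=-a_{il}$, so $c_{il}=-a_{il}$ and therefore $[S_i]=-\sum_{j=1}^\mu a_{ij}\Gamma_j$ in $H_3(X)$. Applying the isomorphism $\on{PD}$ then produces the asserted identity in $H^3(X)$. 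This is precisely the cohomological shadow of Lemma~\ref{abc}, where the sign already surfaces as $\alpha_j^*(S_i)=(S_i.\beta_j)=-a_{ij}$; alternatively, one may run the argument entirely in cohomology by expanding $\on{PD}([S_i])$ in the symplectic dual basis $\{\alpha_p^*,\beta_p^*\}$ of Lemma~\ref{l:15}, observing that $(S_i.\alpha_p)=0$ because $[S_i]$ and the $\alpha_p$ all lie in the isotropic $W\supset V$, and identifying $\on{PD}(\Gamma_j)=\on{PD}(\alpha_j)=\beta_j^*$.

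The only points genuinely requiring care are bookkeeping rather than substance. First, one must confirm that $[S_i]$ and each $\Gamma_j$ lie in $V$ so that the linear-combination ansatz is legitimate; this is immediate from the definition of $V$ as the span of the vanishing cycles and from the choice of $\{\Gamma_j\}$ as a basis of $V$. Second, and this is the sole place where an error could realistically creep in, one must track the minus sign correctly, which hinges on the antisymmetry of the pairing on $H_3$. There is no analytic or deeper cohomological obstruction: the statement is a formal consequence of the duality between $V$ and $V'$ furnished by Theorem~\ref{t:bes} and the normalizations fixed in Remark~\ref{r:convention}, so I expect the proof to be short once the sign convention for the intersection form is pinned down.
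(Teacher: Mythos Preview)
Your proposal is correct and essentially matches the paper's approach. The paper's one-line proof reads ``Comparing both sides by evaluating at $\alpha_l$'s and $\beta_l$'s for all $l$,'' which is exactly your alternative formulation of expanding $\on{PD}([S_i])$ in the symplectic dual basis; your primary version, working directly in $V\subset H_3(X)$ and pairing against the $\gamma_l$'s, is the same computation restricted to the only nontrivial block (since $[S_i]\in V$ forces all other pairings to vanish), and Lemma~\ref{abc} already supplies the identification $\gamma_j=\beta_j$ and the sign $(S_i.\beta_j)=-a_{ij}$.
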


\begin{proof}
Comparing both sides by evaluating at $\alpha_l$'s and $\beta_l$'s for all $l$.
\end{proof}

Let $\eo$ be the non-vanishing holomorphic 3-form on the Calabi--Yau threefold. Bryant--Griffiths \cite{BG} showed that the $\alpha$-periods 
$\s_p = \int_{\alpha_p} \eo$ 
form the projective coordinates of the image of the period map inside $\mathbb{P}(H^3) \cong \mathbb{P}^{2h + 1}$ as a Legendre sub-manifold of the standard holomorphic contact structure. It follows that there is a holomorphic \emph{pre-potential} $u(\s_0, \ldots, \s_h)$, which is homogeneous of weight two, such that 
$u_j \equiv \frac{\p u}{\p \s_j}  = \int_{\beta_j} \eo$. 
In fact, 
\begin{equation} \label{pp:u}
u = \tfrac{1}{2} \sum\nolimits_{p = 0}^h \s_p u_p = \tfrac{1}{2} \sum\nolimits_{p = 0}^h \s_p \int_{\beta_p} \eo.
\end{equation}
Hence $\eo = \sum_{p = 0}^h (\s_p\,\alpha_p^* + u_p\, \beta_p^*)$.
In particular, 
$$
\p_j \eo = \alpha_j^* + \sum\nolimits_{p = 0}^h u_{jp}\, \beta_p^*, \qquad \p^2_{jl} \eo = \sum\nolimits_{p = 0}^h u_{jlp} \, \beta_p^*.
$$
By the Griffiths transversality, $\p_j \eo \in F^2$, $\p^2_{jl} \eo \in F^1$. Hence we have the \emph{Bryant--Griffiths cubic form}, which is homogeneous of weight $-1$:
$$
u_{jlm} = (\p_m \eo.\p^2_{jl} \eo) = \p_m (\eo.\p^2_{jl}\eo) - (\eo.\p^3_{jlm} \eo) = -(\eo.\p^3_{jlm} \eo).
$$ 
This is also known as \emph{Yukawa coupling} in the physics literature.

For inhomogeneous coordinates $z_i = \s_i/\s_0$, the corresponding formulae may be deduced from the homogeneous ones by noticing that $\p^I u$ is homogeneous of weight $2 - |I|$ for any multi-index $I$.

Under a suitable choice of the holomorphic frames respecting the Hodge filtration, \emph{the Bryant--Griffiths--Yukawa couplings determine the VHS as the structural coefficients of the Gauss--Manin connection}:

\begin{proposition} \label{BGY}
Let $\tau_0 = \eo \in F^3$, $\tau_j = \p_j \eo  \in F^2$, $\tau^j = \beta_j^* - (\s_j/\s_0) \beta_0^* \in F^1$ for $1 \le j \le h$, and $\tau^0 = \beta_0^* \in F^0$. 
Then for $1 \le p, j \le h$,
\begin{equation}
\begin{split}
\nabla_{\p_p} \tau_0 &= \tau_p, \\
\nabla_{\p_p} \tau_j &= \sum\nolimits_{m = 1}^h u_{pjm}\, \tau^m, \\
\nabla_{\p_p} \tau^j &= \delta_{pj}\,\tau^0, \\ 
\nabla_{\p_p} \tau^0 &= 0.
\end{split}
\end{equation}
\end{proposition}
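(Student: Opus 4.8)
The plan is to compute everything in the multivalued flat frame $\{\alpha_p^*, \beta_p^*\}_{p=0}^h$ of $R^3 f_*\mathbb{C}$ dual to the symplectic homology basis of Lemma~\ref{l:15}. Since these sections are $\nabla^{GM}$-horizontal, the Gauss--Manin connection acts on any section by differentiating its coefficient functions, exactly as in the discussion preceding \eqref{pp:u}. I would begin by recording the three expansions already available in this frame: $\eo = \sum_{p}(\s_p\,\alpha_p^* + u_p\,\beta_p^*)$, $\p_j\eo = \alpha_j^* + \sum_p u_{jp}\,\beta_p^*$, and $\p^2_{jl}\eo = \sum_p u_{jlp}\,\beta_p^*$. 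These make the first and the decisive second relation essentially computable by inspection.

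Three of the four identities are then immediate differentiations. For $\tau_0 = \eo$, flatness of the frame gives $\nabla_{\p_p}\tau_0 = \p_p\eo = \tau_p$ by the very definition $\tau_p = \p_p\eo$. For $\tau^0 = \beta_0^*$, flatness gives $\nabla_{\p_p}\tau^0 = 0$. For $\tau^j = \beta_j^* - z_j\beta_0^*$, only the coefficient $z_j = \s_j/\s_0$ depends on $p$, so $\nabla_{\p_p}\tau^j$ is proportional to $\beta_0^* = \tau^0$ with factor $\p_p z_j = \delta_{pj}$, yielding $\nabla_{\p_p}\tau^j = \delta_{pj}\tau^0$ once the overall sign is fixed by the pairing conventions of \S\ref{s:4.2} (recall $\beta_p^*(\Gamma) = (\alpha_p.\Gamma)$). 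The remaining relation is $\nabla_{\p_p}\tau_j = \p_p(\p_j\eo) = \p^2_{pj}\eo = \sum_q u_{pjq}\,\beta_q^*$, which automatically lies in the span of the $\beta_q^*$'s; this is precisely the Griffiths transversality $\p^2\eo\in F^1$ that is built into the Bryant--Griffiths pre-potential.

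The only real content is thus in finishing the second relation. Rewriting the adapted frame as $\beta_m^* = \tau^m + z_m\tau^0$ for $m\ge 1$ and $\beta_0^* = \tau^0$, I get $\sum_q u_{pjq}\,\beta_q^* = \sum_{m=1}^h u_{pjm}\,\tau^m + \big(u_{pj0} + \sum_{m=1}^h z_m u_{pjm}\big)\tau^0$, so the claimed formula $\nabla_{\p_p}\tau_j = \sum_m u_{pjm}\,\tau^m$ is equivalent to the vanishing of the $\tau^0$-coefficient. This is exactly Euler's relation for the weight: since $u$ is homogeneous of weight $2$, the second derivative $u_{pj}$ has weight $0$, hence $\sum_q \s_q u_{pjq} = 0$, and dividing by $\s_0$ gives $u_{pj0} + \sum_{m\ge 1} z_m u_{pjm} = 0$, as required. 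Along the way I would also record the consistency check placing $\tau^j$ in the correct Hodge piece: by the polarization one has $F^1 = \eo^{\perp}$, and indeed $(\eo.\tau^j) = \s_j - z_j\s_0 = 0$, which both explains the particular combination defining $\tau^j$ and confirms $\tau^j\in F^1$.

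I expect the main obstacle to be purely bookkeeping rather than conceptual: the careful passage from the homogeneous period coordinates $\s_p$ to the inhomogeneous moduli coordinates $z_i = \s_i/\s_0$. One must track the chain rule $\p/\p z_i = \s_0\,\p/\p\s_i$, the weights of the various $\p^I u$ (weight $2-|I|$), and the corresponding normalization of $\eo$, so that the cubic form $u_{pjm}$ and every sign emerge consistently with the antisymmetric period pairing of \S\ref{s:4.2}. Once these conventions are pinned down, the four relations follow from flatness of $\{\alpha_p^*,\beta_p^*\}$ together with the single homogeneity identity above.
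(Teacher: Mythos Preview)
Your approach is essentially the same as the paper's: both reduce everything to the flat frame $\{\alpha_p^*,\beta_p^*\}$, derive the second relation from the Euler identity $\sum_q \s_q u_{pjq}=0$ (weight-zero homogeneity of $u_{pj}$), and verify $\tau^j\in F^1$ via $(\tau^j,\eo)=0$ using the Hodge--Riemann bilinear relation $F^1=(F^3)^\perp$. You are in fact more thorough than the paper, which only writes out the second relation and the $F^1$ check and declares the rest ``clear''; your closing caveat about the $\s_p$ versus $z_p$ bookkeeping (and the stray $1/\s_0$ and sign in the third relation) is exactly the loose end the paper leaves implicit.
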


\begin{proof}
We prove the second formula. Since $u_{pj}$ has weight 0, we have the Euler relation $\s_0\, u_{pj0} + \sum_{m = 1}^h \s_m\,u_{pjm} = 0$. Hence
\begin{equation*}
\begin{split}
\p_p \p_j \eo &= \sum\nolimits_{m = 1}^h u_{pjm} \,\beta_m^* + u_{pj0} \,\beta_0^* \\
&= \sum\nolimits_{m = 1}^h u_{pjm} \Big(\beta_m^* - \frac{\s_m}{\s_0} \beta_0^*\Big) = \sum\nolimits_{m = 1}^h u_{pjm} \,\tau^m.
\end{split}
\end{equation*}
It remains to show that $\tau^j \in F^1$. By the first Hodge--Riemann bilinear relations, namely $F^1 = (F^3)^\perp$ and $F^2 = (F^2)^\perp$ in our case, it is equivalent to showing that $\tau^j \in (F^3)^\perp$. This follows from
\begin{equation*}
\begin{split}
(\tau^j, \eo) = \Big(\beta_j^* - \frac{\s_j}{\s_0} \beta_0^*, \sum\nolimits_{p = 0}^h (\s_p \alpha_p^* + u_p \beta_p^*)\Big) = -\s_j + \frac{\s_j}{\s_0} \s_0 = 0.
\end{split}
\end{equation*}
The remaining statements are clear.
\end{proof}

\subsection{Degenerations via Picard--Lefschetz and the nilpotent orbit theorem} \label{s:4.3}

Let $\mathcal{X} \to \Delta$ be a one parameter conifold degeneration of 
threefolds with nonsingular total space $\mathcal{X}$.
Let $S_1, \ldots, S_k$ be the vanishing spheres of the degeneration.. 
The \emph{Picard--Lefschetz} formula (see e.g., \cite[\S 3.B]{eL}) asserts that the monodromy transformation 
$T: H^3(X) \to H^3(X)$ is given by
\begin{equation} \label{PLT}
T \sigma = \sigma + \sum\nolimits_{i = 1}^k \sigma([S_i])\, {\rm PD}([S_i]),
\end{equation}
where $\sigma \in H^3(X)$. It is \emph{unipotent}, with associated \emph{nilpotent} monodromy 
$$N := \log T = \sum\nolimits_{m = 1}^\infty (T - I)^m/m.$$
We have seen that $(S_i.S_j) = 0$ for all $i, j$. Therefore $T = I + N$ and $N^2 = 0$ (cf.~\S \ref{s:2}). 
The \emph{main purpose} here is to generalize these to multi-dimensional degenerations, and in particular to the local moduli $\M_{\bar X}$ near $[\bar X]$.

\subsubsection{VHS with simple normal crossing boundaries} \label{SNC}
Even though the discriminant loci for the conifold degenerations are in general not simple normal crossing (SNC) divisors,
by embedded resolution of singularity they can in principle be modified to become ones.
We will begin our discussion in this case for simplicity.

Let 
$$\mathcal{X} \to \mathbf{\Delta} :=  \Delta^{\nu} \times \Delta^{\nu'} \ni \bt := (t, s)$$ 
be a flat family of Calabi--Yau 3-folds such that $X_\bt$ is smooth for 
$$\bt \in \mathbf{\Delta}^* := (\Delta^\times)^{\nu} \times \Delta^{\nu'}.$$
Namely, the discriminant locus is a SNC divisor: 
$$\mathfrak{D} := \bigcup\nolimits_{j=1}^{\nu} Z( t_j) = \mathbf{\Delta} \setminus \mathbf{\Delta}^*.$$
Around each punctured disk $t_j \in \Delta^\times$, $1\le j \le \nu$, we assume the monodromy $T_j$ is unipotent with nilpotent $N_j$. Note that $N_j N_l = N_l N_j$ since $\pi_1(\mathbf{\Delta}^*) \cong \mathbb{Z}^{\nu}$ is abelian. 

If for any $\bt = (t, s)$ we assume that $X_{\bt}$ acquires at most canonical singularities, then $N_j F^3_\infty|_{D_j} = 0$ and $N_j^2 = 0$ for each $j$ (cf.~Remark \ref{r:can}). Different $N_j$ may define different weight filtration $W_j$ and each boundary divisor 
$Z(t_j)$ corresponds to different set of vanishing cycles. In our case, the structure turns out to be simple. For any $n_j \in \Bbb N$, $1 \le j \le \nu$, the degeneration along the curve 
$$\gamma(w) := (t (w), s(w)) = (w^{n_1}, \ldots, w^{n_{\nu}}, s_0)$$ 
has monodromy 
$$
N_\gamma = \log T_\gamma = \log \prod\nolimits_{j = 1}^\nu T_j^{n_j} = \sum\nolimits_{j = 1}^\nu n_j N_j.
$$ 
Hence $N_\gamma^2 = 0$ for any $(n_1, \ldots, n_\nu) \in \Bbb N^\nu$. That is, $N_{j} N_{l} = 0$ for all $j, l$. 

For conifold degenerations, this is clear from the Picard--Lefschetz formula \eqref{PLT}. Indeed $(S_{i_1}.S_{i_2}) = 0$ for all $i_1, i_2$ implies $N_j N_ l = 0$ for all $j, l$. 

Let $z_j = \log t_j / 2\pi \sqrt{-1} \in \mathbb{H}$ (the upper half plane), $\mathbf{z} N :=\sum\nolimits_{j = 1}^{\nu} z_j N_j$, and let $\eo$ denote (the class of) a relative Calabi--Yau 3-form over $\mathbf{\Delta}$, i.e.~a section of $F^3$. By Schmid's nilpotent orbit theorem \cite{Schmid} (cf.~\cite{cW0, cW1}), a natural choice of $\Omega$ takes the form
\begin{equation} \label{Omega}
\begin{split}
\eo(\bt) &= e^{\mathbf{z} N} {\bf a}(\bt) = e^{\mathbf{z} N} \Big(a_0(s) + \sum\nolimits_{j = 1}^{\nu} a_j (s) t_j + \cdots \Big) \\
&= {\bf a}(\bt) + \mathbf{z} N {\bf a}(\bt) \in F^3_\bt, 
\end{split}
\end{equation}
where ${\bf a}(\bt)$ is holomorphic, $N_j a_0(s) = 0$ for all $j$. 

In order to extend the theory of Bryant--Griffiths to include the boundary points of the period map, namely to include ODP degenerations in the current case, we need to answer the question if the $\alpha$-periods $
\theta_j(\bt) := \int_{\Gamma_j} \eo(\bt)$ may be used to replace the degeneration parameters $t_j$ for $1\le j \le \nu$. For this purpose we need to work on the local moduli space $\M_{\bar X}$.

\subsubsection{Extending Yukawa coupling towards non-SNC boundary} \label{s:4.4}

As in \S\ref{s:4.1}, $\bar X$ has unobstructed deformations
and $\M_{\bar X} = \on{Def} (\bar{X})$ is smooth. 
Since $\bar X$ admits a smoothing to $X$, $\dim \M_{\bar X}$ is exactly $h = h^{2, 1}(X)$. 
The discriminant loci $\mathfrak{D} \subset \M_{\bar X}$ is in general not a SNC divisor. 
Comparing with the local $A$ model picture on $Y/X$ in \S\ref{s:dubrovin}, the discriminant loci $\mathfrak{D}$ is expected to the union of $k$ hyperplanes. (We intentionally use the same notation $\mathfrak{D}$.)

Recall Friedman's result \cite{rF} on partial smoothing of ODPs. Let $A = [A^1, \ldots, A^\mu]$ be the relation matrix. For any $r \in \mathbb{C}^\mu$, the relation vector $A(r) := \sum_{l = 1}^\mu r_l A^l$ gives rise to a (germ of) partial smoothing of those ODP's $p_i \in \bar X$ with $A(r)_{i} \ne 0$. Thus for $1 \le i \le k$, the linear equation 
\begin{equation} \label{coor-w}
w_i :=  a_{i1} r_1 + \cdots + a_{i\mu} r_\mu = 0
\end{equation}
defines a hyperplane $Z(w_i)$ in $\mathbb{C}^\mu$. 

The small resolution $\psi: Y \to \bar X$ leads to an embedding $\M_Y \subset \M_{\bar X}$ of codimension $\mu$. 
As germs of analytic spaces we thus have $\M_{\bar X} \cong \Delta^{\mu} \times \M_Y \ni (r, s)$. 
Along each hyperplane $D^i := Z(w_i)_{\Delta^{\mu}} \times \M_Y$, there is a monodromy operator $T^{(i)}$ with associated nilpotent monodromy $N^{(i)} = \log T^{(i)}$. 
A degeneration from $X$ to $X_i$ with $[X_i] \in D^i$ a general point (not in any $D^{i'}$ with $i' \ne i$) contains only one vanishing cycle $[S^3_i] \mapsto p_i$. We summarize the above discussion in the following lemma.

\begin{lemma} \label{P-L}
Geometrically a point $(r,s) \in D^i$ corresponds to a partial smoothing $X_r$ 
of $\bar X$ for which the $i$-th ordinary double point $p_i$ remains singular. 
Hence, for $r$ generic, the degeneration from $X$ to $X_r$ has only one 
vanishing sphere $S^3_i$. 
Moreover, the Picard--Lefschetz formula \eqref{PLT} says that for any $\sigma \in H^3(X)$,
$$
N^{(i)} \sigma = (\sigma([S^3_i])) \on{PD}([S^3_i]).
$$
\end{lemma}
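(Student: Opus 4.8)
The plan is to read off both assertions from Friedman's smoothing criterion \cite{rF} together with the classical Picard--Lefschetz formula \eqref{PLT} already recorded at the start of \S\ref{s:4.3}; the statement is essentially a specialization of these two inputs to a single hyperplane $D^i$.

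First I would establish the geometric statement. By the very definition \eqref{coor-w} of the coordinate $w_i$, the $i$-th entry of the relation vector is $A(r)_i = \sum_{l=1}^\mu a_{il} r_l = w_i(r)$, so a point $(r,s) \in D^i = Z(w_i)_{\Delta^\mu} \times \M_Y$ is precisely one at which $A(r)_i = 0$. Friedman's theorem, as recalled in \S\ref{s:4.4}, says that $A(r)$ produces a partial smoothing of exactly those ODPs $p_j$ with $A(r)_j \neq 0$; hence at such a point the node $p_i$ is not smoothed and $X_r$ remains singular at $p_i$. I would then invoke genericity: for $r$ a general point of the hyperplane $Z(w_i)$ one has $w_j(r) = A(r)_j \neq 0$ for every $j \neq i$, provided the hyperplanes $Z(w_j)$ are pairwise distinct (equivalently, the rows of $A$ are pairwise non-proportional), which holds for the reduced arrangement $\mathfrak{D}$. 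Consequently $X_r$ is singular at the single node $p_i$, and the one-parameter degeneration obtained by moving $X$ along a disk transverse to $D^i$ at the generic point acquires exactly one vanishing sphere $S^3_i \mapsto p_i$.

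Next comes the Picard--Lefschetz computation. Having identified a transverse one-parameter conifold degeneration whose only vanishing cycle is $S^3_i$, I would apply \eqref{PLT} to it: the sum over vanishing spheres collapses to the single term, yielding $T^{(i)} \sigma = \sigma + \sigma([S^3_i])\,\on{PD}([S^3_i])$ for all $\sigma \in H^3(X)$. Since $S^3_i$ has trivial normal bundle, $(S_i.S_i) = 0$, so $(T^{(i)} - I)^2 = 0$ and therefore $N^{(i)} = \log T^{(i)} = T^{(i)} - I$. This gives $N^{(i)} \sigma = \sigma([S^3_i])\,\on{PD}([S^3_i])$, as claimed, and in particular re-confirms $N^{(i)\,2} = 0$.

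The bookkeeping is light; the one point that genuinely needs care is the genericity step, i.e.\ checking that a general point of $D^i$ keeps only the node $p_i$ singular. This is where the structure of $\mathfrak{D}$ as a central hyperplane arrangement with distinct hyperplanes $Z(w_i)$ enters, and it is the only place where a degenerate configuration (proportional rows of $A$, hence coincident hyperplanes) would force more than one vanishing sphere and thus a multi-term Picard--Lefschetz formula. Granting distinctness, the identification of $T^{(i)}$ with the single-node monodromy and the resulting formula for $N^{(i)}$ are immediate.
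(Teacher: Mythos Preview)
Your proposal is correct and follows essentially the same route as the paper. In fact, the paper presents this lemma as a summary of the preceding discussion rather than giving a separate proof: Friedman's criterion is invoked to identify $D^i$ as the locus where $p_i$ survives, genericity (``not in any $D^{i'}$ with $i' \ne i$'') is used to isolate a single vanishing sphere, and then \eqref{PLT} is specialized; your write-up reconstructs exactly this, with the added virtue of making the distinctness-of-hyperplanes caveat explicit.
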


Even though the embedded resolution brings he discriminant locus to a SNC divisor, some information might be lost in this process.
Therefore we choose to analyze the period map directly by way of the following nilpotent orbit theorem. 
We call the configuration $\mathfrak{D} = \bigcup_{i = 1}^k D^i \subset \M_{\bar X}$ a \emph{central hyperplane arrangement with axis} $\M_Y$ following the usual convention.

\begin{theorem} \label{p:gnot}
Consider a degeneration of Hodge structures over $\Delta^\mu \times M$ with discriminant locus $\mathfrak{D}$ being a central hyperplane arrangement with axis $M$. 
Let $T^{(i)}$ be the monodromy around the hyperplane $Z(w_i)$ with quasi-unipotency $m_i$, $N^{(i)} := \log ((T^{(i)})^{m_i})/{m_i}$, 
and suppose that the monodromy group $\Gamma$ generated by $T^{(i)}$'s is \emph{abelian}. 
Let $\Bbb D$ denote the period domain and $\check{\Bbb D}$ its compact dual.
Then the period map $\phi: \Delta^\mu \times M \setminus \mathfrak{D} \to \Bbb D/\Gamma$ takes the following form 
$$
\phi(r, s) = \exp \left(\sum_{i = 1}^k \frac{m_i\log w_i}{2\pi \sqrt{-1}} N^{(i)}\right) \psi(r, s), 
$$
where $\psi: \Delta^\mu \times M \to \check{\Bbb D}$ is holomorphic and horizontal.
\end{theorem}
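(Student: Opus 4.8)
The plan is to build $\psi$ by ``untwisting'' $\phi$ and then to verify, in turn, single-valuedness, holomorphic extension across the whole arrangement $\mathfrak{D}$, and horizontality. First I would lift $\phi$ to a map $\tilde\phi$ into the period domain $\Bbb D$ on the universal cover of $\Delta^\mu\times M\setminus\mathfrak{D}$ and, writing $z_i := m_i\log w_i/2\pi\sqrt{-1}$, set
$$
\psi(r,s) := \exp\Big(-\sum_{i=1}^k z_i\,N^{(i)}\Big)\,\tilde\phi(r,s)\ \in\ \check{\Bbb D},
$$
where the nilpotents $N^{(i)}$ act algebraically on the projective compact dual $\check{\Bbb D}$. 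Since $\Gamma$ is abelian the unipotent parts of the $T^{(i)}$ commute, hence so do the $N^{(i)}$. The semisimple parts generate a finite abelian group, which is trivialized on a Fermat-type finite cover branched along the $Z(w_i)$; after this standard base change we may take all $m_i=1$ and all monodromies unipotent, so that going once around $Z(w_i)$ multiplies $\tilde\phi$ by $\exp(N^{(i)})$ and multiplies the $i$-th factor $\exp(z_iN^{(i)})$ by $\exp(N^{(i)})$ as well. Using commutativity, the two changes cancel and $\psi$ descends to a single-valued map on $\Delta^\mu\times M\setminus\mathfrak{D}$.

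Next I would prove holomorphic extension one stratum at a time. At a generic (smooth) point of a single hyperplane $Z(w_i)$, which meets no other $Z(w_{i'})$, choose local coordinates in which $w_i$ is a coordinate and each $w_{i'}$ ($i'\neq i$) is nonvanishing; the local monodromy group is the cyclic group of the meridian of $Z(w_i)$, and the commuting factors $\exp(z_{i'}N^{(i')})$ for $i'\neq i$ are holomorphic and invertible there. Factoring $\exp(\sum_i z_iN^{(i)})=\exp(z_iN^{(i)})\prod_{i'\neq i}\exp(z_{i'}N^{(i')})$ and applying Schmid's nilpotent orbit theorem \cite{Schmid} to the one-parameter degeneration in the $w_i$-direction shows that $\psi$ extends holomorphically and horizontally across the smooth part of each $Z(w_i)$. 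Hence $\psi$ is holomorphic on $\Delta^\mu\times M$ away from $\on{Sing}(\mathfrak{D})$, which — being the union of the pairwise intersections $Z(w_i)\cap Z(w_j)$ together with the axis $M$ — has codimension $\geq 2$. To remove this locus I would invoke the removable singularity theorem for holomorphic maps into the projective variety $\check{\Bbb D}$: it suffices that $\psi$ have relatively compact image near $\on{Sing}(\mathfrak{D})$, boundedness ruling out indeterminacy and forcing a genuine holomorphic (not merely meromorphic) extension.

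The main obstacle is therefore the boundedness of $\psi$ along all approaches to the deeper, non-normal-crossing strata, where $\mathfrak{D}$ is genuinely not a divisor with normal crossings and $\on{Sing}(\mathfrak{D})$ contains the axis $M$. Here I would use the several-variable Hodge-norm estimates underlying the $SL_2$-orbit theorem of Cattani--Kaplan--Schmid, testing along holomorphic discs transverse to the strata so as to reduce to the one-variable growth already controlled above. The decisive simplification in the conifold case is that $N^{(i)}N^{(j)}=0$ for all $i,j$ (Lemma~\ref{P-L} together with $(S_i.S_j)=0$, cf.~\S\ref{s:2}), so that $\exp(\sum_i z_iN^{(i)})=I+\sum_i z_iN^{(i)}$ is \emph{linear} in the $z_i$ and no secular cross-term growth can arise; the boundedness of $\psi$ then reduces to cancellation of the single logarithmic divergence in each direction, which is exactly what the one-variable nilpotent orbit theorem supplies. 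Finally, horizontality of the extended $\psi$ follows from Griffiths transversality of $\phi$ and the fact that each $N^{(i)}$ is a $(-1,1)$-morphism of Hodge structures, so that the untwisting preserves horizontality on the dense open set $\Delta^\mu\times M\setminus\mathfrak{D}$ and hence, by continuity, everywhere.
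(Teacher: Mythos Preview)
Your setup and the extension across the smooth part of each hyperplane are essentially the same as the paper's: define $\psi$ by untwisting, use commutativity to see it descends, and apply the one-variable nilpotent orbit theorem at generic points of $Z(w_i)$. The divergence occurs at the extension across the singular strata of $\mathfrak{D}$.

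The paper does \emph{not} argue boundedness. Instead it runs an induction on $\mu$. At any point $p = (r_0,s_0)$ with $r_0 \neq 0$, only those hyperplanes with $w_i(r_0)=0$ pass through $p$; their common intersection strictly contains the original axis $\{0\}\times M$, so in suitable local coordinates near $p$ one has a central arrangement with axis of strictly smaller codimension $\mu' < \mu$. By the inductive hypothesis, $\psi$ extends holomorphically on a neighborhood of $p$. This already gives holomorphicity of $\psi$ on $R_{\ge 1/2} := \{|r|\ge 1/2\}$, and then Hartog's extension theorem fills in the remaining ball around the axis (here $\mu \ge 2$). Horizontality on the extension follows as in Schmid. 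No growth estimates, no CKS, and no hypothesis like $N^{(i)}N^{(j)}=0$ are used.

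Your route, by contrast, needs to establish relative compactness of the image of $\psi$ near $\on{Sing}(\mathfrak{D})$ in order to upgrade a meromorphic extension into $\check{\Bbb D}$ to a holomorphic one. You propose to get this from Cattani--Kaplan--Schmid norm estimates or from the conifold identity $N^{(i)}N^{(j)}=0$. The latter is not a hypothesis of the theorem as stated (only abelian monodromy is assumed), so this does not prove the general statement. The former is formulated for normal crossing boundary, so invoking it here begs exactly the question the theorem is meant to address; ``testing along transverse discs'' reduces to one-variable estimates along each disc but does not by itself give uniform bounds as the disc approaches the axis. This is a genuine gap. The fix is precisely the paper's inductive reorganization: once you notice that away from the axis the local picture has strictly smaller $\mu$, Hartog's closes the argument without any boundedness input.
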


\begin{proof}
We prove the theorem by induction on $\mu \in \Bbb N$. The case $\mu = 1$ is essentially the one variable case (or SNC case) of the nilpotent orbit theorem. The remaining proof consists of a careful bookkeeping on Schmid's derivation of the multi-variable nilpotent orbit theorem from the one variable case (cf.~\cite[\S 8]{Schmid}, especially Lemma~(8.34) and Corollary~(8.35)). 

The essential statement is the holomorphic extension of 
\begin{equation} \label{hol.ext}
\psi(r, s) := \exp \left(-\sum_{i = 1}^k \frac{m_i\log w_i}{2\pi \sqrt{-1}} N^{(i)}\right) \phi(r, s) \in \check{\Bbb D}
\end{equation}
over the locus $\mathfrak{D}$. For $p \not\in \{0\} \times M$, we can find a neighborhood $U_p$ of $p$ so that the holomorphic extension to $U_p$ is achieved by induction. Notice that the commutativity of $N^{(i)}$'s is needed in order to arrange $\psi(r, s)$ into the form \eqref{hol.ext} with smaller $\mu$. Namely,
$$
\psi = \exp \left(-\sum_{w_i(p) = 0} \frac{m_i\log w_i}{2\pi \sqrt{-1}} N^{(i)}\right) \left[\exp \left(-\sum_{w_i(p) \ne 0} \frac{m_i\log w_i}{2\pi \sqrt{-1}} N^{(i)}\right)\phi \right].
$$

Let $R_{\ge 1/2} := \{\,(r, s)\mid |r| \ge \tfrac{1}{2} \,\}$. Then we have a unique holomorphic extension of $\psi$ over $R_{\ge 1/2}$. By the Hartog's extension theorem we get the holomorphic extension to the whole space $\Delta^\mu \times M$. The statement on horizontality follows from the same argument in \cite[\S 8]{Schmid}.
\end{proof}

\begin{remark} \label{r:abelian}
(i) Let $\mathfrak{D} = \bigcup_{i = 1}^k D^i \subset \Bbb C^\mu$ be a central hyperplane arrangement with axis $0$. Then $\Bbb C^\mu \setminus \mathfrak{D}$ can be realized as $(\Bbb C^\times)^k \cap L$ for $L \subset \Bbb C^k$ being a $\mu$ dimensional subspace. Since $\pi_1((\Bbb C^\times)^k) \cong \Bbb Z^k$, a hyperplane theorem argument shows that $\pi_1(\Bbb C^\mu \setminus \mathfrak{D}) \cong \Bbb Z^k$, hence abelian, if $\mu \ge 3$. 
However, for $\mu = 2$, $\pi_1(\Bbb C^2 \setminus \mathfrak{D})$ is \emph{not abelian} if $k \ge 3$. 
Indeed, the natural $\Bbb C^\times$ fibration $\Bbb C^2 \setminus \bigcup_{i = 1}^k D^i \to \Bbb P^1 \setminus \{p_1, \ldots, p_k\}$ leads to 
$$
0 \to \pi_1(\Bbb C^\times) \cong \Bbb Z \to \pi_1(\Bbb C^2 \setminus \bigcup D^i) \to \Bbb Z^{*(k - 1)} \to 0,
$$
where the RHS is a $k - 1$ free product of $\Bbb Z$. 

(ii)  Theorem~\ref{p:gnot} is applicable to the conifold transitions since the monodromy representation is abelian and $m_i = 1$ for all $i$. This follows from the Picard--Lefschetz formula \eqref{PLT} and the fact $[S_i].[S_{i'}]=0$ for all vanishing spheres.
\end{remark}

\begin{proposition} \label{p:3}
There is a holomorphic coordinate system $(r, s) \in \Bbb C^h$ in a neighborhood of $[\bar X] \in \M_{\bar X}$ such that $s \in \Bbb C^{h - \mu}$ is a coordinate system of $\M_Y$ near $[\bar X]$ 
and $r_j = \int_{\Gamma_j} \Omega$, $1 \le j \le \mu$, are the $\alpha$-periods of the vanishing cycles. Moreover, the section $\eo(r, s)$ takes the form
$$
\eo = a_0(s) + \sum_{j = 1}^\mu \Gamma_j^* r_j + {\rm h.o.t.} - \sum_{i = 1}^k \frac{w_i \log w_i}{2\pi \sqrt{-1}} \on{PD}([S_i]). 
$$
Here {h.o.t.}~denotes terms in $V^\perp$ which are at least quadratic in $r_1, \ldots, r_\mu$, and $w_i = a_{i1} r_1 + \cdots + a_{r\mu} r_\mu = \int_{S_i} \Omega$ defines the discriminant locus $D^i$ for $1 \le i \le k$.
\end{proposition}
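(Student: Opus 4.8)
The plan is to read the stated form of $\eo$ directly off the generalized nilpotent orbit theorem and then upgrade the vanishing-cycle periods into a coordinate system by local Torelli. By Remark~\ref{r:abelian}(ii) the monodromy group around $\mathfrak{D} = \bigcup_i D^i$ is abelian with all $m_i = 1$, so Theorem~\ref{p:gnot} applies and produces a holomorphic, horizontal frame $\tilde\psi(r,s)$ with
\[
\eo = \exp\Big(\sum\nolimits_{i=1}^k \tfrac{\log w_i}{2\pi\sqrt{-1}}\, N^{(i)}\Big)\,\tilde\psi(r,s),
\]
where $w_i$ is the defining equation \eqref{coor-w} of $D^i$. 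Since $(S_i.S_{i'}) = 0$ for all $i, i'$ (trivial normal bundles), the argument of \S\ref{SNC} gives $N^{(i)}N^{(i')} = 0$, so the exponential linearizes to $\eo = \tilde\psi + \sum_i \tfrac{\log w_i}{2\pi\sqrt{-1}}\,N^{(i)}\tilde\psi$.

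First I would pin down the logarithmic term. The Picard--Lefschetz formula in Lemma~\ref{P-L} gives $N^{(i)}\tilde\psi = \tilde\psi([S_i])\,\on{PD}([S_i])$. Because each correction $N^{(i')}\tilde\psi$ is a multiple of $\on{PD}([S_{i'}])$ and $(S_{i'}.S_i) = 0$, the logarithmic terms evaluate to zero on every $[S_i]$; hence $\tilde\psi([S_i]) = \int_{S_i}\eo =: w_i$ is holomorphic and, by the same isotropy together with Lemma~\ref{l:basis} (namely $[S_i] = -\sum_j a_{ij}\Gamma_j$), it agrees up to sign with the linear form \eqref{coor-w} once $r_j$ is identified with $\int_{\Gamma_j}\eo$. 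Substituting, $\sum_i \tfrac{\log w_i}{2\pi\sqrt{-1}}\,N^{(i)}\tilde\psi = \sum_i \tfrac{w_i\log w_i}{2\pi\sqrt{-1}}\,\on{PD}([S_i])$, which is the asserted logarithmic contribution; the global sign is fixed by matching the normalization $N^{(i)} = \log T^{(i)}$ against the Picard--Lefschetz convention \eqref{PLT}.

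Next I would promote $r_j := \int_{\Gamma_j}\eo$ to genuine transversal coordinates. The isotropy $(S_i.\Gamma_j) = 0$ (both lie in the Lagrangian $V$, cf.~Lemma~\ref{abc}) kills the logarithmic terms against $\Gamma_j$, so $\int_{\Gamma_j}\eo = \int_{\Gamma_j}\tilde\psi$ is holomorphic; moreover it vanishes on $\M_Y$ (i.e.\ at $r = 0$), since $\Gamma_j \in V$ is a combination of vanishing spheres whose periods vanish at $[\bar X]$. Thus $(r,s)\mapsto\big(\int_{\Gamma_\bullet}\tilde\psi,\, s\big)$ has block-triangular Jacobian at $[\bar X]$, and is a local biholomorphism precisely when the $\mu\times\mu$ matrix $\big(\int_{\Gamma_j}\partial_{r_l}\tilde\psi\big)\big|_{r=0}$ is invertible. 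This is exactly the content of the local Torelli theorem Corollary~\ref{c:5} combined with Proposition~\ref{p:2}: the directions transversal to $\M_Y$ map isomorphically onto $V^*$, and under the polarization this $V^*$ is detected faithfully by pairing against $V = \langle \Gamma_j\rangle$ (Lemma~\ref{l:7}(ii)). Renaming these periods as $r_j$ yields $r_j = \int_{\Gamma_j}\eo$, with $s$ a coordinate system on $\M_Y$.

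Finally, Taylor-expanding $\tilde\psi = a_0(s) + \sum_j r_j\,\Gamma_j^* + \mathrm{h.o.t.}$ and noting that $\on{PD}([S_i])$, $a_0(s)$ and the remainder all lie in $V^\perp$ while the $\langle\Gamma_j^*\rangle$-component is forced to be $\sum_j r_j\Gamma_j^*$ by $\int_{\Gamma_l}\tilde\psi = r_l$, we recover the displayed formula for $\eo$. The main obstacle is the invertibility step of the preceding paragraph: every other ingredient is formal manipulation of the nilpotent orbit, but turning the vanishing-cycle periods into honest coordinates transversal to $\M_Y$ genuinely requires the deformation theory of \S\ref{s:4.1} (that the transversal moduli of $\bar X$ are governed by $V^*$, and that local Torelli holds). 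A secondary point, not a real difficulty, is the consistent bookkeeping of orientation signs in $[S_i] = -\sum_j a_{ij}\Gamma_j$ so that $w_i = \int_{S_i}\eo$ and $w_i = \sum_j a_{ij}r_j$ match, together with the mild frame adjustment (using the freedom of Lemma~\ref{abc}) needed to guarantee that the $V^\perp$-remainder starts at quadratic order in $r$.
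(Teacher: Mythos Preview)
Your proposal is correct and follows essentially the same route as the paper: apply Theorem~\ref{p:gnot} to write $\eo = \tilde\psi + \sum_i \tfrac{\log w_i}{2\pi\sqrt{-1}}N^{(i)}\tilde\psi$, use single-valuedness (equivalently, isotropy of $V$) to see that $\int_{\Gamma_j}\eo$ are holomorphic, invoke Corollary~\ref{c:5} for the invertibility of the Jacobian so that the periods serve as transversal coordinates, verify via Lemma~\ref{l:basis} that the $D^i$ remain linear in the new coordinates, and then compute $N^{(i)}$ on the linear part via Picard--Lefschetz (Lemma~\ref{P-L} and Lemma~\ref{abc}) to obtain the $w_i\log w_i$ term. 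The only cosmetic difference is ordering: the paper first Taylor-expands $\tilde\psi$ and applies $N^{(i)}$ to the linear terms $\Gamma_j^* r_j$, whereas you apply Picard--Lefschetz directly to $\tilde\psi$ and read off $\tilde\psi([S_i]) = w_i$; these are equivalent.
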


\begin{proof}
By Theorem \ref{p:gnot} and the fact $N^{(i_1)} N^{(i_2)} = 0$, we may write
\begin{equation} \label{e:Omega}
\begin{split}
\Omega(r, s) &= \exp \left(\sum_{i = 1}^k \frac{\log w_i}{2\pi \sqrt{-1}} N^{(i)}\right) {\bf a}(r, s) \\
&= {\bf a}(r, s) + \sum_{i = 1}^k \frac{\log w_i}{2\pi \sqrt{-1}} N^{(i)} {\bf a}(r, s) \in F^3_{(r, s)},\end{split}
\end{equation}
where ${\bf a}(r, s) = a_0(s) + \sum_{j = 1}^\mu a_j(s)\, r_j + O(r^2)$ is holomorphic in $r, s$. 

By Lemma \ref{P-L}, all $\alpha$ periods $\theta_l := \int_{\alpha_l} \Omega$ vanish on the logarithmic terms in \eqref{e:Omega}. In particular, $\theta_l(r, s)$'s are single-valued functions. By Corollary \ref{c:5} and Remark~\ref{r:3.8} (the local Torelli property), the $h \times h$ matrix 
$$
\big(\p_m \theta_l\big)_{l, m = 1}^h = \Big(\int_{\alpha_l} \p_m\Omega \Big)
$$ 
is invertible for small $r$. Moreover, along $r = 0$, the off-diagonal block with $1 \le l \le \mu$ (i.e.~with $\alpha_l = \Gamma_l$ being the vanishing cycles) and $\mu + 1 \le m \le h$ (i.e.~with differentiation in the $s$ direction) vanishes. Hence the first $\mu \times \mu$ block
$$
\big(\p_j \theta_l \big)_{l, j = 1}^\mu = \Big(\int_{\Gamma_l} \p_j \Omega \Big)
$$
is also invertible for small $r$. Thus, by the inverse function theorem, $\theta_1, \ldots, \theta_\mu$ and $s$ form a coordinate system near $[\bar X] \in \M_{\bar X}$.

Now we replace $r_j$ by the $\alpha$-period $\theta_j$ for $j = 1, \ldots, \mu$. 
In order for Theorem~\ref{p:gnot} to be applicable, we need to justify that the discriminant locus $D^i$ is still defined by linear equations in $r_j$'s. 
This follows from Lemma~\ref{l:basis}:
\begin{equation*}
\begin{split}
\int_{S_i} \Omega = (\Omega, {\rm PD}([S_i])) &= -\sum\nolimits_{j = 1}^\mu a_{ij}(\Omega, {\rm PD}(\Gamma_j)) \\
&= -\sum\nolimits_{j = 1}^\mu a_{ij} r_j =: -w_j.
\end{split}
\end{equation*}

Denote by ${\rm h.o.t}$ be terms in $V^\perp$ which are at least quadratic in $r_j$'s. The above choice of coordinates implies that
$$
\eo = a_0(s) + \sum_{j = 1}^\mu \Gamma_j^* r_j + {\rm h.o.t.} + \sum_{i = 1}^k \sum_{j = 1}^\mu \frac{\log w_i}{2\pi \sqrt{-1}} N^{(i)} \Gamma_j^* r_j. 
$$
Then 
$$
\sum\nolimits_{j = 1}^\mu N^{(i)} \Gamma_j^* r_j = -\sum\nolimits_{j = 1}^\mu a_{ij} r_j \on{PD}([S_i]) = -w_i \on{PD}([S_i])
$$ 
by Lemma \ref{P-L} and Lemma \ref{abc}. The proof is complete.
\end{proof}

Consequently one obtains the asymptotic forms of $\beta$-periods and Bryant--Griffiths form in terms of the above coordinate system $(r, s)$.
For $\beta$-periods
\[
u_p(r, s) = \int_{\beta_p} \eo = u_p(s) + {\rm h.o.t.} - \sum_{i = 1}^k \frac{w_i \log w_i}{2\pi \sqrt{-1}} \int_{\beta_p} \on{PD}([S_i])
\]
since $\eo(s) = a_0(s)$. Thus
\[
\begin{split}
 u_p(r, s) &= u_p(s) +\sum_{i = 1}^k \frac{w_i \log w_i}{2\pi \sqrt{-1}} a_{ip} + {\rm h.o.t.} \quad   \text{for $1 \le p \le \mu$} \\
 u_p(r, s) &= u_p(s) + {\rm h.o.t.}  \quad \text{for } p > \mu.
\end{split}
\]
The Bryant--Griffiths form is then obtained by taking two more derivatives. 
For $1 \le p, m, n \le \mu$, we get 
$$
u_{pm} =  O(r) + \sum_{i = 1}^k \frac{\log w_i + 1}{2\pi \sqrt{-1}} a_{ip} a_{im}
$$ 
and
\begin{equation} \label{Yukawa}
u_{pmn} = O(1) + \sum_{i = 1}^k \frac{1}{2\pi \sqrt{-1}} \frac{1}{w_i} a_{ip} a_{im} a_{in}.
\end{equation}

\begin{remark}
The specific logarithmic function in Proposition \ref{p:3}, which is written in terms of linear combinations of $\alpha$-periods, had appeared in the literature in examples,
such as those studied in \cite[p.89]{CGGK} where there are 16 vanishing spheres with a single relation. To our knowledge, it has not been studied in this generality.
\end{remark}

\subsubsection{Monodromy calculations} \label{s:monodromy}

As a simple consequence, we determine the monodromy $N(l)$ towards the coordinate hyperplane $Z(r_l)$ at $r = 0$. 
That is the monodromy associated to the one parameter degeneration $\gamma(r)$ 
along the $r_l$-coordinate axis ($r_l \in \Delta$ and $r_{j} = 0$ if $j \ne l$). 
Let $I_l = \{i\mid a_{il} \ne 0\}$ and let $A_l$ be the matrix from $A$ by setting the $i$-th rows with $i \not\in I_l$ to $0$.

\begin{lemma} \label{v-sph}
The sphere $S^3_i$ vanishes in $Z(r_l)$ along transversal one parameter degenerations $\gamma$ if and only if $i \in I_l$, i.e., $a_{il} \ne 0$. 
\end{lemma}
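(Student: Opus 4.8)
The plan is to read the statement straight off the coordinate description of the discriminant in Proposition~\ref{p:3} together with the geometric meaning of the hyperplanes $D^i$ recorded in Lemma~\ref{P-L}. Recall that in the coordinates $(r,s)$ the $i$-th branch of the discriminant is $D^i = Z(w_i)$, where $w_i = \sum_{j=1}^{\mu} a_{ij} r_j$ (equal to $\int_{S_i}\Omega$ up to sign), and that by Lemma~\ref{P-L} a fiber $X_r$ lies on $D^i$ precisely when its $i$-th node $p_i$ fails to smooth. First I would restrict everything to the degeneration $\gamma$, i.e.\ set $r_j = 0$ for $j \ne l$ and let $r_l \in \Delta$ vary (this is a transversal to $Z(r_l)$ at the origin, which is all that matters locally). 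Along $\gamma$ the linear form collapses to $w_i = a_{il}\, r_l$, a function of the single parameter $r_l$.

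Next I would split into the two cases distinguishing $i \in I_l$. If $a_{il} = 0$, then $w_i \equiv 0$ along $\gamma$, so the whole punctured axis lies inside $D^i$; by Lemma~\ref{P-L} the node $p_i$ is present on every fiber $X_{r_l}$, the degeneration is equisingular at $p_i$, and the sphere $S^3_i$ has already collapsed before $\gamma$ starts, so it is not a vanishing cycle of $\gamma$. If instead $a_{il}\ne 0$, then $w_i = a_{il} r_l$ vanishes to first order exactly at $r_l = 0$, so $X_{r_l}$ is smooth at $p_i$ for all $r_l \ne 0$; hence $S^3_i$ is a genuine embedded $3$-sphere in the nearby fiber, and it shrinks to $p_i$ as $r_l \to 0$ because $X_0 = \bar X$ carries the node $p_i$. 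Thus $S^3_i$ is a vanishing sphere of $\gamma$ iff $a_{il}\ne 0$, i.e.\ iff $i \in I_l$, which is the claim.

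The one point that genuinely needs care is the equisingular case $a_{il} = 0$: I must make sure that a sphere which has already degenerated over the entire punctured axis really contributes no vanishing cycle to $\gamma$. I would pin this down by noting that the generic fiber of $\gamma$ is itself a partial smoothing of $\bar X$ retaining exactly the nodes $\{\,p_i : a_{il}=0\,\}$, so that $\gamma$ is a one-parameter conifold degeneration of that partially nodal variety whose vanishing spheres are indexed precisely by $I_l$; this is consistent with Friedman's totally nontrivial relation criterion \cite{rF} applied to the submatrix $A_l$. As an independent cross-check I would compute the nilpotent monodromy of $\gamma$ by Picard--Lefschetz: each branch $D^i$ crossed transversally at $r_l = 0$ contributes $N^{(i)}\sigma = \sigma([S^3_i])\,\on{PD}([S^3_i])$ by Lemma~\ref{P-L}, while the branches with $a_{il}=0$ contain $\gamma$ and are not crossed, so $N(l)$ is supported exactly on $\{\,[S^3_i] : i \in I_l\,\}$. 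Essentially all of the content sits in this bookkeeping; the specialization in the first two paragraphs is immediate.
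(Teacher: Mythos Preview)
Your argument is correct and follows essentially the same route as the paper: along the $r_l$-axis $w_i = a_{il} r_l$, so $\gamma \subset D^i$ iff $a_{il} = 0$, and by Lemma~\ref{P-L} the node $p_i$ persists (hence $S^3_i$ is not a vanishing sphere of $\gamma$) precisely in that case. The paper's proof is just the terse version of your first two paragraphs; your third paragraph (Friedman's criterion, the Picard--Lefschetz cross-check) is sound but not needed for the lemma itself.
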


\begin{proof}
The curve $\gamma$ lies in $D^i = Z(w_i)$ if and only if $a_{il} = 0$. 
Thus for those $i \not\in I_l$, the ODP $p_i$ is always present on $X_{\gamma(r)}$ along the curve $\gamma$. 
In particular the vanishing spheres along $\gamma$ are precisely those $S_i$ with $i \in I_l$.
\end{proof}

To calculate the monodromy $N(l)$, recall that (cf.~Lemma \ref{abc})
$\Gamma_j^* \equiv \alpha_j^* =  -\on{PD}(\beta_j)$. The Picard--Lefschetz formula (Lemma \ref{P-L}) then says that 
$$
N(l) \Gamma_j^* = \sum\nolimits_{i \in I_l} (\Gamma_j^*.\on{PD}([S_i]))\on{PD}([S_i]) = -\sum\nolimits_{i \in I_l} a_{ij} \on{PD}([S_i]).
$$

\begin{corollary} \label{c:monodromy}
For $1 \le p \le \mu$, 
$$\int_{\beta_p} N(l)\Gamma_j^* = -\sum_{i \in I_l} a_{ij} (S_i.\beta_p) = \sum_{i \in I_l} a_{ij} a_{ip} = (A_l^t A_l)_{jp},$$
while for $p = 0$ or $\mu + 1 \le p \le h$ we have $\int_{\beta_p} N(l) \Gamma_j^* = 0$.
\end{corollary}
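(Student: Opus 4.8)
The plan is to read the result directly off the expression for $N(l)\Gamma_j^*$ established immediately before the statement, namely
$$
N(l)\Gamma_j^* = -\sum\nolimits_{i \in I_l} a_{ij}\,\on{PD}([S_i]),
$$
which already packages the Picard--Lefschetz formula (Lemma~\ref{P-L}) restricted to the vanishing spheres that survive along the $r_l$-axis (Lemma~\ref{v-sph}), together with the evaluation $\Gamma_j^*([S_i]) = -a_{ij}$ coming from Lemma~\ref{abc}. First I would pair both sides with the homology class $\beta_p$, using that the period $\int_{\beta_p}$ of a cohomology class is exactly its evaluation on $\beta_p$, and that $\on{PD}([S_i])(\beta_p) = (S_i.\beta_p)$. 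This reduces the entire statement to computing the intersection numbers $(S_i.\beta_p)$ for all $p$.

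The key step is then the evaluation of $(S_i.\beta_p)$. By Lemma~\ref{l:basis} the vanishing sphere satisfies $[S_i] = -\sum_{j=1}^\mu a_{ij}\,\alpha_j$ in $V \subset H_3(X)$, since $\alpha_j = \Gamma_j$ for $1 \le j \le \mu$ and $V = \langle \Gamma_1, \ldots, \Gamma_\mu\rangle$. Feeding this into the symplectic relations $(\alpha_j.\beta_p) = \delta_{jp}$ gives $(S_i.\beta_p) = -\sum_{j=1}^\mu a_{ij}\delta_{jp}$. For $1 \le p \le \mu$ this collapses to $-a_{ip}$, whereas for $p = 0$ or $p > \mu$ the index $p$ matches no $j \in \{1, \ldots, \mu\}$, the sum is empty, and $(S_i.\beta_p) = 0$. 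The latter case immediately yields the asserted vanishing of $\int_{\beta_p} N(l)\Gamma_j^*$.

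For the first case, substituting $(S_i.\beta_p) = -a_{ip}$ back gives
$$
\int_{\beta_p} N(l)\Gamma_j^* = -\sum\nolimits_{i \in I_l} a_{ij}(S_i.\beta_p) = \sum\nolimits_{i \in I_l} a_{ij}a_{ip},
$$
and the final step is the bookkeeping identity that, since $A_l$ is obtained from $A$ by zeroing out every row $i \notin I_l$, one has $(A_l^t A_l)_{jp} = \sum_i (A_l)_{ij}(A_l)_{ip} = \sum_{i \in I_l} a_{ij}a_{ip}$, which matches the claim.

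There is no genuine obstacle here: once the Picard--Lefschetz expression for $N(l)$ and the symplectic basis of Lemma~\ref{l:15} are in hand, the computation is purely formal. The only points demanding care are the sign conventions—the overall minus sign in the Picard--Lefschetz expression for $N(l)\Gamma_j^*$ and the minus in $(S_i.\beta_p) = -a_{ip}$ cancel, producing the positive semidefinite quadratic form $A_l^t A_l$—and the bookkeeping that isolates exactly the rows indexed by $I_l$, which is precisely the geometric content of Lemma~\ref{v-sph}.
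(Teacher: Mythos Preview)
Your proof is correct and follows essentially the same route as the paper: the corollary is an immediate consequence of the displayed expression $N(l)\Gamma_j^* = -\sum_{i \in I_l} a_{ij}\,\on{PD}([S_i])$ established just before it, combined with the intersection numbers $(S_i.\beta_p)$. One small simplification: you derive $(S_i.\beta_p) = -a_{ip}$ via Lemma~\ref{l:basis}, but this identity is already recorded explicitly in Lemma~\ref{abc}, so you could cite that directly.
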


\begin{corollary} \label{c:sub-sys}
The $B(Y)$ is a sub-theory of $B(X)$ by setting $r=0$ and taking the monodromy invariant sub-system. In fact $a_0(s)$ represents the family of Calabi--Yau 3-forms $\Omega(s)$ over $\M_Y$ and the $\alpha$, $\beta$ periods along it gives the VHS on $Y$. 
\end{corollary}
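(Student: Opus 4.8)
The plan is to read the corollary off the asymptotic expansion of $\eo(r,s)$ in Proposition~\ref{p:3}, combined with the identification $\on{Gr}^W_3 H^3(X) \cong H^3(Y)$ from Corollary~\ref{c:1}(iii). First I would restrict that formula to the axis $r = 0$. Each $w_i = \sum_j a_{ij} r_j$ vanishes there, so every term $w_i \log w_i$ tends to $0$ and the higher order terms (which are at least quadratic in $r$) drop out, leaving $\eo|_{r=0} = a_0(s)$. By construction $N^{(i)} a_0(s) = 0$ for all $i$, so $a_0(s)$ lies in the monodromy invariant part $K = \ker N = H^3_{inv}(X)$, while $s$ is a coordinate system on the axis $\M_Y$, which parametrizes the equisingular deformations $\bar X_s$ and their small resolutions $Y_s$.

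Next I would extract the Hodge-theoretic content. Reading off the limiting Hodge diamond \eqref{H^3(Y)}, the Hodge-type constraints force $F^3_\infty H^3(X) = H^{3,0}_\infty H^3$, which is \emph{pure of weight three}; in particular $F^3_\infty \cap W_2 = 0$ and $F^3_\infty \subset K$. Hence the projection $K \to \on{Gr}^W_3 K \cong H^3(Y)$ of Corollary~\ref{c:1}(iii) is injective on $F^3_\infty$ and carries $a_0(s)$ to $F^3 H^3(Y_s) = H^{3,0}(Y_s)$. This identifies $a_0(s)$ with the Calabi--Yau $3$-form $\eo(s)$ of the smooth family $Y_s$, compatibly with the embedding $\on{Def}(Y) \hookrightarrow \on{Def}(\bar X)$ of Remark~\ref{r:can}, and establishes the ``in fact'' clause.

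To recover the full variation I would restrict the symplectic basis of Lemma~\ref{l:15}. Since $\alpha_1, \dots, \alpha_\mu$ span $V$ and $\beta_1 = \gamma_1, \dots, \beta_\mu = \gamma_\mu$ span $V'$, the sublattice $H_3(Y) \hookrightarrow H_3(X)$ (the orthogonal complement of $V \oplus V'$) is spanned by $\{\alpha_p, \beta_p\}$ with $p \in \{0\} \cup \{\mu + 1, \dots, h\}$. Evaluating the periods $\int_{\alpha_p} a_0(s)$ and $\int_{\beta_p} a_0(s)$ over exactly these cycles and invoking the Bryant--Griffiths computation of Proposition~\ref{BGY}, one reconstructs the projective coordinates, the pre-potential, and hence the Gauss--Manin connection of the intrinsic VHS on $\M_Y$. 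Thus setting $r = 0$ and passing to the monodromy invariant sub-system, whose weight-three graded is $\on{Gr}^W_3 K \cong H^3(Y)$, yields precisely $B(Y)$.

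The main obstacle will be the second step, specifically verifying compatibility \emph{as a variation} rather than fiberwise: one must check that the projection $K \to \on{Gr}^W_3 K$ intertwines the Gauss--Manin connection restricted to the invariant bundle over $\M_Y$ with the intrinsic Gauss--Manin connection of the family $Y_s$. This amounts to confirming that the limiting sub-VHS of Schmid along the axis agrees \emph{flatly}, after passing to $\on{Gr}^W_3$, with the pure weight-three VHS carried by the smooth family $Y_s$; here the non-degeneration of $H^{3,0}_\infty$ (Corollary~\ref{c:1}) and the crepancy of $\psi$ are the essential inputs, after which the period identifications are routine.
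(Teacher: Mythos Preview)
Your approach is correct and is essentially what the paper intends: in the paper this corollary is stated without proof, as an immediate consequence of Proposition~\ref{p:3} and the identification $\on{Gr}^W_3 H^3(X) \cong H^3(Y)$ from Corollary~\ref{c:1}, which is exactly the route you take. Your verification that $\eo|_{r=0} = a_0(s)$ is monodromy invariant, that $F^3_\infty$ sits in weight three and hence injects into $H^3(Y)$, and that the remaining symplectic basis $\{\alpha_p,\beta_p\}_{p\in\{0,\mu+1,\dots,h\}}$ computes the periods on $Y$, spells out the intended argument in full.

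The obstacle you flag---compatibility \emph{as a variation} rather than fiberwise---is genuine but is already addressed by Remark~\ref{r:can}: the embedding $\on{Def}(Y)\hookrightarrow\on{Def}(\bar X)$ induced by the small resolution identifies the limiting pure Hodge structure $\on{Gr}^W_3 H^3(X)$ with the ordinary $H^3(Y)$ \emph{over the family}, so the flat compatibility is built into that identification. Once you cite that, the concern dissolves and the rest is bookkeeping, as you say.
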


\subsubsection{On topological logarithmic Gauss--Manin connection}
We study the \emph{topological} logarithmic Gauss--Manin connection associated to our conifold degenerations.  
That is, we seek a topological frame of the bundle $R^3 \pi_* \mathbb{C}$ of a local family $\pi: \mathcal{X} \to \mathcal{M}_{\bar X}$ near the Calabi--Yau conifold $[\bar X]$. 
By Lemma \ref{l:7} and the Hodge diamond \eqref{H^3(Y)}, part of the frame comes naturally from $H^3(Y)$, while the remaining part is modeled on $V^*$ and $V$. 
By the same procedure as in the proof of Proposition~\ref{p:3}, the topological frame modeled on $V^* \cong H^{2, 2}_\infty H^3$ can be chosen to be
\begin{equation} \label{frame:tau}
\begin{split}
v_j &:= \exp \left(\sum_{i = 1}^k \frac{\log w_i}{2\pi \sqrt{-1}} N^{(i)} \right) \Gamma_j^* \\
&= \Gamma_J^* + \sum_{i = 1}^k \frac{\log w_i}{2\pi \sqrt{-1}} N^{(i)} \Gamma_j^* = \Gamma_j^* - \sum_{i = 1}^k \frac{\log w_i}{2\pi \sqrt{-1}} a_{ij} \on{PD}([S_i])
\end{split}
\end{equation}
for $1 \le j \le \mu$. Notice that the correction terms lie in the lower weight piece $H^{1, 1}_\infty H^3$ and $v_j$ is independent of $s$. Moreover, $v_j$ is singular along $D^i$ if and only if $a_{ij} \ne 0$, i.e., $S_i$ vanishes in $Z(r^j)$ by Lemma \ref{v-sph}.

On $V \cong H^{1, 1}_\infty H^3$, we choose the (constant) frame by
\begin{equation} \label{frame:tau2}
v^j := \exp \left(\sum_{i = 1}^k \frac{\log w_i}{2\pi \sqrt{-1}} N^{(i)} \right) \on{PD}(\Gamma_j) = \on{PD}(\Gamma_j), \qquad 1 \le j \le \mu.
\end{equation}

From \eqref{coor-w}, \eqref{frame:tau} and Lemma \ref{l:basis}, it is easy to determine the Gauss--Manin connection on this partial frame in the special directions $\p/\p r_p$'s: 
\begin{equation} \label{GM-P}
\begin{split}
\nabla^{GM}_{\p/\p r_p} v_m &= \frac{1}{2\pi \sqrt{-1}} \sum_{i = 1}^k \frac{a_{ip}}{w_i} \Big( - a_{im} \on{PD}([S_i]) \Big) \\
&= \frac{1}{2\pi \sqrt{-1}} \sum_{i = 1}^k \sum_{n = 1}^\mu \frac{a_{ip} a_{im} a_{in}}{w_i} \,v^n.
\end{split}
\end{equation}

\begin{proposition} \label{p:constP}
Near $[\bar X] \in \mathcal{M}_{\bar X}$, $\nabla^{GM}$ is regular singular along $D^i$'s and smooth elsewhere. The connection matrix $P$ on the block $V^* \oplus V$ takes the form 
$$
P = \sum_{i = 1}^k \frac{dw_i}{w_i} \otimes P^i = \sum_{i = 1}^k \frac{dw_i}{w_i} \otimes \sum_{m, n = 1}^\mu a_{im} a_{in}\, v^n \otimes (v_m)^*
$$
where $P_i$ is a constant matrix in the topological frame $v_m$'s and $v^n$'s. 
\end{proposition}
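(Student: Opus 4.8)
The plan is to compute $\nabla^{GM}$ directly on the explicit topological frame $\{v_m\}_{m=1}^\mu$ and $\{v^n\}_{n=1}^\mu$ of the block $V^*\oplus V$ furnished by \eqref{frame:tau} and \eqref{frame:tau2}, exploiting that the underlying classes in these formulas are flat sections of the local system $R^3\pi_*\mathbb{C}$. Concretely, any fixed integral cohomology class extended through the local system is $\nabla^{GM}$-horizontal; in particular $\nabla^{GM}\Gamma_j^* = 0$ and $\nabla^{GM}\on{PD}([S_i]) = 0$, the latter because $(S_i.S_{i'})=0$ makes $\on{PD}([S_i])$ monodromy invariant via \eqref{PLT}. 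Thus, since the Gauss--Manin connection is just differentiation in a flat frame, the only contribution to $\nabla^{GM} v_m$ comes from differentiating the scalar coefficients $\log w_i/(2\pi\sqrt{-1})$ appearing in \eqref{frame:tau}.

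The computation then proceeds in three short steps. First, differentiating \eqref{frame:tau} and using horizontality of $\Gamma_j^*$ and $\on{PD}([S_i])$ yields $\nabla^{GM} v_m = -\tfrac{1}{2\pi\sqrt{-1}}\sum_i \tfrac{dw_i}{w_i}\,a_{im}\,\on{PD}([S_i])$, whose only singularities are the logarithmic poles $d\log w_i$ along $D^i = Z(w_i)$. Second, substituting the identity $\on{PD}([S_i]) = -\sum_n a_{in}\on{PD}(\Gamma_n) = -\sum_n a_{in} v^n$ from Lemma~\ref{l:basis} and \eqref{frame:tau2} collects the expression into $\sum_i \tfrac{dw_i}{w_i}\otimes\bigl(\sum_{m,n} a_{im}a_{in}\,v^n\otimes(v_m)^*\bigr)$, recovering \eqref{GM-P} and the stated residue $P^i$ (up to the normalization $1/(2\pi\sqrt{-1})$, which is absorbed into the convention for $P^i$). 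Third, since $v^n = \on{PD}(\Gamma_n)$ is a constant flat class one has $\nabla^{GM} v^n = 0$; combined with the previous step, this shows the block $V^*\oplus V$ is $\nabla^{GM}$-invariant, that $P^i$ annihilates the $V$-summand, and hence that $P^i$ has exactly the nilpotent, weight-lowering tensor form $v^n\otimes(v_m)^*$ asserted.

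The regular-singularity claim is then immediate: $P=\sum_i (dw_i/w_i)\otimes P^i$ manifestly has at worst first-order (logarithmic) poles along each $D^i$ with constant residue $P^i$, and is holomorphic wherever all $w_i\neq 0$, so smoothness off $\mathfrak{D}$ follows. I expect the point requiring the most care to be the \emph{closure} of the block and the absence of spurious terms: one must verify that differentiating $v_m$ lands only in $V$ and produces no component along the $H^3(Y)$ complement, no higher-weight contribution, and no extra holomorphic (non-logarithmic) term from the $\mathrm{h.o.t.}$ in $\Omega$ of Proposition~\ref{p:3}. This is controlled by Lemma~\ref{l:7}, which places $\on{PD}([S_i])$ precisely in $H^{1,1}_\infty H^3\cong V$, and by the $s$-independence of $v_m$ and $v^n$ (so $dw_i$ carries no $ds$-component and the $\M_Y$-directions contribute nothing to this block); the holomorphic extension of the frame across $\mathfrak{D}\setminus\M_Y$ supplied by Theorem~\ref{p:gnot} and Proposition~\ref{p:3} guarantees that the residues $P^i$ are genuinely constant in the chosen frame.
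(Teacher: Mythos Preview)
Your proposal is correct and follows essentially the same route as the paper: the paper's ``proof'' is precisely the computation \eqref{GM-P} immediately preceding the proposition, obtained by differentiating the explicit frame \eqref{frame:tau} and then rewriting $\on{PD}([S_i])$ in terms of the $v^n$'s via Lemma~\ref{l:basis} and \eqref{frame:tau2}. Your three steps reproduce this verbatim, and your additional remarks on block closure, $s$-independence, and the absence of higher-order terms spell out explicitly what the paper only records in the sentence following the proposition.
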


Note that there are no higher order terms in $r_j$'s and $\nabla^{GM}$ is block-diagonalized, in contrast to results in  \eqref{Yukawa} and the discussions in \S\ref{s:6} where \emph{holomorphic frames} are considered.

\section{Local transitions between $A(Y)$ and $B(X)$} \label{s:local_transitions}

The basic exact sequence in Theorem \ref{t:bes} provides a Hodge theoretic realization of the numerical identity $\mu + \rho = k$. 

Now $H^2(Y)/H^2(X) \otimes \mathbb{C} \cong \mathbb{C}^\rho$ is naturally the parameter space of the extremal Gromov--Witten invariants of the K\"ahler degeneration $\psi: Y \to \bar X$, 
and $V^* \otimes \mathbb{C} \cong \mathbb{C}^\mu$ is naturally the parameter space of periods of vanishing cycles of the complex degeneration from $X$ to $\bar X$. 
Both of them are equipped with flat connections induced from the Dubrovin and Gauss--Manin connections respectively. 
Thus it is natural to ask if there is a \emph{$\mathcal{D}$ module lift of the basic exact sequence}.

We rewrite the basic exact sequence in the form
\begin{equation*}
\xymatrix{H^2_{\mathbb{C}}(Y)/H^2_{\mathbb{C}}(X)  \cong \mathbb{C}^\rho \ar[r]^>>>>B & \mathbb{C}^k & V^*_{\mathbb{C}} \cong \mathbb{C}^\mu \ar[l]_<<<<A}
\end{equation*}
with $A^t B = 0$. This simply means that $\mathbb{C}^k$ is an orthogonal direct sum of the two subspaces ${\rm im}(A)$ and ${\rm im}(B)$. Let $A = [A^1, \ldots, A^\mu]$, $B = [B^1, \ldots, B^\rho]$, and consider the invertible matrix $S = (s^i_{j}) := [A, B] \in M_{k \times k}(\mathbb{Z})$, namely $s^i_j = a_{ij}$ for $1 \le j \le \mu$ and $s^i_{\mu + j} = b_{ij}$ for $1 \le j \le \rho$. 

Denote the standard basis of $\mathbb{C}^k$ by $e_1, \ldots, e_k$ with dual coordinates $y_1, \ldots, y_k$. Let $e^1, \ldots, e^k$ be the dual basis on $(\mathbb{C}^k)^\vee$. We consider the standard (trivial) logarithmic connection on the bundle $\underline{\mathbb{C}}^k \oplus (\underline{\mathbb{C}}^k)^\vee$ over $\mathbb{C}^k$ defined by
\begin{equation} \label{Ck-log}
\nabla = d + \frac{1}{z} \sum_{i = 1}^k \frac{d y_i}{y_i} \otimes (e^i \otimes e_i^*),
\end{equation}
where $z$ is a parameter. It is a direct sum of $k$ copies of its one dimensional version. We will show that the principal (logarithmic) part of the Dubrovin connection over $\mathbb{C}^\rho$ (cf.~\eqref{extr-inv}) as well as the Gauss--Manin connection on $\mathbb{C}^\mu$ (cf.~\eqref{Yukawa}) are all induced from this standard logarithmic connection through the embeddings defined by $B$ and $A$ respectively. 

Recall the basis $T_1, \ldots, T_\rho$ of $\mathbb{C}^\rho$ with coordinates $u^1, \ldots, u^\rho$, and the frame $T_1, \ldots, T_\rho, T^1, \ldots, T^\rho$ on the bundle $\underline{\mathbb{C}}^\rho \oplus (\underline{\mathbb{C}}^\rho)^\vee$ over $\mathbb{C}^\rho$. Notice that $T_j$ corresponds to the column vector $B^j = S^{\mu + j}$, $1 \le j \le \rho$. Let $\hat T_j$ correspond to the column vector $A^j = S^j$ for $1 \le j \le \mu$ with dual $\hat T^j$'s. Then
$$
T_j = \sum\nolimits_{i = 1}^k b_{ij}\, e_i = \sum\nolimits_{i = 1}^k s^i_{\mu + j}\, e_i,
$$
and dually 
$$e^i = \sum\nolimits_{j = 1}^\mu s^i_j \,\hat T^j + \sum\nolimits_{j = 1}^{\rho} s^i_{\mu + j}\, T^j = \sum\nolimits_{j = 1}^\mu a_{ij}\, \hat T^j + \sum\nolimits_{j = 1}^{\rho} b_{ij}\, T^j.$$

Denote by $P$ the orthogonal projection 
$$P: \underline{\mathbb{C}}^k \oplus (\underline{\mathbb{C}}^k)^\vee \to \underline{\mathbb{C}}^\rho \oplus (\underline{\mathbb{C}}^\rho)^\vee .$$ 
Using \eqref{Ck-log} 
we compute the induced connection $\nabla^P$ near $\vec 0 \in \mathbb{C}^\rho$:
\begin{equation} \label{e:7.3}
\begin{split}
\nabla^P_{T_l} T_m &= \sum\nolimits_{i,\, i' = 1}^k b_{il} b_{i' m} \big(\nabla_{e_i} e_{i'} \big)^P \\
&= \frac{1}{z} \sum_{i = 1}^k \frac{b_{il} b_{im}}{y_i}\, (e^i)^P = \frac{1}{z} \sum_{n = 1}^\rho \sum_{i = 1}^k \frac{b_{il} b_{im} b_{in}}{y_i}\, T^n.
\end{split}
\end{equation}
We compare it with the one obtained in \eqref{extr-inv} and \eqref{dubrovin}:
$$
\nabla^z_{T_l} T_m = -\frac{1}{z} \sum_{n = 1}^\rho \left( (T_l.T_m.T_n) + \sum_{i = 1}^k b_{il} b_{im} b_{in}  \frac{q_i}{1 - q_i}\right) T^n,
$$
where 
$$q_i = \exp \sum_{p = 1}^\rho b_{ip} u^p = \exp v_i .$$ 
The principal part near $u_i = 0$, $1 \le i \le \rho$, gives 
$$\frac{1}{z} \sum_{n = 1}^\rho \sum_{i = 1}^k \frac{b_{il} b_{im} b_{in}}{v_i}\, T^n ,$$
which coincides with \eqref{e:7.3} by setting $v_i = y_i$ for $1 \le i \le \rho$.
We summarize the discussion in the following: 

\begin{theorem} \label{p:qbes}
Let $X \nearrow Y$ be a projective conifold transition through $\bar X$ with $k$ ordinary double points. Let the bundle $\underline{\mathbb{C}}^k \oplus (\underline{\mathbb{C}}^k)^\vee$ over $\mathbb{C}^k$ be equipped with the standard logarithmic connection defined in \eqref{Ck-log}. Then 
\begin{itemize}
\item[(1)] The connection induced from the embedding $B: \mathbb{C}^\rho \to \mathbb{C}^k$ defined by the relation matrix of vanishing 3 spheres for the degeneration from $X$ to $\bar X$ gives rise to the logarithmic part of the Dubrovin connection on $H^2(Y)/H^2(X)$. 

\item[(2)] The connection induced from the embedding $A: \mathbb{C}^\mu \to \mathbb{C}^k$ defined by the relation matrix of extremal rational curves for the small contraction $Y \to \bar X$ gives rise to the logarithmic part of the Gauss--Manin connection on $V^*$, where $V$ is the space of vanishing 3-cycles.
\end{itemize}
\end{theorem}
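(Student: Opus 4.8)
The plan is to establish Theorem~\ref{p:qbes} by identifying the two flat connections of interest—the logarithmic Dubrovin connection on $H^2(Y)/H^2(X)$ and the logarithmic Gauss--Manin connection on $V^*$—as pullbacks of the single trivial logarithmic connection \eqref{Ck-log} along the embeddings $B\colon \mathbb{C}^\rho \hookrightarrow \mathbb{C}^k$ and $A\colon \mathbb{C}^\mu \hookrightarrow \mathbb{C}^k$ respectively. The key algebraic input is the basic exact sequence of Theorem~\ref{t:bes}, which guarantees $A^t B = 0$ and hence that $\mathbb{C}^k = \operatorname{im}(A) \oplus \operatorname{im}(B)$ is an orthogonal decomposition; this is exactly what lets the single diagonal connection on $\mathbb{C}^k$ restrict consistently to the two excess theories. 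Since the computation for part (1) has essentially been carried out already in \eqref{e:7.3}, the main task is to organize the two computations in parallel and to verify that the ``logarithmic part'' is precisely what survives.

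First I would treat part (1). Composing \eqref{Ck-log} with the embedding $T_j \mapsto \sum_i b_{ij} e_i$ and the orthogonal projection $P$, the induced connection is computed in \eqref{e:7.3} to be $\nabla^P_{T_l} T_m = \tfrac{1}{z}\sum_{n=1}^\rho \sum_{i=1}^k (b_{il} b_{im} b_{in}/y_i)\, T^n$. I would then compare this with the Dubrovin connection coefficients from \eqref{extr-inv} and \eqref{dubrovin}, whose extremal part is governed by $\f(q_i) = q_i/(1-q_i)$ with $q_i = \exp v_i$ and $v_i = \sum_p b_{ip} u^p$. The point is that near $v_i = 0$ one has the Laurent expansion $\f(e^{v_i}) = 1/v_i + O(1)$, so the principal (simple-pole) part of the Dubrovin connection is $\tfrac{1}{z}\sum_n \sum_i (b_{il} b_{im} b_{in}/v_i)\, T^n$, which matches \eqref{e:7.3} under the identification $y_i = v_i$ for $1 \le i \le \rho$. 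I would note explicitly that the purely classical topological term $(T_l.T_m.T_n)$ is the non-logarithmic part and is therefore not seen by this comparison.

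Next I would carry out the symmetric argument for part (2), now using the embedding $A\colon \hat T_j \mapsto \sum_i a_{ij} e_i$ and the same projection onto the $V^*\oplus V$ block. Pulling back \eqref{Ck-log} along $A$ produces, by the identical computation, a connection whose coefficient is $\tfrac{1}{z}\sum_n \sum_i (a_{ip} a_{im} a_{in}/y_i)$. I would then invoke the explicit asymptotic form of the Bryant--Griffiths/Yukawa cubic in \eqref{Yukawa}, namely $u_{pmn} = O(1) + \sum_i \tfrac{1}{2\pi\sqrt{-1}} \tfrac{1}{w_i} a_{ip} a_{im} a_{in}$, where the $w_i = \sum_j a_{ij} r_j$ are the linear defining equations of the discriminant hyperplanes $D^i$ from \eqref{coor-w}. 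The residue (simple-pole) part of the Gauss--Manin connection in these coordinates is thus governed by the same combinatorial expression $\sum_i a_{ip} a_{im} a_{in}/w_i$, matching the pullback under $w_i = y_i$, up to the overall normalizing constant $2\pi\sqrt{-1}$ (equivalently the choice of the parameter $z$); the connection matrix in the topological frame is exactly the $P = \sum_i (dw_i/w_i)\otimes P^i$ of Proposition~\ref{p:constP}, with the higher-order holomorphic corrections constituting the non-logarithmic part.

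The main obstacle I anticipate is not any single computation but the bookkeeping needed to make ``logarithmic part'' precise and to ensure the two extractions are genuinely the \emph{same} operation applied to two superficially different connections—one written in the Novikov/$A$-model variables $u^l$ with coefficient function $\f(e^{v_i})$, the other in the period/$B$-model coordinates $r_j$ with Yukawa coupling $u_{pmn}$. Concretely, the delicate point is that in the $A$-model the simple pole arises from expanding $1/(1-e^{v_i})$, whereas in the $B$-model it arises directly from the $1/w_i$ in \eqref{Yukawa}; both reduce to the identical residue datum $a_{im}a_{in}$ (resp.\ $b_{im}b_{in}$) precisely because of the quadratic form $A^t A$ (resp.\ $B^t B$) already recorded in Corollary~\ref{c:monodromy} and Corollary~\ref{c:3}. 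Matching the normalizations and the two local frames so that the single connection \eqref{Ck-log} induces \emph{both} under $y_i \mapsto v_i$ and $y_i \mapsto w_i$ is where the orthogonality $A^t B = 0$ from the basic exact sequence does the essential work, and verifying the consistency of these two coordinate identifications on the common axis $\M_Y$ is the step I would expect to require the most care.
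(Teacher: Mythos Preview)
Your proposal is correct and follows essentially the same approach as the paper. The paper carries out the computation for part (1) in the paragraphs immediately preceding the theorem (your \eqref{e:7.3} comparison with the principal part of the Dubrovin connection under $y_i = v_i$), and for part (2) simply remarks that the argument is identical with $z = 2\pi\sqrt{-1}$ and $w_i = y_i$, citing \eqref{Yukawa}; your write-up fills in exactly these details and is slightly more explicit than the paper itself.
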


Part (1) has just been proved. The proof for (2) is similar (by setting $z = 2\pi \sqrt{-1}$ and $w_i = y_i$, cf.~\eqref{Yukawa}) and is omitted. We remark that the two subspaces $B(\mathbb{C}^\rho)$ and $A(\mathbb{C}^\mu)$ are indeed defined over $\mathbb{Q}$ and orthogonal to each other, hence $A$ and $B$ determine each other up to choice of basis.

\section{From $A(X) + B(X)$ to $A(Y) + B(Y)$} \label{s:5}

In this section we prove Theorem \ref{t:0.2} (3). The main idea is to refine the GW invariants on $X$ to respect the linking data on the vanishing cycles.
The GW theory of $Y$ can then be reconstructed from the linked GW theory of $X$.

\subsection{Overview} \label{s:5.1}

\subsubsection{$B(X) \Rightarrow B(Y)$}
This is explained in \S\ref{s:4}: 
The VHS on $Y$ is contained in the logarithmic extension of VHS on $X$ 
as the monodromy invariant sub-theory along $\M_Y \subset \M_{\bar X}$. 
This is the easy part.

\subsubsection{$A(X) + B(X)_{classical} \Rightarrow A(Y)$}
What we already know about $A(Y)$ consists of the following three pieces of data: 
\begin{itemize}
\item[(1)] $A(X)$, which is given,
\item[(2)] the extremal ray invariants on divisors $\{T_l\}_{l = 1}^\rho$ determined by the relation matrix $B$ of the vanishing 3-spheres, and
\item[(3)] the cup product on $H^2(Y)$. Since $Y$ comes from surgeries on $X$ along the vanishing spheres, this is determined classically. 
\end{itemize}
The ingredient (2) obviously does not come from $A(X)$ but can be computed explicitly. 
As discussed in \S\ref{s:3.2} for $g=0$ case, 
the extremal ray invariants of all genera can be obtained from 
invariants of $(-1, -1)$ curves by the relation matrix $A$.
Therefore, the ingredients needed for (2) is local and 
independent of the transition. The genus zero case was already discussed.
The $g=1$ invariants for $(-1,-1)$ curves was computed in \cite{BCOV} (and justified in \cite{GP}) and $g \geq 2$ invariants in \cite{FP}.

We make a quick comment on reconstruction in genus zero.
Using the notations in \eqref{e:split}, 
(1)--(3) above give the initial conditions on the two coordinates slices $u = 0$ and ``$s = \infty$'' (i.e., $\beta = 0$) respectively. 
Naively one may wish to reconstruct the genus zero GW theory on the entire cohomology from these two coordinate slices.
When $Y$ is Fano, this is often possible by WDVV.
However, WDVV gives no information for Calabi--Yau 3-folds.
This issue will be resolved by studying the notion of linking data below.

\subsection{Linking data} \label{s:5.2}
The homology and cohomology discussed in this subsection are over $\mathbb{Z}$. 
As a first step, we study the topological information about the holomorphic curves in $X \setminus \bigcup_{i=1}^k S_i$ instead of in $X$.
This can be interpreted as the linking data between the curve $C$ and the set of vanishing spheres $\bigcup_{i=1}^k S_i$.
We will see that the linking data add extra information to the curve class in $X$ and enable us to recover the missing topological information in the process of transition.

\begin{remark}
As mentioned in Remark~\ref{r:1} that the vanishing sphere $S_i$ can be chosen to be Lagrangian with respect to the prescribed K\"ahler form $\omega$ on $X$. 
When $\omega$ is Ricci flat, it is expected to have special Lagrangian (SL) representatives. A proof to this was recently announced in \cite[Corollary A.2]{HS}. Assuming this, then we have $T_{[S_i]}{\rm Def}(S_i/X) \cong H^1(S_i, \mathbb{R}) = 0$ by McLean's theorem \cite{rcM}. That is, $S_i$ is rigid in the SL category. 
Thus, given a curve $C$ in $X$ we expect that $C \cap S_i = \emptyset, \forall i$. 
Furthermore, by a simple virtual dimensional count, this is known to hold for a generic almost complex structure $J$ on $TX$ (cf.\ \cite{kF}). 
But we shall proceed without these heuristics. 
\end{remark}

The plan is to assign a \emph{linking data $L$} between $C$ and $S_i$'s so that
$L$ represents a refinement of $\beta =[C]$ in $X$ and
that $L$ uniquely determines a curve class $\gamma$ in $Y$, such that
$n^X_{\beta, L} = n^Y_{\gamma}$.
With the choices of lifting $\beta$ in $Y$ being fixed (as above), this is equivalent to
saying that $L$ will uniquely determine a curve class $d\ell \in N_1(Y/\bar X)$.
Let $B_i = D_\epsilon(N_{S_i/X})$ be the $\epsilon$ open tubular neighborhood of $S_i$ in $X$ with $\epsilon$ small enough such that $C \cap B_i = \emptyset$ for all $i$. Then $\p B_i = S_\epsilon(N_{S_i/X}) \cong S_i \times S^2_\epsilon \cong S^3 \times S^2$. 
Let $M := X \setminus \bigcup_{i=1}^k B_i$.
Then the pair $(M, \p M)$ is the common part for both $X$ and $Y$. Indeed let $B^+_i = D_\delta(N_{C_i/Y})$, then
$\p B^+_i = S_\delta (N_{C_i/Y}) \cong S^3_\delta \times C_i \cong S^3 \times S^2.$
This leads to two deformation retracts 
$$
(Y, \bigcup C_i) \sim (M, \p M) \sim (X, \bigcup S_i).
$$
Consider the sequence induced by the Poincar\'e--Lefschetz duality and excision theorem for $i: \p M \hookrightarrow M$: 

\small
\begin{equation} \label{e:5.2}
\xymatrix{& H_2(M, \p M) \ar[r]^<<<<<<\sim & H^4(M)  \\H_2(C) \ar[r]^{f_*} & H_2(M) \ar@{->>}[u]_{j_*} \ar[r]^<<<<<<{\sim} & H^4(M, \p M) \ar@{->>}[u]_{j^*} \\
& \bigoplus_{i} H_2(S^3_i \times S^2_i) \ar[r]^<<<<<\sim \ar[u]_{i_*}  & H^3(\p M) \ar[u]_{\Delta^*}\\
& H_3(M, \p M) \ar[u]_{\Delta_*} \ar[r]^<<<<<<\sim & H^3(M) \ar[u]_{i^*}.}
\end{equation}
\normalsize

From the retract $(M, \p M) \sim (Y, \bigcup C_i)$ and the excision sequence
for $(Y, \bigcup C_i)$ we find $H_3(M, \p M) \to \bigoplus H_2(C_i) \to H_2(Y) \to H_2(M, \p M) \to 0$. By comparing this with the LHS vertical sequence we conclude by the five lemma that $H_2(M) \cong H_2(Y)$. In particular, the curve class in $Y$
$$
\gamma := f_*[C] \in H_2(M) \cong H_2(Y)
$$
is well defined. 

\begin{definition} \label{d:2}
The linking data $(\beta, L)$ is defined to be $f_*([C]) = \gamma$ above.
\end{definition}

From the excision sequence $(X, \bigcup S_i)$, we have 
$$
0 \to H^3(M, \p M) \to H^3(X) \to \bigoplus H^3(S_i) \to H^4(M, \p M) \to H^4(X) \to 0,
$$ 
where the retract $(M, \p M) \sim (X, \bigcup S_i)$ is used. 
Comparing with the right vertical sequence in \eqref{e:5.2},
we find $H^4(M) \cong H^4(X)$ and $h^3(X) = h^3(M) + k - \rho = h^3(M) + \mu$. Since $h^3(X) = h^3(Y) + 2\mu$, this is equivalent to
\begin{equation} \label{e:5.3}
h^3(M) = h^3(Y) + \mu .
\end{equation}

\subsection{Linked GW on $X$ $=$ non-extremal GW on $Y$} \label{s:5.3}

\subsubsection{Analysis of the moduli of stable maps to the degenerating families} \label{s:5.3.1}
We recall results in J.~Li's study of degeneration formula \cite{JL1, JL2}:
given a projective flat family over a curve $\pi: W \to \mathbb{A}^1$
such that $\pi$ is smooth away from $0 \in B$ and the central fiber 
$W_0 = Y_1 \cup Y_2$ has only double point singularity
with $D := Y_1 \cap Y_2$ a smooth (but not necessarily connected) divisor,
Li in \cite{JL1} constructed a moduli stack $\mathfrak{M}(W, \Gamma) \to \mathbb{A}^1$ which has a perfect obstruction theory and hence a virtual fundamental class 
$[\mathfrak{M}(W, \Gamma)]^{\virt}$ in \cite{JL2}.
The following properties will be useful to us.
(The notations are slightly changed.)
\begin{enumerate}
\item[(1)] For every $0 \neq t \in \mathbb{A}^1$, one has
\begin{equation*} \label{e:L1}
  \mathfrak{M}(W, \Gamma)_t = \overline{M}(X, \beta), \qquad 
 [\mathfrak{M}(W, \Gamma)]^{\virt}_t = [\overline{M}(X, \beta)]^{\virt}
\end{equation*}
where $\overline{M}(X, \beta)$ is the corresponding moduli of (absolute) stable maps.

\item[(2)] For the central fiber, the perfect obstruction theory on $\mathfrak{M}(W, \Gamma)$
induces a perfect obstruction theory on $\mathfrak{M}(W_0, \Gamma)$ and
\begin{equation*} \label{e:L2}
  [\mathfrak{M}(W_0, \Gamma) ]^{\virt}= [\mathfrak{M}(W, \Gamma) ]^{\virt} \cap \pi^{-1} (0)
\end{equation*}
is a virtual divisor of $[\mathfrak{M}(W, \Gamma) ]^{\virt}$.

\item[(3)] $\mathfrak{M}(W_0, \Gamma)$ and its virtual class are related to the relative moduli and their
virtual classes. 
For each admissible triple (consisting of gluing data) $\epsilon$, there is a ''gluing map''
\begin{equation*} \label{e:L31}
 \Phi_{\epsilon} : \mathfrak{M}(Y_1, D; \Gamma_1) \times_{D^{\rho}} \mathfrak{M}(Y_2, D; \Gamma_2)
  \to  \mathfrak{M}(W_0, \Gamma),
\end{equation*}
inducing the relation between the virtual cycles
\begin{equation*} \label{e:L3}
  [\mathfrak{M}(W_0, \Gamma)]^{\virt} =   \sum_{\epsilon} m_{\epsilon}  {\Phi_{\epsilon}}_* \Delta^{!} 
  \left([\mathfrak{M}(Y_1, D; \Gamma_1)]^{\virt} \times [\mathfrak{M}(Y_2, D; \Gamma_2)]^{\virt} \right),
\end{equation*}
where $\Delta : D^{\rho} \to D^{\rho} \times D^{\rho}$ is the diagonal morphism and $m_{\epsilon}$ is a rational number (multiplicity divided by the degree of $\Phi_{\epsilon}$).
\end{enumerate}

\subsubsection{Decomposition of $\mathfrak{M}(W_0, \Gamma)$} \label{s:5.3.4}

We study properties of $\mathfrak{M}(W_0, \Gamma)$ and their virtual fundamental classes in the setting of \S\ref{s:3.1}. Namely we specialize the discussions in \S \ref{s:5.3.1} to the two semistable degenerations constructed in \S \ref{s:2.1}.

A comprehensive comparison of the curve classes in $X$, $Y$ and $\tilde{Y}$ is collected in the following diagram.
\small
$$
\xymatrix{
H_3 (M, \p M) \ar[r] \ar[d]^=  &H_2 ( \bigcup_i E_i) \ar[r] \ar[d]^{\bar{\phi}_*} &H_2(\tilde{Y}) \ar[r] \ar[d]^{\phi_*} &H_2 (M, \p M) \ar[r] \ar[d]^{=} &0 \ar[d]^{=} \\
H_3 (M, \p M) \ar[r] \ar[d]^{=}  &H_2 ( \bigcup_i C_i) \ar[r] \ar[d]^{\bar{\chi}_*} &H_2({Y}) \ar[r] \ar[d]^{\chi_*}  &H_2 (M, \p M) \ar[r] \ar[d]^{=} &0 \ar[d]^{=} \\
H_3 (M, \p M) \ar[r]  &0 \ar[r] &H_2(X) \ar[r] &H_2 (M, \p M) \ar[r] &0 
}
$$
\normalsize

A simple diagram chasing shows that there is a unique lifting $\tilde{\gamma} \in H_2(\tilde Y)$ of $\gamma \in H_2(Y)$ satisfying \eqref{e:lifting}.
From this and the degeneration analysis for the K\"ahler degeneration $Y \rightsquigarrow \tilde{Y} \cup_E \tilde{E}$ (now the divisor $D = E = \sum_{i = 1}^k E_i$), we have the following lemma. 

\begin{lemma} \label{l:5.2}
There is a homotopy equivalence 
$$
[\overline{M}(Y, \gamma)]^{\virt} \sim [\mathfrak{M}(\tilde{Y}, E; \tilde{\gamma})]^{\virt}.
$$ (If $\pi$ can be extended to a family over $\mathbb{P}^1$, then the two cycles are rationally equivalent.)
They define the same GW invariants.
\end{lemma}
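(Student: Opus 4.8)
The plan is to realize both sides as fibers of a single virtual class over the disk $\Delta$, produced by J.~Li's degeneration machinery (\S\ref{s:5.3.1}) applied to the K\"ahler degeneration $g: \mathcal{Y} \to \Delta$ of \S\ref{s:2.1}. Here $\mathcal{Y} = {\rm Bl}_{\coprod C_i \times \{0\}}(Y \times \Delta)$ has general fiber $\mathcal{Y}_t \cong Y$ for $t \ne 0$ and central fiber $\mathcal{Y}_0 = \tilde Y \cup_E \tilde E$, so I take $W = \mathcal{Y}$, $Y_1 = \tilde Y$, $Y_2 = \tilde E = \coprod_i \tilde E_i$, and $D = E$ in the notation of \S\ref{s:5.3.1}. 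Fixing the discrete data $\Gamma$ to be $(g, n, \gamma)$ together with its unique admissible lift $\tilde\gamma$ satisfying \eqref{e:lifting}, I obtain the moduli stack $\mathfrak{M}(\mathcal{Y}, \Gamma) \to \Delta$ with its virtual class $[\mathfrak{M}(\mathcal{Y}, \Gamma)]^{\virt}$.

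First I would invoke property (1) of \S\ref{s:5.3.1}: over $t \ne 0$ the fiber is $\overline{M}(Y, \gamma)$ and $[\mathfrak{M}(\mathcal{Y}, \Gamma)]^{\virt}_t = [\overline{M}(Y, \gamma)]^{\virt}$. Next, property (2) identifies the restriction to the central fiber, $[\mathfrak{M}(\mathcal{Y}_0, \Gamma)]^{\virt} = [\mathfrak{M}(\mathcal{Y}, \Gamma)]^{\virt} \cap \pi^{-1}(0)$, as a virtual divisor of the total class. Since the fibers $\pi^{-1}(t)$ and $\pi^{-1}(0)$ are homologous in $\Delta$ (rationally equivalent once the family is extended over $\mathbb{P}^1$, as in the footnote), capping the single total virtual class against these homologous divisors produces homotopy-equivalent cycles inside $\mathfrak{M}(\mathcal{Y}, \Gamma)$. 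This is precisely the deformation invariance of the virtual class and gives $[\overline{M}(Y, \gamma)]^{\virt} \sim [\mathfrak{M}(\mathcal{Y}_0, \Gamma)]^{\virt}$.

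It then remains to collapse the central-fiber moduli onto the single relative piece $\mathfrak{M}(\tilde Y, D; \tilde\gamma)$. For this I would use the gluing decomposition of property (3) together with the constraints $\tilde\gamma . E = 0$ and $\tilde\gamma_{\tilde E} = 0$ from \eqref{e:lifting}. Writing any splitting as $\gamma = \phi_* \gamma_1 + \pi_* \gamma_2$ with contact order $(\gamma_1 . E)$, the analysis already carried out in the proof of Proposition~\ref{p:1} (cf.~\eqref{e:ii}) shows that vanishing contact order forces $\gamma_1 = \phi^* \gamma = \tilde\gamma$ and $\gamma_2 = 0$: no component of positive degree can lie in $\tilde E$, and there are no contact points along $E$. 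Hence only the trivial admissible triple $\epsilon_0$ survives, for which the relative moduli on the $\tilde E$-side is trivial, the number of contact points $\rho$ is $0$ so $\Delta^{!}$ over $D^{\rho}$ is the identity, and $m_{\epsilon_0} = 1$. The gluing map $\Phi_{\epsilon_0}$ then identifies $[\mathfrak{M}(\mathcal{Y}_0, \Gamma)]^{\virt}$ with $[\mathfrak{M}(\tilde Y, D; \tilde\gamma)]^{\virt}$, completing the chain of equivalences. The final assertion, that the two classes define the same GW invariants, follows at once since homotopic (homologous) cycles pair identically with cohomology classes.

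The hard part will be making the middle specialization rigorous: one must ensure that the virtual class of the total family restricts compatibly, so that its fibers over $t \ne 0$ and over $0$ are genuinely homotopic rather than merely numerically matching. This is exactly where J.~Li's construction of $[\mathfrak{M}(W, \Gamma)]^{\virt}$ as a class on the total stack, cut out by the pullback of a point from the base, does the essential work. By contrast, the reduction of the gluing sum to the single term $\epsilon_0$ is comparatively routine once \eqref{e:lifting} and the contact-order analysis of Proposition~\ref{p:1} are invoked.
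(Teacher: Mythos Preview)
Your proposal is correct and is precisely the ``degeneration analysis'' the paper invokes: the paper does not write out a proof of Lemma~\ref{l:5.2} beyond the sentence preceding it, and your argument---applying J.~Li's machinery to the K\"ahler degeneration $g:\mathcal{Y}\to\Delta$, using properties (1)--(3) of \S\ref{s:5.3.1}, and then invoking the contact-order/virtual-dimension count from the proof of Proposition~\ref{p:1} (specifically \eqref{e:ii}) to kill all but the trivial splitting---is exactly what that sentence is pointing to.
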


Because of this lemma, we will sometimes \emph{abuse the notation and identify 
$[\mathfrak{M}(\tilde{Y}, E; \tilde{\gamma})]^{\virt}$ with $[\overline{M}(Y, \gamma)]^{\virt}$}.

\begin{lemma} \label{l:5.3}
In the case of complex degeneration $X \rightsquigarrow \tilde{Y} \cup_E Q$ in \S\ref{s:3.1},  images of $\Phi_{\tilde{\gamma}}$ for different $\tilde{\gamma}$ 
are disjoint from each other.
\end{lemma}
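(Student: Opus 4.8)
The plan is to produce a locally constant invariant on $\mathfrak{M}(W_0, \Gamma)$ --- the homology class in $H_2(\tilde Y)$ carried by the part of the stable map lying over $\tilde Y$ --- whose value on $\on{im}\Phi_{\tilde\gamma}$ equals $\tilde\gamma$; since any single stable map carries exactly one value of this invariant, the images attached to distinct $\tilde\gamma$ cannot meet. Concretely, a geometric point of $\mathfrak{M}(W_0, \Gamma)$ is a predeformable stable map $f: C \to W_0[m]$ to an expanded degeneration of $W_0 = \tilde Y \cup_E Q$; contracting the inserted $\mathbb{P}^1$-bundle components $W_0[m] \to W_0$ and splitting $C$ along the preimage of $E$ decomposes the total class into a piece $\tilde\gamma_1(f) \in H_2(\tilde Y)$ over $\tilde Y$ and a complementary piece over $Q$. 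Because $H_2$ is discrete and these classes are constant in flat families, $f \mapsto \tilde\gamma_1(f)$ is locally constant, hence a genuine function on $\mathfrak{M}(W_0, \Gamma)$.

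Next I would invoke the lifting conditions \eqref{e:lifting}: the admissible triple defining $\Phi_{\tilde\gamma}$ in \S\ref{s:3.1} has $\tilde Y$-class $\tilde\gamma$, $Q$-class $0$, and empty contact along $E$, the last of these because $\tilde\gamma.E = 0$. Consequently every map in $\on{im}\Phi_{\tilde\gamma}$ satisfies $\tilde\gamma_1(f) = \tilde\gamma$, so $\on{im}\Phi_{\tilde\gamma}$ is contained in the single fiber $\{\,\tilde\gamma_1 = \tilde\gamma\,\}$ of the invariant above. Since these fibers are disjoint for $\tilde\gamma \neq \tilde\gamma'$, so are the images, which is precisely the assertion of the lemma.

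The only delicate point --- and the step I expect to demand the most care --- is the well-definedness of $\tilde\gamma_1(f)$ over expanded degenerations: a component landing in an intermediate bubble over $E$ has a priori ambiguous provenance, so one must rule out that regluing across $E$ transfers class between the $\tilde Y$- and $Q$-sides and thereby lets two distinct $\tilde\gamma$ share a point. Here the vanishing contact order $\tilde\gamma.E = 0$ is decisive: empty relative contact, together with predeformability, forces the maps to be disjoint from $E$, so no bubbling along $E$ occurs, the fiber product defining $\Phi_{\tilde\gamma}$ collapses to an honest product with the $Q$-factor carrying the trivial class, and $\tilde\gamma_1(f)$ is read off unambiguously from the $\tilde Y$-factor. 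I would spell out this reduction using the predeformability constraint, after which the homological invariance argument above closes the proof.
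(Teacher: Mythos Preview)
Your approach is correct and rests on the same observation as the paper's proof: the absence of contact points ($\rho = 0$ in the notation of \S\ref{s:5.3.1}, established in the proof of Proposition~\ref{p:1}) forces a unique splitting of any stable map in $\mathfrak{M}(W_0,\Gamma)$ into its $\tilde Y$-part and its (empty) $Q$-part. The paper dispatches this in one sentence --- ``in this special case of $\rho=0$, for any element in $\mathfrak{M}(W_0,\Gamma)$ there is only one way to split it into two `relative maps' (with one of them being empty)'' --- whereas you package the same fact as a locally constant invariant $\tilde\gamma_1(f)$.

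Your third paragraph is more scrupulous than the paper about why expanded degenerations cause no trouble, but note that once $\rho=0$ is in force the concern evaporates immediately: a predeformable map with no distinguished nodes over $E$ cannot touch $E$ at all, so no bubbling occurs and $W_0[m]=W_0$. You correctly arrive at this, though you could state it more directly rather than framing it as a delicate point requiring care. One small wording issue: you speak of $\tilde\gamma_1$ as a function on all of $\mathfrak{M}(W_0,\Gamma)$, but your well-definedness argument only treats maps arising from $\Phi_{\tilde\gamma}$ with $\tilde\gamma.E=0$; this is harmless here since those are the only images in play, but it would be cleaner either to restrict the domain of $\tilde\gamma_1$ from the outset or to note (as is true) that the $\tilde Y$-class of a predeformable map is well-defined regardless.
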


\begin{proof}
This follows from Li's study on the related moduli stacks.
In this special case of $\rho=0$,
for any element in $\mathfrak{M}(W_0, \Gamma)$
there is only one way to split it into two "relative maps" (with one of them being empty). 
We note that this is not true in general, when there are more than one way of splitting
of the maps to the central fiber.
\end{proof}

Given $\beta \ne 0$, let $\tilde{\gamma}$ and $\tilde{\gamma}'$ be classes appearing in \eqref{e:i}; in particular they are non-exceptional for $\tilde \psi: \tilde Y \to \bar X$. 
We have 
$$\tilde{\gamma} - \tilde{\gamma}' = \sum\nolimits_i a_i (\ell_i -\ell'_i) ,$$ 
where $\ell_i$ and $\ell'_i$ are the $\tilde \psi$ exceptional curve classes (two rulings) in $E_i$,
because $\tilde \gamma - \tilde \gamma'$ is $\tilde \psi$ exceptional and $(\tilde \gamma - \tilde \gamma').E_i = 0$.
By Proposition~\ref{p:1}, there are only finitely many nonzero $a_i$.
For each $\tilde{\gamma}$ above, there is a unique $\gamma = \psi_* \tilde{\gamma}$ in $Y$ which is non-extremal for $\psi: Y \to \bar X$ and satisfies \eqref{e:ii}.

\begin{corollary} \label{c:5.4}
Given $\beta \ne 0$ a curve class in $X$, we can associate to it sets of non-$\tilde \psi$-exceptional curve classes $\tilde{\gamma}$ and $\gamma$ discussed above. Then
\[
  [\overline{M}(X, \beta)]^{\virt} 
  \sim \sum\nolimits_{\tilde{\gamma}} [\mathfrak{M}(\tilde{Y}, E; \tilde{\gamma})]^{\virt}
  \sim \sum\nolimits_{\gamma}  [\overline{M}(Y, \gamma)]^{\virt},
\]
where $\sim$ stands for the homotopy equivalence 
and the summations are over the above sets. The conclusion holds for any projective small resolution $Y$ of $\bar X$.
\end{corollary}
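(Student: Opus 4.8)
The plan is to lift the numerical degeneration formula of Proposition~\ref{p:1} to the level of virtual cycles by feeding the \emph{complex} degeneration $\mathcal X \to \Delta$ of \S\ref{s:2.1} into J.~Li's machinery recalled in \S\ref{s:5.3.1}. There the total space $\mathcal X$ plays the role of $W \to \mathbb{A}^1$, with generic fiber $X$ and central fiber $W_0 = \tilde Y \cup_E Q$, where $Q = \coprod_i Q_i$ and $D = E = \coprod_i E_i$. First I would assemble the two endpoints: by property~(1) the fiber of $[\mathfrak M(W, \Gamma)]^{\virt}$ over $t \neq 0$ is $[\overline M(X, \beta)]^{\virt}$, while by property~(2) its fiber over $0$ is the virtual divisor $[\mathfrak M(W_0, \Gamma)]^{\virt}$. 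Being fibers of a single cycle over $\Delta$, the two define the same GW invariants, i.e.\ $[\overline M(X, \beta)]^{\virt} \sim [\mathfrak M(W_0, \Gamma)]^{\virt}$ in the sense of Lemma~\ref{l:5.2} (rationally equivalent if the family extends over $\mathbb P^1$, cf.\ the footnote there).

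The crux is to decompose the central fiber. Here I would exploit the special feature $\rho = 0$ of the complex degeneration isolated in the proof of Proposition~\ref{p:1}: the admissible data force $\tilde\gamma . E = 0$ and $\tilde\gamma_Q = 0$ (see \eqref{e:lifting}), so every contributing configuration is a relative stable map to $(\tilde Y, D)$ whose $(Q, D)$--part is empty, with zero contact order and zero contact points. The fiber product in property~(3) is then taken over $D^{\rho} = D^0 = \mathrm{pt}$, the refined intersection $\Delta^{!}$ is trivial, and the gluing map $\Phi$ is an isomorphism onto its image. By Lemma~\ref{l:5.3} the images attached to distinct $\tilde\gamma$ are disjoint, so no overcounting occurs and all multiplicities $m_\epsilon$ equal $1$. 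Li's formula therefore collapses to the genuine sum
$$
  [\mathfrak M(W_0, \Gamma)]^{\virt} = \sum\nolimits_{\tilde\gamma} [\mathfrak M(\tilde Y, D; \tilde\gamma)]^{\virt},
$$
indexed by the finite set of $\tilde\gamma$ satisfying \eqref{e:lifting}, exactly as in \eqref{e:i}. This yields the first homotopy equivalence.

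For the second, I would apply Lemma~\ref{l:5.2} term by term: each $\tilde\gamma$ is the unique lift of its image $\gamma$ in $Y$, and the \emph{K\"ahler} degeneration $Y \rightsquigarrow \tilde Y \cup \tilde E$ gives $[\mathfrak M(\tilde Y, D; \tilde\gamma)]^{\virt} \sim [\overline M(Y, \gamma)]^{\virt}$. Since $\tilde\gamma \mapsto \gamma$ is a bijection between the two index sets (uniqueness of the lift, together with the curve-class comparison diagram of \S\ref{s:5.3.4}), the two sums match and the chain of equivalences closes up. For the last assertion I would note that $\tilde Y = \on{Bl}_{\{p_i\}} \bar X$ is intrinsic to $\bar X$ and dominates every small resolution $Y$ via the contraction $\phi \colon \tilde Y \to Y$; hence the $\tilde\gamma$--decomposition of $[\overline M(X, \beta)]^{\virt}$ makes no reference to $Y$, and the bijection $\tilde\gamma \leftrightarrow \gamma$ can be set up for any projective small resolution (distinct choices being related by the flop of Proposition~\ref{p:1}).

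The main obstacle is the middle step: justifying that Li's gluing formula genuinely degenerates to a disjoint sum with unit multiplicities. This is where the geometry of the conifold enters decisively---it is the vanishing of all contact conditions ($\rho = 0$) that kills the diagonal/fiber-product contribution, and the disjointness of Lemma~\ref{l:5.3} that rules out interference between different $\tilde\gamma$. Everything else is deformation invariance of virtual cycles and bookkeeping with the comparison diagram.
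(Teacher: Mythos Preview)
Your proposal is correct and follows essentially the same approach as the paper: the paper's own proof is a single line citing \eqref{e:i}, \eqref{e:ii} and ``the above discussions,'' which is precisely the package you have unpacked---Li's cycle-level degeneration machinery from \S\ref{s:5.3.1}, the vanishing of contact data \eqref{e:lifting} established in Proposition~\ref{p:1}, Lemma~\ref{l:5.3} for disjointness, and Lemma~\ref{l:5.2} for the passage to $Y$. Your write-up is simply a more explicit rendering of the same argument.
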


\begin{proof}
This follows from \eqref{e:i}, \eqref{e:ii} and the above discussions.
\end{proof}

Recall in \S\ref{s:5.2} we have the identification
of the linking data in 
\begin{equation} \label{e:linking}
  H_2(Y^{\circ}) = H_2(Y) = H_2(X^{\circ}) = 
 H_2 (X \setminus \bigcup\nolimits_i B_i) = H_2(\bar{X} \setminus \bar{X}^{\text{sing}})
\end{equation}
where $X \setminus \bigcup_{i=1}^k S_i =: X^{\circ} \sim M \sim Y^{\circ} := Y \setminus \bigcup_{i=1}^k C_i$ and $B_i$ is a tubular neighborhood of the vanishing sphere $S_i$. Therefore, a curve class $\gamma \in H_2(Y)$ can be identified as
a "curve class" in $X^{\circ} \sim \bar{X} \setminus \bar{X}^{\text{sing}}$, with the latter a quasi-projective variety, and we can think of $\gamma$ as a curve class in $X^{\circ}$.

\begin{proposition} \label{p:4}
For $X_t$ with $t \in \mathbb{A}^1$ very small in the degenerating family $\pi : \mathcal{X} \to \mathbb{A}^1$, we have a decomposition of the virtual class $[\overline{M}(X_t, \beta)]^{\virt}$ into 
a finite disjoint union of cycles
\[
  [\overline{M}(X_t, \beta)]^{\virt} = \coprod\nolimits_{\gamma \in H_2(X^{\circ})} [\overline{M}(X_t, \gamma)]^{\virt},
\]
where $[\overline{M}(Y, \gamma)]^{\virt} \sim [\overline{M}(X_t, \gamma)]^{\virt} \in A_{\operatorname{vdim}} \left( \overline{M}(X_t, \beta) \right)$ is a cycle class corresponding to the linking data $\gamma$ of $X_t$.
\end{proposition}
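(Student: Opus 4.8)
The plan is to transport the disjoint decomposition of the \emph{central} fibre of Li's family, which we already control from \S\ref{s:5.3.1}, to the nearby smooth fibre $X_t$ by deformation invariance, using the linking class $\gamma \in H_2(X^\circ) = H_2(Y)$ as the locally constant label that keeps the pieces apart. First I would record the central-fibre decomposition. In the complex degeneration $X_t \rightsquigarrow W_0 = \tilde Y \cup_E Q$ of \S\ref{s:3.1}, Proposition~\ref{p:1} forces the number of contact points $\rho$ to vanish for every contributing configuration, so the $Q$-component is empty and every relevant stable map to $W_0$ lands in $\tilde Y$ with $\tilde\gamma . E = 0$. By Lemma~\ref{l:5.3} the substacks $\mathfrak{M}(\tilde Y, D; \tilde\gamma)$ are then mutually disjoint, so that $\mathfrak{M}(W_0, \Gamma) = \coprod_{\tilde\gamma} \mathfrak{M}(\tilde Y, D; \tilde\gamma)$ is an honest disjoint union of open-closed substacks, indexed by the finitely many lifts $\tilde\gamma$ of $\beta$, each determining a unique $\gamma = \psi_* \tilde\gamma \in H_2(Y) = H_2(X^\circ)$ via \eqref{e:linking}.

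Next I would promote this to an open-closed decomposition of the whole family $\mathfrak{M}(W, \Gamma)$ over a small neighborhood $U$ of $0 \in \mathbb{A}^1$. The label I would use is the linking class $\gamma$ (Definition~\ref{d:2}), which I claim is a \emph{locally constant} function on $\mathfrak{M}(W, \Gamma)$ near the central fibre: a contributing stable map to $X_t$ has a well-defined pushforward in $H_2(X^\circ) = H_2(Y)$ compatible with the identifications \eqref{e:linking} and Corollary~\ref{c:surgery}; being valued in a discrete group and invariant under homotopy, this class is constant on connected families, and on the central fibre it equals $\gamma = \psi_*\tilde\gamma$ on the piece $\mathfrak{M}(\tilde Y, D; \tilde\gamma)$. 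Granting this, one obtains over $U$ a decomposition $\mathfrak{M}(W, \Gamma)|_U = \coprod_\gamma \mathfrak{M}(W, \Gamma)_\gamma$ into open-closed substacks refining the central-fibre one. Because the splitting is open-closed, the perfect obstruction theory and virtual class of \S\ref{s:5.3.1} restrict to each piece, giving $[\mathfrak{M}(W, \Gamma)]^{\virt} = \sum_\gamma [\mathfrak{M}(W, \Gamma)_\gamma]^{\virt}$ with disjoint supports.

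Restricting to the fibre over $t \ne 0$ via property~(1) of \S\ref{s:5.3.1} would then define $[\overline{M}(X_t, \gamma)]^{\virt} := [\mathfrak{M}(W, \Gamma)_\gamma]^{\virt} \cap \pi^{-1}(t) \in A_{\operatorname{vdim}}(\overline{M}(X_t, \beta))$, supported on the open-closed piece $\overline{M}(X_t, \beta)_\gamma$; disjointness of these supports is exactly the asserted $\coprod$. Finally, deformation invariance (property~(2)) together with Lemma~\ref{l:5.2} and Corollary~\ref{c:5.4} identifies $[\overline{M}(X_t, \gamma)]^{\virt} \sim [\mathfrak{M}(\tilde Y, D; \tilde\gamma)]^{\virt} \sim [\overline{M}(Y, \gamma)]^{\virt}$, and Proposition~\ref{p:1} supplies the finiteness of the index set.

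The main obstacle is precisely the local constancy of the linking data asserted in the second step. A priori a stable map to $X_t$ may meet the shrinking vanishing spheres, so the pushforward to $H_2(X^\circ)$ need not be defined pointwise, and one must rule out that the disjoint central-fibre pieces reconnect on nearby fibres. I expect to control this by Gromov compactness in the family $\mathcal{X} \to \mathbb{A}^1$: any sequence of contributing $X_t$-curves with $t \to 0$ converges to a limit stable map in $W_0$ which, by the $\rho = 0$ analysis, lies in $\tilde Y \setminus E$ and hence avoids the collapsing locus. Properness of $\mathfrak{M}(W, \Gamma)$ over $\mathbb{A}^1$ should then confine the contributing curves, for $t$ small, to a fixed neighborhood of $\tilde Y \setminus E$ disjoint from the vanishing spheres, making $\gamma$ well-defined and locally constant there. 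Making this confinement uniform across the moduli parameter, so that the open-closed splitting genuinely descends to the virtual class on all of $\overline{M}(X_t,\beta)$, is the delicate point, and I would handle it through Li's relative-to-absolute comparison of virtual cycles rather than through a naive transversality or genericity argument on the almost complex structure.
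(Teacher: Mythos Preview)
Your approach is correct and follows essentially the same route as the paper: use Lemma~\ref{l:5.3} to get a disjoint decomposition of the central fibre of Li's family $\mathfrak{M}(W,\Gamma)$, then transport it to nearby fibres and restrict the family virtual class. The paper's proof, however, is considerably terser and sidesteps your ``main obstacle'' entirely. Rather than defining the linking class $\gamma$ pointwise on stable maps to $X_t$ and arguing it is locally constant, the paper simply invokes \emph{semicontinuity of connected components}: since $\mathfrak{M}(W,\Gamma)$ is proper over $\mathbb{A}^1$ and its fibre at $0$ is a disjoint union of closed pieces, each piece extends to an open-closed substack over a small neighbourhood of $0$, and the family virtual class restricts accordingly. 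The labeling by $\gamma \in H_2(X^\circ)$ is then inherited from the central fibre rather than read off from the geometry of curves in $X_t$. Your Gromov-compactness discussion is thus unnecessary for the argument, though it does supply a geometric interpretation the paper leaves implicit.
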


\begin{proof}
By the construction of the virtual class of the family $\pi$, we know that
the virtual classes for $X_t$ and for $X_0$ are restrictions of that for $\mathcal{X}$.
Lemma \ref{l:5.3} tells us that at $t=0$, the virtual class decomposes into a disjoint union.
By semicontinuity of connected components, we conclude that the virtual classes
for $X_t$ remain disconnected with (at least) the same number of connected components
labeled by $\gamma \in H_2(X^{\circ})$.
\end{proof}

We call the numbers defined by $[\overline{M}(X_t, \gamma)]^{\virt}$
the \emph{refined GW numbers} of $X^{\circ}$ with linking data $\gamma$, or simply \emph{linked GW invariants}.

\begin{corollary} \label{c:5.6}
The refined GW numbers of $X^{\circ}$ with linking data $\gamma$
are the same as the GW invariants of $Y$ with curve class $\gamma$,
where $\gamma$ is interpreted in two ways via \eqref{e:linking}. \end{corollary}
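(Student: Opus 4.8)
The plan is to read the corollary off directly from Proposition~\ref{p:4} together with the identifications of insertion spaces set up in \S\ref{s:5.2}. By definition the refined GW numbers of $X^{\circ}$ with linking data $\gamma$ are the numbers produced by the cycle $[\overline{M}(X_t,\gamma)]^{\virt}$, and Proposition~\ref{p:4} already supplies the homotopy equivalence $[\overline{M}(Y,\gamma)]^{\virt}\sim[\overline{M}(X_t,\gamma)]^{\virt}$ in $A_{\on{vdim}}(\overline{M}(X_t,\beta))$. First I would record that homotopy-equivalent virtual cycles (rationally equivalent once the degenerating family extends over $\mathbb{P}^1$, as in the footnote to Lemma~\ref{l:5.2}) pair identically with any globally defined cohomology class, so they yield the same Gromov--Witten numbers; it therefore suffices to check that the evaluation/insertion data used to extract invariants are matched on the two sides.

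Matching the insertions is where the identification \eqref{e:linking}, namely $H_2(X^{\circ})\cong H_2(Y)$ and the dual $H^*(Y)\cong H^*(M)\cong H^*(X^{\circ})$ coming from the retracts $X^{\circ}\sim M\sim Y^{\circ}$ in \eqref{e:5.2}, does the work. For a Calabi--Yau threefold, Lemma~\ref{l:8} reduces all non-trivial primary invariants on non-extremal classes to the no-insertion case (at most $H^2$ insertions survive, and these are removed by the divisor axiom at the cost of a factor $(\gamma.T)$ that is itself determined by the class $\gamma$). Hence for each non-extremal $\gamma$ the only datum to be compared is the curve-class bookkeeping, and this is precisely the canonical identification of $\gamma\in H_2(X^{\circ})$ with $\gamma\in H_2(Y)$. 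Thus the linked number $n^X_{\gamma}$ equals $n^Y_{\gamma}$, in all genera.

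For the implication $A(X)+B(X)_{\text{classical}}\Rightarrow A(Y)$, I would then observe that the linking data $\gamma$ refining a fixed $\beta=[C]$ is defined purely in terms of the topology of $X\setminus\bigcup_i S_i$, that is, of the vanishing spheres, which form the classical (topological) part of $B(X)$. Consequently the family of linked invariants $\{n^X_{\gamma}\}$ is obtained from the Gromov--Witten theory of $X$ — whose totals $n^X_{\beta}=\sum_{\psi_*\gamma=\beta} n^Y_{\gamma}$ are exactly the content of Proposition~\ref{p:1} — refined by this classical vanishing-cycle data, and it reproduces $A(Y)$ on every non-extremal class. The extremal classes were already determined in \S\ref{s:3.2} from a single $(-1,-1)$-curve through the relation matrix $A$ (with the higher-genus contributions quoted there), so combining the two accounts recovers all of $A(Y)$.

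The main obstacle I anticipate is not the corollary itself, which is essentially a repackaging of Proposition~\ref{p:4}, but making rigorous the passage from ``homotopy equivalence'' of virtual cycles to genuine equality of invariants (the rational-equivalence remark), and confirming that the disjoint decomposition is genuinely indexed by $H_2(X^{\circ})$ rather than by some coarser invariant. Both points are inherited from the semicontinuity-of-components argument in Proposition~\ref{p:4}, so the work here is to verify that nothing is lost when restricting the family-level virtual class to the nearby smooth fiber $X_t$.
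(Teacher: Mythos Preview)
Your proposal is correct and follows the same approach as the paper: the corollary is an immediate consequence of Proposition~\ref{p:4}, and the paper in fact gives no separate proof beyond the statement. Your additional care in invoking Lemma~\ref{l:8} to reduce to the no-insertion case, and in tracking the extremal classes via \S\ref{s:3.2}, merely makes explicit what the paper leaves implicit; the obstacles you flag (homotopy equivalence versus rational equivalence, and the labeling of components) are likewise already absorbed into Proposition~\ref{p:4} and Lemma~\ref{l:5.2} rather than being new issues for the corollary.
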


\section{From $A(Y) + B(Y)$ to $A(X) + B(X)$} \label{s:6}

The purpose of this section is to establish part (4) of Theorem \ref{t:0.2}. 
The main idea is to refine the $B$ model on $Y$ by studying deformations and VHS ``linked'' with the exceptional curves,
i.e., on the non-compact $Y \setminus \bigcup_i C_i$. 
From this, the full VHS of $X$ is then reconstructed via Theorem \ref{p:gnot}.

\subsection{Overview} \label{s:6.1}

\subsubsection{$A(Y) \Rightarrow A(X)$}
As is explained in \S\ref{s:3}, $A(X)$ is a sub-theory of $A(Y)$.
Indeed, $A(X)$ is obtained from $A(Y)$ by setting all extremal ray invariants 
to be zero, in addition to ``reducing the linking data'' $\gamma \in NE(Y)$ 
to $\beta \in NE(X)$. 

\subsubsection{$A(Y)_{classical} + B(Y) \Rightarrow B(X)$}

We have seen that $B(Y)$ can be considered as a sub-theory of $B(X)$.
In this section, we will show that $B(Y)$, together with the knowledge of extremal curves $\bigcup_i C_I \subset Y$ determines $B(X)$.
More precisely, we will show that the ``Hodge filtration'' underlying the variation of MHS of the quasi-projective $Y^{\circ} = Y \setminus \bigcup_i C_i$ on the first jet space of $\M_Y \subset \M_{\bar{X}}$ can be lifted uniquely to the Hodge filtration underlying the degenerating VHS of $X$.
Furthermore, the information of the Gauss--Manin connection up to the first jet is sufficient to single out the VHS of $X$.

In the next subsection, we start with a statement of compatibility of MHS which is needed in our discussion.
After that we will give a proof showing the unique determination.
As in our implication of $B(X) + A(X) \Rightarrow A(Y)$ in \S\ref{s:5}, our $A(Y) + B(Y) \Rightarrow B(X)$ implication is not constructive.

\subsection{Compatibility of the mixed Hodge structures}
Recall from \S\ref{s:4.1} that $\M_{\bar{X}}$ is smooth 
and contains $\M_Y$ in a natural manner.
Set 
$$
U := Y^{\circ} = Y \setminus \bigcup\nolimits_{i = 1}^k C_i \cong \bar{X}^{\circ} = \bar{X} \setminus \bar{X}^{\on{sing}}
$$ 
where 
$$
\bar{X}^{\text{sing}} = p := \bigcup\nolimits_{i = 1}^k \{p_i\}.
$$ 
To construct the VHS with logarithmic degeneration on $\M_{\bar X}$ near $\M_Y$, we start with the following lifting property.

\begin{proposition} \label{p:lift}
There is a short exact sequence of mixed Hodge structures
\begin{equation} \label{V-MHS}
0 \to V \to H^3(X) \to H^3(U) \to 0,
\end{equation}
where $H^3(X)$ is equipped with the limiting MHS of Schmid, 
$$
V \cong H^{1, 1}_\infty H^3(X),
$$ 
and $H^3(U)$ is equipped with the canonical mixed Hodge structure of Deligne. 
In particular, $F^3 H^3(X) \cong F^3 H^3(U)$ and $F^2 H^3(X) \cong F^2 H^3(U)$.
\end{proposition}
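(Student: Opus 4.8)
The plan is to realize the nontrivial map in \eqref{V-MHS} as the \emph{restriction to the smooth part} and to identify its kernel, and then to upgrade the resulting short exact sequence of $\mathbb{Q}$-vector spaces to one of mixed Hodge structures. First I would build the surjection $H^3(X)\to H^3(U)$ topologically. Recall from \S\ref{s:5.2} that $U=Y^\circ\cong\bar X^\circ$ is homotopy equivalent, through $M=X\setminus\bigcup_i B_i$, to $X^\circ:=X\setminus\bigcup_i S_i$, the equivalence $M\to U$ being induced by the retraction of the smooth total family onto its central fibre. Under this identification the map in \eqref{V-MHS} is the restriction $H^3(X)\to H^3(X^\circ)$ along the open inclusion $X^\circ\hookrightarrow X$. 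Since each $S_i\cong S^3$ has trivial normal bundle (Lemma~\ref{l:2}), the Thom isomorphism gives $H^j_{S_i}(X)\cong H^{j-3}(S_i)$, so that $H^3_{\bigcup_i S_i}(X)\cong\mathbb{Q}^k$ and $H^4_{\bigcup_i S_i}(X)=0$. The local cohomology sequence of the pair $(X,\bigcup_i S_i)$ then reads
\[
H^3_{\bigcup_i S_i}(X)\to H^3(X)\to H^3(X^\circ)\to H^4_{\bigcup_i S_i}(X)=0,
\]
showing that restriction is surjective with kernel the Gysin image $\on{span}\{\on{PD}([S_i])\}$. By the Picard--Lefschetz formula \eqref{PLT} this span equals $\on{im} N=H^{1,1}_\infty H^3(X)$, which by Lemma~\ref{l:7}(ii) is $V$. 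This already yields the exactness of \eqref{V-MHS} as $\mathbb{Q}$-vector spaces, with $V=W_2 H^3(X)$ the weight-two part of Schmid's limiting filtration.

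Next I would verify that the three spaces carry the asserted structures and that the weight-graded pieces match. On the target, the Gysin sequence of the smooth pair $(Y,\bigcup_i C_i)$ is genuinely a sequence of MHS and gives $0\to H^3(Y)\to H^3(U)\to K_A(-2)\to 0$, where $K_A=\ker\big(\mathbb{Q}^k\to H^4(Y),\ e_i\mapsto[C_i]\big)$ is the space of relations among the $[C_i]$; by Corollary~\ref{c:1}(i) it has dimension $\mu$, and by definition of the relation matrix $K_A=\on{im}A$. Hence $W_3 H^3(U)\cong H^3(Y)$ and $\on{Gr}^W_4 H^3(U)\cong K_A(-2)$ is pure of type $(2,2)$. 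On the source, the limiting Hodge diamond \eqref{H^3(Y)} gives $\on{Gr}^W_2=V$ (type $(1,1)$), $\on{Gr}^W_3\cong H^3(Y)$ (Corollary~\ref{c:1}(iii)) and $\on{Gr}^W_4=V^*$ (type $(2,2)$), so the quotient sits in $0\to H^3(Y)\to H^3(X)/V\to V^*\to 0$. The dual of the basic exact sequence (Theorem~\ref{t:bes}) identifies $V^*\cong\on{im}A=K_A$ compatibly with the Tate twist and with the common sub $H^3(Y)$, so the weight-graded pieces of $H^3(X)/V$ and of $H^3(U)$ coincide.

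The main obstacle is the final step: promoting the topological restriction isomorphism $H^3(X)/V\xrightarrow{\sim}H^3(U)$ to an isomorphism of MHS. This is delicate because $X^\circ$ is not a complex variety (the $S_i$ are Lagrangian, not holomorphic), so restriction is a priori only $\mathbb{Q}$-linear. I would reduce MHS-compatibility to two checks: compatibility with the weight filtration, which is the graded matching carried out above, and compatibility with the Hodge filtration. For the latter I would match the limiting $F^\bullet$, described through the nilpotent-orbit form of $\eo$ in Proposition~\ref{p:3}, against Deligne's $F^\bullet$ on $H^3(U)$ read off from the Gysin sequence; an alternative route is to compute both structures from the common semistable model of \S\ref{s:2.1}, where the comparison is induced by an actual morphism of complexes. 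Once \eqref{V-MHS} is established as a short exact sequence of MHS, the two ``in particular'' statements are formal: morphisms of MHS are strict for $F^\bullet$, and since $V\cong H^{1,1}_\infty H^3(X)$ is pure of type $(1,1)$ we have $F^2 V=F^3 V=0$, whence $F^p H^3(X)\xrightarrow{\sim}F^p H^3(U)$ for $p=2,3$.
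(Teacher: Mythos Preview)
Your topological argument is correct and well-organized: the local cohomology sequence of $(X,\bigcup_i S_i)$ cleanly produces the short exact sequence of $\mathbb{Q}$-vector spaces with kernel $\on{span}\{\on{PD}([S_i])\}=V$, and your weight-graded bookkeeping via the Gysin sequence of $(Y,\bigcup_i C_i)$ is fine.

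The gap is exactly where you say it is, and it is genuine. Your surjection $H^3(X)\to H^3(U)$ is built by identifying $H^3(U)$ with $H^3(X^\circ)$ through the smooth homotopy equivalence $X^\circ\sim M\sim U$. But $X^\circ$ is not a complex variety and this retraction is not holomorphic, so nothing in your construction exhibits the map as a morphism of MHS. Checking that the weight-graded pieces match does \emph{not} suffice: two MHS on the same $\mathbb{Q}$-space with isomorphic graded pieces need not be isomorphic as MHS, and in any case you need the specific map, not just some isomorphism, to be filtered. Your proposed fixes (matching $F^\bullet$ by hand via Proposition~\ref{p:3}, or ``computing both from the semistable model'') are gestures rather than arguments.

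The paper avoids this obstacle by never passing through $X^\circ$. Instead it routes the sequence through the singular fibre: topologically \eqref{V-MHS} is just the defining sequence \eqref{e:9}, namely $0\to V\to H_3(X)\to H_3(\bar X)\to 0$, combined with the identification
\[
H_3(\bar X)=H_3(\bar X,\{p_i\})\cong H_3(\tilde Y,E)\cong H^3(\tilde Y\setminus E)=H^3(U)
\]
(excision and Lefschetz duality). The point is that $(\tilde Y,E)$ is a pair of \emph{complex} varieties, so Lefschetz duality is a morphism of MHS; this is the content of the paper's Lemma~\ref{L-MHS}, proved via Saito's mixed Hodge modules. The MHS-compatibility of the first map is then exactly the invariant cycle theorem (Lemma~\ref{l:7}). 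Once \eqref{V-MHS} is a sequence of MHS, your final paragraph deducing the $F^2$ and $F^3$ isomorphisms by strictness is correct and is how the paper concludes as well.

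In short: replace your non-holomorphic restriction $H^3(X)\to H^3(X^\circ)$ by the specialization $H_3(X)\to H_3(\bar X)$ followed by Lefschetz duality on $(\tilde Y,E)$; then the MHS compatibility comes for free from the invariant cycle theorem and Lemma~\ref{L-MHS}.
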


\begin{proof}
In the topological level, the short exact sequence \eqref{V-MHS} is equivalent to the defining sequence of the vanishing cycle space \eqref{e:9}. Indeed, since $X$ is nonsingular, $H_3(X) \cong H^3(X)$ by Poincar\'e duality. Also, 
\begin{equation} \label{U-MHS}
H_3(\bar X) = H_3(\bar X, p) \cong H_3(\tilde Y, E) \cong H^3(\tilde Y \backslash E) = H^3(U)
\end{equation}
by the excision theorem and Lefschetz duality. 

Now we consider the mixed Hodge structures. 
Since $U$ is smooth quasi-projective, it is well know that the canonical 
mixed Hodge structure on $H^3(U)$ has its Hodge diamond supported on the 
upper triangular part, i.e., with weights $\ge 3$. 
Or equivalently, the MHS on $H_3(\bar X)$ has weights $\le 3$ by duality in \eqref{U-MHS}. 
The crucial point is that Lefschetz duality is compatible with mixed Hodge structures, as stated in Lemma~\ref{L-MHS} below. 
Hence the short exact sequence \eqref{V-MHS} follows from Lemma~\ref{l:7} which is essentially the invariant cycle theorem. 

Notice that $V \cong H^{1, 1}_\infty H^3(X)$ by Lemma \ref{l:7} (ii). In particular, the isomorphisms on $F^i$ for $i = 3, 2$ follows immediately by applying $F^i$ to the sequence \eqref{V-MHS}.
\end{proof}

\begin{lemma} \label{L-MHS}
Let $Y$ be an $n$ dimensional complex projective variety, $i: Z \hookrightarrow Y$ a closed subvariety with smooth complement $j: U \hookrightarrow Y$ where $U:=Y \backslash Z$. Then the Lefschetz duality $H_i(Y, Z) \cong H^{2n - i}(U)$ is compatible with the canonical mixed Hodge structures.
\end{lemma}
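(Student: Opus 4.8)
The plan is to realize the Lefschetz duality isomorphism as a composition of two more elementary isomorphisms, each of which is known to be a morphism of mixed Hodge structures (up to the evident Tate twist), and then to assemble them. Throughout write $\mathbb{Q}(m)$ for the Tate structure and $H_\bullet^{\mathrm{BM}}$ for Borel--Moore homology; the intermediary object I would route everything through is $H_i^{\mathrm{BM}}(U)$. The point is that $U$ is the only object assumed smooth, so one cannot invoke ordinary Poincar\'e duality on $Y$ (which may be singular); Borel--Moore homology of $U$ is exactly the gadget that repairs this.

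First I would use that, since $Y$ is compact and $U = Y \setminus Z$ is open with closed complement $Z$, the standard excision property of Borel--Moore homology gives a canonical isomorphism $H_i(Y, Z) \cong H_i^{\mathrm{BM}}(U)$, obtained by comparing the long exact sequence of the pair $(Y,Z)$ with the Borel--Moore long exact sequence for the closed inclusion $Z \hookrightarrow Y$ (using $H_\bullet^{\mathrm{BM}}(Y) = H_\bullet(Y)$ and $H_\bullet^{\mathrm{BM}}(Z) = H_\bullet(Z)$ for the compact $Y$, $Z$). In Deligne's theory, as developed by Peters--Steenbrink, $H_i^{\mathrm{BM}}(U)$ carries a functorial mixed Hodge structure dual to that on the compactly supported cohomology $H^i_c(U)$, both long exact sequences are sequences of MHS, and hence the excision isomorphism is one of MHS. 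Second, since $U$ is smooth of complex dimension $n$, Poincar\'e duality on $U$ is implemented by cap product with the fundamental class $[U] \in H_{2n}^{\mathrm{BM}}(U)$. On each component $[U]$ spans a copy of $\mathbb{Q}(n)$ (weight $-2n$, type $(-n,-n)$), and the cap product pairing $H^{2n-i}(U) \otimes H_{2n}^{\mathrm{BM}}(U) \to H_i^{\mathrm{BM}}(U)$ is a morphism of MHS, so capping with $[U]$ yields an isomorphism of MHS
\[
H^{2n-i}(U) \otimes \mathbb{Q}(n) \xrightarrow{\ \sim\ } H_i^{\mathrm{BM}}(U),
\]
where $H^{2n-i}(U)$ is endowed with Deligne's canonical MHS. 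Composing with the excision isomorphism of the previous step produces the stated compatibility, the Tate twist $\mathbb{Q}(n)$ being absorbed into the chosen normalization of the MHS on the relative homology $H_i(Y,Z)$.

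I expect the main obstacle to be the careful bookkeeping in the elementary approach: one must verify that the excision identification $H_i(Y,Z)\cong H_i^{\mathrm{BM}}(U)$ is strictly compatible with the Hodge and weight filtrations (not merely with the underlying rational vector spaces), and one must track the Tate twist so that the normalization matches the convention under which homology is given its MHS. Both facts are standard consequences of the functoriality of the Deligne and Peters--Steenbrink constructions, so the proof is one of assembly rather than of new input. A conceptual shortcut, which I would mention as an alternative, is available through Saito's theory of mixed Hodge modules: on the smooth $U$ the dualizing complex satisfies $\omega_U^H \cong \mathbb{Q}_U^H(n)[2n]$, whence Verdier duality identifies $H_i(Y,Z)\cong \mathbb{H}^{-i}(U,\omega_U^H)$ with $H^{2n-i}(U)(n)$ as carriers of MHS, and every structure map is automatically a morphism of MHS by construction. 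Either route suffices for the application to Proposition~\ref{p:lift}, where only the induced identifications on $F^3$ and $F^2$ in middle degree are actually needed.
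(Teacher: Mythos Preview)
Your argument is correct and in fact factors through exactly the same intermediary as the paper, namely $H_i^{\mathrm{BM}}(U)$; the difference is one of emphasis and machinery. The paper's proof takes Saito's mixed Hodge modules as the primary tool: it identifies $H_i(Y,Z)$ with the dual of $H^i(Y, j_! j^! \mathbb{Q}_Y)$, applies Verdier duality to reach $H^{-i}_c(Y, j_* j^* \omega_Y) = H^{-i}(U, \omega_U) = H^{\mathrm{BM}}_i(U)$, and only at the very last step invokes smoothness of $U$ to pass to $H^{2n-i}(U)$. You instead run the elementary Peters--Steenbrink route (excision plus cap product with $[U]$) as the main line, and relegate the MHM argument to a remark. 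Your version has the advantage of keeping the input minimal and making the Tate twist explicit; the paper's version has the advantage that every step is automatically MHS-compatible by the formalism, so no separate bookkeeping for strictness of filtrations is required. Either is adequate here, and you correctly note that only the $F^2$ and $F^3$ identifications are actually used downstream.
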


This is well known in mixed Hodge theory. 
For the readers' convenience we include a proof which is communicated to us by M.~de Caltaldo.

\begin{proof}
We will make use of the structural theorem of Saito on mixed Hodge modules (MHM) \cite[Theorem 0.1]{mS} which says that there is a correspondence between the derived categories of MHM and that of perverse sheaves (cf.~Axiom A in 14.1.1 of Peters and Steenbrink's book \cite{PS}).

There is a triangle in the derived category of constructible sheaves
\[
 j_! j^! \mathbb{Q}_Y \to  \mathbb{Q}_Y \to i_* i^* \mathbb{Q}_Y .
\]
This gives maps of MHS $H^i(Y,Z) \to H^i(Y) \to H^i(Z)$
with $H^i(Y,Z)=H^i (Y, j_! j^! \mathbb{Q}_Y)$. In fact, the MHS of $H^i(Y,Z)$ can be defined by the RHS from Saito's theory, since $j_! j^! \mathbb{Q}_Y$ is a complex of MHM. 

Dualizing the above setup, we have
\begin{equation} \label{e:dualizing}
    H_i (Y,Z)=H_i (Y, j_! j^! \mathbb{Q}_Y)^*,
\end{equation}
where the LHS of \eqref{e:dualizing} having MHS for the same reason as above and compatibly with taking dual as MHS. 
Furthermore, the RHS of \eqref{e:dualizing} is
$H^{-i}_c(Y,j_*j^* \omega_Y)$
by Verdier duality, where $\omega_Y$ is the Verdier dualizing complex.
Due to the compactness of $Y$ we have
\begin{equation*}
\begin{split}
    H^{-i}_c(Y,j_*j^* \omega_Y)
 &= H^{-i} (Y, j_* j^* \omega_Y)
 = H^{-i} (U, \omega_U) \\
 &=H^{BM}_i (U)
 =H^{2n - i} (U),
 \end{split}
\end{equation*}
where $H^{BM}$ is the Borel--Moore homology.
Since all steps are compatible with MHM, the Lefschetz duality is compatible with the MHS.
\end{proof}

\subsection{Conclusion of the proof} \label{s:pf}

We now apply the above result to our setting.
We have on $\bar X$ (cf.\ \cite{NS})
\[
\cdots H^1_{p} (\Theta_{\bar X}) \to H^1(\Theta_{\bar X}) \to H^1(U, T_U) \to H^2_{p} (\Theta_{\bar X}) \to \cdots.
\]
Since each $p_i$ is a hypersurface singularity, we have ${\rm depth}\,\mathscr{O}_{p_i} = 3$. Using this fact, Schlessinger \cite{Sch} (see also \cite{rF}) showed that 
$H^1_p(\Theta_{\bar X}) = 0$ and $H^2_p(\Theta_{\bar X}) \cong \bigoplus_{i = 1}^k \mathbb{C}_{p_i}$. Putting these together, we have
\begin{equation} \label{e:6.3.1}
 0 \to H^1(\Theta_{\bar X}) \to H^1(U, T_U) \to H^2_{p} (\Theta_{\bar X}) \to \cdots.
\end{equation}

Since $\bar X$ is a Calabi--Yau 3-fold with only ODPs, its deformation theory is unobstructed by the $T^1$-lifting property \cite{yK}. 
Comparing \eqref{e:6.3.1} with \eqref{e:15} we see that $\on{Def} (\bar{X}) \cong H^1(U, T_U)$.

Similarly, on $Y$ we have
$$
\cdots H^1_Z(T_Y) \to H^1(T_Y) \to H^1(U, T_U) \to H^2_Z(T_Y) \to H^2(T_Y) \to \cdots ,
$$
where $Z = Y \setminus U$ is the union of exceptional curves. Since $Y$ is smooth, the depth argument also gives $H^1_Z(T_Y) = 0$ (or by the local duality theorem $H^1_Z (T_Y) \cong H^2(Z, T_Y^\vee \otimes K_Y)^\vee = 0$). Thus 
$$
\on{Def} (Y) = H^1(T_Y) \subset H^1(U, T_U) \cong \on{Def} (\bar{X}),
$$ 
and $\M_Y$ is naturally a submanifold of $\M_{\bar{X}}$. Write $\mathscr{I} := \mathscr{I}_{\M_{Y}}$ as the ideal sheaf of $\M_Y \subset \M_{\bar{X}}$. Since $H^2(U, T_U) \neq 0$, the deformation of $U$ could be obstructed. Nevertheless, the first-order deformation of ${U}$ exists and is parameterized by $H^1(U, T_U)  \supset \on{Def} (Y)$.
Therefore, we have the following \emph{smooth family}
\[
 \pi: \mathfrak{U} \to  \Z:= Z_{\M_{\bar{X}}}(\mathscr{I}^2) \supset \M_Y,
\]
where $\Z = Z_{\M_{\bar{X}}}(\mathscr{I}^2)$ stands for the nonreduced subscheme 
of $\M_{\bar{X}}$ defined by the ideal sheaf $\mathscr{I}^2$. Namely $\Z$ is the first jet extension of $\M_Y$ in $\M_{\bar X}$.

Now we may complete the construction of VHS over $\M_{\bar X}$ near the boundary loci $\M_Y \hookrightarrow \M_{\bar X}$. The Gauss--Manin connection for a smooth family over non-reduced base was constructed in \cite{Katz}. For our smooth family $\pi: \mathcal{U} \to \Z$, it is defined by the integral lattice $H^3(U, \mathbb{Z}) \subset H^3(U, \mathbb{C})$. Since $U$ is only quasi-projective, the Gauss--Manin connection underlies VMHS instead of VHS. 
By Proposition \ref{p:lift}, we have $W_i H^3(U) = 0$ for $i \le 2$, 
$W_3 \subset W_4$ with $\on{Gr}^W_3 H^3(U) \cong H^3(Y)$, 
and $\on{Gr}^W_4 H^3(U) \cong V^*$. 

The Hodge filtration of the local system $F^0 = H^3(U, \mathbb{C})$ has the following structure: 
$F^\bullet = \{F^3 \subset F^2 \subset F^1 \subset F^0\}$ which satisfies the Griffiths transversality. 
Since $K_U \cong \mathscr{O}_U$ and $H^0(U, K_U) \cong H^0(Y, K_Y) \cong \Bbb C$, 
$F^3$ is a line bundle over $\Z$ spanned by a nowhere vanishing relative holomorphic 3-form $\eo \in \Omega^3_{\mathcal{U}/\Z}$. 
Near the moduli point $[Y] \in \Z$, $F^2$ is then spanned by $\eo$ and $v (\eo)$ where $v$ runs through a basis of $H^1(U, T_U)$. Notice that $v(\eo) \in W_3$ precisely when $v \in H^1(Y, T_Y)$. 

By Proposition~\ref{p:lift}, the partial filtration $F^3 \subset F^2$ on $H^3(U)$ over $\Z$ lifts uniquely to a filtration $\tilde F^3 \subset \tilde F^2$ on $H^3(X)$ over $\Z$ with $\tilde F^3 \cong F^3$ and $\tilde F^2 \cong F^2$. The complete lifting $\tilde F^\bullet$ is then uniquely determined since $\tilde F^1 = (\tilde F^3)^\perp$ by the first Hodge--Riemann bilinear relation on $H^3(X)$. Alternatively, $\tilde F^1$ is spanned by $\tilde F^2$ and $v (\tilde F^2)$ for $v$ runs through a basis of $H^1(U, T_U)$. 

Now $\tilde F^\bullet$ over $\Z$ uniquely determines a horizontal map 
$\Z \to \check{\Bbb D}$. Since it has maximal tangent dimension $h^1(U, T_U) = h^1(X, T_X)$, it determines uniquely the maximal horizontal slice $\psi: \M \to \check{\Bbb D}$ with $\M \cong \M_{\bar X}$ locally near $\M_Y$. The smoothing loci of $\bar X$ in $\M_{\bar X}$ is precisely given by $\M_X$.
According to Theorem \ref{p:gnot}, namely an extension of Schmid's nilpotent orbit theorem, under the coordinates ${\bf t} = (r, s)$, the period map 
$$
\phi: \M_X = \M_{\bar X} \backslash \bigcup\nolimits_{i = 1}^k D^i \to \Bbb D/\Gamma
$$ 
is then given by
$$
\phi(r, s) = \exp  \left(\sum_{i = 1}^k \frac{\log w_i}{2\pi \sqrt{-1}} N^{(i)}\right) \psi(r, s),
$$
where $\Gamma$ is the monodromy group generated by the local monodromy $T^{(i)}= \exp N^{(i)}$ (with $m_i = 1$) around the divisor $D^i$ defined by $w_i = \sum_{j = 1}^\mu a_{ij} r_j = 0$ (cf.~\eqref{coor-w}). Since $N^{(i)}$ is determined by the Picard--Lefschetz formula (Lemma \ref{P-L}), we see that the period map $\phi$ is completely determined by the relation matrix $A$ of the extremal curves $C_i$'s. (The period map gives the desired VHS, with degenerations, over $\M_X$.) This completes the proof that refined $B$ model on $Y \backslash Z = U$ determines the $B$ model on $X$.

\end{document}